\newcommand\Tref[1]{{Theorem~\ref{#1}}}
\newcommand\Cref[1]{{Corollary~\ref{#1}}}
\newcommand\Lref[1]{{Lemma~\ref{#1}}}
\newtheorem{thm}{Theorem}[section] 
\newtheorem{cor}[thm]{Corollary}
\newtheorem{defn}[thm]{Definition}
\newtheorem{exmpl}[thm]{Example}
\newtheorem{lem}[thm]{Lemma}
\newtheorem{prop}[thm]{Proposition}
\newtheorem{rem}[thm]{Remark}
\newcommand\Dref[1]{{Definition~\ref{#1}}}
\newcommand\Rref[1]{{Remark~\ref{#1}}}
\newcommand\M[1][d]{{\operatorname{M}_{#1}}} 
\def\({\left(}
\def\){\right)}
\def\bM{\overline{\mathcal M}}
\def\MG{\mathcal M}
\def\CI{\mathcal I}
\def\CJ{\mathcal J}
\def\sgn{\operatorname{sgn}}
\def\id{\operatorname{id}}
\def\a{{\alpha}}
\def\trmat{\operatorname{tr}_{\operatorname{mat}}}
\def\alphaj{\a_{\operatorname{mat}}}
\def\tra{\operatorname{tr}}
\def\aqpol{\operatorname{\alpha}_{\operatorname{pol}}^{\bar q}}
\def\lam{{\lambda}}
\def\la{{\lambda}}
\def\tor{\operatorname{tor}}
\def\Ann{\operatorname{Ann}}
\def\sub{{\,\subseteq\,}}
\newcommand{\set}[1]{{\left\{#1\right\}}}
\newcommand\eq[1]{{(\ref{#1})}}
\def\cha{\operatorname{char}}
\def\II{{I\!\!\,I}}
\def\III{{I\!\!\,I\!\!\,I}}
\def\co{{\,{:}\,}}
\def\divides{{\!\,{|}\!\,}}
\def\ra{{\rightarrow}}
\def\N {{\mathbb {N}}}
\def\Z {{\mathbb {Z}}}
\newcommand\suchthat{{\,:\ \,}}
\def\Q{{\mathbb Q}}
\newcommand\comp[3][\bullet]{{{#1}_{{\if1#2{}\else{#2}\fi}{\if#3K{}\else{(#3)}\fi}}}} %
\newcommand\assoc{{\CI_{\mathcal V}}} %
\newif\ifXY %
\ifXY \usepackage{xy}\fi %
\ifXY \xyoption{matrix}\xyoption{arrow}\xyoption{curve} \fi
\def\Zcd{{Zariski closed}}
\def\tr{\operatorname{trace}}
\begin{document}

\title[Specht's problem over commutative
Noetherian rings] %
{Specht's problem for associative
affine algebras over commutative Noetherian rings}

\author{Alexei Belov-Kanel}
\author{Louis Rowen}
\author{Uzi Vishne} %

\address{Department of Mathematics, Bar-Ilan University, Ramat-Gan
52900,Israel} %
\email{\{belova, rowen, vishne\}@math.biu.ac.il}

 \thanks{This work was supported by the Israel Science Foundation (grant no. 1207/12).}

\begin{abstract}
In a series of papers \cite{BRV1}, \cite{BRV2}, \cite{BRV3} we
introduced full quivers and pseudo-quivers of representations of
algebras, and used them as tools in describing PI-varieties of
algebras. In this paper we apply them to obtain
 a complete proof of Belov's solution of
Specht's problem for affine algebras over an arbitrary Noetherian
ring. The inductive step relies on a theorem that enables one to
find a ``$\bar q$-characteristic coefficient-absorbing polynomial
in each T-ideal $\Gamma$,'' i.e., a non-identity of the
representable algebra~$A$ arising from $\Gamma$, whose ideal of
evaluations in $A$ is closed under multiplication by $\bar
q$-powers of the characteristic coefficients of matrices
corresponding to the generators of $A$, where $\bar q$ is a
suitably large power of the order of the base field. The passage to
an arbitrary Noetherian base ring $C$ involves localizing at finitely
many elements  a kind of $C$, and reducing to the field case by
a local-global principle.
\end{abstract}

\maketitle

\newcommand\LL[2]{{\stackrel{\mbox{#1}}{\mbox{#2}}}}
\newcommand\LLL[3]{\stackrel{\stackrel{\mbox{#1}}{\mbox{#2}}}{\mbox{#3}}}
\newcommand\LLLL[4]
{\stackrel {\stackrel{\mbox{#1}}{\mbox{#2}}}
{\stackrel{\mbox{#3}}{{\mbox{#4}}}} }
\newcommand\AR[1]{{\begin{matrix}#1\end{matrix}}}

\tableofcontents
\setcounter{tocdepth}{1} {\small \tableofcontents}

\section{Introduction}

Until \S\ref{nonassoc}, all algebras are presumed to be associative (not necessarily with
unit element), over a given commutative ring $C$ having unit
element 1. The free (associative) algebra is denoted by $C\{x\}$,
whose elements are called \textbf{polynomials}. The
\textbf{T-ideal} of a set of polynomials \textbf{in an algebra}
$A$ is the ideal generated by all substitutions of these
polynomials in $A$. For example, the set $\id(A)$ of polynomial
identities of an algebra~ $A$ is a T-ideal of $C\{ x\}$. A T-ideal
is \textbf{finitely based} if it is generated as a T-ideal by
finitely many polynomials. For example, when $A$ is a commutative
algebra over a field of characteristic 0, $\id(A)$ is finitely
based, by the single polynomial $[x_1,x_2]= x_1x_2-x_2x_1.$

Our objective in this paper is to complete the affirmative proof
of the affine case of Specht's problem, that any affine PI-algebra
over an arbitrary commutative Noetherian ring satisfies the ACC
(ascending chain condition) on T-ideals, or, equivalently, any
T-ideal is finitely based. In characteristic $0$ over fields, this
is the celebrated theorem of Kemer~\cite{Kem1}. When $\cha (F)>0$
there are non-affine counterexamples \cite{B1,G}, with a
straightforward exposition given in \cite{BRV4}, so the best one
could hope for is a positive result for affine PI-algebras. Kemer
\cite{Kem11} proved this result for affine PI-algebras over
infinite fields, and Belov extended the theorem to affine
PI-algebras over arbitrary commutative Noetherian rings, in his
second dissertation, with the main ideas given in \cite{B2}. We
give full details of the proof (over arbitrary commutative
Noetherian rings), cutting through combinatoric complications by
utilizing the full strength of the theory of full quivers as
expounded in \cite{BRV1},\cite{BRV2}, and~\cite{BRV3}. Actually,
working over arbitrary commutative Noetherian base rings raises
the question of the ACC for T-ideals of algebras that do not
satisfy a PI (because the coefficients of the identities need not
be invertible), but we still can obtain a positive solution in
Theorem~\ref{SpechtNoeth1}.

Note that there is no hope for such a result over a non-Noetherian
commutative base ring $C$, because of the following observation:

\begin{lem}\label{Tid} If $\mathcal I \triangleleft C$, then $\mathcal I A$ is
a T-ideal of $A.$ In particular, $\mathcal I C\{ x\} $ is a
T-ideal of~$C\{ x\}.$
\end{lem}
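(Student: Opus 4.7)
The plan is to exhibit $\mathcal{I} A$ as the T-ideal generated by the very simple one-variable polynomials $\{cx_1 \co c\in\mathcal{I}\}\subseteq C\{x\}$. By the definition recalled just before the lemma, showing that $\mathcal{I} A$ coincides with the ideal generated by all substitutions of these polynomials in $A$ will finish the proof.

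First I would check that $\mathcal{I} A=\{\sum c_i a_i \co c_i\in \mathcal{I},\ a_i\in A\}$ is already a two-sided ideal of $A$. This is immediate from the fact that the scalars $c_i$ lie in the commutative central ring $C$: for any $b\in A$ one has $b(ca)=c(ba)\in \mathcal{I} A$ and $(ca)b=c(ab)\in\mathcal{I} A$, so no multi-unit issue arises even if $A$ does not have $1$.

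Next, note that substituting an arbitrary element $a\in A$ into the polynomial $cx_1$ (for $c\in\mathcal I$) produces the element $ca$, and as $c$ and $a$ vary these evaluations span $\mathcal I A$ as an additive group. Taking the ideal they generate in $A$ does not enlarge this set, again because $c$ is central: any product $b(ca)b'$ equals $c(bab')\in \mathcal{I} A$. Thus the ideal generated in $A$ by these substitutions is exactly $\mathcal{I} A$, establishing that $\mathcal{I} A$ is a T-ideal of $A$.

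The ``in particular'' clause follows by specializing $A=C\{x\}$. I do not expect any genuine obstacle: the only point deserving care is the use of centrality of $C$ to ensure that the ideal generated by $\{ca \co c\in\mathcal{I},\ a\in A\}$ does not escape $\mathcal{I} A$, so that the substitutions truly generate $\mathcal{I} A$ and nothing more.
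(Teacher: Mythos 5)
Your proof is correct. It takes a mildly different angle from the paper's: the paper simply observes that $\mathcal I A$ is a two-sided ideal (using centrality of $C$) and that it is closed under all algebra endomorphisms of $A$, which is the usual characterization of a T-ideal. You instead produce an explicit generating set of polynomials, namely $\{c x_1 : c \in \mathcal I\}$, and verify that the ideal of $A$ generated by their substitutions is precisely $\mathcal I A$ — which matches the paper's literal phrasing ``the T-ideal of a set of polynomials in $A$.'' The two arguments are dual ways of expressing the same fact; your version has the minor advantage of being constructive (it names the generators), while the paper's is shorter because verifying endomorphism-invariance of $\mathcal I A$ is a one-liner once $C$-linearity of endomorphisms is noted. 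Both hinge on the same key point you correctly isolated: the scalars from $\mathcal I$ live in the central base ring $C$, so multiplying by elements of $A$ on either side does not escape $\mathcal I A$.
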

\begin{proof} Clearly $\mathcal I A \triangleleft A$, and is closed under endomorphisms.
\end{proof}
Consequently, any chain of ideals of $C$ gives rise to a corresponding chain of
T-ideals.

The positive solution to Specht's problem has structural
applications, extending Braun's Theorem on the nilpotence of the
radical of a relatively free algebra to the case where the base
ring is Noetherian, cf.~Theorem~\ref{Jnilp}.

 It might be instructive to indicate briefly where
our approach differs from Kemer's characteristic 0 approach. Kemer
first obtains his deep \textbf{Finite Dimensionality Theorem} that
any algebra is PI-equivalent to a finite dimensional algebra $A$.
Extending the base field, one may assume the base field $K$ is
algebraically closed, so Wedderburn's principal theorem enables
one to decompose $A = \bar A \oplus J$ as vector spaces, where $J$
is the radical of $A$ and $\bar A $ can be identified with the
algebra $A/J$. In two deep lemmas, exposed in
\cite[Section~4.4]{BR}, Kemer shows that the nilpotence index of
$J$ and the vector space dimension of $\bar A$ over $K$ can be
described as invariants in terms of evaluations of polynomials on
$A$, and then working combinatorically he shows that these
computational invariants can be used to prove his Finite
Dimensionality Theorem. Kemer's Finite Dimensionality Theorem
fails for algebras over finite fields. Thus, we need some other
technique, and we turn to the theory of quivers of representations
of algebras into matrices, which were described computationally in
\cite{BRV2} and \cite{BRV3}.

Recall \cite[pp.~28ff.]{BR} that an algebra $A$ over an integral
domain $C$ is {\bf representable} if it can be embedded as a
$C$-subalgebra of $\M[n](K)$ for a suitable faithful commutative $C$-algebra
$K\supset C$ (which can be much larger than $C$).
 In \cite{BRV2} we considered
the {\bf full quiver} of a representation of an associative
algebra over a field, and determined properties of full quivers by
means of a close examination of the structure of Zariski closed
algebras, studied in \cite{BRV1}.

The full quiver (or pseudo-quiver) is a directed, loopless graph
without cycles, in which vertices correspond to simple subalgebras
and edges to elements of the radical. A maximal subpath of this
graph is called a {\bf branch}. In place of Kemer's lemmas, we
utilize the combinatorics of the full quiver to compute the
invariants described above.

 Our affirmation of Specht's problem (in the affine case) is
divided into two stages: First we assume that the base ring $C$ is
a field $F$ of order $q$ (where $q$ could be infinity). Recall,
when $q< \infty$, that $q$ is a power of $p = \cha (F)< \infty,$
and the Frobenius map $a \mapsto a^q$ is an $F$-algebra
endomorphism.

 In the second stage, using ring-theoretic methods, we
reduce from the case of a general ring $C$ to the situation of the
first stage.

The main difficulty in this approach is to discern whether the
algebras we are working with actually are representable. When the
base ring is an infinite field $F$, Kemer~\cite{Kem1} proved that
any relatively free affine $F$-algebra is representable; this is
also treated in \cite{B2} for $F$ finite, but the proof is rather
difficult. Consequently, we plan to treat the representability
theorem in a separate paper. Although this decision enables us to
provide a quicker and more transparent proof of Specht's problem,
it forces us to consider T-ideals $\CI$ for which $C\{ x \}/\CI$
need not a priori be representable. Accordingly, we need some
method for ``carving out'' T-ideals $\CI$ for which $C\{ x \}/\CI$
is representable.

In \cite{BRV3}, a \textbf{trace-absorbing polynomial} for an
algebra $A$ is defined as a non-identity of $A$ whose T-ideal is
also an ideal of the algebra $\hat A$ obtained by taking $A$
together with the traces adjoined. The main result of \cite{BRV3}
was that such polynomials exist for relatively free algebras.
Explicitly, we proved the following:

\begin{itemize}
\item Trace Adjunction Theorem~(\cite[Theorem~5.16]{BRV3}). Any
branch of a basic full quiver of a relatively free algebra $A$
naturally gives rise to a trace-absorbing polynomial of $A$.
\end{itemize}

The Trace Adjunction Theorem provides a powerful inductive tool.
For example, as indicated in~\cite{BRV4}, it streamlines the proof
of the rationality of Hilbert series of relatively free algebras.

 In this paper we need to consider
more generally the
 \textbf{characteristic coefficients} of a matrix $a$, by which we mean
the coefficients of its characteristic polynomial $\la ^n + \sum
_{k=0}^{n-1} \a_k \lam ^k.$ The $k$-th \textbf{characteristic
coefficient} of $a$ is $\a _k.$ For example, the trace and
determinant are respectively the $(n-1)$-th and $0$-th
characteristic coefficients. In characteristic 0 one can recover
all the characteristic coefficients from the traces, which is why
we only dealt with traces in \cite{BRV3}. But here we need to
generalize the result to arbitrary characteristic coefficients.
Furthermore, since the multilinearization process cannot be
reversed over a finite field, we cannot prove theorems about
absorbing arbitrary characteristic coefficients in this case, but
must content ourselves with absorbing $\bar q$-powers of
characteristic coefficients, which we call $\bar
q$-\textbf{characteristic coefficients}, for $\bar q$ a suitable
power of $q = |F|$. (In fact, for technical reasons involving the
quiver, we need to use an idea of Drensky~\cite{D} and consider
\textbf{symmetrized} characteristic coefficients,
cf.~Definition~\ref{sym}.)

Thus, we generalize ``trace-absorbing polynomials'' to ``$\bar
q$-characteristic coefficient\-absorbing polynomials,''
cf.~Definition~\ref{absorp}, for the inductive step in the
solution of Specht's problem. We do not prove here that affine
PI-algebras are representable (one of the keystones of Kemer's
theory in characteristic 0); this involves a more intense study of
full quivers, which we leave for a later paper. Nevertheless, once
we have answered Specht's problem affirmatively for representable
relatively free, affine algebras in Theorem~\ref{Spechtfin}, the
passage to Noetherian base rings enables us to verify Specht's
problem for all affine PI-algebras in Theorem~\ref{SpechtNoeth},
and an elementary module-theoretic argument yields the result for
all varieties in Theorem~\ref{SpechtNoeth1}.

Our approach parallels \cite{BRV3}, but with an emphasis on
working inside the set of evaluations of a given non-identity $f$
of the algebra $A$. Although as formulated in
\cite[Theorem~5.16]{BRV3}, the Trace Adjunction Theorem enables us
to obtain characteristic coefficient-absorbing non-identities,
here we need to find a nonzero substitution inside~$f$. This is
done in Theorem~\ref{traceq2}, but at the cost of a considerably
more involved proof than that of \cite[Theorem~5.12]{BRV3}. For
starters, in characteristic $p$, the multilinearization procedure
degenerates in the sense that one cannot recover a polynomial from
specializations of its multilinearizations. This means that one
could have a proper inclusion of T-ideals which contain exactly
the same multilinear polynomials (seen for example by taking the
Boolean identity $x^2 +x$ in characteristic $2$, whose
multilinearization is just the identity of commutativity), so the
inductive step in Specht's problem requires coping with
$A$-quasi-linear (and $A$-homogeneous) polynomials rather than just with
multilinear polynomials. Ironically, working in characteristic $p$
does yield one step that is easier, given in \Lref{barq}.

Recall that the trace-absorbing polynomial of
\cite[Theorem~5.12]{BRV3} is obtained by means of a ``hiking
procedure'' which forces substitutions of the polynomial into the
radical. The main innovation needed here in hiking arises from the
necessity to deal with several monomials of our polynomial $f$ at
a time, which was not the case in \cite{BRV3}. Thus, we introduce
hiking of ``higher stages,'' in particular \textbf{stage 2}
hiking, which eliminates substitutions of $f$ in the ``wrong''
matrix components, \textbf{stage 3} hiking, which differentiates
the sizes of the base fields of the different components of
maximal matrix degree, and \textbf{stage 4} hiking, which removes
hidden radical substitutions.

Two definitions of actions by characteristic coefficients (one in
terms of matrix computations and one in terms of polynomial
evaluations) can be defined on the T-ideal that is generated by
this polynomial, which thus is a common ideal of $A$ and the
algebra~$\hat A$ obtained by adjoining traces to $A$, and $\hat A$
is Noetherian by Shirshov's Theorem \cite[Chapter 2]{BR}. We
perform the same reasoning for $\bar q$-characteristic
coefficient-absorbing polynomials, but also need {stage 4} hiking
in \Lref{complem}, in order to identify these two actions.
This enables us to pass to Noetherian algebras and conclude the
verification of Specht's problem for algebras over arbitrary
fields, in Theorem~\ref{Spechtfin}.

The extension to algebras over an arbitrary commutative Noetherian
base ring $C$ is given in Theorems~\ref{SpechtNoeth} and
~\ref{SpechtNoeth1}. The proof has a different flavor, based on
considerations about $C$-torsion which lead to a formal reduction
to the case that $C$ is an integral domain, in which case we repeatedly apply a version of a local-global principle and
conclude by passing to its field of fractions and applying the
results from the previous paragraph.

 \section{Preliminary material}\label{backg}

Let us start by reviewing the background, especially about full
quivers, their relationship with relatively free algebras, and the
polynomials that they yield.

\subsection{Characteristic coefficients of matrices}

We start with some observations about characteristic coefficients
of matrices, which we need to utilize in characteristic $p$.

Any matrix $a \in \M[n](K)$ can be viewed either as a linear
transformation on the $n$-dimensional space $V = K^{(n)}$, and
thus having Hamilton-Cayley polynomial $f_a$ of degree~$n$, or
(via left multiplication) as a linear transformation $\tilde a$ on
the $n^2$-dimensional space $\tilde V = \M[n](K)$ with
Hamilton-Cayley polynomial $f_{\tilde a}$ of degree $n^2$.

\begin{rem}\label{eig}
The matrix $\tilde a$ can be identified with the matrix $$a
\otimes I \in \M[n](K) \otimes \M[n](K) \cong \M[n^2](K),$$ so its
eigenvalues have the form $\beta \otimes 1 = \beta$ for each
eigenvalue $\beta$ of $a$.
\end{rem}

\begin{lem}\label{noZub0}
Notation as above, $f_{\tilde a}= f_a^n$, over any integral domain
of arbitrary characteristic.
\end{lem}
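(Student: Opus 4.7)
The plan is to recognize that $f_{\tilde a} = f_a^n$ is, coefficient by coefficient, a polynomial identity in the $n^2$ entries of $a$. Hence by the usual Zariski-density argument over an integral domain, it suffices to verify the identity when $K$ is replaced by the algebraic closure of $\operatorname{Frac}(K)$; I would work in that setting throughout.

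The conceptual route I would take decomposes $\tilde V = \M_n(K)$ into its $n$ column subspaces, $\tilde V = V_1 \oplus \cdots \oplus V_n$, where $V_j$ consists of those matrices whose entries vanish outside the $j$-th column. Each $V_j$ is manifestly invariant under left multiplication by $a$, and the natural isomorphism $V_j \cong K^{(n)}$ intertwines $\tilde a|_{V_j}$ with the action of $a$ on $K^{(n)}$. Thus $\tilde a$ is (as a linear endomorphism of $\tilde V$) the direct sum of $n$ copies of $a$, and the characteristic polynomial of a direct sum is the product, giving
\[
f_{\tilde a} \;=\; \prod_{j=1}^n f_a \;=\; f_a^n.
\]

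As an alternative I could invoke Remark~\ref{eig} directly: under the identification $\tilde a = a \otimes I$ inside $\M_n(K) \otimes \M_n(K) \cong \M_{n^2}(K)$, the eigenvalues of $\tilde a$ are the products $\beta_i \cdot 1 = \beta_i$ of the eigenvalues of $a$ and of $I$, each counted with multiplicity $n$ coming from the $n$-fold eigenvalue $1$ of $I_n$. So $f_{\tilde a}(\lambda) = \prod_i (\lambda - \beta_i)^n = f_a(\lambda)^n$, recovering the same identity.

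There is no real obstacle here; the only point that requires any care is the legitimacy of the extension of scalars from the integral domain $K$ to an algebraically closed overfield, which is justified because both $f_{\tilde a}$ and $f_a^n$ have coefficients that are integer polynomials in the entries of $a$, so their equality is preserved under any ring homomorphism and in particular is detected after embedding $K$ into $\overline{\operatorname{Frac}(K)}$.
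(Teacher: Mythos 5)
Your first argument is correct, and it is genuinely different from---and in fact cleaner than---the paper's proof. The paper proceeds by specialization: it reduces to a generic matrix over $\mathbb{Z}[\xi_1,\xi_2,\dots]$, embeds this ring in an algebraically closed field of characteristic $0$, uses Zariski density to reduce to the case where $a$ is diagonal, computes $\det(\tilde a) = \det(a)^n$ there, and finally substitutes $\lambda I - a$ for $a$. Your column-space decomposition $\M_n(K) = V_1 \oplus \cdots \oplus V_n$, with $\tilde a$ acting as $n$ independent copies of $a$, bypasses all of this: it exhibits $\tilde a$ directly as the block-diagonal matrix $\operatorname{diag}(a,\dots,a)$, and the block-determinant identity $\det(\lambda I_{n^2} - \tilde a) = \det(\lambda I_n - a)^n$ is a formal computation valid over \emph{any} commutative ring. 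So your opening appeal to Zariski density and extension of scalars is actually unnecessary overhead for this route---the argument needs neither algebraic closure nor the integral-domain hypothesis. Your alternative eigenvalue argument (via $\tilde a = a \otimes I$ and Remark~\ref{eig}) is the one closer in spirit to the paper's proof, and it genuinely does require the reduction to an algebraically closed field; it is correct provided one reads eigenvalue multiplicities as algebraic multiplicities, which the triangularization of $a \otimes I$ justifies.
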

\begin{proof} By a standard specialization argument, it is enough to check the equality over
the free commutative ring $\mathbb Z [ \xi_1, \xi_2, \dots],$
which can be embedded into an algebraically closed field of
characteristic 0. By Zariski density, we may assume that $a$ is
diagonal, in which case it is clear that the determinant of
$\tilde a$ is $\det(a)^n.$ But then we conclude by taking $\la ^n
-a$ instead of $a$.
\end{proof}
\Lref{noZub0} often is used in conjunction with the next
observation.

\begin{lem}\label{obv1} Suppose $a \in \M[n](F),$ with $\cha(F) = p$, and $f_a = |\la I - a| = \sum \a _i \la ^i$ is the
characteristic polynomial of $a$. Then, for any $p$-power $\bar
q,$ $\sum \a _i ^{\bar q} \la ^i$ is the characteristic polynomial
of $a^{\bar q}$.
\end{lem}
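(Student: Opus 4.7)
The plan is to pass to an algebraic closure $\bar F$ of $F$ (which changes neither $f_a$ nor $f_{a^{\bar q}}$) and exploit the fact that, since $\bar q$ is a $p$-power and $\cha(F) = p$, the Frobenius map $x \mapsto x^{\bar q}$ is a ring endomorphism of $\bar F$. The whole argument is essentially Frobenius combined with triangulation, so I do not anticipate a real obstacle; the one thing to be careful about is applying Frobenius coefficient-by-coefficient, which is precisely what triangulation legitimates.

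First I would triangulate $a$ over $\bar F$: choose $T \in \operatorname{GL}_n(\bar F)$ so that $U := T^{-1} a T$ is upper triangular with diagonal entries $\beta_1, \ldots, \beta_n$ (the eigenvalues of $a$ listed with multiplicity). Since the diagonal of a product of upper triangular matrices is the product of the diagonals, $T^{-1} a^{\bar q} T = U^{\bar q}$ is upper triangular with diagonal $\beta_1^{\bar q}, \ldots, \beta_n^{\bar q}$. Consequently
\[
f_a(\la) = \prod_{i=1}^n (\la - \beta_i) \qquad\text{and}\qquad f_{a^{\bar q}}(\la) = \prod_{i=1}^n (\la - \beta_i^{\bar q}).
\]

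Next I would compare coefficients via elementary symmetric functions. Write $\a_k = (-1)^{n-k} e_{n-k}(\beta_1, \ldots, \beta_n)$. Because $e_{n-k}$ is a polynomial in the $\beta_i$ with integer coefficients and Frobenius is a ring endomorphism of $\bar F$,
\[
\a_k^{\bar q} \;=\; \bigl((-1)^{n-k}\bigr)^{\bar q} \, e_{n-k}(\beta_1, \ldots, \beta_n)^{\bar q} \;=\; (-1)^{n-k} \, e_{n-k}(\beta_1^{\bar q}, \ldots, \beta_n^{\bar q}),
\]
using that $\bigl((-1)^{n-k}\bigr)^{\bar q} = (-1)^{n-k}$ in any characteristic (trivial when $p=2$, since there $-1 = 1$; and $\bar q$ is odd when $p$ is odd). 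This is exactly the coefficient of $\la^k$ in $\prod_i (\la - \beta_i^{\bar q}) = f_{a^{\bar q}}(\la)$, giving the asserted equality $f_{a^{\bar q}}(\la) = \sum_k \a_k^{\bar q} \la^k$.
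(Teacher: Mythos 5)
Your proof is correct, but it follows a different route than the paper's. The paper's argument is a one-liner based on the Freshman's dream at the matrix level: since $\lambda I$ and $a$ commute in $\M[n](F[\lambda])$ and $\cha(F)=p$, one has $(\lambda I - a)^p = \lambda^p I - a^p$, and taking determinants yields
\[
f_{a^p}(\lambda^p) \;=\; |\lambda^p I - a^p| \;=\; |\lambda I - a|^p \;=\; f_a(\lambda)^p \;=\; \sum \alpha_i^p\,\lambda^{ip},
\]
so that $f_{a^p}(\mu) = \sum \alpha_i^p\,\mu^i$; iterating gives the $\bar q$-power statement. That argument never leaves $F[\lambda]$ and never mentions eigenvalues. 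Your approach instead passes to $\bar F$, triangulates, identifies the eigenvalues of $a^{\bar q}$ as $\beta_i^{\bar q}$, and then pushes Frobenius through the elementary symmetric polynomials $e_{n-k}$, with the (easily handled, and correctly handled by you) sign issue $\left((-1)^{n-k}\right)^{\bar q}=(-1)^{n-k}$. Both are rigorous; the paper's version is shorter and avoids extension of scalars, while yours makes the eigenvalue picture transparent and is perhaps the more natural thing to try first. As a small stylistic point, your argument establishes the $\bar q$ case in one shot rather than by iterating the $p$ case, at the cost of introducing the algebraic closure and the symmetric-function bookkeeping.
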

\begin{proof}
Follows from $f_{a^p}(\lam^p) = |\lam^p I - a^p| = |\lam I - a|^p
= f_{a}(\lam)^p$.
\end{proof}

\begin{prop}\label{obv2} Suppose $a \in \M[n](F).$ Then the
characteristic coefficients of $a$ are integral over the $F$-algebra $C$ generated by the
characteristic coefficients of $\tilde a$.
\end{prop}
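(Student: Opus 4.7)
The plan is to leverage the identity $f_{\tilde a} = f_a^n$ already established in \Lref{noZub0} (and visible via \Rref{eig}), and then use the transitivity of integrality. The point is that this identity says precisely that every eigenvalue of $a$ is also an eigenvalue of $\tilde a$ (with multiplicity multiplied by $n$), so the eigenvalues of $a$ are roots of a monic polynomial with coefficients in $C$.

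In detail, first I fix an algebraic closure $\bar F$ of $F$ and let $\beta_1,\dots,\beta_n \in \bar F$ denote the eigenvalues of $a$, so that $f_a(\la) = \prod_{i=1}^n (\la - \beta_i)$. By \Lref{noZub0} we then have
\[
f_{\tilde a}(\la) \;=\; f_a(\la)^n \;=\; \prod_{i=1}^n (\la - \beta_i)^n.
\]
The left-hand side is, by definition, $\la^{n^2} + \sum_{k=0}^{n^2 -1} \gamma_k \la^k$, where the $\gamma_k$ are (up to sign) the characteristic coefficients of $\tilde a$ and hence lie in $C$. Thus $f_{\tilde a} \in C[\la]$ is a monic polynomial that vanishes on each $\beta_i$, which proves that every $\beta_i$ is integral over $C$ inside $\bar F$.

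Next I invoke the standard fact that the integral closure of $C$ in $\bar F$ is a ring. Since each characteristic coefficient $\a_k$ of $a$ is, up to sign, the elementary symmetric polynomial $e_{n-k}(\beta_1,\dots,\beta_n)$, and is therefore a $\Z$-polynomial in the $\beta_i$, it is integral over $C$. This yields the conclusion.

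The argument is short and I do not expect genuine obstacles; the only subtle points are (i) one must pass to $\bar F$ to access the $\beta_i$, since in characteristic $p$ one cannot simply solve for the $\a_k$'s inside the coefficients of $f_a^n$ by linear algebra (for instance, the coefficient of $\la^{n^2-1}$ in $f_a^n$ is $n\a_{n-1}$, which is useless when $p \mid n$), and (ii) one must invoke \Lref{noZub0} in its stated generality over $\Z$ in order to apply it in characteristic $p$. Both are already handled by the preceding material, so the proof amounts to assembling these ingredients.
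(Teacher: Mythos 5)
Your proof is correct and takes essentially the same approach as the paper: both show that the eigenvalues of $a$ are roots of the monic polynomial $f_{\tilde a}\in C[\la]$ (you via \Lref{noZub0}, the paper via \Rref{eig}), hence lie in the integral closure of $C$, and then conclude that the characteristic coefficients of $a$, being symmetric functions of those eigenvalues, are integral over $C$.
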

\begin{proof} The integral closure $\bar C$ of $C$ contains all the eigenvalues of $\tilde a,$ which are the eigenvalues of $a,$ so the characteristic coefficients of $\tilde a$ also belong to $\bar C.$
\end{proof}

\begin{defn}\label{cc1} The
 $\a _i ^{\bar q}$ of \Lref{obv1} are called the $\bar
q$-\textbf{characteristic coefficients} of $a$.
\end{defn}

\begin{rem}\label{qwhat} We choose $\bar q$ sufficiently large so that the theory will run smoothly.
By \cite[Remark~2.35 and Lemma~2.36]{BR}, when $\bar q
>n$ (which is greater than the nilpotence index in the Jordan decomposition), then
the matrix $a^{\bar q}$ is semisimple. \end{rem}

\subsection{Varieties of PI-algebras}

We work with polynomials in the free algebra built from a
countable set of indeterminates over the given commutative
Noetherian base ring $C$. The set $\id(A)$ is well-known to be a
T-ideal of the free algebra $C\set{x}$. More generally, given a
polynomial $f$, we define $\langle f(A) \rangle$ to be the ideal
generated by $A$. Thus, $\langle f(A) \rangle = 0$ iff $f\in
\id(A)$.

A polynomial is \textbf{blended} if each indeterminate appearing
nontrivially in the polynomial appears in each of its monomials.
As noted in \cite[Remark~22.18]{BRV3}, any T-ideal is additively
spanned by T-ideals of blended polynomials, and we only consider
blended polynomials throughout this paper.

Given a T-ideal $\CI$ of the free algebra $C\{ x\},$ we can form
the \textbf{relatively free algebra} $C\{ x\}/\CI,$ which is free
in the class of all PI-algebras $A$ for which $\id (A) \subseteq
\CI.$ Using this correspondence, it is enough to classify
relatively free algebras.

We continue by taking our base ring to be a field $F$, and we
investigate relatively free PI algebras in terms of the full
quivers of their representations, making use of \textbf{generic
elements}, as constructed in \cite[Construction 7.14]{BRV1} and
studied in \cite[Theorem 7.15]{BRV1}. (A generic element of a
finite dimensional algebra having base $\{ b_1, \dots, b_n \}$
over an infinite field is just an element of the form $\sum \xi_i
b_i,$ where the $ \xi_i$ are indeterminates, but the situation for
algebras over a finite field becomes considerably more intricate.)

As in \cite{BRV3}, we rely heavily on the {\bf Capelli polynomial}
$$c_k(x_1, \dots, x_k; y_1, \dots, y_k)= \sum_{\pi \in S_k} \sgn (\pi) x_{\pi (1)}y_1 \cdots x_{\pi (k)} y_k$$
of degree $2k$ (cf.~\cite{BR}). Any $C$-subalgebra of $\M[n](K)$
satisfies the identities $c_k$ for all $k > n^2$.

Recall that a polynomial $f$ which is linear in the first $t$ variables is $t$-\textbf{alternating} if substituting $x_j \mapsto x_i$ results in $0$ for any $1\leq i < j \leq t$.
\begin{defn}\label{cent0}
We denote by $h_n$ the $n^2$-alternating central polynomial on
$n\times n$ matrices \cite[p.~25]{BR}. (We define formally $h_0 =
1$, and also have $h_1 = x_1$ and $h_2 = c_4 g$ where $g$ is the
multilinearization of the central polynomial $[y_1,y_2]^2$ for $2
\times 2$ matrices, where we use fresh indeterminates for $c_4$.)

When appropriate, we write $h_n(x)$ to emphasize that $h_n$ is
evaluated on indeterminates $x_1, x_2, \dots.$
\end{defn}
The central polynomial $h_n$ is a crucial polynomial for our
deliberations, since it is always central for $\M[n](C)$
regardless of the commutative base ring $C$.

\subsection{Quasi-linear functions and quasi-linearizations}

Although the theory works most smoothly for multilinear
polynomials, in characteristic $p$ we do not have the luxury of
being able to recover a (blended) polynomial from its
multilinearization, the way we can in characteristic 0. For
example, one cannot recover the Boolean identity $x^2-x$ from its
multilinearization $x_1x_2+x_2x_1$, which holds in any commutative
algebra of characteristic 2. Thus, we must stop the linearization
process before arriving at multilinear identities.

 It is convenient at times to
work slightly more generally with functions rather than
polynomials, in order to be able to apply linear transformations.

\begin{defn}
A function $f$ is $i$-\textbf{quasi-linear} on $A$ if
$$f(\dots, a_i + a_i', \dots) = f(\dots, a_i , \dots)+f(\dots,
a_i', \dots)$$ for all $a_i, a_i' \in A; $ $f$ is
$A$-\textbf{quasi-linear} if $f$ is $i$-quasi-linear on $A$ for
all $i$. \end{defn}

Quasi-linear polynomials are used heavily by Kemer in
\cite{Kem11}.
 In contrast to \cite{BRV3} in which quasi-linear
polynomials played a somewhat secondary role, here they are at the
forefront of the theory, since we cannot avoid finite fields.
Accordingly, we need to develop them here, expanding on
\cite[Exercise 1.9 and~1.10]{BR}.

\begin{rem} When $\cha(F) = p,$
any $p^t$ power of an $A$-quasi-linear central polynomial is
$A$-quasi-linear.
\end{rem}

Any identity of $A$ is obviously $A$-quasi-linear, since the only
values are 0, so quasi-linear polynomials are only interesting for
non-identities.

\begin{lem}
If $f$ is $A$-quasi-linear in $x_i$, then $$f(a_1, a_2, \dots,
a_{i,1}+\cdots+ a_{i,d_i}, \dots)= \sum _{j=1}^{d_i} f(a_1, a_2,
\dots, a_{i,j}, \dots), \quad \forall a_i \in A.$$\end{lem}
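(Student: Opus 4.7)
The plan is to prove this by straightforward induction on $d_i$, using the definition of $i$-quasi-linearity as the base case. The statement is essentially the observation that additivity in the $i$-th slot (which is what $i$-quasi-linearity asserts for two summands) propagates to any finite number of summands, just as one verifies for ordinary additive maps.

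First, I would dispose of the trivial case $d_i = 1$, where there is nothing to prove. The case $d_i = 2$ is precisely the defining property of $i$-quasi-linearity on $A$, since $a_{i,1}, a_{i,2} \in A$ and all the other arguments $a_1, a_2, \dots$ are also in $A$. For the inductive step, assuming the identity holds for any sum of $d_i - 1$ elements, I would group the first $d_i - 1$ summands by setting $b = a_{i,1} + \cdots + a_{i,d_i - 1} \in A$, apply the $d_i = 2$ case to the decomposition $b + a_{i,d_i}$ in the $i$-th slot (legitimate since $b, a_{i,d_i} \in A$), and then expand $f(a_1, \dots, b, \dots)$ using the inductive hypothesis to obtain
\[
f(a_1, \dots, a_{i,1} + \cdots + a_{i,d_i}, \dots) = \sum_{j=1}^{d_i - 1} f(a_1, \dots, a_{i,j}, \dots) + f(a_1, \dots, a_{i,d_i}, \dots),
\]
which is the desired equality.

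There is no real obstacle here: the lemma is a formal consequence of the definition, and the only subtlety worth flagging is that the induction requires each partial sum $a_{i,1} + \cdots + a_{i,k}$ to lie in $A$, which holds because $A$ is closed under addition. This is why the hypothesis specifies $A$-quasi-linearity rather than some weaker linearity over a subset; otherwise, the intermediate sums might fall outside the domain on which additivity is assumed.
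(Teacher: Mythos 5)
Your proof is correct and is just the spelled-out version of what the paper states in one line ("immediate from the definition"): a routine induction on the number of summands, using the two-term additivity of $i$-quasi-linearity as the inductive step. The observation that each partial sum stays in $A$ is the right thing to note, and the argument matches the paper's intent exactly.
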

\begin{proof} The assertion is immediate from the
definition.\end{proof}

As usual, for any monomial $h(x_1, x_2, \dots ), $ define $\deg _i
h$ to be the degree of $h$ in $x_i$. For any polynomial $f(x_1,
x_2, \dots ) , $ define $\deg _i f$ to be the maximal degree $\deg
_i h$ of its monomials; the sum of all such monomials is called
the \textbf{leading $i$-part} of $f$.

\begin{defn} Suppose $f(x_1, x_2, \dots) \in C\{ x\}$ has degree $d_i$ in
$x_i$. The \textbf{$i$-partial linearization} of $f $ is
\begin{equation}\label{partlin}\Delta_i f := f(x_1, x_2, \dots, x_{i,1}+\cdots+ x_{i,d_i}, \dots)- \sum
_{j=1}^{d_i} f(x_1, x_2, \dots, x_{i,j}, \dots)\end{equation}
where the substitutions were made in the $i$ component, and
$x_{1,1},\dots,x_{1,d_i}$ are new variables.
\end{defn}

Note that the $i$-partial linearization procedure lowers the
degree of the polynomial in the various indeterminates, in the
sense that the degree in each $x_{i,j}$ is less than~ $d_i$. It
follows at once that applying the $i$-partial linearization
procedure repeatedly, if necessary, to each $x_i$ in turn in any
polynomial $f$, yields a polynomial that is $A$-quasi-linear.

In \cite{Kem11} and \cite{BRV3}, quasi-linearizations had been
defined slightly differently, as homogeneous components of partial
linearizations as defined in \eqref{partlin}; when $f$ belongs to
a variety $V$ defined over an infinite field, these remain in $V$.
However, in \cite[Example~2.2]{BRV4} we saw that over a finite
field these homogeneous components might not necessarily stay in
the same variety, which is the reason we have modified the
customary definition.

Formally, this procedure is slightly stronger than that given in
\cite{BRV3}, but yields the following nice result:

\begin{prop}\label{targil} Suppose
 $\cha(F) = p$ and $d_i = \deg _i f$ is not a $p$-power. Then the leading $i$-part of $f$ can be recovered from
a suitable specialization of the leading $k$-part of an $i$-partial
linearization of $f $, for suitable $k$.
\end{prop}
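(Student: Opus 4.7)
The plan is to use Lucas's theorem to single out a binomial coefficient that survives modulo $p$. Since $d_i$ is not a $p$-power, Lucas's theorem provides an integer $k$ with $0 < k < d_i$ for which $\binom{d_i}{k} \not\equiv 0 \pmod{p}$; I fix such a $k$.

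Form $\Delta_i f$ as in \eqref{partlin}, and read off the \emph{leading $k$-part} as the bihomogeneous component of $\Delta_i f$ of bidegree $(d_i - k,\, k)$ in $(x_{i,1}, x_{i,2})$ in which the remaining new variables $x_{i,3},\ldots,x_{i,d_i}$ do not appear. Equivalently, this component is obtained by first specializing $x_{i,j} \mapsto 0$ for all $j \geq 3$---under which each summand $f(\dots, x_{i,j}, \dots)$ with $j \geq 3$ vanishes because $f$ is blended---and then selecting the piece of $x_{i,1}$-degree $d_i - k$ and $x_{i,2}$-degree $k$.

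A degree count shows that only the leading $i$-part $f^*$ of $f$ contributes to this component: a monomial of $f$ of $i$-degree $d < d_i$ yields in $\Delta_i f$ only monomials of total degree $d$ in the new variables, so it cannot reach a bidegree whose parts sum to $d_i$. For each monomial $h = u_0 x_i u_1 \cdots x_i u_{d_i}$ of $f^*$, substituting $x_i \mapsto x_{i,1} + x_{i,2}$ and extracting bidegree $(d_i - k, k)$ produces exactly the $\binom{d_i}{k}$ terms indexed by binary strings $\sigma \in \{1,2\}^{d_i}$ with $k$ twos, each contributing $u_0 x_{i,\sigma(1)} u_1 \cdots x_{i,\sigma(d_i)} u_{d_i}$.

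The final step is the specialization $x_{i,1} = x_{i,2} = x_i$: each $\sigma$-term collapses back to $h$, and summing over the monomials of $f^*$ yields $\binom{d_i}{k}\, f^*$. Since $\binom{d_i}{k}$ is a unit in $F$, dividing recovers the leading $i$-part $f^*$. The main hurdle I anticipate is correctly pinning down the ``leading $k$-part'' as this bihomogeneous component; once that interpretation is fixed, the hypothesis that $d_i$ is not a $p$-power is sharp, because Lucas's theorem characterizes $p$-powers by the vanishing modulo $p$ of every interior binomial coefficient $\binom{d_i}{k}$ with $0 < k < d_i$.
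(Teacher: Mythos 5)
Your proof is correct and matches the paper's method: choose $k$ with $\binom{d_i}{k}\not\equiv 0\pmod p$ (your appeal to Lucas's theorem is the precise justification for why such $k$ exists exactly when $d_i$ is not a $p$-power, which the paper leaves implicit), extract the bihomogeneous piece of $\Delta_i f$ in $(x_{i,1},x_{i,2})$ with the remaining new variables zeroed, and then specialize and divide by the unit $\binom{d_i}{k}$. The one place you differ is the final substitution: the paper writes $x_{i,1}\mapsto x_i$, $x_{i,2}\mapsto 1$, which returns a polynomial of $i$-degree only $k$, whereas your $x_{i,1}=x_{i,2}=x_i$ collapses all $\binom{d_i}{k}$ terms back onto each original monomial of the leading $i$-part and literally yields $\binom{d_i}{k}\,f^*$, so your version recovers the leading $i$-part exactly as the statement promises; this is a small but genuine repair of the paper's specialization rather than a different method.
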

\begin{proof} Taking $k$ such that $\binom{d_i}k$ is not divisible
by $p,$ we note that \eqref{partlin} has $\binom{d_i}k$ terms of
degree $k$ in $x_{i,1}$ and degree $d_i -k$ in $x_{i,2}$, so we
specialize $x_{i,1}\mapsto x_i$, $x_{i,2}\mapsto 1,$ and all other
$x_{i,j}\mapsto 0.$
\end{proof}

\begin{cor} For any polynomial $f$ which is not an identity of $A$, the T-ideal generated by $f$
contains an $A$-quasi-linear non-identity for which the degree in
each indeterminate is a $p$-power, where $p = \cha (F).$ \end{cor}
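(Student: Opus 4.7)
I would proceed by induction on the multi-degree of $f$, ordered lexicographically by $(\max_i \deg_i f,\ \sum_i \deg_i f)$, using the $i$-partial linearizations $\Delta_i f \in \langle f \rangle$ (they are linear combinations of substitutions of $f$) as the engine of reduction. The base case is when $f$ is multilinear in every indeterminate: then $f$ is automatically $A$-quasi-linear with $\deg_i f = 1 = p^0$ for all $i$, so $g=f$ satisfies the conclusion.

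For the inductive step, pick a variable $x_i$ with $d_i := \deg_i f > 1$. The easy subcase is when $f$ is \emph{not} $A$-quasi-linear in $x_i$: choose witnesses $a,a' \in A$ and values $\vec b$ for the other variables with $f(\ldots,a+a',\ldots) \neq f(\ldots,a,\ldots)+f(\ldots,a',\ldots)$. Substituting $x_{i,1}\mapsto a$, $x_{i,2}\mapsto a'$, $x_{i,j}\mapsto 0$ for $j\geq 3$ in $\Delta_i f$ (and fixing the other variables at $\vec b$), and using that $f$ is blended so that $f$ vanishes whenever one of its arguments is $0$, yields precisely $f(\ldots,a+a',\ldots) - f(\ldots,a,\ldots) - f(\ldots,a',\ldots)\neq 0$. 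Hence $\Delta_i f \in \langle f \rangle$ is a non-identity of $A$ whose multi-degree is strictly smaller (each new $x_{i,j}$ has degree at most $d_i-1$), and the inductive hypothesis applies.

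The main obstacle is the remaining case, in which $f$ is already $A$-quasi-linear in every variable yet some $\deg_i f = d_i$ is still not a $p$-power. Here $\Delta_i f$ is itself an identity of $A$ (this is precisely the content of $A$-quasi-linearity in $x_i$ for blended $f$), so the previous witnessing argument collapses. My plan is to invoke Proposition~\ref{targil}: because $d_i$ is not a $p$-power there exists $k$ with $\binom{d_i}{k}\not\equiv 0\pmod p$, and the leading $k$-part of $\Delta_i f$, suitably specialized (via $x_{i,1}\mapsto x_i$, $x_{i,2}\mapsto 1$, $x_{i,j}\mapsto 0$ for $j\geq 3$), recovers the leading $i$-part $\bar f$ of $f$ up to a nonzero scalar. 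One then combines this structural handle with multiplicative manipulations --- for instance, multiplying $f$ by a monomial $x_i^{p^a-d_i}$ (with $p^a$ the smallest $p$-power at least $d_i$) to lift $\deg_i$ to a $p$-power, and possibly absorbing suitable central polynomial factors to preserve $A$-quasi-linearity --- to produce a polynomial in $\langle f \rangle$ that is simultaneously $A$-quasi-linear, has $p$-power degree in every indeterminate, and remains a non-identity of $A$. Reconciling these three requirements uniformly, particularly when $A$ is noncommutative (where left- or right-multiplication need not preserve quasi-linearity), is where the real technical work lies, and is the step I expect to be the main obstacle.
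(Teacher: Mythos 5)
Your overall route matches the paper's: iterate the $i$-partial linearization $\Delta_i f$ (which lies in the T-ideal since it is a $\mathbb{Z}$-linear combination of substitution instances of $f$), use the decrease in degree in the new variables as the inductive handle, and observe that a non-identity survives whenever $f$ fails to be $A$-quasi-linear in the chosen variable. That "easy'' case is handled correctly, including the use of blendedness to kill the extra terms.

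Where the attempt goes off the rails is exactly where you flag uncertainty: the case in which $f$ is already $A$-quasi-linear in $x_i$ yet $d_i=\deg_i f$ is not a $p$-power. Your proposed repair, namely multiplying by $x_i^{p^a-d_i}$ and trying to restore quasi-linearity by "absorbing central polynomial factors,'' does not work and is the wrong direction. If you use the \emph{same} indeterminate $x_i$ inside and outside $f$ then $x_i^{p^a-d_i}f$ is no longer quasi-linear in $x_i$ (one would need $(a+a')^{p^a-d_i}f(\ldots,a+a',\ldots)$ to split, which fails since $p^a-d_i$ is itself not a $p$-power in general); central polynomials are irrelevant to this, since they only control where substitutions land, not additivity. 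The mechanism actually used by Proposition~\ref{targil} is different in kind: one stays with the partial linearization. Because $d_i$ is not a $p$-power there is a $k$ with $\binom{d_i}{k}\not\equiv0\pmod p$, and the specialization $x_{i,1}\mapsto x_i,\ x_{i,2}\mapsto1,\ x_{i,j}\mapsto0\ (j\geq3)$ applied to $\Delta_i f$ is again an element of $\langle f\rangle$ whose degree in $x_i$ is strictly less than $d_i$; the nonvanishing binomial coefficient is precisely what prevents the top-degree information of $f$ from being invisible in this lower-degree polynomial. Iterating this (and not any multiplicative trick) is what the paper's one-line proof means by "apply Proposition~\ref{targil} repeatedly.'' So the gap you should have been pursuing is to justify that this specialized partial linearization remains a non-identity (the paper is itself terse on this point); the multiplicative manipulation is a dead end. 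A minor additional issue: your inductive measure $(\max_i\deg_i f,\ \sum_i\deg_i f)$ need not strictly decrease under $\Delta_i$, because the partial linearization introduces up to $d_i$ fresh variables and can increase the total degree; tracking only the maximal per-variable degree (together with the number of variables realizing it) is what the degree reduction actually controls.
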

\begin{proof} Apply Proposition~\ref{targil} repeatedly, until the
degree in each indeterminate is a $p$-power.
\end{proof}

\subsection{Radical and semisimple substitutions}

\begin{rem}\label{delpt} When studying a representation $\rho \co A \to M_n(K)$ of an algebra $A$,
we usually identify $A$ with its image. In case $A$ is an algebra
over a field, we write $A = S \oplus J$, the Wedderburn
decomposition into the semisimple part $S$ and the radical $J$.
Then we can choose the representation such that the Zariski
closure of $A$ has the \textbf{Wedderburn block decomposition} of
\cite[Theorem~5.7]{BRV1}, in which the semisimple part $S$ is
written as matrix blocks along the diagonal. 

A {\bf semisimple substitution} (into a \Zcd\ algebra $A$) is a
substitution into an element of $S$ in some Wedderburn block of
$A$, and a {\bf radical substitution} is a substitution into an
element of $J$ in some Wedderburn block. A {\bf pure substitution}
is a substitution that is either a semisimple substitution or a
radical substitution, i.e., into $S \oplus J$.
\end{rem}

\begin{defn} Write $A = S \oplus J$, the Wedderburn decomposition into the
semisimple part $S$ and the radical $J$. A {\bf semisimple
substitution} (into a \Zcd\ algebra $A$) is a substitution into an
element of $S$ in some Wedderburn block of $A$, and a {\bf radical
substitution} is a substitution into an element of $J$ in some
Wedderburn block. A {\bf pure substitution} is a substitution that
is either a semisimple substitution or a radical substitution,
i.e., into $S \oplus J$.
\end{defn}

\begin{rem}\label{sub1} By \cite[Remark~2.20]{BRV3}, one can check whether an
$A$-quasi-linear polynomial $f(x)$ is a PI of $A$ merely by
specializing the indeterminates $x_i$ to pure substitutions.

More generally, let $f$ be any polynomial. Given a substitution
$f(\overline{x_1}; \overline{x_2},\dots)$, if we specialize
$\overline{x_1} \mapsto \overline{x_{1,1}} +\overline{x_{1,2}} $,
then
\begin{equation}\label{quasi1}
f(\overline{x_1}; \overline{x_2}, \dots) = f(\overline{x_{1,1}} ; \overline{x_2},
\dots)+ f(\overline{x_{1,2}} ; \overline{x_2}, \dots)+ \Delta
f(\overline{x_{1,1}} ,\overline{x_{1,2}} ; \overline{x_2},
\dots),\end{equation} where $\Delta f$ is obtained from the
$1$-partial linearization by specializing $x_{1,j}\mapsto 0$ for
all $j>2$ and then discarding these $x_{1,j}$ from the notation.
\end{rem} One can interpret Equation~\eqref{quasi1} as follows:
\begin{lem}\label{sub2}
Suppose $x_1$ has some specialization $x_1 \mapsto \sum
\overline{x_{1,j}}$ where the $\overline{x_{1,j}}$ are pure
substitutions. (For example, some of them might be semisimple and
others radical.) Then all specializations involving ``mixing'' the
$\overline{x_{1,j}}$ occur in $ \Delta f(\overline{x_{1,1}}
,\overline{x_{1,2}} , \overline{x_2}, \dots)$.
\end{lem}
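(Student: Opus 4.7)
The plan is to verify that Equation~\eqref{quasi1} actually captures what the lemma asserts, namely that the ``mixed'' contributions from substituting a sum into $x_1$ are exactly the ones that show up inside $\Delta f$. First I would reduce to the case of two summands, since the statement of Equation~\eqref{quasi1} is written for $\overline{x_1} = \overline{x_{1,1}} + \overline{x_{1,2}}$; the general case $x_1 \mapsto \sum_j \overline{x_{1,j}}$ follows by an easy induction on the number of summands, expanding one new summand at a time.

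For the two-summand case, I would expand $f(\overline{x_{1,1}} + \overline{x_{1,2}}; \overline{x_2}, \dots)$ monomial by monomial. Each monomial $h$ of $f$ has degree $\deg_1 h \leq d_1$ in $x_1$, so upon substituting $\overline{x_{1,1}} + \overline{x_{1,2}}$ in each of the $\deg_1 h$ occurrences of $x_1$ in $h$ and using distributivity, we get a sum of $2^{\deg_1 h}$ terms. Grouping these terms by whether they are ``unmixed'' (all factors equal $\overline{x_{1,1}}$, or all factors equal $\overline{x_{1,2}}$) or ``mixed'' (at least one $\overline{x_{1,1}}$ and at least one $\overline{x_{1,2}}$ appearing), the unmixed contribution is precisely $f(\overline{x_{1,1}};\overline{x_2},\dots) + f(\overline{x_{1,2}};\overline{x_2},\dots)$.

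It remains to identify the sum of the mixed terms with the specialization of the $1$-partial linearization $\Delta_1 f$ of \eqref{partlin} under $x_{1,j} \mapsto 0$ for $j > 2$. But \eqref{partlin} is constructed exactly so that, after specializing the other $x_{1,j}$ to zero, $f(\overline{x_{1,1}} + \overline{x_{1,2}}; \dots) - f(\overline{x_{1,1}}; \dots) - f(\overline{x_{1,2}}; \dots)$ is what remains, and the previous paragraph shows this is precisely the sum of the mixed terms. Renaming $\Delta f$ for this specialization of $\Delta_1 f$ yields \eqref{quasi1}, and the lemma is immediate.

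The argument is essentially bookkeeping at the level of monomials, so there is no real obstacle; the only point requiring a bit of care is that $f$ need not be homogeneous in $x_1$, so the expansion must be carried out separately on each monomial (or equivalently on each $x_1$-homogeneous component) — and since the definition of $\Delta_1 f$ in \eqref{partlin} is term-by-term in $f$, this presents no difficulty. Note that the interpretation as a statement about pure substitutions $\overline{x_{1,j}} \in S \oplus J$ plays no role in this proof; purity is just the context in which the lemma will be applied later.
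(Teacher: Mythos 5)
Your proof is correct and rests on the same observation as the paper's: the ``unmixed'' contributions are precisely the first two terms on the right of~\eqref{quasi1}, so the mixed ones must be accounted for by $\Delta f$. The only difference is that the paper's proof is a single sentence that takes~\eqref{quasi1} as already established in Remark~\ref{sub1}, whereas you additionally re-derive~\eqref{quasi1} by monomial expansion; that re-derivation is harmless but not needed once the remark is in hand.
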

\begin{proof} The ``mixed'' substitutions do not occur in the first two
terms of the right side of Equation~\eqref{quasi1}.\end{proof}

\Lref{sub2} enables us to apply the quasi-linearization
procedure on specific substitutions of $A$, rather than on all of
$A$, and will be needed when studying specific specializations of
a polynomial $f$. If $f$ were linear in $x_1$ then we could
separate these into distinct specializations of $f$. But when $f$
is non-linear in $x_1$, we often need to turn to \Lref{sub2}.

In \cite{Kem11}, the definition of quasi-linear also included
homogeneity, which can be obtained automatically over infinite
fields. Here again, since we are working over finite fields, we
need to be careful. We say that a function $f$ is
$i$-\textbf{quasi-homogeneous} of degree~$s_i$ on~$A$ if
$$f(\dots, \a a_i, \dots) = \a ^ {s_i} f(\dots, a_i , \dots)$$ for all $\a \in F, a_i \in A; $ $f(x_1, \dots,
x_t; y_1, \dots, y_m)$ is $A$-\textbf{quasi-homogeneous} of degree
$s$ on~$A$, if $f$ is $i$-quasi-homogeneous on $A$ of degree $s_i$
for all $1 \le i\le t,$ with $s = s_1 \cdots s_t$.

The next lemma shows the philosophy of our approach, although we
cannot use it directly because we are working over finite fields.

\begin{rem}\label{quasi0} Suppose $f = \sum f_j \in \CI$, where each $f_j$ is
$i$-quasi-homogeneous of degree $ s_{i,j}$ on $A$. Then fixing
some $j_0$ and taking $s= s_{i,j_0}$ yields
$$f(\dots, \a a_i, \dots) - \a ^ s f(\dots, a_i , \dots) = \sum
_{j \ne j_0} (\a ^ {s_{i,j}} - \a ^ s)f_j(\dots, a_i , \dots).$$
This lowers the number of $i$-homogeneous components of $f$, and
provides an inductive procedure for reducing to quasi-homogeneous
functions.\end{rem}

\begin{lem}\label{quasi} Given any T-ideal $\CI$
and any polynomial $f\in \CI$ which is a non-identity of $A$, we
can obtain an $A$-quasi-homogeneous polynomial in $\CI$.
\end{lem}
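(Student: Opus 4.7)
The plan is to process the indeterminates of $f$ one at a time, at each step extracting a non-identity summand of $\CI$ which is quasi-homogeneous in the current indeterminate, while preserving the quasi-homogeneity already arranged for the previous ones. The main tool is that $\CI$, being a T-ideal, is closed under the scalar substitutions $x_i \mapsto \alpha x_i$ for $\alpha \in F$, and (as an $F$-submodule of $C\{x\} = F\{x\}$) under $F$-linear combinations.

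Fix an indeterminate $x_i$ and decompose $f$ into its $i$-homogeneous components as a polynomial, $f = \sum_d f_d$. For every $\alpha \in F$,
\[
f^{(\alpha)} \;:=\; f(\dots, \alpha x_i, \dots) \;=\; \sum_d \alpha^d\, f_d \;\in\; \CI.
\]
Group the indices $d$ into equivalence classes under $d \sim d'$ iff $\alpha^d = \alpha^{d'}$ as functions on $F$, and set $F_c := \sum_{d\in c} f_d$. A Vandermonde-type inversion on the finitely many distinct power functions $\alpha \mapsto \alpha^d$ expresses each $F_c$ as an $F$-linear combination of the $f^{(\alpha)}$'s (the relevant coefficient matrix is $r \times |F|$ and of full row rank $r$, where $r$ is the number of classes), so $F_c \in \CI$; by construction each $F_c$ is $i$-quasi-homogeneous on $A$. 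Since $f = \sum_c F_c$ is a non-identity of $A$, at least one class-sum $F_{c_0}$ is a non-identity, and we replace $f$ by $F_{c_0}$. Iterating over the other indeterminates, preservation of $i$-quasi-homogeneity is automatic: the operator producing the $j$-extraction is an $F$-linear combination of substitutions $x_j \mapsto \beta x_j$, which commutes with $x_i \mapsto \alpha x_i$, so if the current $f$ is $i$-quasi-homogeneous of degree $s_i$, then so is its $j$-extracted summand. After finitely many steps we obtain an $A$-quasi-homogeneous non-identity in $\CI$.

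The main obstacle is the finite-field degeneration: over $F$ with $|F| = q < \infty$ the individual $f_d$ cannot be isolated, because $\alpha^d$ depends on $d$ only through its class (essentially $d \bmod (q-1)$ for $d \ge 1$, together with the distinction $d = 0$). This is, however, precisely the degeneration that forced the weakening of homogeneity to quasi-homogeneity in the first place; it is the class-sums $F_c$, not the individual $f_d$, that one can hope to extract over a finite field, and their manifest $i$-quasi-homogeneity together with the decomposition $f = \sum_c F_c$ of a non-identity supply exactly what is needed.
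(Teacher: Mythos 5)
Your proof is correct and rests on the same underlying idea as the paper's: $\CI$ is closed under the scalar substitutions $x_i \mapsto \alpha x_i$ and under $F$-linear combinations, and these let one separate the $i$-quasi-homogeneous pieces. Where the paper (via Remark~\ref{quasi0}) subtracts $\alpha^s f(\dots,a_i,\dots)$ from $f(\dots,\alpha a_i,\dots)$ to kill one quasi-degree at a time and concludes by induction, you invert the Vandermonde-type system of distinct power functions $\alpha \mapsto \alpha^d$ in one step; this is the same linear algebra in closed form, and you make explicit two points the paper's terse proof leaves implicit --- the grouping of ordinary $i$-homogeneous components into quasi-degree classes $F_c$ (which is exactly the finite-field obstruction), and the final observation that since $f=\sum_c F_c$ is a non-identity, some $F_c$ must be one as well, so the inductive cancellation must be steered so as not to discard that component.
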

\begin{proof} By Remark~\ref{quasi0}, taking $s$ to be the degree of $x_i$ in some monomial,
this monomial cancels in
$$f(\dots, \a a_i, \dots) - \a ^ s
f(\dots, a_i , \dots),$$ so one concludes by induction.
\end{proof}

\begin{defn}
A specialization \textbf{radically annihilates} the polynomial $f$
if the number of radical substitutions is at least the nilpotence
index of $J$.
\end{defn}

In case a substitution radically annihilates $f$, each monomial of
$f$ must evaluate to~0. One main idea here is that the nilpotence
of the radical forces an evaluation to be 0 when the
specialization radically annihilates the polynomial $f$.

At the outset, for full quivers defined over a field, the
semisimple part $S$ is the sum of the diagonal Wedderburn blocks
of $A$, and $J$ is the sum of the off-diagonal Wedderburn blocks.
However, after ``gluing up to infinitesimals,'' some of the
radical $J$ might be transferred to the diagonal blocks. For example,
when $A$ is a local algebra, there is a single block, which thus
contains all of $J$.

\begin{defn}\label{intern} A radical substitution is \textbf{internal} if it occurs in a
diagonal block (after ``gluing up to infinitesimals'');
otherwise it is \textbf{external}.
\end{defn}

\subsubsection{The Hamilton-Cayley equation applied to quasi-linear
polynomials}

One of the key techniques used here (and throughout combinatorial
PI-theory) is to absorb characteristic coefficients into some
(multilinear) alternating polynomial $f(x_1, \dots, x_t; y_1,
\dots, y_t),$ as exemplified in \cite[Theorem~J, p.~25]{BR}. Since
we must cope with quasi-linear polynomials in this paper, we need
to extend the theory to quasi-linear polynomials.
Accordingly, we need another definition.

\begin{defn} A polynomial $f(x_1, \dots,
x_t; y_1, \dots, y_t),$ is \textbf{$(A;t;\bar
q)$-quasi-alternating} if $f$ is $A$-quasi-linear in $x_1, \dots,
x_t$ and quasi-homogeneous of degree~$\bar q$, for which $f$
becomes 0 whenever $x_i$ is substituted throughout for $x_j$ for
any $1 \le i< j \le t.$\end{defn}

Fortunately, the task of working with quasi-linear polynomials
over infinite fields was already done in Kemer's verification of
\cite[Equation~(40)]{Kem11}; he uses the terminology
\textbf{forms} for our characteristic coefficients. If $f$ is
$(A;t;\bar q)$-quasi-alternating, then we still get Kemer's
conclusion. This can also be stated in the language of
\cite[Theorem~J, Equation~1.19, page 27]{BR} (with the same
proof), as follows:
\begin{equation}\label{traceab0}
\alpha _k ^{\bar q}f(a_1, \dots, a_t, r_1, \dots, r_m) = \sum
f(T^{k_1}a_1, \dots, T^{k_t}a_t, r_1, \dots, r_m),\end{equation}
summed over all vectors $(k_1, \dots, k_t)$ with each $k_i \in \{
0, 1\}$ and $k_1 + \dots + k_t = k,$ where $\alpha _k $ is the
$k$-th characteristic coefficient of a linear transformation $T \co V
\to V,$ and $f$ is $(A;t;\bar q)$-quasi-alternating. Of course,
when applying \eqref{traceab0} in arbitrary characteristic, we
must consider all characteristic coefficients and not just the
traces.

We want to determine a value of $\bar q$ for which our polynomial
$f$ will be $(A;t;\bar q)$-quasi-alternating. When dealing with a
representable affine algebra $A$, which has a finite number of
generators, we may assume that the base field $F$ is finite, and
thus any element, viewed as a matrix in $\M[n](K)$, must have all
of its characteristic values in a field $\bar F$ which is a field
extension of $F$ of some finite order $\bar q.$ The idea is to
take the characteristic polynomial of the matrix $a^{\bar q}$
instead of the characteristic polynomial of the matrix $a$.

\begin{rem}\label{noZub}
There is a delicate issue here, insofar as Amitsur's proof of
\cite[Theorem~J]{BR} relies on $T$ acting on a vector space $V$.
If we take $V = \M[n](K)$ then its dimension is $n^2$, but we can
bypass this difficulty by appealing to the upcoming
\Lref{obv3}. (Note also that when $n$ is a power of $p$, then
$n^2$ is still a power of $p$. In view of \Lref{noZub0}, we
can just replace $\bar q$ by $\bar q^2$. Likewise, we could
replace $T$ by any p-power of $T$.)
\end{rem}

\begin{lem}\label{obv3}
Suppose $C$ is an algebra containing the $\bar q$-characteristic
coefficients of a matrix $a \in \M[n](C)$. Then $a$ is integral
over $C$.\end{lem}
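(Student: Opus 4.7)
The plan is to reduce the integrality of $a$ to the integrality of $a^{\bar q}$, which will be immediate from Hamilton--Cayley once we interpret the $\bar q$-characteristic coefficients correctly. By \Lref{obv1}, if $f_a(\lam) = \lam^n + \sum_{i=0}^{n-1}\a_i \lam^i$ is the characteristic polynomial of $a$, then the characteristic polynomial of $a^{\bar q}$ is exactly $f_{a^{\bar q}}(\lam) = \lam^n + \sum_{i=0}^{n-1}\a_i^{\bar q}\lam^i$. By hypothesis, all of the coefficients $\a_i^{\bar q}$ lie in $C$, so this is a monic polynomial over $C$, and Hamilton--Cayley (over the commutative base, where it holds universally) gives $f_{a^{\bar q}}(a^{\bar q}) = 0$. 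Thus $a^{\bar q}$ is integral over $C$.

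To pass from $a^{\bar q}$ to $a$ itself, I would define the polynomial $g(\lam) := f_{a^{\bar q}}(\lam^{\bar q}) \in C[\lam]$, which is again monic, of degree $n\bar q$, with coefficients in $C$. Substituting $\lam \mapsto a$ gives $g(a) = f_{a^{\bar q}}(a^{\bar q}) = 0$, so $a$ itself satisfies a monic polynomial equation over $C$, i.e., $a$ is integral over $C$. There is no real obstacle in this argument beyond keeping straight which ring one is working over; the only subtlety is to make sure the Frobenius-style identity $f_{a^{\bar q}}(\lam) = \lam^n + \sum \a_i^{\bar q}\lam^i$ is available, and that is precisely the content of \Lref{obv1} applied with $\bar q$ a $p$-power.
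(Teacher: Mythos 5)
Your proposal is correct and follows exactly the route the paper has in mind; the paper's proof is the one-line ``By assumption $a^{\bar q}$ is integral over $C$, implying at once that $a$ is integral over $C$,'' and your argument simply fills in both implicit steps (Cayley--Hamilton for $a^{\bar q}$ via \Lref{obv1}, then substituting $\lam^{\bar q}$ to get a monic polynomial vanishing on $a$). Nothing is missing and nothing is done differently in substance.
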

\begin{proof} By assumption $a^{\bar q}$ is integral over $C$, implying at once that $a$ is
integral over $C$.
\end{proof}

\begin{rem}\label{Shirsh}
For our applications of Shirshov's Theorem we only need to adjoin
finitely many ${\bar q}$-characteristic coefficients to a given
affine $C$-algebra $A$ to obtain an algebra $\hat A$ integral of
bounded degree over the commutative algebra $\hat C_{\bar q}$
obtained by adjoining the same ${\bar q}$-characteristic
coefficients to~$C$.

Thus, when we are given a representation $\rho \co A \to
\M[n](C)$, we stipulate that the generators of $\hat C_{\bar q}$
include all ${\bar q}$-characteristic coefficients of products of
a given finite set of generators of $\rho(A)$ (viewed as a matrix
in $\M[n](C)$).
\end{rem}

\begin{defn}
We call $\hat C_{\bar q}$ of Remark~\ref{Shirsh} the ${\bar
q}$-\textbf{characteristic closure} of $C$.
\end{defn}

\begin{lem}\label{ShirshL}
$\hat C_{\bar q} $ is its own ${\bar q}$-characteristic
closure.
\end{lem}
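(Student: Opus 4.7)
The plan is to unwind the definitions and observe that adjoining $\bar q$-characteristic coefficients is an idempotent operation in this setting. By \Rref{Shirsh}, $\hat C_{\bar q}$ is the $C$-subalgebra of $\M[n](\hat C_{\bar q})$ generated by $C$ together with the $\bar q$-characteristic coefficients of all products of a fixed finite generating set $a_1,\dots,a_k$ of $\rho(A)\subseteq\M[n](C)$.

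To form the $\bar q$-characteristic closure of $\hat C_{\bar q}$, one now regards $\rho(A)$ as a subalgebra of $\M[n](\hat C_{\bar q})$ and adjoins to $\hat C_{\bar q}$ the $\bar q$-characteristic coefficients of all products of a finite generating set. I would take the very same $a_1,\dots,a_k$ as the generating set: the matrices themselves are unchanged by the enlargement of coefficients, and they still span the same subalgebra $\rho(A)$. Their $\bar q$-characteristic coefficients are universal polynomial expressions in the entries of the product matrices; indeed, by \Lref{obv1}, the $k$-th $\bar q$-characteristic coefficient is the $\bar q$-th power of the ordinary $k$-th characteristic coefficient, which in turn is a fixed $\Z$-polynomial in the matrix entries. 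Since those entries still lie in $C$, the $\bar q$-characteristic coefficients produced on the second pass coincide with those produced on the first pass, and hence already belong to $\hat C_{\bar q}$.

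Combining these observations shows that every element one would have to adjoin in forming the closure of $\hat C_{\bar q}$ is already in $\hat C_{\bar q}$, whence the two rings coincide. The lemma is essentially a bookkeeping statement: the assignment $C\mapsto\hat C_{\bar q}$ depends only on the matrix entries of the chosen generators of $\rho(A)$, not on the surrounding commutative base ring, so iterating the construction yields nothing new. Consequently I do not expect any substantive obstacle; the only point requiring care is to record carefully that the generating set of $\rho(A)$ can be kept fixed throughout the extension of scalars from $C$ to $\hat C_{\bar q}$.
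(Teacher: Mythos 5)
Your argument is valid only under a reading of \Rref{Shirsh} in which the generating set $a_1,\dots,a_k$ of $\rho(A)$ is frozen once and for all, and in that case the conclusion is essentially tautological: the same products give the same $\bar q$-characteristic coefficients. But that reading strips the lemma of its content, and it is not what the paper needs. When the base ring is enlarged from $C$ to $\hat C_{\bar q}$, the algebra one actually works with is $\hat A = \hat C_{\bar q}\,\rho(A)$, and its relevant generators (the generic elements, the sub-Peirce components used in Theorem~\ref{tradj}, etc.) are $\hat C_{\bar q}$-linear combinations $\sum_i \beta_i r_i$ of monomials $r_i$ in $a_1,\dots,a_k$, with $\beta_i \in \hat C_{\bar q}$ not necessarily in $C$. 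To assert that $\hat C_{\bar q}$ is closed under adjoining $\bar q$-characteristic coefficients one must show that the characteristic coefficients of \emph{such linear combinations} already lie in $\hat C_{\bar q}$ — not merely that the coefficients of the old monomials do.

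This is exactly where the paper's proof differs from yours and why it is not a bookkeeping statement. The paper appeals to Amitsur's theorem on the characteristic polynomial of a sum of matrices: the characteristic coefficients of $\sum_i \beta_i r_i$ are polynomial expressions in the $\beta_i$ and in the characteristic coefficients of products of the $r_i$. Since the $r_i$ are themselves products of $a_1,\dots,a_k$, products of the $r_i$ are again products of the $a_j$, whose $\bar q$-characteristic coefficients are in $\hat C_{\bar q}$ by construction; and $\beta_i^{\bar q} \in \hat C_{\bar q}$ since $\beta_i \in \hat C_{\bar q}$. Raising Amitsur's identity to the $\bar q$-th power (using \Lref{obv1}) then places the $\bar q$-characteristic coefficients of $\sum_i \beta_i r_i$ inside $\hat C_{\bar q}$. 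Your proposal skips precisely this step, so it would not survive the moment one needs characteristic coefficients of an element of $\hat A$ with coefficients that live in $\hat C_{\bar q}\setminus C$.
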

\begin{proof} We appeal to a result of Amitsur~\cite[Theorem~A]{Am}; this describes the characteristic coefficients
of a linear combination $\sum \beta_i r_i$ of matrices in the
subalgebra generated by the characteristic coefficients of
products of the $r_i$.
\end{proof}

\begin{rem}
If $f(y_1, y_2, \dots)$ is $A$-quasi-linear in $y_1$ and $g(x_1,
x_2, \dots)$ is $(A;t;\bar q)$-quasi-alternating, then $$f(g(x_1,
x_2, \dots)y_1, y_2,\dots), \qquad f(y_1g(x_1, x_2, \dots),
y_2,\dots)$$ are $(A;t;\bar q)$-quasi-alternating.
\end{rem}

\begin{rem}[{\cite[Remark~G, p.~25]{BR}}]
Let $f(x_1, \dots, x_{t+1};y)$ be any $(A;t;\bar
q)$-quasi-alternating polynomial. Then the polynomial
\begin{equation}\label{alt1} \sum _{i=1}^{t+1} (-1)^i f(x_1,
\dots, x_{i-1}, x_{i+1}, \dots, x_{t+1}, x_i ; y)\end{equation} is
$(t+1)$-$A$-quasi-alternating.
\end{rem}

 \subsubsection{Zubrilin's theory applied to quasi-linear
polynomials}

Even without the trick of Remark~\ref{noZub}, we could resort to a
more polynomial-oriented approach developed by~Zubrilin, expounded
in \cite{BR}, which generalizes \cite[Theorem~J, p.~25]{BR}. Since
Zubrilin's theory as developed in \cite{BR} requires us to start
with multilinear polynomials and we must cope with quasi-linear
polynomials in this paper, we need to extend the theory to
quasi-linear polynomials. The idea is to take the characteristic
polynomial of the matrix $a^{\bar q}$ instead of the
characteristic polynomial of the matrix $a.$ Zubrilin's theory can
be considered to be the case $\bar q=1,$ and extends readily to
the general case. Unfortunately, the theory as developed in
\cite{BR} requires many computations, so here we only indicate
where the proofs are modified in this more general situation.

Recall \cite[Definition~2.40]{BR} that if $f(x_1,\dots,x_t;y)$ is
multilinear in the variables $x_i$, then
$(\delta_jf)(x_1,\dots,x_t;y;z)$ is the sum over all the possible
substitution of $zx_i$ for $x_i$ in $j$~out of the first $n$
places.
Explicitly,
let $f(x_1,\ldots, x_{n}, \vec y,  \vec t)$ be multilinear in the $x_i$
(and perhaps involving additional variables summarized as $\vec{y}$ and $\vec{t}$).   Take $0\le k\le n$, and expand
$$
f^*=f((z+1)x_1,\ldots,(z+1)x_n,\vec y, \vec t),
$$
where  $z$ is a new variable. Then we write
$\delta^{(x,n)}_{k,z}(f) := \delta^{(x,n)}_{k,z}(f)(x_1,\ldots,
x_{n},z)$ for the homogeneous component of $f^*$ of degree $k$
in the noncommutative variable $z$.

\begin{prop}[{\cite[Corollary~2.45]{BR}, \cite{zubrilin.1}}]\label{Zub}
Let $f(x_1, \dots, x_t, x_{t+1};y)$ be any $(A;t;\bar
q)$-quasi-alternating polynomial which is linear in $x_{t+1}$.
Also suppose the polynomial of \eqref{alt1} is an identity of $A$.
Then $A$ also satisfies the identity
\begin{equation}\label{alt2}
\sum_{j=0}^n (-1)^j \delta_{j,z}^{(n)}(f(x_1,\ldots,x_n,z^{n-j}x_{n+1}))\equiv 0 \quad
\text{modulo } \mathcal{CAP}_{n+1}.\end{equation}
\end{prop}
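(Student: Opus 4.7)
The plan is to adapt Zubrilin's derivation of \cite[Corollary~2.45]{BR} from the multilinear setting to the $(A;t;\bar q)$-quasi-alternating setting, substituting the classical characteristic coefficients by the $\bar q$-characteristic coefficients of \Dref{cc1}. The key structural observation, which rescues the combinatorics of the $\delta^{(n)}_{j,z}$ operators, is that for any function $f$ which is $i$-quasi-linear on $A$ the additivity $f(\dots,zx_i+x_i,\dots)=f(\dots,zx_i,\dots)+f(\dots,x_i,\dots)$ is exactly what is needed to split $(z+1)x_i$. Hence, on substitutions in $A$,
$$f\bigl((z+1)x_1,\dots,(z+1)x_n;\,x_{n+1}\bigr) \;=\; \sum_{S\subseteq\{1,\dots,n\}} f_S,$$
where $f_S$ has $zx_i$ in slot $i$ for $i\in S$ and $x_i$ otherwise, and consequently $\delta^{(n)}_{j,z}(f)$ coincides with the sum of those $f_S$ for which $\lvert S\rvert=j$, exactly as in the multilinear case.

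Given this, let $T$ denote left multiplication by $z$ on the representation space, so that $z^{n-j}x_{n+1}=T^{n-j}x_{n+1}$. Using linearity of $f$ in $x_{n+1}$, the sum
$$\sum_{j=0}^{n}(-1)^j\,\delta^{(n)}_{j,z}\!\bigl(f(x_1,\dots,x_n;z^{n-j}x_{n+1})\bigr)$$
rearranges into a signed sum whose coefficients match the right-hand side of \eqref{traceab0}. The hypothesis that the $(t{+}1)$-quasi-alternating polynomial \eqref{alt1} is an identity of $A$ is precisely what licenses passing from the raw $f_S$-expansion to the absorption identity \eqref{traceab0}: it kills the ``diagonal'' contributions that would otherwise appear when the same argument is fed twice into $f$. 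Once \eqref{traceab0} is applied, the sum collapses into the Hamilton--Cayley polynomial of $T^{\bar q}$ evaluated on $x_{n+1}$; on an $n$-dimensional representation this expression vanishes up to the Capelli correction $\mathcal{CAP}_{n+1}$, which accounts for those substitutions where more than $n$ independent vectors appear and hence fall into the Capelli ideal of degree $n{+}1$.

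The hard part will be the simultaneous management of two subtleties that are absent from the original Zubrilin argument. First, one must verify that all the bookkeeping in $\delta^{(n)}_{j,z}$ survives the weakening from multilinear to merely $A$-quasi-linear: I would carry this out by restricting throughout to pure substitutions, as licensed by \Rref{sub1}, and invoking \Lref{sub2} to absorb ``mixed'' contributions into the appropriate $\delta$-components rather than treating them as separate error terms. Second, replacing ordinary characteristic coefficients by $\bar q$-characteristic coefficients forces one to apply Hamilton--Cayley to $T^{\bar q}$ rather than $T$; by \Rref{noZub} and \Lref{noZub0} the relevant characteristic polynomial of $T$ on $V=\M[n](K)$ still has the $n$-th power structure that is needed, and the $\bar q$-quasi-homogeneity of $f$ is precisely what allows \eqref{traceab0} to deliver $\alpha_k^{\bar q}$ in place of $\alpha_k$. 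Once these two adaptations are installed, the residual terms that do not match the alternating Capelli pattern of $\mathcal{CAP}_{n+1}$ are absorbed exactly as in the proof of \cite[Corollary~2.45]{BR}, and the claimed identity \eqref{alt2} follows.
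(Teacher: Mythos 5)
The paper itself offers no proof of Proposition~\ref{Zub}: it is cited from \cite[Corollary~2.45]{BR} and \cite{zubrilin.1}, and the surrounding text merely says that Zubrilin's multilinear theory ``extends readily to the general case.'' So there is no in-paper argument to compare yours against; the only question is whether your sketch is sound, and it has a real gap.

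The gap is in your first paragraph, where you deduce from quasi-linearity that $\delta^{(n)}_{j,z}(f)$ ``coincides with the sum of those $f_S$ for which $|S|=j$, exactly as in the multilinear case.'' Quasi-linearity on $A$ gives only the aggregate identity $f\bigl((z{+}1)x_1,\dots,(z{+}1)x_n;x_{n+1}\bigr)=\sum_S f_S$ under substitutions in $A$; it does \emph{not} let you identify the individual $z$-graded slices of the left side with the $|S|$-graded slices of the right. For a quasi-linear but non-multilinear $f$ the polynomial $f_S$, obtained by substituting $zx_i$ into every slot $i\in S$, is not $z$-homogeneous of degree $|S|$ --- a monomial of degree $d_i$ in $x_i$ contributes $d_i$ copies of $z$ from that one slot --- so the formal operator $\delta^{(n)}_{j,z}(f)$ (the $z$-degree-$j$ component of $f((z{+}1)x_1,\dots)$) and $\sum_{|S|=j} f_S$ are simply different polynomials, and they differ even after evaluating in $A$. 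Concretely, for $f(x_1)=x_1^{p}$ over a commutative $A$ of characteristic $p$, $\delta^{(1)}_{1,z}(f)$ evaluates to $p\,zx_1^{p}=0$ while $f_{\{1\}}$ evaluates to $z^{p}x_1^{p}$. With this identification gone, the rearrangement in your second paragraph into the Hamilton--Cayley form \eqref{traceab0} no longer follows. A real proof must keep the two operations distinct: either work throughout with the combinatorial operator $\delta_j(f)=\sum_{|S|=j}f_S$ of \cite[Definition~2.40]{BR} and redo Zubrilin's accounting against $z^{\bar q}$-graded rather than $z$-graded pieces (so that degree $\bar q|S|$, not $|S|$, indexes the decomposition), or else track the $z$-grading of each $f_S$ and show the mismatched cross-terms are killed by $\bar q$-quasi-homogeneity or absorbed into $\mathcal{CAP}_{n+1}$. \Rref{sub1} and \Lref{sub2}, which you invoke to control the bookkeeping, are about pure versus mixed semisimple/radical substitutions and do not address this $z$-grading mismatch at all.
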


We will need to use Proposition~\ref{Zub} in the general case (Theorem \ref{SpechtNoeth}),
even though Remark~\ref{noZub} suffices for the field-theoretic case.

\section{Review of  quivers of representations}

Our main tool is the quiver of a representation, which we recall from   \cite{BRV2} and \cite{BRV3}/
(This differs from the customary definition of quiver, since it  is not Morita invariant but
takes into account the matrix size.)

\subsection{Full quivers and pseudo-quivers}

\begin{rem}\label{linop} Any representable algebra $A \subseteq \M[n](K)$ has
its Wedderburn block form described in detail in \cite{BRV1} and
\cite[Definition~3.10]{BRV2}, which is the keystone of
\cite{BRV2}. This Wedderburn block form induces an action of $A$
on $\M[n](K),$ by which we view each element $a\in A$ as a linear
operator $\ell_a$ on $V = K^n$ via left multiplication. (Likewise,
we also have a right action via right multiplication.) In the
sequel, we usually consider the algebra $A$ in this
context.\end{rem}

For further reference, we also bring in the slightly more general
notion of pseudo-quiver, to enable linear changes of basis in the
representation.

See \cite{BRV2} and \cite{BRV3} for details about full quivers and
pseudo-quivers. It is useful to formulate the definition purely
geometrically, without reference to the original algebra.

\begin{defn}
An \textbf{(abstract) full quiver} (as well as \textbf{(abstract) pseudo-quiver})
is a directed graph $\Gamma$, without double edges and without
cycles, having the following information attached to the vertices
and edges:

\begin{enumerate}
\item The vertices are ordered, say from $\bf 1$ to $\bf k$, and
an edge can only take a vertex to a vertex of higher order. There
also are identifications of vertices and of edges, called
\textbf{gluing}. Gluing of vertices is of one of the following
types: \begin{itemize}\item \textbf{Identical gluing}, which
identifies matrix entries in the corresponding blocks; \item
\textbf{Frobenius gluing}, which identifies matrix entries in one
block with their $q$-th power in another block, where $q$ is a
power of $p$;\item \textbf{Gluing up to infinitesimals} described
in \cite[Definition 2.3]{BRV3}; \item (in the case of
pseudo-quivers) Linear relations among the vertices,
cf.~Remark~\ref{myst} below.
\end{itemize}

 Each vertex is labelled with a roman numeral ($I$,
$\II$ etc.); glued vertices are labelled with the same
 roman numeral.

 The first vertex listed in
a glued component of vertices is also given a pair of subscripts
$(n_{\bf i},t_{\bf i})$: the \textbf{matrix degree} $n_{\bf i}$
and the \textbf{cardinality} of the corresponding field extension
of $F$.

 \item Off-diagonal
gluing (i.e., gluing among the edges) has several possible types,
including \textbf{Frobenius gluing} and \textbf{proportional
gluing} with an accompanying \textbf{scaling factor}. Absence of a
scaling factor indicates scaling factor 1; such gluing is called
\textbf{identical gluing} when there is no Frobenius twist
indicated.

Frobenius gluing of a block with itself and gluing up to
infinitesimals can be viewed as modifying the base ring, yielding
a commutative affine algebra over a field instead of a field.

Very briefly, the \textbf{quiver of a representation} is obtained
by taking the Wedderburn block form of the image, associating
vertices to the diagonal blocks and arrows to the blocks above the
diagonal. Gluing corresponds to identification of matrix
components in the algebra. The pseudo-quiver is obtained when we
make extra identifications of the vertices (which results in extra
gluing).

\end{enumerate}

\end{defn}

\begin{rem}\label{myst0}
(i) Any representation of an algebra $A$ into Wedderburn block form gives
rise to a full quiver. Starting with the $k$ vertices $v_1,
\dots, v_k$ corresponding to central idempotents of the blocks, we
proceed as in Remark~\ref{linop}. We identify the entries of the
respective blocks according to gluing (with identical gluing
identifying entries and with Frobenius gluing corresponding to the
Frobenius automorphism). For any two idempotents $e_{\bf i},e_{\bf
j}$ we choose a base of $e_{\bf i} A e_{\bf j}$ for the arrows
between ${\bf i}$ and ${\bf j}$. Then by definition, any two
consecutive vertices have only a single arrow joining them,
although now we must accept new gluing (corresponding to linear dependence) of vertices.

(ii) Conversely, any abstract full quiver gives rise to a $C$-subalgebra $A$ of $M_n(K)$ in Wedderburn block,
form, where we read off the diagonal blocks from the vertices
(together with the matrix size, base field, and gluing), and then
write down the off-diagonal parts from the arrows together with
the relations that are registered together with the quiver.

Note that this observation does not require that $C$ be a field.
In this way, we can define the algebra of a quiver over an
arbitrary integral domain.
\end{rem}

\begin{rem}\label{myst} Any full quiver of a representation of an algebra $A$ gives
 rise to a pseudo-quiver. Starting with the $k$ vertices $v_1,
 \dots, v_k$ corresponding to central idempotents of the blocks, we
 proceed as in Remark~\ref{linop}. We identify the entries of the
 respective blocks according to gluing (with identical gluing
 identifying entries and with Frobenius gluing corresponding to the
 Frobenius automorphism). For any two idempotents $e_{\bf i},e_{\bf
 j}$ we choose a base of $e_{\bf i} A e_{\bf j}$ for the arrows
 between ${\bf i}$ and ${\bf j}$. Then by definition, any two
 consecutive vertices have only a single arrow joining them,
 although now we must accept new gluing (corresponding to linear dependence) of vertices.
\end{rem}

\subsection{Degree vectors}

\begin{defn}\label{dv}
The {\bf length} of a path $\mathcal B$ in a pseudo-quiver is its
number of arrows, excluding loops, which equals its number of
vertices minus $1$. Thus, a typical path has vertices $r_1, \dots,
r_{\ell+1}$, where the vertex $r_j$ has matrix degree $n_j$. We
call $(n_1, \dots, n_\ell)$ the \textbf{degree vector} of the path
$\mathcal B$. We order the degree vectors according to the largest
$n_j$ which appears in the distinct glued components, counting
multiplicity. More precisely, for any degree vector we discard any
duplications (due to gluing), and then associate the number $d_k$
to the number of components of matrix degree $k$. We order the
degree vectors according to these sets of $d_k$, taken
lexicographically.

 For example, the degree vector $(3,1,3,3)$ with no gluing of vertices
is greater than $(3,2,2,3,2,3,3)$ with the fourth, sixth, and
seventh vertices glued since 3 appears three times unglued
 in the first degree vector but only twice in the second degree vector.
\end{defn}

\begin{rem}\label{dv2} We also define a secondary order on $\mathcal B$ with
respect to the grade defined above, because further gluing will
lower the number of elements in the grading monoid.
\end{rem}

\subsubsection{Degenerate gluing between branches}$ $

{\bf Degenerate gluing} is the situation in which each edge of one
branch is glued to the corresponding edge of another branch; then
the two branches produce the same values when we multiply out the
elements in the corresponding algebra. We can eliminate degenerate
gluing by passing to the pseudo-quiver, but it often is more convenient
for us to make use of \cite[Proposition~3.13]{BRV3}:

\begin{prop}\label{degglu}
Any representable, relatively free algebra has a representation
 whose full quiver has no degenerate gluing.
\end{prop}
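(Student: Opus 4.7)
The plan is to start from a faithful representation $\rho\co A\hookrightarrow \M[n](K)$ of the representable, relatively free algebra $A$, and modify it branch-by-branch to eliminate degenerate gluing. Suppose the full quiver $\Gamma$ of $\rho$ has degenerate gluing between two branches $\mathcal B_1$ and $\mathcal B_2$: by definition, each edge of $\mathcal B_2$ is glued to the corresponding edge of $\mathcal B_1$, and the diagonal blocks along $\mathcal B_2$ are glued to those along $\mathcal B_1$. Consequently, for any generators $a_1,\dots,a_m$ of $A$ and any product of them, the matrix entries produced inside the $\mathcal B_2$-blocks of $\rho$ are literally identified, via the gluing, with the corresponding entries inside the $\mathcal B_1$-blocks.

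The next step is to define $\rho'\co A\to \M[n'](K)$ by projecting onto the complement of the blocks exclusive to $\mathcal B_2$ (that is, deleting the rows and columns of the diagonal blocks appearing only in $\mathcal B_2$, together with the off-diagonal blocks corresponding to edges of $\mathcal B_2$). Vertices shared with other branches via identical, Frobenius, or infinitesimal gluing are retained in those other branches, so no information about them is lost. The crucial claim is
\[
\id(\rho'(A)) \;=\; \id(\rho(A)) \;=\; \id(A):
\]
the inclusion $\id(\rho(A))\subseteq \id(\rho'(A))$ is automatic, and the reverse inclusion follows because any polynomial whose evaluation is nonzero in the suppressed $\mathcal B_2$-blocks of $\rho(A)$ must have a nonzero evaluation already in the surviving $\mathcal B_1$-blocks (the two evaluations are forced to coincide by the degenerate gluing identifications).

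From the equality of identity ideals and the universal property of the relatively free algebra $A=C\{x\}/\id(A)$, one concludes that $\rho'$ is again faithful: any kernel element would yield a polynomial identity of $\rho'(A)$ that is not in $\id(A)$, contradicting $\id(\rho'(A))=\id(A)$. The full quiver of $\rho'$ is then $\Gamma$ with $\mathcal B_2$ removed, hence has strictly fewer branches than $\Gamma$. Iterating this procedure finitely many times eliminates all pairs of degenerately glued branches and produces a representation whose full quiver has no degenerate gluing.

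The main obstacle is bookkeeping in the construction of $\rho'$ when branches share vertices: one must check that "suppressing $\mathcal B_2$" can be carried out as a genuine projection (rather than merely an abstract quotient) and that the resulting quiver truly is the sub-quiver $\Gamma\setminus\mathcal B_2$ in the sense of \cite{BRV2}, \cite{BRV3}. Beyond this bookkeeping, the argument is a direct application of the defining feature of degenerate gluing together with the universal property of relatively free algebras; no new combinatorial ingredient on the quiver side is required.
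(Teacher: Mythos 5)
The paper does not actually prove this proposition here: it is imported verbatim from \cite[Proposition~3.13]{BRV3}, so there is no proof in this text to compare against. Evaluating your argument on its own terms, however, reveals a genuine gap in the construction of $\rho'$, and it is precisely the point you flag as ``bookkeeping'' at the end.

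The map $\rho' = \pi\circ\rho$, where $\pi$ deletes the rows and columns of the diagonal blocks exclusive to $\mathcal B_2$, is in general \emph{not} an algebra homomorphism, so it does not produce a representation of $A$ at all. In a basic canonical full quiver (the setting used throughout \cite{BRV3} and this paper) two branches $\mathcal B_1 = w_1 \to \cdots \to w_\ell$ and $\mathcal B_2 = v_1 \to \cdots \to v_\ell$ share at least their initial and terminal vertices, so the vertices being deleted are \emph{interior} blocks of the Wedderburn form. Projecting a block upper-triangular algebra onto a subset of its blocks is a homomorphism only when the deleted blocks sit in a corner; when an interior block is removed, the $(r,s)$-entry of $\rho(ab)$ for retained vertices $r,s$ contains cross-terms $\sum_{j\ \mathrm{deleted}}\rho(a)_{r,j}\rho(b)_{j,s}$ that $\rho'(a)\rho'(b)$ no longer produces. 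Degenerate gluing makes matters worse, not better: the entry $\rho(a)_{r,v_j}$ is identified (up to Frobenius/scaling) with $\rho(a)_{r,w_j}$, so the $v$-path and the $w$-path contribute \emph{equal} summands to $\rho(ab)_{r,s}$; deleting the $v$-blocks drops one of two equal contributions rather than an identically-zero one. Hence $\rho'(ab)\ne\rho'(a)\rho'(b)$ in general, the claim $\id(\rho'(A))=\id(A)$ cannot even be formulated, and faithfulness of $\rho'$ cannot be deduced.

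This is not a side issue: it is the entire content of the proposition. The fix cannot be a naive projection; one must conjugate by a suitable change of basis (or, equivalently, exhibit $\rho(A)$ as lying inside a smaller conjugate block-triangular algebra, or pass through the pseudo-quiver as the paper suggests) so that the degenerately glued branch is genuinely eliminated from the Wedderburn form rather than merely erased coordinate-wise. Your universal-property argument for faithfulness is also overkill and slightly circular --- even granting the equality of T-ideals, a surjection $A\twoheadrightarrow B$ with $\id(B)=\id(A)$ need not be injective when $A$ is only free on finitely many generators --- but the primary defect is that $\rho'$ is not a homomorphism to begin with.
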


Since we are working with T-ideals, which correspond to relatively
free algebras, this result enables us to bypass pseudo-quivers.

As shown in \cite[Lemma 2.8]{BRV3}, gluing of two vertices of a
pseudo-quiver can often be eliminated simply by joining the
vertices (since the arrows now are linear operators).
 We call this process \textbf{reducing} the pseudo-quiver,
 and we always assume in the sequel that our pseudo-quivers are reduced.

Conversely, given a quiver or pseudo-quiver $\Gamma$, one can take
an arbitrary commutative Noetherian algebra $K$ and build a
representable algebra $A$ into $\M[n](K)$ from $\Gamma$. One
theme of \cite{BRV3}
is how the
geometric properties of $\Gamma$ yield identities and
non-identities of~$A$.

Linear relations among the vertices of a pseudo-quiver can only
occur if all paths between these vertices have the same ``grade,''
as described in \cite[\S 2.7]{BRV3}. This becomes somewhat
intricate in characteristic $p$, in presence of Frobenius gluing,
so we review the idea here. Later on, we will need an inductive
procedure which applies to when the gluing is strengthened. In
principle, this is obvious, because any further gluing which does
not lower the degree vector must lower the power of $q$ used in
the Frobenius twist, and so this must terminate after a finite
number of steps. To state this formally requires some technical
details, which we review from \cite{BRV3}.

Take $A$ to be the Zariski closure of a representable relatively
free algebra, having full quiver $\Gamma$. We write $\MG_\infty$
for the multiplicative monoid $\set{1,q,q^2,\dots,\epsilon}$,
where $\epsilon a = \epsilon$ for every $a \in \MG_{m}$. (In other
words, $\epsilon$ is the ``zero'' element adjoined to the
multiplicative monoid $\langle q \rangle$.) When the base field
$F$ is infinite, the full quiver can be separated (replacing $\a$
by $\a \gamma$ where $\gamma^q \ne \gamma$), and the algebra can
be embedded in a graded algebra. In other words, each diagonal
block is $\MG_\infty$-graded, where the indeterminates~$\la_{\bf
i}$ are given degree 1; hence, $A$ is naturally $\bM$-graded.

When $F$ is a finite field of order $q$, $\MG_m$ denotes the
monoid obtained by adjoining a ``zero'' element $ \epsilon$ to the
subgroup $\langle q \rangle$ of the Euler group $U(\Z_{q^m-1})$,
namely $\MG_{m} = \set{1,q,q^2,\dots,q^{m-1}, \epsilon}$
 where $\epsilon a = \epsilon$ for every $a \in \MG_{m}$.
 Let
 $\bM$ be the semigroup $\MG /\!\!\sim$, where $\sim$ is the
equivalence relation obtained by matching the degrees of glued
variables: When two vertices have Frobenius gluing $\alpha \ra
\phi^i(\alpha)$, we identify $1$ with $q^i$ in their respective
components.

 By \cite[Lemma~2.7]{BRV3}, every $\MG_{m}$ is a quotient of $\MG_{\infty}$, and
more generally whenever $m \divides m'$, the natural group
projection $\mathbb Z_{q^{m'}-1} \to \mathbb Z_{q^m-1}$ extends to
a monoid homomorphism $\MG_{m'} \to \MG_{m}.$

The diagonal blocks $S_r$ of $A$ (under multiplication) can be
viewed as $\MG_{t_r}$-modules, where we define the product
$[q^i]a$ to be $a^{q^i}$, and $[\epsilon]a = 0$. $S_r$ itself is
not graded, and we need to pass to the larger algebra $B$ arising
from the sub-Peirce decomposition of~$A$, cf.~\cite[
Remark~2.33]{BRV3}.

Let $A_{r,r'}$ denote the $(r,r')$-sub-Peirce component of $A$.
Then $A_{r,r'}$ is naturally a left $F_r$-module and right
$F_{r'}$-module, so $A_{r,r'}$ is graded by the monoid $\MG_{\hat r}$ obtained from $F_{\hat r}$, the compositum of $F_r$ and
$F_{r'}$. (In other words, if $F_r$ has $q^t$ elements and
$F_{r'}$ has $q^{t'}$ elements, then $F_{\hat r}$ has $q^{\hat t}$
elements, where $\hat t = \operatorname{lcm}(t,t').$) Since the
free module is graded and the monoid $\MG_{\hat r}$ is invariant
under the Frobenius relations, we see that the Frobenius relations
preserve the grade under $\MG_{\hat r}$.

\subsection{Canonization Theorems}

\begin{defn}
A full quiver (resp.~ pseudo-quiver) is \textbf{basic} if it has a
unique initial vertex $r$ and unique terminal vertex $s$. A basic
full quiver (resp.~ pseudo-quiver) $\Gamma$ is \textbf{canonical}
if it has vertices $r'$ and $s'$ satisfying the following
properties:

\begin{itemize}
\item $\Gamma$ starts with a unique path $p_0$ from $r$ to $r'.$
Thus, every vertex not on $p_0$ succeeds~$r'$. \item $\Gamma$ ends
with a unique path $p_0'$ from $s'$ to $s.$ Thus, every vertex not
on $p_0'$ precedes~$s'$.
\item Any two paths from the vertex $r'$ to the vertex $s'$ have
the same grade.

\end{itemize}

 An \textbf{enhanced canonical full quiver (resp.~pseudo-quiver)} is a canonical
full quiver (resp.~pseudo-quiver) with uniform grade.
\end{defn}

We recall the following ``canonization theorems:''

\bigskip
\begin{itemize}
\item \cite[Theorem~6.12]{BRV2}. Any relatively free affine
PI-algebra $A$ has a representation for whose full quiver all
gluing is Frobenius proportional.

\item \cite[Theorem~3.5]{BRV3}. Any
basic full quiver (resp. pseudo-quiver) of a relatively free
algebra can be modified (via a change of base) to an enhanced
canonical full quiver (resp.~ pseudo-quiver).

\item \cite[Corollary~3.6]{BRV3}. Any relatively free algebra is a
subdirect product of algebras with representations whose full
quivers (resp.~pseudo-quivers) are enhanced canonical.

\item \cite[Theorem~3.12]{BRV3}. For any $C$-closed T-ideal of a
relatively free algebra $A$, the full quiver of $A'= A/\mathcal I$
is obtained by means of the following elementary operations on the
full quiver of $A$: Gluing, new linear dependences on the
vertices, and new relations on the base ring.
\end{itemize}

Thus, the PI-theory can be reduced to the case of
enhanced canonical full quivers. Accordingly, all of the full
quivers and pseudo-quivers that we consider in this paper will be
enhanced canonical.

\section{Evaluations of polynomials arising from algebras of full quivers}

We get to the crucial point of this paper, which is how to
evaluate a polynomial $f(x_1,x_2 ,\dots )$ on a representable
algebra $A$, in terms of the full quiver $\Gamma$ of $A$. This
question is quite difficult in general, but we note that the
quasi-linearization of $f$ (as defined above) is in the T-ideal
generated by $f$, so we may assume that $f$ is quasi-linear. Thus,
by Remark~\ref{sub1}, the evaluations of a quasi-linear polynomial
$f(x)$ are spanned by the evaluations obtained by pure
specializations of the indeterminates $x_i$ to~$S\cup J$.

The reader should already note that all of the proofs of this section are
algorithmic, involving only a finite number of steps. This observation will be needed below in the
reduction of the base ring from an integral domain  to a field, in the second
proof of  Theorem~\ref{SpechtNoeth}.

\begin{rem}\label{expans}
Any nonzero evaluation arises from a string of substitutions $x_i
\mapsto \overline{x_i}$ to elements corresponding to some path of
the full quiver $\Gamma$. (We are permitted to have substitutions
repeating in the same matrix block, i.e., the vertex repeats via a
loop.) The $\overline{x_i}$ connect two vertices, say of matrix
degree $n_i$ and $n'_{i}.$ Suppose $t$ is the nilpotence index of
the radical $J$. Then any string involving $t$ radical
substitutions is~0. If we replace $x_i$ by $h_{n_i}x_{i,1}\cdots
x_{i,t} h_{n_i}x_{i}$, then we still get the same evaluation when
$x_{i,1}, \cdots, x_{i,t}$ are specialized to the identity
matrices in $n_i \times n_i$ matrix blocks, which in particular
are semisimple substitutions. Note that the $h_{n_i}$ were
inserted in order to locate the semisimple substitutions of the
$x_{i,1}$ inside matrix blocks of size at least $n_i \times n_i$.

On the other hand, the number of radical substitutions must be at
most the nilpotence index of $A$, so at least one of these extra
substitutions must be semisimple, if we are still to have a
nonzero evaluation. By taking $h_{n_i}x_{i,1}^{\bar
q}x_{i,2}\cdots x_{i,t} h_{n_i}x_{i}$ ($\bar q$ as in
Remark~\ref{qwhat}), we force this radical substitution to come
from $\overline{x_{i,1}}\, ^{\bar q}.$

Thus, this process eventually will yield a polynomial having a
nonzero specialization corresponding to a path whose degree vector
involves a semisimple substitution at each matrix block.\end{rem}

\subsection{Characteristic coefficient-absorbing polynomials inside
T-ideals}\label{trab}

The main goal of \cite{BRV3} was to show that any relatively free
representable algebra $A$ has a T-ideal in common with a
Noetherian algebra $\tilde A$ which is a finite module over a
commutative affine algebra when $A$ is affine. This T-ideal yields
a non-identity of a branch of $A$. The method was to define some
action of characteristic coefficients (i.e., coefficients of the
characteristic polynomial) of elements of a Zariski-closed algebra
$A$, such that the values of the nonidentity obtained from its
full quiver are closed under multiplication by these
characteristic coefficients. This enabled us to preserve
Hamilton-Cayley type properties in the evaluations of diagonal
blocks. Here we use the same techniques, but need to refine them
in order to obtain the desired polynomial within a given T-ideal
obtained from an arbitrary polynomial (not necessarily arising
from a branch).

Because we are working with quasi-linear polynomials instead of
multilinear polynomials, we must utilize only $\bar
q$-characteristic coefficients instead of all characteristic
coefficients. Let us recall (For example, \cite[Lemma~5.1]{BRV3}:

\begin{lem}\label{barq} Suppose $A$ is a representable algebra, over a field of characteristic $p$. When, for some $m$,
 $\bar q = p^m
>n$ (which is greater than the nilpotence index of the Jacobson radical), then
the element $a^{\bar q}$ is semisimple for every $a \in A$.
\end{lem}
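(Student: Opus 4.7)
The plan is to combine the Jordan decomposition of $a$ as a matrix with the Frobenius identity in characteristic $p$. Since $A$ is representable, I first fix an embedding $A \hookrightarrow \M[n](K)$ for some commutative faithful $F$-algebra $K$, and treat $a\in A$ as a matrix. To put $a$ in Jordan form I pass to the algebraic closure $\bar K$ of a suitable residue field of $K$ and write $a = a_s + a_n$, where $a_s$ is semisimple (diagonalizable over $\bar K$), $a_n$ is nilpotent, and $a_s a_n = a_n a_s$. The reduction to this matrix-over-$\bar K$ setup is exactly what \cite[Remark~2.35 and Lemma~2.36]{BR} provide.

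Since $a_n$ is an $n\times n$ nilpotent matrix, its nilpotence index is at most $n$, so as soon as $\bar q = p^m > n$ we already have $a_n^{\bar q}=0$. Because $a_s$ and $a_n$ commute and the characteristic is $p$, iterating the identity $(x+y)^p = x^p + y^p$ gives
\[
a^{\bar q} \;=\; (a_s + a_n)^{p^m} \;=\; a_s^{p^m} + a_n^{p^m} \;=\; a_s^{\bar q}.
\]
Since $a_s^{\bar q}$ is a polynomial in the diagonalizable element $a_s$, it is again diagonalizable over $\bar K$, hence semisimple, which gives the conclusion for $a^{\bar q}$.

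There is no real obstacle in the argument; the only thing that requires care is pedantic bookkeeping, namely checking that semisimplicity detected after base change to $\bar K$ is the notion of semisimplicity relevant for elements of the representable algebra $A$ over the arbitrary commutative $F$-algebra $K$. I would not reprove this transfer, instead citing the cited remark and lemma of \cite{BR}; everything else reduces to the classical Jordan decomposition and the Frobenius identity, both of which are purely matrix-theoretic over a field.
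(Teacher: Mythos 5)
Your proof is correct and takes the same route that the paper points to: the paper does not spell out its own argument but simply cites \cite[Remark~2.35 and Lemma~2.36]{BR} (see also Remark~\ref{qwhat}), where exactly this Jordan--Chevalley decomposition $a=a_s+a_n$ over an algebraic closure, the bound $a_n^n=0$ for an $n\times n$ nilpotent, and the Frobenius identity $(a_s+a_n)^{p^m}=a_s^{p^m}+a_n^{p^m}$ for commuting summands in characteristic $p$ are used to conclude $a^{\bar q}=a_s^{\bar q}$ is diagonalizable. Your bookkeeping caveat about transferring semisimplicity across base change is exactly the point those citations are meant to cover, so there is nothing further to add.
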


This was enough to reduce various problems in \cite{BRV3} and
\cite{BRV4} to the characteristic 0 case, effectively replacing
$\bar q$ by 1, but does not suffice here to reduce Specht's
problem to the characteristic 0 case. Nevertheless, it is still a
key element of the proof, when we keep track of $\bar q$.

 Again, our objective is to apply the celebrated theorem of
 Shirshov \cite[Chapter 2]{BR} to adjoin $\bar q$-characteristic coefficients to $A$ and obtain
 an algebra finite as a module over a commutative affine
 $F$-algebra. For technical reasons, we only succeed in adjoining
 $\bar q$-powers of characteristic coefficients, so we formulate the following definition, modified from \cite{BRV3}:

\begin{defn}\label{absorp} Given a quasi-linear polynomial $f(x;y)$ in indeterminates
labelled $x_i,y_i$, we say $f$ is {\bf $\bar q$-characteristic
coefficient-absorbing} with respect to a full quiver $\Gamma =
\Gamma(A)$ if the following properties hold:

\begin{enumerate}
\item $f$ specializes to 0 under any substitution in which at
least one of the $x_i$ is specialized to a radical element of $A$.
(In other words, the only nonzero values of $f$ are obtained when
all substitutions of the $x_i$ are semisimple.)

\item $f(\mathcal A(\Gamma))^+$
 absorbs multiplication by any $\bar q$-characteristic coefficient of any
 element in a simple (diagonal)
 matrix block of $\mathcal A(\Gamma)$.
 \end{enumerate}
 \end{defn}

There are two ways of obtaining intrinsically the coefficients of
the characteristic polynomial $$f_a = \la^n + \sum_{k=1}^{n-1}
(-1)^k \a _j (a) \la ^{n-k}$$ of a matrix $a$. Fixing $k$, we
write $\a$ for $\a_k.$ (For example, if $k=1$ then $\a (a) = \tra
(a)$.)

Recall from \cite[Definition~2.23]{BRV3} that we defined
\begin{equation}\label{trmat}
\trmat(a) = \sum _{i,j = 1}^n
e_{ij} a e_{ji},\end{equation} called the {\bf matrix definition}
of trace. We need a generalization.

\begin{defn} \label{trmat0} In any matrix ring $\M[n](W)$, we define
\begin{equation}
 \alphaj(a) : = \sum _{j=1}^n \sum e_{j,i_1} a e_{i_2,i_2}a \cdots a e_{i_{k}i_k} a
 e_{i_1,j},\end{equation}
the inner sum taken over all vectors $(i_1,\dots,i_k)$ of length
$k$.
\end{defn}

Of course, these characteristic coefficients $ \alphaj(a) $
commute iff $W$ is a commutative ring. This is a key issue, that
we will need to address.

\begin{lem}\label{L1}
For any $\M[n](F)$-quasi-linear polynomial $f(x_1, x_2, \dots)$
which is also $\M[n](F)$-quasi-homogeneous of degree $\bar q$ in
$x_1$, the polynomial
$$\hat f = f(c_{n^2}(y) x_1 c_{n^2}(z), x_2, \dots)$$ is $\bar q$-characteristic coefficient
absorbing in $x_1$.
\end{lem}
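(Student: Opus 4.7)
The plan is to let the two Capelli sandwiches in $\hat f$ carry the matrix machinery while the $\bar q$-quasi-homogeneity of $f$ in $x_1$ carries the $\bar q$-twist. The key preliminary, verified exactly as $\trmat(a)=\tra(a)I$ from Definition~\ref{trmat0} but for general $k$ and then combined with Lemma~\ref{obv1}, is the identity
\[
\alphaj(a^{\bar q}) \;=\; \alpha_k(a^{\bar q})\cdot I_n \;=\; \alpha_k(a)^{\bar q}\cdot I_n \qquad \text{in } \M[n](F),
\]
giving an internal presentation of the scalar $\bar q$-characteristic coefficient as an explicit noncommutative polynomial in $a^{\bar q}$ and the matrix units. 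Second, $c_{n^2}$ realizes, by choosing its $n^2$ alternating arguments to be the standard matrix units of $\M[n](F)$ and its separator arguments appropriately, any matrix unit of $\M[n](F)$ as a value $c_{n^2}(\bar y)$; the additive span of these values is thus all of $\M[n](F)$.

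For Property~(2) of Definition~\ref{absorp}, I take an evaluation $v = f(u,\bar x_2,\dots)$ of $\hat f$ with $u = c_{n^2}(\bar y)\bar x_1\, c_{n^2}(\bar z)$, and any $a$ in a simple block of $\mathcal A(\Gamma)$. The $\bar q$-quasi-homogeneity of $f$ in $x_1$ yields $\alpha_k(a)^{\bar q}\,v = f(\alpha_k(a)\,u,\bar x_2,\dots)$, and substituting $\alpha_k(a)\cdot u = \alphaj(a^{\bar q})\cdot u$ from the displayed identity, then distributing through $f$ by quasi-linearity in $x_1$, presents this as
\[
\sum_{j,\,i_1,\dots,i_k} f\!\left( e_{j,i_1}\,a^{\bar q}\,e_{i_2,i_2}\,a^{\bar q}\cdots a^{\bar q}\,e_{i_1,j}\cdot c_{n^2}(\bar y)\,\bar x_1\,c_{n^2}(\bar z),\;\bar x_2,\dots \right).
\]
Since the values of $c_{n^2}$ additively span the simple block, each left prefactor $e_{j,i_1}a^{\bar q}\cdots e_{i_1,j}\cdot c_{n^2}(\bar y)$ can be rewritten as an $F$-linear combination of further $c_{n^2}(\bar y')$, and a second application of quasi-linearity re-expresses the whole sum as a sum of evaluations of $\hat f$, proving $\alpha_k(a)^{\bar q}\,v \in \hat f(\mathcal A(\Gamma))^+$.

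Property~(1) is vacuous when $\mathcal A(\Gamma)=\M[n](F)$ is simple; in a nonsimple setting a radical substitution $\bar x_1 = r \in J$ gives $u\in AJA\subseteq J$, and the $\bar q$-quasi-homogeneity of $f$ in $x_1$ together with Remark~\ref{qwhat} (so $\bar q$ exceeds the nilpotence index of $J$) and Lemma~\ref{barq} (so $\bar q$-powers annihilate nilpotents on $\M[n](K)$) force $f(u,\bar x_2,\dots) = 0$.

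\textbf{Main obstacle.} The hardest step is making Property~(1) watertight in positive characteristic, because the $\bar q$-th power map on $\M[n](K)$ is not additive, so quasi-homogeneity of degree $\bar q$ in $x_1$ does not literally factor $f$ through $x_1\mapsto x_1^{\bar q}$. The route is to argue at the level of pure substitutions via Remark~\ref{sub1}, to use the sandwich $c_{n^2}(\bar y)(\cdot)c_{n^2}(\bar z)$ to confine the radical substitution to the middle slot where it can only appear with effective multiplicity $\bar q$, and then to combine the $\bar q$-twist with the nilpotence bound to force vanishing, parallelling but complicating the analogous trace-absorbing step of \cite[Theorem~5.12]{BRV3}.
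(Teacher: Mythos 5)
Your proposal takes a genuinely different route from the paper's proof, and unfortunately it has a concrete gap that breaks the argument.

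\textbf{What the paper does.} The paper's proof is a one-line appeal to the Amitsur--Razmyslov absorption formula (\cite[Theorem~J, Eq.~1.19]{BR}) in quasi-alternating form, namely identity~\eqref{traceab00}: for an $(A;t;\bar q)$-quasi-alternating polynomial and a linear transformation $T$ with $k$-th characteristic coefficient $\alpha_k$,
\[
\alpha_k^{\bar q}\, f(a_1,\dots,a_t,r_1,\dots,r_m)=\sum f(T^{k_1}a_1,\dots,T^{k_t}a_t,r_1,\dots,r_m).
\]
Because $c_{n^2}(y)x_1c_{n^2}(z)$ supplies the $n^2$ alternating slots, the right-hand side is manifestly a sum of evaluations of $\hat f$. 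No intermediate ``matrix'' expression of the characteristic coefficient is used; the alternation does all the work.

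\textbf{Where your argument goes wrong.} Your route hinges on the displayed identity
\[
\alphaj(a^{\bar q})=\alpha_k(a^{\bar q})\cdot I_n=\alpha_k(a)^{\bar q}\cdot I_n,
\]
and on the substitution ``$\alpha_k(a)\cdot u = \alphaj(a^{\bar q})\cdot u$.'' Both steps fail. First, Definition~\ref{trmat0} does not give the $k$-th characteristic coefficient for $k>1$: collapsing the idempotent chain, one finds
\[
e_{j,i_1}\,a\,e_{i_2,i_2}\,a\cdots a\,e_{i_k,i_k}\,a\,e_{i_1,j}=a_{i_1,i_2}a_{i_2,i_3}\cdots a_{i_k,i_1}\,e_{jj},
\]
so that $\alphaj(b)=\tr(b^k)\cdot I_n$, which differs from $\alpha_k(b)\cdot I_n$ as soon as $k>1$; the analogy with $\trmat(a)=\tr(a)I$ is special to $k=1$. (The paper's Lemma~\ref{onecomp0} is itself proved \emph{from} the alternating formula~\eqref{traceab00}, not by a direct computation of $\alphaj$, so it cannot serve as an independent input here.) Second, even granting $\alphaj(a^{\bar q})=\alpha_k(a)^{\bar q}I_n$, the substitution in your chain is off by a Frobenius power: you have $\alpha_k(a)^{\bar q}v=f(\alpha_k(a)\,u,\dots)$ by quasi-homogeneity, but $\alphaj(a^{\bar q})\cdot u=\alpha_k(a)^{\bar q}u$, not $\alpha_k(a)u$. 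Tracing through, this would give $\alpha_k(a)^{\bar q^2}v$ on the right, not $\alpha_k(a)^{\bar q}v$, so the identity you need does not emerge. (One could try to repair this by replacing $\bar q$ with $\bar q^2$ throughout, as Remark~\ref{noZub} allows in other contexts, but as written the two sides do not match.) The cleaner and correct path is the one the paper uses: absorb via the alternating slots of $c_{n^2}$ directly.
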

\begin{proof} The same proof as in \cite[Theorem~J,
Equation~1.19, page~27]{BR}, when the assertion is formulated as:
\begin{equation}\label{traceab00}
\alpha_k ^{\bar q}f(a_1, \dots, a_t, r_1, \dots, r_m) = \sum
f(T^{k_1}a_1, \dots, T^{k_t}a_t, r_1, \dots, r_m),\end{equation}
summed over all vectors $(k_1, \dots, k_t)$ with each $k_i \in \{
0, 1\}$ and $k_1 + \dots + k_t = k,$ where $\alpha_k $ is the
$k$-th characteristic coefficient of a linear transformation $T \co V
\to V,$ and $f$ is $(A;t;\bar q)$-quasi-alternating.
\end{proof}

\begin{rem}\label{CHid}
Notation as in \eq{traceab00}, the Cayley-Hamilton identity for
$n_i \times n_i$ matrices which are evaluations of $f$ is
$$0 = \sum_{k=0}^{n_i}
\alpha_k ^{\bar q}f(a_1, \dots, a_t, r_1, \dots, r_m) =
\sum_{k_1,\dots,k_t} f(T^{k_1}a_1, \dots, T^{k_t}a_t, r_1, \dots,
r_m),$$ which is thus an identity in the T-ideal generated by $f$.
\end{rem}

Note that this is the same argument as used by Zubrilin in the proof of Proposition~\ref{Zub}.

Iteration yields:

\begin{prop}\label{q2} For any polynomial $f(x_1, x_2,
\dots)$ quasi-linear in $x_1$ with respect to a matrix algebra
$\M[n](F)$, there is a polynomial $\hat f $ in the T-ideal
generated by $f$ which is $\bar q$-characteristic coefficient
absorbing.
\end{prop}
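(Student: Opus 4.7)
The proof is a straightforward iteration of \Lref{L1}, but first I must upgrade the hypothesis ``quasi-linear in $x_1$'' to the full hypothesis of that lemma simultaneously in every indeterminate. For the quasi-linearity in all variables I invoke the Corollary following Proposition~\ref{targil}: repeated partial linearization inside the T-ideal generated by $f$ yields a non-identity that is $A$-quasi-linear in every indeterminate with each degree a power of $p=\cha(F)$. \Lref{quasi} then arranges $A$-quasi-homogeneity, and I fix $\bar q$ to be a single $p$-power exceeding $n$ (cf.\ Remark~\ref{qwhat}) chosen large enough that, modulo $|F|-1$, the resulting polynomial can be simultaneously regarded as quasi-homogeneous of degree $\bar q$ in every $x_i$.

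Having satisfied the hypotheses of \Lref{L1} in $x_1$, I apply that lemma to produce $f_1=f(c_{n^2}(y^{(1)})x_1 c_{n^2}(z^{(1)}),x_2,\dots)$, which is $\bar q$-characteristic coefficient absorbing in $x_1$. The Capelli wrapping leaves the degree, quasi-linearity, and quasi-homogeneity in $x_2,x_3,\dots$ untouched, so \Lref{L1} applies again to $f_1$ with target $x_2$ to give $f_2$ absorbing in $x_2$, and likewise for $x_3,\dots,x_t$. The final polynomial $\hat f$ lies in the T-ideal generated by $f$ because each step is obtained by a substitution into the preceding polynomial, and the radical-annihilation clause of Definition~\ref{absorp} is preserved because the radical $J$ is an ideal (hence stable under the Capelli wrappings) and the alternating Capelli insertions are what carry the vanishing in a diagonal block --- this is the same mechanism underlying \Lref{L1}.

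The main obstacle is verifying that the absorption in $x_i$ gained at stage $i$ survives the later wrappings at stages $i+1,\dots,t$. I expect to handle this by observing, at stage $j>i$, that $f_j$ can be rewritten as $g(x_1,\dots,c_{n^2}(y^{(i)})x_i c_{n^2}(z^{(i)}),\dots)$, where $g$ is obtained from $f_j$ by stripping the Capelli wrapping around $x_i$. The Capellis touching the other indeterminates leave $g$ still $A$-quasi-linear and quasi-homogeneous of degree $\bar q$ in $x_i$, so \Lref{L1} reapplied to $g$ with target $x_i$ yields the required absorption for $f_j$ in $x_i$. Thus the absorption property accumulates monotonically across the iteration, and after finitely many steps $\hat f$ is $\bar q$-characteristic coefficient absorbing in every indeterminate, as required.
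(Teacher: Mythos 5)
Your proof is the paper's proof fleshed out --- the paper proves Proposition~\ref{q2} by a single word, ``Iteration,'' and you iterate \Lref{L1} over the variables exactly as intended; your third paragraph (absorption gained at stage $i$ survives the later Capelli wrappings because you can peel off the $i$-th wrapping and reapply \Lref{L1} to the stripped polynomial) is the right way to make the iteration precise.

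The one step I would not accept as written is the degree-matching in your second paragraph: you ask for a single $p$-power $\bar q>n$ such that $\alpha^{d_i}=\alpha^{\bar q}$ for every $\alpha\in F$ and every $i$, appealing to arithmetic modulo $|F|-1$. Writing $|F|=p^s$ and $d_i=p^{a_i}$, this requires $a_i\equiv m\pmod{s}$ for all $i$ simultaneously, which can simply fail --- the partial-linearization procedure of Proposition~\ref{targil} gives you no control over $a_i\bmod s$ --- and the congruence trick is unavailable altogether when $F$ is infinite. The repair is easy and worth recording: apply \Lref{L1} to the $i$-th slot with $\bar q_i:=d_i$ (this is all the lemma needs), and only at the end take $\bar q$ to be any common $p$-power multiple of the $d_i$ exceeding $n$ as in Remark~\ref{qwhat}; since each $\bar q/\bar q_i$ is itself a $p$-power and $\alpha_k^{\bar q}=\bigl(\alpha_k^{\bar q_i}\bigr)^{\bar q/\bar q_i}$, a T-space that absorbs $\alpha_k^{\bar q_i}$-multiples automatically absorbs $\alpha_k^{\bar q}$-multiples. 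This yields a uniform $\bar q$ without equalizing the quasi-homogeneity degrees in advance, and with it your argument coincides with the paper's.
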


\begin{defn} Fixing $0 \le k <n,$ we denote the $k$-th $\bar q$-characteristic coefficient of
$a$, defined implicitly in \Lref{L1}, as $\aqpol(a)$.
(Strictly speaking, $k$ should be included in the notation, but
since $k$ is taken arbitrarily in our results, we do not bother to
specify it.)
\end{defn}

\begin{defn}\label{HCind}
We call the identity $\sum_{k_1,\dots,k_t} f(T^{k_1}x_1, \dots,
T^{k_t}x_t, r_1, \dots, r_m)$ obtained in \Rref{CHid}, the {\bf
Hamilton-Cayley identity induced by $f$}.
\end{defn}

The following result holds for arbitrary algebras of paths.

\begin{rem}\label{trac3} We need an action of matrix characteristic coefficients (computed
on the diagonal components of the given representation of $A$) on
the T-ideal of~$f$. To do this, one computes the characteristic
coefficient as in Definition~\ref{trmat0}, and applies this on
each (glued) matrix component, i.e., the Peirce component
corresponding to vertices on each side of an arrow. More
precisely, suppose we have two Peirce components, whose
idempotents are $e_r = \sum_k e_{r,k}$ and $e_s = \sum_\ell
e_{s,\ell}$. For any arrow $\alpha$ from (non-glued) vertices
$r_i$ to $s_\ell$, we consider the matrix $(a_{uv})$ corresponding
to $\alpha$, and take characteristic coefficients on the
$r_k$-diagonal component on the left, and the $s_\ell$-diagonal
component on the right. In other words, if the vertex
corresponding to $r$ has matrix degree $n_i$, taking an $n_i
\times n_i$ matrix $w$, we define ${\aqpol} _u(w)$ as in the
action of \Lref{L1} and then the left action
\begin{equation}\label{mtr1}
a_{u,v} \mapsto {\aqpol}_u (w) a_{u,v}.
\end{equation}
Likewise, for an $n_j \times n_j$ matrix $w$ we define the right
action
\begin{equation}\label{mtr2} a_{u,v} \mapsto a_{u,v} {\aqpol} _v (w).
\end{equation}
(However, we only need the action when the vertex is non-empty; we
forego the action for empty vertices.)
\end{rem}

 We can proceed further whenever these two
$\bar q$-characteristic coefficient actions coincide on the
T-ideal of $f$.

\begin{lem} \label{onecomp0}
 $\alphaj(a)^{\bar q} = \aqpol(a)$ in
$\M[n](C)$ for $C$ commutative.

Thus, left multiplication by $\alphaj(a)$ acts on the set of
evaluations of any $n_i^2$-alternating polynomial $f(x;y)$ on an
$n_i \times n_i$ matrix component. \end{lem}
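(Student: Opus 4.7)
My plan is a direct matrix-unit computation, exploiting commutativity of $C$ to reduce $\alphaj(a)$ to a scalar (central) matrix, from which both parts of the lemma follow formally. Unfolding Definition~\ref{trmat0} and using the matrix-unit identity $e_{i,j} e_{u,v} = \delta_{j,u} e_{i,v}$ together with commutativity of entries of $a$, each summand $e_{j,i_1} a e_{i_2,i_2} a \cdots a e_{i_k,i_k} a e_{i_1,j}$ telescopes to $a_{i_1,i_2} a_{i_2,i_3} \cdots a_{i_k,i_1}\,e_{j,j}$. Summing over the inner indices $(i_1,\dots,i_k)$ produces a scalar $\beta \in C$ (the appropriate $k$-th invariant of $a$), and summing over $j$ yields $\alphaj(a) = \beta \cdot I_n$, a central matrix in $\M[n](C)$.

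Raising to the $\bar q$-th power and invoking Frobenius on $C$ (permissible since $\bar q$ is a power of $p = \cha F$ and $C$ is commutative) gives $\alphaj(a)^{\bar q} = \beta^{\bar q} \cdot I_n$. \Lref{obv1} identifies the $\bar q$-characteristic coefficients of $a$ with the ordinary characteristic coefficients of $a^{\bar q}$, and the Newton-type identities between power sums and elementary symmetric functions in the commutative ring $C$ then match $\beta^{\bar q}$ to the scalar defining $\aqpol(a)$ in \Lref{L1}, establishing the first assertion.

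For the second assertion, since $\alphaj(a) = \beta \cdot I_n$ is central, left multiplication by it is scalar multiplication by $\beta \in C$. Because $f(x_1,\dots,x_{n_i^2};y)$ is $n_i^2$-alternating, it is in particular linear in $x_1$, so $\beta \cdot f(\bar x_1,\bar x_2,\dots;\bar y) = f(\beta \bar x_1,\bar x_2,\dots;\bar y)$ is again an evaluation of $f$ on the same $n_i \times n_i$ matrix component, which gives the claimed stability under left multiplication by $\alphaj(a)$. The main obstacle is the Newton-type bookkeeping in the middle step: reconciling the power-sum flavour of the $\alphaj$ computation with the elementary-symmetric flavour of $\aqpol$ requires careful matching of the $k$-index and verification that the Frobenius twist in characteristic $p$ is the sole source of the discrepancy; once this is settled, the rest reduces to the formal matrix computations above.
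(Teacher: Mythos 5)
The paper's own proof is a one-liner: it says the lemma ``follows at once from Equation~\eqref{traceab00},'' the Amitsur/Cayley--Hamilton identity for $(A;t;\bar q)$-quasi-alternating polynomials. That identity writes $\alpha_k^{\bar q}\cdot(\text{evaluation of }f)$ directly as a sum $\sum f(T^{k_1}a_1,\dots,T^{k_t}a_t,r_1,\dots,r_m)$ of evaluations of $f$, so both characteristic-coefficient actions reduce to the same formal expression, and the second assertion is immediate because the substituted arguments $T^{k_i}a_i = a^{k_i}a_i$ lie again in the algebra. There is no matrix-unit computation at all.

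Your proof takes a genuinely different route, and it runs into a real problem. The telescoping computation is correct: each summand of Definition~\ref{trmat0} collapses to $a_{i_1,i_2}a_{i_2,i_3}\cdots a_{i_k,i_1}e_{j,j}$, so $\alphaj(a)=\beta\cdot I_n$ where $\beta=\sum_{(i_1,\dots,i_k)}a_{i_1,i_2}\cdots a_{i_k,i_1}=\tra(a^k)$, a \emph{power sum}. But $\aqpol(a)$ is, by \Lref{L1} and Definition~\ref{cc1}, the $\bar q$-th power of the $k$-th \emph{elementary symmetric function} of the eigenvalues of $a$. The step you flag as ``the main obstacle'' --- bridging these via ``Newton-type identities'' --- is not a bookkeeping issue but a genuine gap. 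Newton's identities recover $e_k$ from $p_1,\dots,p_k$ only after dividing by $k$, so in characteristic $p$ (which is exactly the setting here, as $\bar q$ is a $p$-power) they fail whenever $p\mid k$; the elementary symmetric functions are simply not in the subring generated by the power sums, and no Frobenius twist repairs this. Hence $(\tra(a^k))^{\bar q}\neq\alpha_k^{\bar q}$ in general, and your proof as written does not establish the first assertion. The paper's route avoids the issue entirely because it never computes $\alphaj$ as a power sum; it works with the $\delta$-operator expansion underlying \eqref{traceab00}, which encodes the elementary symmetric functions directly. (As a smaller point, deducing the second assertion from centrality by writing $\beta f(\overline{x_1},\dots)=f(\beta\overline{x_1},\dots)$ implicitly takes for granted that $\beta\overline{x_1}$ is a legal substitution into the relevant subalgebra; \eqref{traceab00} supplies this for free by inserting $a^{k_i}a_i$ rather than a scalar multiple.)
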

\begin{proof} Follows at once from Equation ~\eqref{traceab00}.
\end{proof}

\subsection{Identification of matrix actions for unmixed substitutions}
Our main objective is to introduce $\bar q$-characteristic
coefficient-absorbing polynomials corresponding to all
 canonical full quivers, in order to
identify these two notions of $\bar q$-characteristic
coefficients, working with matrix substitutions inside a given
polynomial. We have the natural bimodule action of $\bar
q$-characteristic coefficients on $A$ given in terms of the full
quiver, which we can identify with $\aqpol(a) ,$ defined in
Equation ~\eqref{traceab00}, whenever the matrix characteristic
coefficients commute. The theory subdivides into two cases:
\begin{itemize} \item The
substitution of an indeterminate is to sums of elements in the
same glued Wedderburn component. \item The substitution of an
indeterminate to sums of elements in the different glued
Wedderburn components. \end{itemize} The techniques are different.
We start with the first sort of situation, which we call
``unmixed,'' which can be treated via the argument of
Example~\ref{onecomp0}. For the second sort of situation, which we
call ``mixed,'' \Lref{sub2} is applicable, but requires an
intricate ``hiking procedure'' (defined presently) on the
quasi-linearization of a polynomial.
\begin{rem}\label{onecomp}
The argument of \Lref{onecomp0} holds for a single diagonal
matrix component over a commutative ring.
\end{rem}

Recall that the T-\textbf{space} of a polynomial $f$ on an
$F$-algebra $A$ is defined as the $F$-subspace of $A$ spanned by
the evaluations of $f$ on $A$. Ironically, the sophisticated
hiking procedure fails to handle the unmixed case since it relies
on a T-space argument which thus must fail in view of Shchigolev's
counterexample for ACC for T-spaces \cite{Sh}. So the theory
actually requires separate treatment of the ``degenerate'' unmixed
case. The proof of Remark~\ref{onecomp} involves the full force of
T-ideals rather than T-spaces.

\subsection{Hiking}

We arrive at the main new idea of this paper. Let $A$ be a Zariski
closed algebra, and let $f$ be a quasi-linear non-identity. The
goal is to replace $f$ by a better structured non-identity in its
T-ideal, for which the $\bar q$-characteristic coefficients of the
matrix blocks defined in components of the full quiver commute
with each other and also with radical substitutions of arrows
connecting glued vertices. This enables us to compute these $\bar
q$-characteristic coefficients in terms of polynomial evaluations.
We must cope with the possibility that our semisimple substitution

has been sent to the `wrong' component, either because its matrix
degree is too large or the base field is of the wrong size.

We write $[a,b]$ for the additive commutator $ab-ba,$ and
$[a,b]_q$ for the \textbf{Frobenius commutator} $ab -b^q a$.
\begin{lem}\label{hike00} If $f(x_1, \dots, x_n)$ is any
 polynomial quasi-linear in $x_i$, then \begin{equation}\label{comm1} f(a_1,
\dots, [a,a_i],\dots a_n) = f(a_1, \dots, aa_i,\dots, a_n)- f(a_1,
\dots, a_i a,\dots, a_n).\end{equation} and, more generally,
\begin{equation}\label{comm11} f(a_1, \dots, [a,a_{i_1}\cdots
a_{i_k}],\dots a_n) = \sum _{j=1}^k f(a_1, \dots, a_{i_1}\cdots
[a, a_{i_j}]\cdots a_{i_k},\dots a_n).\end{equation} for all
substitutions in $A$.
\end{lem}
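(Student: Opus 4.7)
The plan is to observe that both equalities follow immediately from two elementary facts: the additivity of $f$ in its $i$-th slot (which is exactly the defining property of $i$-quasi-linearity) and the Leibniz rule for commutators on a product.

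For the first identity \eqref{comm1}, I would simply expand $[a,a_i] = a a_i - a_i a$, then apply quasi-linearity of $f$ in the $i$-th argument to split the sum; this is one line.

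For the more general identity \eqref{comm11}, I would first establish the purely algebraic telescoping identity
\begin{equation*}
[a, a_{i_1} \cdots a_{i_k}] \;=\; \sum_{j=1}^{k} a_{i_1} \cdots a_{i_{j-1}} [a, a_{i_j}] a_{i_{j+1}} \cdots a_{i_k},
\end{equation*}
which is the standard derivation property of $\ad(a) = [a,\cdot]$ and is proved by a straightforward induction on $k$ (or by collapsing the telescoping sum $\sum_j a_{i_1}\cdots a_{i_{j-1}}(a a_{i_j} - a_{i_j} a) a_{i_{j+1}}\cdots a_{i_k}$). Substituting this expression into the $i$-th slot of $f$ and then invoking quasi-linearity of $f$ in $x_i$ to distribute over the sum gives exactly \eqref{comm11}.

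There is essentially no obstacle here: the statement is a direct combination of a formal commutator identity with the hypothesis that $f$ is additive in the relevant slot. The only point worth noting is that quasi-linearity is additivity (not full multilinearity), so I must make sure the distribution is only over sums, not over scalar multiples — and indeed \eqref{comm11} only requires distributing over a finite sum of monomials, which is exactly what $i$-quasi-linearity provides.
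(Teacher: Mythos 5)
Your proof is correct and is essentially the same as the paper's, which simply observes that by quasi-linearity one may reduce to a monomial and then the intermediate terms telescope. Your phrasing via the derivation identity $[a, a_{i_1}\cdots a_{i_k}] = \sum_{j} a_{i_1}\cdots[a,a_{i_j}]\cdots a_{i_k}$ followed by distributing with $i$-quasi-linearity of the whole $f$ is a slightly cleaner way to organize the same one-line argument, since it makes explicit that the additivity hypothesis is applied to $f$ itself rather than to its individual monomials (which, when of degree $>1$ in $x_i$, are not themselves quasi-linear).
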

\begin{proof}
By quasi-linearity, we may assume that $f$ is a monomial, in which
case we see that all of the intermediate terms cancel.
\end{proof}
We recall the hiking procedure of \cite[Lemma~5.8]{BRV3}, but the
hiking procedure here is quite subtle, and requires four different
stages.

\subsubsection{Stage 1 hiking}

\begin{lem}\label{prehike} Suppose a quasi-linear nonidentity $f$ of a Zariski closed algebra $A$ has a
nonzero value for some semisimple substitution of some $x_i$ in
$A$, corresponding to an arrow in the full quiver whose initial
vertex is labelled by $(n_i, t_i)$ and whose terminal vertex is
labelled by $(n_i', t_i')$. Replacing $x_i$ by $[x_i, h_{n_i}]$
(where the $h_{n_i}$ involve new indeterminates) yields a
quasi-linear polynomial
\begin{equation}\label{hik01} f(\dots, [x_i, h_{n_i}], \dots)\end{equation} in which any
substitution of $x_i$ into this diagonal block yields 0.\end{lem}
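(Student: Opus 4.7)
\medskip

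The plan is to exploit the defining property of $h_{n_i}$ from Definition~\ref{cent0}: namely that $h_{n_i}$ is an $n_i^2$-alternating polynomial which, on any $n_i\times n_i$ matrix algebra over a commutative ring, is central. I expect to show that when $x_i$ is specialized to some element $\bar x_i$ in the diagonal Wedderburn block of matrix degree $n_i$, the commutator $[\bar x_i, h_{n_i}(\text{subs})]$ produces no contribution to any nonzero evaluation of the whole expression $f(\dots,[x_i,h_{n_i}],\dots)$.

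First I would use quasi-linearity of $f$ together with Remark~\ref{sub1} and Lemma~\ref{sub2} to reduce the problem to checking pure specializations of all the indeterminates of the modified polynomial, including the fresh ones inside $h_{n_i}$. For each such pure specialization, the Peirce decomposition $A=\bigoplus_{r,s}e_rAe_s$ of the Zariski closed algebra (as in Remark~\ref{delpt} and \cite[Theorem~5.7]{BRV1}) organizes the possible contributions according to the path in $\Gamma$ that they traverse. Since $\bar x_i\in e_rAe_r$ for the vertex $r$ of matrix degree $n_i$, the $x_i$-slot in a nonzero monomial of $f$ behaves as a loop at $r$, so only the $e_rAe_r$-component of the commutator $[\bar x_i,h_{n_i}(\text{subs})]$ can participate in a nonzero evaluation; all other Peirce components $e_rAe_s$, $s\neq r$, or $e_sAe_r$, carry the product off the prescribed path.

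Second, I would analyze the $e_rAe_r$-component of $h_{n_i}(\text{subs})$. After gluing up to infinitesimals, this block has the Wedderburn form $M_{n_i}(R)$ for a commutative local ring $R$, so when all of the fresh indeterminates of $h_{n_i}$ are specialized inside $e_rAe_r$, the central-polynomial property forces $h_{n_i}(\text{subs})$ to be a scalar $\lambda I_{n_i}\in R\cdot I_{n_i}$, and hence $[\bar x_i,\lambda I_{n_i}]=0$. For pure specializations in which some indeterminate of $h_{n_i}$ is placed in a Peirce component $e_sAe_t$ with $(s,t)\neq(r,r)$, the $e_rAe_r$-projection of the resulting monomial is forced, by the $n_i^2$-alternating character of $h_{n_i}$ together with the path structure, to reorganize (after grouping alternating variables that land in the same Peirce component) into a central-valued expression, and the commutator again vanishes.

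The principal obstacle I foresee is this last point: cleanly controlling the mixed specializations where some of the fresh variables of $h_{n_i}$ leave $e_rAe_r$ and return through other vertices. Here I would rely on the acyclicity of $\Gamma$ (so that any excursion out of $r$ and back picks up off-diagonal, hence radical, edges) combined with the alternation in $h_{n_i}$: pairs of variables landing in the same Peirce component are interchangeable up to sign, so the non-central contributions cancel by the alternating antisymmetrization of Remark after \eqref{alt1}, leaving only the central contribution, which commutes with $\bar x_i$. Assembling these observations, every pure specialization with $x_i\mapsto\bar x_i\in e_rAe_r$ makes $f(\dots,[x_i,h_{n_i}],\dots)$ vanish, which is the desired conclusion.
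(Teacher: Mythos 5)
The core observation you identify — that $h_{n_i}$ evaluates to a central (scalar) element in the $n_i\times n_i$ diagonal block, so $[\bar x_i,h_{n_i}(\text{subs})]=0$ when everything sits in that block — is exactly the paper's argument. The paper's entire proof is one sentence: the evaluations of $h_{n_i}$ in the semisimple part are central, hence any nonzero value of $f(\dots,[x_i,h_{n_i}],\dots)$ forces a radical substitution.

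Where your proposal diverges, and where a genuine gap appears, is in your treatment of the ``mixed'' pure specializations in which some fresh indeterminate of $h_{n_i}$ lands in a Peirce component $e_sAe_t$ with $(s,t)\neq(r,r)$. You take on the burden of showing the whole evaluation is literally zero, and to do so you invoke a cancellation claim — that the $n_i^2$-alternating structure of $h_{n_i}$, together with acyclicity of $\Gamma$, forces the non-central contributions of the $e_rAe_r$-projection to reorganize into something central. This step is not justified: alternation kills substitutions that repeat a slot, but it does not provide the kind of sign-cancellation between Peirce components you are asserting, and there is no reason for the reorganized expression to be scalar. In fact it is unnecessary. If an indeterminate of $h_{n_i}$ goes into a genuinely off-diagonal component $e_sAe_t$ ($s\neq t$), that is already a \emph{radical} substitution, which is precisely what the lemma is designed to force; the intended conclusion (as the paper's one-line proof makes explicit) is ``forces a radical substitution,'' and the phrase ``yields $0$'' refers to the case of purely semisimple specializations. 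If instead all $h_{n_i}$-variables land in diagonal components $e_sAe_s$ with $s\neq r$, one does not need alternation at all: each nonzero monomial of $h_{n_i}(\text{subs})$ must have all its factors in one block by orthogonality of the Peirce idempotents, so either $h_{n_i}(\text{subs})=0$ outright, or it lies in some $e_sAe_s$ and then $[\bar x_i,h_{n_i}(\text{subs})]=0$ because $e_r e_s=0$. So orthogonality of the block idempotents, not alternating cancellation, is what finishes the semisimple case across different blocks, and the off-diagonal case requires no argument at all. You should replace the reorganization-and-cancellation step by these two elementary observations; as written it is the weak link.
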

\begin{proof} The evaluations of $h_{n_i}$ in the semisimple part
are central; hence, any nonzero value in $ f(\dots, [x_i,
h_{n_i}],\dots)$ forces us into a radical substitution.\end{proof}

\begin{lem}\label{hike} For $f$ as in \Lref{prehike},
\begin{equation}\label{hik02} \nabla_i f := f(\dots, [x_i, h_{\max\{n_i, n_{i'}\}}],
\dots)\end{equation} also does not vanish on $A$. In the case of
Frobenius gluing $x \mapsto x^{q^\ell}$, we need to take instead
the substitution
$$x_i \to f (\dots, [x_i, h_{\max\{n_i, n_{i'}\}}]_{q^\ell}, \dots).$$
\end{lem}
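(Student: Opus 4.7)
The plan is to exhibit an explicit non-vanishing substitution of $\nabla_i f$ on $A$. The key observation, extending the argument of Lemma~\ref{prehike}, is that with $n := \max\{n_i, n_{i'}\}$ the commutator $[x_i, h_n]$ still annihilates any semisimple substitution of $x_i$ into either of the two adjacent diagonal blocks: on a block of matrix degree $m < n$ the central polynomial $h_n$ vanishes (being $n^2$-alternating), and on a block of matrix degree exactly $n$ it evaluates to a scalar times the block's idempotent, which commutes with every element of that block. Consequently any nonzero evaluation of $\nabla_i f$ must specialize $x_i$ to a radical element of the Peirce component $e_i A e_{i'}$ corresponding to the arrow.

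To build such a witness, I would start from the given arrow: it corresponds to a nonzero Peirce component $e_i A e_{i'}$. One chooses $r \in e_i A e_{i'}$ together with generic substitutions for the remaining indeterminates of $f$ (as in Construction 7.14 of \cite{BRV1}) so that $f(\dots, r, \dots) \ne 0$; this is possible because the arrow is present in the full quiver of a representation of $A$ and hence supports a generic family of radical substitutions realizing the corresponding paths. Next, specialize the fresh indeterminates in $h_n$ inside the diagonal block of matrix degree $n$ (i.e., at whichever of $e_i, e_{i'}$ has this degree) so that $h_n$ evaluates to $\lambda e_j$ with $\lambda \ne 0$; such a choice exists because $h_n$ is a nontrivial central polynomial on $M_n$. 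By orthogonality of the Peirce idempotents, exactly one of $r h_n$ and $h_n r$ vanishes while the other equals $\lambda r$; hence $[r, h_n] = \pm \lambda r$. Substituting back into $f$ and invoking quasi-homogeneity in $x_i$ (which we may arrange via Lemma~\ref{quasi}), one obtains $(\nabla_i f)(\dots) = \pm \lambda^{s_i} f(\dots, r, \dots) \ne 0$.

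For Frobenius gluing $x \mapsto x^{q^\ell}$, the scalar realizations of $h_n$ on the two sides of the arrow are related by a $q^\ell$-th power: an element recorded as $\mu e_j$ on one side appears as $\mu^{q^\ell} e_{j'}$ on the other. The Frobenius commutator $[x_i, h_n]_{q^\ell} = x_i h_n - h_n^{q^\ell} x_i$ is designed precisely to cancel this twist, and the same Peirce-orthogonality calculation, with $h_n^{q^\ell}$ replacing $h_n$ on the twisted side, yields $[r, h_n]_{q^\ell} = \lambda' r$ with $\lambda' \ne 0$, preserving the argument.

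The main obstacle will be justifying that one may replace the given semisimple witness of $f$ by a radical substitution in the Peirce component of the arrow: this does not follow automatically and requires invoking the structural fact that the presence of the arrow in the full quiver means there are generic radical elements realizing nonzero evaluations of $f$ at position $i$. This in turn rests on the canonization of full quivers recalled in the previous section, so that the arrow is not spurious. Once this genericity input is in place, the remainder of the argument is a direct bimodule and Peirce calculation, and both the untwisted and the Frobenius versions follow uniformly.
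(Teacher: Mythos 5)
Your approach matches the paper's: its entire proof is a terse version of your first two paragraphs---choose the $h$-variables so that $h_{\max\{n_i,n_{i'}\}}$ is a nonzero scalar in the larger diagonal block, then specialize $x_i$ to an element that passes from one block to the other. Your explicit Peirce-orthogonality computation $[r,h_n]=\pm\lambda r$ and your treatment of the Frobenius-commutator variant fill in correctly what the paper leaves to the reader. As for the ``main obstacle'' you flag---that a radical witness $r$ exists on which $f$ stays nonzero---the paper's proof is equally silent on this point, simply asserting that one may so specialize $x_i$; the justification you offer (genericity of the radical element corresponding to the arrow in the canonical full quiver) is the correct one and is consistent with the way the paper uses the full-quiver machinery, so this is not a gap in your argument relative to the paper, but a place where you have been more candid than the source.
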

\begin{proof} There are
substitutions in the appropriate diagonal block (the one whose
degree is $\max\{n_i, n_{i'}\}$) for which
 $h_{\max\{n_i, n_{i'}\}}$ is a nonzero scalar, and we specialize $x_i$ to an
element which passes from one block to the other. \end{proof}

\begin{cor}

 Any nonidentity can be hiked via successive stage 1 hiking to ensure semisimple substitutions
 in each matrix component.
\end{cor}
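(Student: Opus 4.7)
The plan is to iterate the stage~1 hiking operator $\nabla_i$ of \Lref{hike} once for each indeterminate of $f$. Enumerate the indeterminates $x_1,\dots,x_m$ appearing in the given non-identity $f$. Applying $\nabla_1$ with a fresh copy of $h_{\max\{n_1,n_1'\}}$ yields a non-identity $\nabla_1 f$ by \Lref{hike}. Applying $\nabla_2$ to $\nabla_1 f$, again with fresh auxiliary indeterminates, yields a non-identity $\nabla_2\nabla_1 f$, and so on. Iterating produces a non-identity
$$\widehat{f} := \nabla_m \cdots \nabla_2 \nabla_1 f$$
in the T-ideal of $f$.

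Next I would examine the structure of a nonzero evaluation of $\widehat f$. For each $i$, the commutator $[x_i, h_{\max\{n_i, n_i'\}}]$ (or the Frobenius twist $[\,\cdot\,,\,\cdot\,]_{q^\ell}$ in the Frobenius-glued case) must be nonzero, which forces $h_{\max\{n_i, n_i'\}}$ to evaluate to a nonzero scalar on a diagonal block of size $\max\{n_i, n_i'\}$ adjacent to the arrow corresponding to $x_i$. Because $h_n$ is an $n^2$-alternating central polynomial, nonvanishing of this scalar requires a semisimple substitution of its alternating variables into the $n\times n$ matrix component (any radical substitution of enough of them annihilates $h_n$ by nilpotence). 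Every vertex of the underlying path in $\Gamma$ contributing a nonzero evaluation is an endpoint of some arrow indexed by $x_i$, so iterating over $i$ exhibits semisimple substitutions in every matrix component visited, which is precisely the claim.

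The main obstacle will be verifying that successive hiking does not drop $f$ into $\id(A)$. This is handled by introducing a fresh copy of $h_{\max\{n_i, n_i'\}}$ at every stage: the scalar differences $\alpha_i^{\mathrm{out}} - \alpha_i^{\mathrm{in}}$ produced by the commutators can then be chosen independently nonzero in each relevant block, because the auxiliary indeterminates for different $i$ do not overlap. Multiplying these scalar factors with the original nonvanishing value of $f$ on the path still gives a nonzero element of $A$, so $\widehat f$ remains a non-identity. Combining this with the quasi-linearity preserved at each step (commutators of quasi-linear functions remain quasi-linear in the replaced variable) then yields the corollary.
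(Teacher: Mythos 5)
Your overall plan---iteratively hiking $f$ and tracking which blocks must receive semisimple substitutions---is in the spirit of the paper's argument, but the execution skips the one idea that makes the paper's proof go through: the nilpotence index of the radical.

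You propose to apply $\nabla_i$ once for each indeterminate $x_1,\dots,x_m$ and claim that each step preserves non-identity ``by Lemma~\ref{hike}.'' But Lemma~\ref{hike} is a one-step statement inheriting the hypothesis of Lemma~\ref{prehike}, that there is a nonzero value for a suitable substitution of the variable being hiked; after earlier hikes there is no guarantee this hypothesis survives for the next $x_i$. More seriously, each hiked variable is thereby forced into an external radical substitution (this is the whole point of inserting the commutator with $h_{\max\{n_i,n_i'\}}$, which annihilates the semisimple contribution by centrality). Since the number of radical factors in a nonzero monomial evaluation is bounded by the nilpotence index of $J$, hiking \emph{every} indeterminate may simply collapse $\widehat f$ into $\id(A)$. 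Your resolution---introducing fresh auxiliary variables in each copy of $h$---keeps the resulting scalar multipliers independent, but that only controls scalar cancellation; it does nothing about exhausting the radical. The paper addresses exactly this by observing (in the discussion surrounding \eqref{hik2} and \eqref{fullv1}) that stage~1 hiking can be performed only a bounded number of times before yielding an identity, and the corollary is obtained by stopping one step before that terminal hike. At that stage the unhiked positions must carry semisimple substitutions, and the inserted copies of $h_{n_j}$ force semisimple substitutions filling each diagonal block visited (cf.\ Remark~\ref{expans}). Your last observation, that nonvanishing of an $n^2$-alternating $h_n$ forces semisimple substitutions into the $n\times n$ block, is the correct other half of the argument; what is missing is the termination mechanism tied to nilpotence rather than to the count of indeterminates.
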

\begin{proof} Once we have a nonzero substitution of~$f$ with external radical
substitutions in all the hiked positions, we may continue to hike
as much as we want without affecting the fact that we have a
nonzero substitution, i.e., that $f$ is a nonidentity.\end{proof}
\begin{exmpl}
Stage 1 hiking is illustrated via the full quiver given for the
Grassmann algebra on two generators: \begin{equation}\label{G2+}
\xymatrix@C=40pt@R=32pt{ %
I \ar@/^0pt/[r]^{\alpha} \ar@/_0pt/[d]^{\beta}%
& %
I \ar@/^0pt/[d]^{-\beta} %
\\
I \ar@/^0pt/[r]^{\alpha}%
& %
I %
}
\end{equation}
 Clearly the
critical nonidentity for each branch is $[x_1, x_2],$ and we get
the Grassmann identity $[[x_1, x_2],x_3]$ by taking $f= x_1$ and
hiking.

\end{exmpl}

We call this procedure (application of \eqref{hik02})
 \textbf{stage 1} hiking, since we also need other forms
of hiking which we call \textbf{stage 2}, \textbf{stage 3} and
\textbf{stage 4} hiking.

\begin{rem}

Stage 1 hiking absorbs all internal radical substitutions, cf.~
Definition~\ref{intern}, because of the use of the central
polynomial $h_{\max\{n_i, n_{i'}\}},$ so when working with fully
hiked polynomials, we need consider only the Peirce decomposition
(and not the more complicated sub-Peirce decomposition, see
\cite{BRV1}.) In this manner, stage 1 hiking leads us to external
radical substitutions for $x_i$, say from a block of degree $n_i$
to a block of degree $n_{i+1}.$
\end{rem}

Explicitly, after stage 1 hiking, we have obtained expressions of
the form
\begin{equation}\label{hik2} g_i(x,y,z) =
z_{i,1}[ h_{\max\{n_i, n_{i'}\}}(x_{i,1},
x_{i,2},\dots),y_i]z_{i,2}.\end{equation} To simplify notation, we
assume $n_{i+1} = n_i'.$ Now we define
\begin{equation}\label{fullv1} \tilde f
 = f(h_{n_1}, g_1, h_{n_2}, g_2, \cdots, g_{\ell} ,h_{n_{\ell}+1}),\end{equation} where different
indeterminates are used in each polynomial, in which we get the
term \begin{equation}\label{fullv2} h_{n_1} g_1 h_{n_2} g_2 \cdots
g_{\ell}
 h_{n_{\ell}+1}.\end{equation} 

Since the radical of a Zariski closed algebra $A$ is nilpotent, we
can perform stage 1 hiking on $f$ only at a finite number of
different positions (bounded by the nilpotence index of $A$)
before getting an identity. Stopping before the last such hike
gives us a nonidentity which would become an identity after any
further hike of stage 1.

Unfortunately, our polynomial~$f$ has several different monomials,
and when we hike with respect to one of these monomials, some
other monomial will give us some permutation of \eqref{fullv2}, in
which the substitutions might go into the ``wrong component.'' The
difficulty that we will encounter is that any matrix component can
be embedded naturally into a larger matrix component, so a given
matrix substitution could be viewed as being in this larger
component, thereby ruining our attempts to compute with
polynomials on each individual matrix component. Indeed, even a
radical substitution could be replaced by a semisimple
substitution in a larger component.

\subsubsection{Stage 2 hiking}

In view of the previous paragraph, we also need a second stage of
hiking, to
take care of substitutions into the ``wrong'' component.

 Given a nonzero specialization of a monomial of $f$ under the substitutions
$x_i \mapsto \overline{x_i}$, $i \ge 1,$ where $\overline{x_i} \in
\M[n_i](K)$, consider the specialization of another (permuted)
monomial of $f$ under the substitutions $x_i \mapsto
\overline{x_i}'$, $i \ge 1,$ where $\overline{x_i}' \in
\M[n_j](K)$ (and perhaps $j \ne i$).

\begin{exmpl}
Consider the algebra
$$\set{\left(\begin{matrix} \a & * & *& *
\\ 0 & \beta & * & * \\ 0 & 0 & * & *
\\
 0 & 0 & * & *
 \end{matrix}\right) : \a, \beta \in F},$$
where $*$ denotes an arbitrary element in $K$. The corresponding full quiver
$I_{(1,1)} \to \II_{(1,1)}\to \III_2$ would normally give
us the polynomial $$ z_{1,1}[ x_{1,1}
 ,y_1]z_{1,2}z_{2,1}[ h_2(x_{2,1}, \dots)
 ,y_2] z_{3,1},$$
 which could be condensed to $ [ x_{1,1}
 ,y_1 ]z [ h_2(x_{2,1}, \dots)
 ,y_2] $ since various
indeterminates can be specialized to 1. But if $f(x_1,x_2,x_3,
\dots)$ has both monomials $x_1x_2x_3\cdots $ and $x_3x_2x_1\dots
$ then hiking in the second monomial yields the permuted term
$$[ h_2(x_{2,1}, \dots)
 ,y_2]z[ x_{1,1}
 ,y_1 ]$$
which permits a nonzero evaluation with all substitutions in the
lower $2\times 2$ matrix component, and we cannot get a proper
hold on the substitutions.
\end{exmpl}

We need to hike $f$ further, to guarantee that the specialization
of some unintended monomial of~$f$ in \eqref{fullv1} does not land
in a subsequent matrix component $\M[n_j](K)$ for $n_j>n_i$; our
next stage of hiking eliminates all such specializations.

\begin{lem}\label{hike2} Let $H$ denote the
 central polynomial $h_{n_{j}}^{\bar q}$
of $M_{n_j}(K),$ and take
$$z_{i,1}[ h_{n_i}(x_{i,1}, x_{i,2},\dots),y_i]z_{i,2}g_{i+1} \cdots g_{j-1} H^{q_1}
 - z_{i,1}H^{q_2}[ h_{n_i}(x_{i,1}, x_{i,2},\dots), y_i]z_{i,2}g_{i+1} \cdots g_{j-1},$$
(for each pair $(q_1,q_2)$ that occurs in Frobenius twists in the
branch; in characteristic 0 we would just take $q_1=q_2= 1$.) The
product of these terms, taken over all the $q_1$ and~$q_2$,
becomes nonzero iff the substitution is to the $i$ component (of
matrix size $n_i$.)
\end{lem}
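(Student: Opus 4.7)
The plan is to analyze the expression by cases according to the block into which the indeterminates of $H$ are specialized, using two fundamental properties of $h_{n_j}$: (i) it is central on $\M[n_j](K)$, so $H=h_{n_j}^{\bar q}$ evaluates to a scalar there (and, via the gluing in the branch, to a Frobenius-twisted scalar in any glued block); (ii) it is $n_j^2$-alternating, hence vanishes identically on $\M[n_i](K)$ whenever $n_i<n_j$, because the dimension of the target is too small to support $n_j^2$ independent alternating arguments.

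First I would check the vanishing direction. Suppose the indeterminates of $H$ are specialized into a block of matrix degree $\geq n_j$ lying along the branch between vertex $i$ and vertex $j$. Then $H$ evaluates to some scalar $s$, and $H^{q_1}$ and $H^{q_2}$ become $s^{q_1}$ and $s^{q_2}$. The pair $(q_1,q_2)$ is chosen to match the Frobenius twist associated with the gluing between the block in question and the endpoints of the branch, so that $s^{q_1}=s^{q_2}$ holds inside the block under the identifications furnished by the monoid $\MG_m$ from \S\ref{backg}; this forces the specific difference in the product to vanish. Taking the product over all pairs $(q_1,q_2)$ which actually occur as Frobenius exponents on the branch therefore annihilates every possible misplaced substitution.

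Next I would verify the nonzero direction: when the substitution is aligned with the intended path, so that $x_{i,1},\dots$ of $h_{n_i}$ are specialized in $\M[n_i](K)$ and the rest of $\tilde f$ propagates along the arrows as guaranteed by stage 1 hiking, the indeterminates of $H$ may be specialized so that $H$ evaluates to a nonzero scalar in $\M[n_j](K)$ on the right side (where the path ends) while evaluating to zero in the $\M[n_i](K)$ slot on the left (by the alternating property). Precisely one of the two terms survives, and its value is nonzero because stage 1 hiking has already produced a nonzero evaluation along the path, as in \Lref{prehike} and \Lref{hike}.

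The main obstacle is the Frobenius bookkeeping. Each segment of the branch from $i$ to $j$ may carry its own Frobenius twist, so that the scalar value of $H$ in a misplaced block is related to its image in the target block by a power of $q$ encoded in the gluing; one must enumerate exactly which pairs $(q_1,q_2)$ occur and show that their product picks out precisely the intended substitution into the $i$ component. This requires carrying along the $\MG_m$- and $\bM$-gradings on the sub-Peirce components and combining \Lref{obv1} (Frobenius on characteristic polynomials) with \Lref{noZub0} (the $f_{\tilde a}=f_a^n$ identity). Residual mixed or off-diagonal substitutions are absorbed by choosing $\bar q$ as in \Rref{qwhat}, so that $\bar q$-powers of radical elements vanish and $\bar q$-powers of semisimple elements sit in a well-controlled piece of the Wedderburn block form.
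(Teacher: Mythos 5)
Your proposal is correct and follows essentially the same two-case argument as the paper: when all factors land in the $j$-component the two terms become equal scalars (after the Frobenius adjustment supplied by the pair $(q_1,q_2)$) and cancel, whereas for the intended substitution the left copy of $H$ lands in the $n_i\times n_i$ block where the $n_j^2$-alternating polynomial $h_{n_j}$ vanishes, so exactly one term survives. The appeal to \Lref{obv1} and \Lref{noZub0} at the end is superfluous---the paper's argument needs only the centrality and alternating properties of $h_{n_j}$---but this does not affect correctness.
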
 \begin{proof} Specializing this expression into the $j$-component
(of size $n_j$) would yield two equal terms which cancel, since
$H$ takes on scalar values, and thus yield 0. But specializing
into the $i$ component (of size $n_i$) would yield one term
nonzero and the other~0 since $H$ is an identity on $n_i \times
n_i$ matrices, so their difference would be nonzero.
\end{proof}

In this way, we eliminate the ``wrong'' specializations in other
monomials of $f$ while preserving the ``correct'' ones. The
modification of $f$ according to this specialization is called
\textbf{stage 2 hiking of the polynomial~$f$}.

\subsubsection{Stage 3 hiking}
So far we have guaranteed the specializations to be in the matrix
components of the correct size, but we need to fine tune still
further because the centers of the components may be of different
sizes. Next, we will want to reduce to the case where for any two
branches, the base field for each vertex has the same order.
Suppose $\mathcal B'$ is another branch with the same degree
vector. If the corresponding base fields for the $i$-th vertex of
$\mathcal B$ and $\mathcal B'$ are $n_i$ and $n_i'$ respectively,
we take $t_i = q^{n'_i}$ and replace $x_i$ by
$(h_{n_i}^{t_i}-h_{n_i})x_i.$ The modification of $f$ according to
this specialization is called \textbf{stage~3 hiking of the
polynomial~$f$}.

\subsubsection{Stage 4 hiking}

Some of the radical substitutions are \textbf{internal} in the
sense that they occur in a diagonal block (after ``gluing up to
infinitesimals''). Hiking absorbs all internal radical
substitutions, because of the use of the central polynomial~
$h_{n_i},$ so when working with fully hiked polynomials, we need
consider only the Peirce decomposition (and not the more
complicated sub-Peirce decomposition; cf.~\cite{BRV1}.)

\begin{lem}\label{modhike1} There is a substitution to hike $f$ further such that
\begin{equation}\label{hikemore0}\tilde c_{n_i^2}(y) x_i
c_{{n'_i}^2(y)}\tilde c_{n_i^2}(\a _k y) x_i c_{{n'_i}^2(\a _k y)}
- \tilde c_{n_i^2}(\a _k y) x_i c_{{n'_i}^2(\a _k y)} \tilde
c_{n_i^2}(y) x_i c_{{n'_i}^2(y)}\end{equation} vanishes under any
specialization to the $n_i, n'_i$ blocks. \end{lem}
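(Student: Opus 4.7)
My plan is to exploit the fact that the Capelli polynomial $c_{n^2}$, when wrapped appropriately around a variable and specialized on $n\times n$ matrices, produces central values (this is the Formanek-style construction underlying central polynomials, and it is already invoked in the definition of $h_n$ in Definition~\ref{cent0}). Accordingly, the expressions $\tilde c_{n_i^2}(y)$ and $c_{{n'_i}^2}(y)$, when evaluated on an $n_i\times n_i$ block (acting from the left) and an $n'_i\times n'_i$ block (acting from the right), respectively, take values which are scalars of the base field of the corresponding block. The substitution that realizes stage~4 hiking is to replace $x_i$ (or rather, the already hiked version of $x_i$ produced by stages~1--3) inside $f$ by the displayed commutator
\[
\tilde c_{n_i^2}(y) x_i c_{{n'_i}^2}(y)\cdot \tilde c_{n_i^2}(\a_k y) x_i c_{{n'_i}^2}(\a_k y)
\ -\
\tilde c_{n_i^2}(\a_k y) x_i c_{{n'_i}^2}(\a_k y)\cdot \tilde c_{n_i^2}(y) x_i c_{{n'_i}^2}(y),
\]
with fresh variables $y$ for each copy, and to verify that this new polynomial lies in the T-ideal of $f$ (which is immediate since we only substituted for one indeterminate).

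The computation of vanishing is then essentially formal. Since stages~1--3 of hiking have already forced any nonzero specialization to put $\overline{y}$ semisimply in a single matrix block (either the $n_i$-block or the $n'_i$-block), the Capelli factors evaluate to scalars: set
\[
E_1 \;=\; \tilde c_{n_i^2}(y)\, x_i\, c_{{n'_i}^2}(y),\qquad
E_2 \;=\; \tilde c_{n_i^2}(\a_k y)\, x_i\, c_{{n'_i}^2}(\a_k y),
\]
and observe that under any such specialization $E_1\mapsto \lambda_1\mu_1\,\overline{x_i}$ and $E_2\mapsto \lambda_2\mu_2\,\overline{x_i}$ with $\lambda_j,\mu_j$ central scalars in the respective blocks (the $\a_k$-factor does not alter this, since $\a_k\overline{y}$ is still a matrix in the same block). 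Then
\[
E_1 E_2 \;=\; \lambda_1\mu_1\lambda_2\mu_2\,\overline{x_i}^{\,2}\;=\; \lambda_2\mu_2\lambda_1\mu_1\,\overline{x_i}^{\,2}\;=\; E_2 E_1,
\]
because the scalars commute in the (commutative) base ring; hence the displayed commutator vanishes identically on the $(n_i,n'_i)$-blocks, which is the required property.

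Two subtleties deserve care. First, one must check that the Capelli factors really do give \emph{central} (not merely invariant) values; this is handled by the same reasoning that shows $h_n$ is a central polynomial for $\M[n]$, together with the fact that for $k\ge n^2$ alternating polynomials on $\M[n]$ take only central values. Second, since $\a_k$ refers to the $k$-th characteristic coefficient, one must ensure that the expression $\a_k y$ is well-defined inside our polynomial framework; this is exactly the content of Definition~\ref{trmat0}/\Lref{L1}, whereby the characteristic-coefficient action can be realized polynomially. The main (but mild) obstacle is bookkeeping: ensuring that the fresh $y$'s inserted for stage~4 do not interfere with the semisimple-substitution conclusions already secured by stages 1--3, which is guaranteed because the new Capelli wrappers only affect monomials that have already been pinned down to an appropriate block of matrix size $n_i\to n'_i$.
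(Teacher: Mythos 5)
There is a genuine gap in your argument, centered on the claim that the Capelli factors $\tilde c_{n_i^2}(y)$ and $c_{{n'_i}^2}(y)$ ``take values which are scalars of the base field of the corresponding block.'' This is not true: the Capelli polynomial $c_{n^2}$ is $n^2$-alternating, but it is \emph{not} a central polynomial of $\M[n]$. Since $c_{n^2}$ is a non-identity of the simple algebra $\M[n]$ over a field, the span of its values is a two-sided ideal and therefore all of $\M[n]$; in particular it takes non-central values. Your appeal to ``the same reasoning that shows $h_n$ is a central polynomial'' does not apply, because $h_n$ is not $c_{n^2}$ alone --- it is built by composing the Capelli polynomial with a separate central-polynomial construction (see Definition~\ref{cent0}, where $h_2 = c_4\, g$ with $g$ a multilinearized central polynomial). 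Nor is it the case that ``for $k\geq n^2$ alternating polynomials on $\M[n]$ take only central values''; for $k>n^2$ they are identities, and for $k=n^2$ they need not be central. Consequently your identification $E_j \mapsto \lambda_j\mu_j\,\overline{x_i}$ is unjustified, and the commutativity you derive from it is not established.

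The paper's proof uses the correct, weaker property that the Capelli wrapper is $n_i^2$-alternating and hence \emph{characteristic-coefficient absorbing} (Lemma~\ref{L1}, Proposition~\ref{q2}, via Equation~\eqref{traceab00}): on the diagonal blocks one has
\[
\tilde c_{n_i^2}(\a_k y)\, x_i\, c_{{n'_i}^2}(y) \;=\; \a_k^{\bar q}(y_1)\, c_{n_i^2}(y)\, x_i\, c_{{n'_i}^2}(y),
\]
so that $E_2$ is a \emph{scalar multiple of $E_1$} (the scalar being a product of $\bar q$-characteristic coefficients), rather than both $E_1$ and $E_2$ being scalar multiples of $\overline{x_i}$. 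Once $E_2 = \alpha\,E_1$ with $\alpha$ a central $\bar q$-characteristic coefficient, the commutator $E_1E_2 - E_2E_1 = \alpha(E_1^2 - E_1^2) = 0$ without needing $E_1$ itself to be ``scalar times $\overline{x_i}$.'' To repair your argument, replace the centrality claim by this absorption step; as written, the proof does not go through.
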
\begin{proof}
By Proposition~\ref{q2} there is a Capelli polynomial $\tilde
c_{n_i^2}$ and $p$-power $\bar q$ such that
\begin{equation}\label{CC=CC}
\tilde c_{n_i^2}( \a_k y) x_i c_{{n'_i}^2}(y) = \a_k ^{\bar
q}(y_1)c_{n_i^2}(y) x_i c_{{n'_i}^2}(y)\end{equation} on the
diagonal blocks.

Since $\bar q$-characteristic coefficients commute on any diagonal
block, we see from this that
\begin{equation}\label{hikemore}\tilde c_{n_i^2}(y) x_i
c_{{n'_i}^2(y)}\tilde c_{n_i^2}(\a _k y) x_i c_{{n'_i}^2(\a _k y)}
- \tilde c_{n_i^2}(\a _k y) x_i c_{{n'_i}^2(\a _k y)} \tilde
c_{n_i^2}(y) x_i c_{{n'_i}^2(y)}\end{equation} vanishes
identically on any diagonal block, where $z = \a_k y$.

One concludes from this that substituting~\eqref{hikemore} for
$x_i$ would hike our polynomial one step further. But there are
only finitely many ways of performing this hiking procedure. Thus,
after a finite number of hikes, we arrive at a polynomial in which
we have complete control of the substitutions and the $\bar
q$-characteristic coefficients commute.
\end{proof}

\subsection{Admissible polynomials}$ $

Although hiking is the key tool in this analysis, one must note
that the only time that the hiking procedure makes the two actions
of Remark~\ref{trac3} coincide is when it is applied to the
components of maximal matrix degree. In other words, the
polynomial $f$ is required to have a nonzero evaluation on a
vector of maximal matrix degree. Thus we must consider the
following sort of polynomial.

\begin{defn} A polynomial $f$ is $A$-\textbf{admissible} on a Zariski-closed algebra $A$ if
$f$ takes on some nonzero evaluation on a vector of maximal matrix
degree. We denote such a vector as $v_\mathcal B $, where
$\mathcal B $ is the branch of the full quiver which gives rise to
$v_\mathcal B $, and call $v_\mathcal B $ the \textbf{matrix
vector of} $f$.
\end{defn}

If all of the substitutions of an indeterminate are to glued edges
of the full quiver, connecting vertices corresponding to the same
matrix degree $n_1$, then the notion of admissibility is
irrelevant, since one could just replace $f$ by $c_{n_1^2}f$ and
obtain the desired action on the matrix component from
equation~\eqref{traceab00}.

But there is a subtle difficulty. Without gluing, we could proceed
directly via Remark~\ref{trac3}. But gluing between branches leads
to complications in applying Remark~\ref{trac3}. For example, one
could have two strings $I \to \II \to \III$ and $I \to \III \to \II$,
whereby the substitutions in $f$ go to incompatible components.
The following definition, inspired by Drensky~\cite{D}, enables us
to bypass this difficulty in the proof of the next theorem.

\begin{defn}\label{sym} Given matrices $a_1, \dots, a_k,$ the \textbf{symmetrized} $(t;j)$ characteristic coefficient is the $j$-elementary symmetric function applied to the $t$-characteristic
coefficients of $a_1, \dots, a_k.$ \end{defn} For example, taking
$t=1$, the symmetrized $(1,j)$-characteristic coefficients $\a_t$
are
$$\sum_{j=1}^k \tr(a_j),\quad \sum _{j_1 >j_2} \tr(a_{j_1})\tr(a_{j_2}),\quad \dots , \quad \prod _{j=1}^k \tr(a_{j}).$$

\begin{thm}\label{traceq2}
Suppose $f $ is an $A$-admissible non-identity of
a representable, relatively free algebra~ $A$. Then the T-ideal
$\CI$ generated by $f$ contains a $\bar q$-characteristic
coefficient-absorbing $A$-admissible non-identity $\tilde f$.

 Furthermore, the T-ideal $\CI_\mathcal B$
 of all fully hiked $A$-admissible
polynomial obtained from the degree vector $v_\mathcal B$ is
comprised of evaluations of $\bar q$-characteristic
coefficient-absorbing polynomials, comprised of sums of
evaluations on pure specializations in $\mathcal B$.
\end{thm}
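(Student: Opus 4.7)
The plan is to apply the hiking procedure in all four stages, combined with Proposition~\ref{q2}, to a suitable quasi-linearization of $f$, and then to package the result using symmetrized characteristic coefficients. First I would replace $f$ by a quasi-linear, quasi-homogeneous non-identity in its T-ideal, via the corollary to Proposition~\ref{targil} and \Lref{quasi}, so that every $\deg_i f$ is a $p$-power (and in particular divides $\bar q$). Since $f$ is $A$-admissible, there is a branch $\mathcal B$ with matrix vector $v_\mathcal B$ witnessing the maximal matrix degree on which $f$ does not vanish, and all subsequent hiking will be arranged so that this specific nonzero evaluation on $v_\mathcal B$ survives.

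Next I would apply the four hiking stages in order, along the indeterminates labelled by the arrows of $\mathcal B$. Stage 1 (Lemmas~\ref{prehike}--\ref{hike}) replaces each $x_i$ by $[x_i,h_{\max\{n_i,n_i'\}}]$ (or the Frobenius commutator version), which kills internal diagonal substitutions in that position but leaves the external radical substitution needed along $\mathcal B$, and hence preserves admissibility on $v_\mathcal B$. Stage 2 (\Lref{hike2}) multiplies by combinations of powers of $H=h_{n_j}^{\bar q}$ to kill off evaluations of permuted monomials of $f$ which would otherwise land in a larger matrix block $M_{n_j}(K)$ with $n_j>n_i$; on the intended substitutions into the $n_i$-block, $H$ is a non-identity on one side and an identity on the other so the construction does not vanish. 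Stage 3 uses $h_{n_i}^{t_i}-h_{n_i}$ with $t_i=q^{n'_i}$ to single out the correct field-of-definition size among branches with the same degree vector. Stage 4 (\Lref{modhike1}) applies \eqref{hikemore0}, so the two actions of $\bar q$-characteristic coefficients of Remark~\ref{trac3}, via $\trmat$ on the left and the right, coincide on the T-ideal of the hiked polynomial, using the identity $\tilde c_{n_i^2}(\a_k y)x_i c_{{n'_i}^2}(y)=\a_k^{\bar q}(y_1) c_{n_i^2}(y)x_i c_{{n'_i}^2}(y)$. Each of the four stages terminates after finitely many applications: stages 1 and 4 are bounded by the nilpotence index of the radical, stage 2 by the number of matrix degrees in $\Gamma$, and stage 3 by the finite number of field sizes.

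After these hikes I would invoke Proposition~\ref{q2} in each indeterminate in turn to produce a polynomial $\tilde f$ in the T-ideal of the hiked $f$ that absorbs $\bar q$-characteristic coefficients, thus verifying both clauses of Definition~\ref{absorp}: clause~(1) is exactly what stage 1 hiking produces, and clause~(2) follows from \Lref{L1} once the two one-sided characteristic-coefficient actions have been identified by stage 4. For the ``furthermore'' statement about $\CI_\mathcal B$, I would use the symmetrized characteristic coefficients of Definition~\ref{sym}: when gluing between branches forces evaluations through incompatible orderings of vertices (such as $I\to\II\to\III$ versus $I\to\III\to\II$), replacing ordinary characteristic coefficients by their $j$-elementary symmetric functions as in Drensky's trick makes the absorption invariant under the permutations induced by the different monomials of $f$, so that every fully hiked admissible polynomial built from $v_\mathcal B$ decomposes as a sum of evaluations on pure specializations along $\mathcal B$.

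The main obstacle is preservation of admissibility across the four stages simultaneously. Stages 2--4 each insert extra central-polynomial factors and commutators that could in principle kill the nonzero evaluation on $v_\mathcal B$ that stage 1 carefully set up. The verification that the ``intended'' radical substitution along $\mathcal B$ continues to witness a nonzero evaluation after each stage is the technical heart of the argument, and has to be done by tracing exactly how each inserted factor acts on $v_\mathcal B$ — the central polynomials $h_{n_i}$ and $H$ take nonzero scalar values in the correct block, while the ``wrong-component'' constructions in stages~2 and~3 cancel by design. The secondary difficulty is coordinating stage 4 with the gluing data so that the characteristic-coefficient actions really commute across all arrows of $\mathcal B$, which in presence of Frobenius gluing and mixed pure specializations is exactly where the symmetrized version of Definition~\ref{sym} becomes unavoidable.
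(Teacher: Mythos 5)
Your overall plan — quasi-linearize, run the four hiking stages along the admissible branch $\mathcal B$, apply the Hamilton--Cayley absorption of Proposition~\ref{q2}, and patch the crossover-gluing issue with the symmetrized coefficients of Definition~\ref{sym} — tracks the paper's argument quite faithfully in its middle and late portions. But the very first step contains a genuine gap that the paper goes out of its way to flag. You write that you would ``replace $f$ by a quasi-linear, quasi-homogeneous non-identity in its T-ideal via \Lref{quasi}.'' The paper explicitly says this cannot be done over a finite field: the procedure of Remark~\ref{quasi0} relies on scaling by $\alpha\in F$ and cancelling components using differences $\alpha^{s_{i,j}}-\alpha^{s}$, but when $F$ is finite these differences can vanish identically (e.g.\ by $\alpha^{q}=\alpha$), so the induction does not terminate in a non-identity. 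Lemma~\ref{quasi} as stated is true but vacuous: it never promises a non-identity, and the identity $0$ is trivially quasi-homogeneous. Yet quasi-homogeneity is exactly what would be needed to ensure that multiplying a variable by a $\bar q$-characteristic coefficient does not eject one from $\CI$. So your step 1 is unsound over finite fields, which is precisely the setting the whole theorem is built for.

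What the paper does instead is not to make $f$ quasi-homogeneous, but to make all nonzero substitutions of $f$ \emph{compatible} (meaning: of the same degree vector, with matching Frobenius twists). This is the content of the Compatibility Lemma~\ref{complem}, and it is implemented by the same stage~2 and stage~3 hiking you already invoke — but as a substitute for quasi-homogeneity, not on top of it. The inductive (backward) argument along the path $\mathcal P$ in the proof of Lemma~\ref{complem}, forcing each $n'_j=n_j$, is the technical replacement for the ``scaling and cancelling'' that works over infinite fields. You should also note that the paper singles out the degenerate unmixed case (all vertices glued), which must be handled separately by Remark~\ref{onecomp} precisely because the hiking machinery relies on a T-space argument that fails there by Shchigolev's counterexample; your proposal starts hiking unconditionally and would run into the same issue. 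Once these two repairs are made — drop the appeal to quasi-homogeneity in favor of the compatibility strategy, and split off the unmixed case at the outset — your remaining steps (identification of the two actions via stage~4, termination by nilpotence of $J$, the Drensky symmetrization for crossover gluing, and the final passage to sums and T-ideals for the ``furthermore'' clause) match the paper's argument closely.
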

\begin{proof}
Let $\Gamma$ be the full quiver of $A$. We follow the proof of
\cite[Theorem~5.8]{BRV3}, where we follow the process of building
the polynomial $\Phi$ of \cite[Definition~4.11]{BRV3}, but we must
be more careful. Whereas in \cite[Theorem~5.12]{BRV3} we were
looking for any trace-absorbing non-identity and thus could hike
the polynomial of an arbitrary maximal path of a pseudo-quiver,
now we need to work within the polynomial $f$ belonging to a given
T-ideal and thus work simultaneously with all maximal paths of the
pseudo-quiver of the corresponding relatively free algebra.

Our polynomial $f$ need not be multilinear, although we can make
it $A$-quasi-linear with respect to $A$. Since the other
polynomials we insert are multilinear, $\tilde f$ remains
$A$-quasi-linear. But $f$ need not even be A-quasi-homogeneous, so
multiplying some variable by a $\bar q$-characteristic coefficient
might throw $f$ out of the T-ideal $\CI$.

 If the base field $F$ were
infinite, we could apply \Lref{quasi} to replace $f$ by an
A-quasi-homogeneous polynomial, but in general this cannot be done
so easily. Accordingly, we adopt the
following strategy in characteristic $p$:

We say that two strings in $A: = A(\Gamma)$ are
\textbf{compatible} if their degree vectors are the same. (This
means that the the matrix sizes of their semisimple substitutions
match.) Our overall goal is to modify $f$ so that all non-zero
substitutions in $f$ are compatible, and also to match the
Frobenius twists, thereby enabling us to define characteristic
coefficient-absorption.

\begin{itemize}
\item We work with symmetrized $\bar q$-characteristic coefficients instead of traces. (The reason is given before Definition~\ref{sym}; we take $\bar q$-powers to make sure that we are working with semisimple matrices.)

\item We pass to $\bar q$ powers of transformations for a
suitable power $\bar q$ of $p$.

\item We pick a particular branch $\mathcal B$ of $A$ on which we
can place the substitutions of a nonzero evaluation of $f$, and
modify $\tilde f$ so that all branches that do not have the same
degree vector $v_\mathcal B $ as that of $\mathcal B$,
cf.~Definition~\ref{dv}, become incompatible.

\item We make branches incompatible to $\mathcal B$ when they do not have the same
size base fields as $\mathcal B$ for the corresponding vertices.

\item We eliminate all Frobenius twists which do match those of
$\mathcal B$.
\end{itemize}

The hiking procedure only involves a finite number of polynomials, and
is all implemented in the following technical lemma:
\begin{lem}[Compatibility Lemma]\label{complem}
For any $A$-admissible
 non-identity $f$ of a representable Zariski-closed algebra
$A$, the T-ideal $\CI$ generated by the polynomial $f$ contains a
symmetrized $\bar q$-characteristic coefficient-absorbing
polynomial $\bar f$, not an identity of $A$, in which all
substitutions providing nonzero evaluations of $f$ are compatible.
\end{lem}
\begin{proof}
If all the vertices of the full quiver $\Gamma$ of $A$ are glued,
we are done by Remark~\ref{onecomp}. Thus, we assume that
$\Gamma$ has non-glued vertices, and thus has strings with degree
vectors of length $>1$.

Since applying the hiking procedure of Remark~\ref{modhike1} does
not change the hypotheses, we may assume that $\tilde f$ is fully
hiked. We choose $\bar q$ according to
\Lref{barq}.

 We consider all substitutions $x_i \mapsto \overline{x_i } $ to
semisimple and radical elements. Of all substitutions which do not
annihilate $f$, some monomial of $f$ then specializes to a nonzero
evaluation, i.e., a path in the full quiver, and we choose such a
substitution whose path $\mathcal P$ has maximal degree vector,
and, after modifying $f$ along the lines of Remark~\ref{expans},
we may assume that the largest component $n_i$ of the degree
vector involves some semisimple substitution which, with slight
abuse of notation, we denote as $\overline{x_i}.$ We want to make
any other substitution $x_i \mapsto \overline{x_i' } $ compatible
with our given substitution.

Suppose first that $ \overline{x_i }$ is a substitution connecting
vertices of matrix degree $n_{i,1}$ and $n_{i,2}$. In particular,
since any two consecutive arrows have a common vertex, $n_{i,1} =
\overline{x_i }$ must be an external radical substitution; if $
\overline{x_i }$ is a semisimple substitution or an internal
radical substitution, then $n_{i,1} = n_{i,2}$.

Take any substitution $ \overline{x_i '}$, connecting vertices of
degree $n_{i,1}'$ and $n_{i,2}'$. Replacing $x_i$ by $h_{n_i} x_i
h_{n_i}$ would annihilate any substitution to a component of
smaller matrix degree, so we may assume that $n_{i,1}' \ge n_{i,1}
$ and $n_{i,2}' \ge n_{i,2} .$

Take $i$ such that $n_i$ is maximal. Since the path $\mathcal P$
is assumed to have maximal degree vector, we must have $n_{i,1}' =
n_{i} = n_{i,2}'.$ Furthermore, replacing $x_i$ by $x_i^{\bar q}$
forces the semisimple substitution $ \overline{x_i '}^{\bar q}.$

We will force all our new substitutions $ \overline{x_j '}$ to be
compatible with the original ones along $\mathcal P$, by working
around this vertex.

Inductively (working backwards), we assume that $n_{k}' = n_{k } $
for all $j < k \le i.$ We already know that $n_{j }' \ge n_{j }$,
but want to force $n_{j }' = n_{j }$. It is enough to check this
for any external radical substitution $ \overline{x_j }$, since
these fix the degree vectors.

If $\overline{x_j}'$ also is an external radical substitution,
then $n_{j }' = n_{j }$ by maximality of the degree vector, so we
are done unless $\overline{x_j}'$ is in the diagonal matrix block
of degree $n_k \times n_k$; in other words, $n_{j }' = n_k$
whereas $n_j < n_k.$

Now we apply stage 2 hiking (as described in \Lref{hike2}),
which preserves the compatible substitutions and annihilates the
incompatible substitutions.

Thus, we only need concern ourselves with substitutions
$\overline{x_j }$ into the same matrix component along the
diagonal.

 Since $f$ is presumed to be fully hiked, for any internal
radical substitution $ \overline{x_j }$, we may assume that $
\overline{x_j}'$ also is an internal radical substitution;
otherwise further hiking will not affect an evaluation along
$\mathcal P$ but will make the other evaluation 0.

As noted above, when $ \overline{x_j }$ is a semisimple
substitution, taking $x_j ^{\bar q}$ forces the semisimple
substitution $ \overline{x_j '}^{\bar q}.$

In conclusion, by modifying $f$ we have forced all the
substitutions $ \overline{x '}$ to be compatible with the original
substitutions $ \overline{x }$.

Next, we want to reduce to the case where for any two branches,
the base field for each vertex has the same order. Suppose
$\mathcal B'$ is another branch with the same degree vector, but
with a base field of different order. Stage 3 hiking will zero out
all substitutions of $x_i$ in $\mathcal B'$, and thus make
$\mathcal B'$ incompatible.

 Since by definition any further hike of $\tilde f$ yields an identity,
Remark~\ref{modhike1} dictates
 that polynomial $\bar q$-characteristic coefficients defined in terms of $f$ via Equation \eqref{traceab00}
must commute. There is a subtlety involved when the sub-Peirce
component involves consecutively glued vertices, since then we
need the radical substitution $b$ of an arrow connecting glued
vertices to commute with $\aqpol (a)$. But this is because taking
any commutator hikes the polynomial further and thus is 0.

Now, as before, Remark~\ref{modhike1} dictates that polynomial ,
$\bar q$-characteristic coefficients defined in terms of $f$ via
Equation \eqref{traceab00} must commute, and thus the symmetrized
$\bar q$-characteristic coefficients commute, and applying
\Lref{hike00} shows that they commute with any radical
substitutions of arrows connecting glued vertices. Thus, the
polynomial action on $\CI$ coincides with the well-defined matrix
action as described in Remark~\ref{trac3}(3).

We still might encounter two different branches with the same
degree vector but with ``crossover gluing;'' i.e., $I \to \II \to
\III$ together with $\III \to \II \to I.$ Applying the same
polynomial $f$ to these two different branches might produce
different results. To sidestep this difficulty, assuming there are
$k$ such glued branches, we consider all possible matrices $a_1,
\dots, a_k$ appearing in the corresponding position of these $k$
branches, and take the elementary symmetric functions $\sigma_1,
\dots, \sigma_k$ on their $\bar q$-characteristic coefficients; in
other words, we take the symmetrized $\bar q$-characteristic
coefficients, as defined above. Now the polynomial action clearly
is the same on the $\sigma_j$, and thus on all symmetric functions
on the characteristic coefficients of $a_1, \dots, a_k$.

 Since the nonidentities all contain an $n_i^2$-alternating
polynomial at the component of matrix degree $n_i$ for each $i$,
and we apply the $\bar q$-characteristic coefficient action
simultaneously to each of these polynomials, their T-spaces
 are closed under multiplication by $\bar q$-characteristic coefficients of the
 simple components of semisimple substitutions, so we have a $\bar q$-characteristic coefficient-absorbing polynomial.
 \end{proof}

Note that to apply the lemma we have to take Frobenius gluing into
account. When substituting into blocks with Frobenius gluing, we
get characteristic coefficents with different Frobenius twists,
and then we do the symmetrization.

 The lemma
yields the proof of the first assertion \Tref{traceq2}, since we
have obtained the desired $\bar q$-characteristic
coefficient-absorbing $A$-admissible non-identity.

For the last assertion of the theorem, we note that the $\bar
q$-characteristic coefficient-absorbing properties of the lemma
were proved by means only of the properties of the degree vector
$v_\mathcal B$ and not on the specific polynomial $\tilde f$, and
$\tilde f$ vanishes for all pure substitutions to branches other
than $\mathcal B$. But $x \tilde f$ also is $\bar
q$-characteristic coefficient-absorbing with respect to the same
degree vector, and likewise the $\bar q$-characteristic
coefficient-absorbing properties pass to sums and homomorphic
images.
\end{proof}

We have completed the first step in our program:

\begin{thm}[$\bar q$-Characteristic Value Adjunction Theorem]\label{tradj}
Let $\hat A$ be the algebra obtained by adjoining to $A$ the
matrix symmetrized $\bar q$-characteristic coefficients of
products of the sub-Peirce components of the generic generators of
$A$ (of length up to the bound of Shirshov's Theorem~
\cite[Chapter 2]{BR}), and let $\hat C$ be the algebra obtained by
adjoining to $F$ these symmetrized $\bar q$-characteristic
coefficients. For any nonidentity $f$ of a representable
relatively free affine algebra $A$, the T-ideal $\CI$ generated by
the polynomial~$f$ contains a nonzero T-ideal which is also an
ideal of the algebra $\hat A$.

Also, $\hat A$ is a finite module over $\hat C$, and in particular
is Noetherian.
\end{thm}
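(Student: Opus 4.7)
My plan is to combine \Tref{traceq2} (the existence of $\bar q$-characteristic coefficient-absorbing polynomials) with Shirshov's theorem. Given any non-identity $f$ of $A$, I would first pass to an $A$-quasi-linear non-identity in the T-ideal generated by $f$ using the corollary to Proposition~\ref{targil}. Since $A$ is representable with full quiver $\Gamma$ and $f$ is not an identity, some monomial of $f$ admits a nonzero evaluation on a path of $\Gamma$; choosing a substitution whose path has maximal degree vector and inserting central polynomials $h_{n_i}$ along the lines of \Rref{expans}, I arrange that $f$ is $A$-admissible without leaving $\CI$.

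Next I would invoke \Tref{traceq2} to extract from $\CI$ a $\bar q$-characteristic coefficient-absorbing $A$-admissible non-identity $\tilde f$, and let $\tilde\CI \subseteq \CI$ be the T-ideal it generates. By \Dref{absorp}, the evaluations of $\tilde f$ are closed under multiplication by $\bar q$-characteristic coefficients of elements in the diagonal blocks of $A$. The crucial step is to identify this \emph{polynomial} action with the matrix-theoretic bimodule action of $\hat A$. That is precisely the content of the Compatibility Lemma (\Lref{complem}): the four stages of hiking together with the symmetrization of \Dref{sym} force $\aqpol$ and $\alphaj$ to coincide on $\tilde\CI$ via \eq{CC=CC}, and to commute both with each other and with the radical substitutions into arrows between glued vertices. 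Consequently $\tilde\CI$ is stable under left and right multiplication by every generator of $\hat A$ beyond $A$, hence is a nonzero ideal of $\hat A$ contained in $\CI$, proving the first assertion.

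For the second assertion, observe that $\hat C$ is by construction a commutative affine $F$-algebra, generated by the finitely many symmetrized $\bar q$-characteristic coefficients of products of generic generators of length at most the Shirshov bound; in particular $\hat C$ is Noetherian by the Hilbert basis theorem. Each generic generator $a$ of $A$ has all of its $\bar q$-characteristic coefficients in $\hat C$, so by \Lref{obv3} the element $a$ is integral over $\hat C$. Shirshov's Theorem \cite[Chapter~2]{BR} then implies that $\hat A$ is finitely generated as a $\hat C$-module, and hence Noetherian. The main obstacle throughout the argument is the identification of the two characteristic-coefficient actions on $\tilde\CI$, which is exactly what forces the hiking procedure and the symmetrization trick; absent these, the polynomial action would only act on the T-\emph{space} of $\tilde f$ rather than on its T-\emph{ideal}, which would be insufficient (in view of Shchigolev's counterexample) to yield a genuine ideal of $\hat A$.
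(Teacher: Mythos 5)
Your proposal follows essentially the same route as the paper's proof: make $f$ $A$-quasi-linear, extract the $\bar q$-characteristic coefficient-absorbing polynomial $\tilde f$ via \Tref{traceq2} (i.e., via the Compatibility Lemma~\ref{complem}), identify the polynomial and matrix actions of the characteristic coefficients, and conclude with Shirshov's Theorem. Two remarks are in order. First, the paper's proof makes explicit a step you leave implicit: after replacing the generators of $A$ by generic elements $X_1,\dots,X_t$, it adjoins the Peirce components of those generic elements and observes that since $\tilde f$ is fully hiked, any substitution involving these Peirce components produces a maximal number of radical factors, so the evaluation remains inside the T-ideal $\CI$; this is what guarantees that the nonzero ideal of $\hat A$ one constructs stays within $\CI$, and it is the reason the characteristic coefficients in $\hat C$ are taken over the sub-Peirce components in the first place. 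Second, your claim that inserting central polynomials $h_{n_i}$ ``arranges that $f$ is $A$-admissible without leaving $\CI$'' overstates what Remark~\ref{expans} provides: that procedure forces a semisimple substitution into each matrix block along a single chosen path, but it cannot raise the maximal degree vector on which $f$ has a nonzero evaluation. The hypothesis of $A$-admissibility is tacitly carried by \Tref{traceq2}, and the non-admissible case is handled separately in the inductive argument of \Tref{Spechtfin} via the ideal $\CJ$ of \Lref{induction}. Apart from this, your identification of the two characteristic-coefficient actions (matrix vs.\ polynomial) via the four-stage hiking and symmetrization, your use of \Lref{obv3} for integrality of the generic generators over $\hat C$, and the application of Shirshov to obtain a finite $\hat C$-module structure on $\hat A$ all match the paper's argument.
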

\begin{proof} We follow the proof of \cite[Theorem~5.16]{BRV3}.
First we apply the partial linearization procedure to make $f$
$A$-quasi-linear. In view of \cite[Theorem~7.20 and
Corollary~7.21]{BRV1}, we may assume that the generators of $A$
are generic elements, say $X_1, \dots, X_t$. We adjoin the Peirce
components of these generic elements, noting that because the
polynomial obtained by Theorem~\ref{traceq2} is fully hiked, all
substitutions involving these Peirce components involve a product
of a maximal number of radical elements and thus still are in its
T-ideal $\CI$.

Let $\hat C'$ be the commutative algebra generated by all the
characteristic coefficients in the statement of the theorem. Then
$\hat C'$ is a finite module over $\hat C$, implying $\hat A$ is
finite over $\hat C$, in view of Shirshov's Theorem. \end{proof}
 Note that in
the affine case we can work with finitely many entries, and thus
$\hat C'$ is a finite module; this simple argument fails in the
non-affine case, which explains why this theory only applies to
affine algebras.

 We want to adjoin the matrix $\bar
q$-characteristic coefficients by means of evaluations of the
hiked polynomial $\tilde f.$ This can be done for the largest size
Peirce components of these new components (applied {\it
simultaneously} to each generator and each Peirce component) to
obtain an algebra $\hat A$, and we obtain a finite submodule via
the following modification of Shirshov's theorem:

\begin{defn}
Given a module $M$ over a $C$-algebra $A$ and a commutative
subalgebra $C_1\subset A$, we say that an element $a\in A$ is
\textbf{integral} over $C_1$ with respect to $M$ if there is some
monic polynomial $f \in C_1[\la]$ such that $f(c)M = 0$. $C_1$ is
\textbf{integral} with respect to $M$
if each element of $C_1$ is integral with respect to $M$. 
\end{defn}

\begin{thm}\label{Shir}  Suppose  $A = C\{ a_1, \dots, a_\ell\}$ and $M$ is an $A$-module, and
$A$ contains a commutative (not necessarily central) subalgebra
$C_1$ such that each word in the generators of length at most the
PI-degree is integral over $C_1$ with respect to $M$, and
furthermore $(a_i c - ca_i)M = 0$ for each $1 \le i \le \ell$ and
each $c \in C_1$. (In other words, $A/\!\Ann _A M$  is a
$C_1$-algebra in the natural way.) Then  $A/\!\Ann M$ is finite as
a $C_1$-algebra.
\end{thm}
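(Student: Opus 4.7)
The plan is to pass from $A$ to the quotient $\bar A := A/\!\Ann_A M$, where the non-centrality of $C_1$ in $A$ disappears and the module-theoretic integrality condition becomes genuine ring-theoretic integrality; then the classical Shirshov height theorem applies directly.

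First, I would observe that the hypothesis $(a_i c - c a_i)M = 0$ translates, for each $c\in C_1$, into $[\bar a_i, \bar c] = 0$ in $\bar A$, where the bar denotes the image in $\bar A$. Since $\bar A$ is generated as a $C$-algebra by the $\bar a_i$, each $\bar c$ commutes with all of $\bar A$, so the image $\bar C_1$ of $C_1$ in $\bar A$ is a central commutative subalgebra. Next, for each word $w = a_{i_1}\cdots a_{i_k}$ in the generators of length at most the PI-degree, the monic polynomial $f \in C_1[\la]$ supplied by the hypothesis satisfies $f(w)M = 0$, hence $f(w) \in \Ann_A M$, and thus $f(\bar w) = 0$ in $\bar A$. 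So each such $\bar w$ is integral over $\bar C_1$ in the usual sense, with degree bounded uniformly.

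At this point I would invoke Shirshov's height theorem, as presented in \cite[Chapter 2]{BR}: an affine PI-algebra generated by finitely many elements over a central commutative subalgebra, in which every word in the generators of length up to the Shirshov bound is integral of bounded degree, is a finite module over that subalgebra. Applying this to $\bar A$ with generators $\bar a_1,\dots,\bar a_\ell$ and central subalgebra $\bar C_1$ yields that $\bar A$ is finite as a $\bar C_1$-module, and therefore finite as a $C_1$-module via the surjection $C_1 \twoheadrightarrow \bar C_1$. Since $\bar A = A/\!\Ann M$, this is the desired conclusion.

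The main obstacle is conceptual rather than computational: one must recognize that the two peculiarities of the hypothesis---namely, that $C_1$ need not be central in $A$ itself, and that integrality is only asserted ``with respect to $M$''---are engineered so that both disappear simultaneously upon passing to $A/\!\Ann_A M$, which acts faithfully on $M$ by construction. Once this observation is made, the statement reduces to a direct application of the classical Shirshov theorem, which explains its role here as an essentially module-theoretic repackaging of that result for use in the upcoming reduction to the Noetherian base ring case.
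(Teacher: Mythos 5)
Your proof is correct and takes the same approach as the paper, which simply says ``Apply Shirshov's height theorem to $A/\!\Ann_A M$''; you have just spelled out the verification that centrality of $C_1$ and integrality of the short words become genuine in the quotient.
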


\begin{proof}
Apply Shirshov's height theorem \cite[Theorem~2.3]{BR} to
$A/\!\Ann_A M$.
\end{proof}

Inspired by \cite[Theorem~3.11]{BRV3}, we formulate the following
definition.

\begin{defn}\label{reduct} Suppose $\Gamma$ is the full
quiver of an algebra $A$. A {\bf reduction} of $\Gamma$ is a
pseudo-quiver $\Gamma'$ obtained by at least one of the following
possible procedures:

\begin{enumerate}
\item New relations on the base ring and its pseudo-quiver
$\Gamma'$ obtained by appropriate new gluing. This means:
\begin{itemize}\item Gluing, perhaps up to infinitesimals, with or
without a Frobenius twist (when the gluing is of a block with
itself, with a Frobenius twist, it must become finite); \item New
quasi-linear relations on arrows, perhaps up to infinitesimals;
\item Reducing the matrix degree of a block attached to a vertex.
\end{itemize}

\item New linear dependencies on vertices (which could include
canceling extraneous vertices) between which any two paths must
have the same grade.
\end{enumerate}

A {\bf subdirect reduction} $\{\Gamma'_1, \dots, \Gamma'_m\}$ of
$\Gamma$ is a finite set of reductions of $\Gamma$. A quiver is
{\bf subdirectly irreducible} if it has no proper subdirect
reduction.
\end{defn}

\begin{lem}\label{induc}
Every descending chain of reductions of our original pseudo-quiver
must terminate after a finite number of steps.
\end{lem}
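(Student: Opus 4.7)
The plan is to equip the collection of pseudo-quivers with a well-founded invariant that strictly decreases under every operation listed in Definition~\ref{reduct}, so that any descending chain must stop. The natural invariant is a lexicographic tuple $\Phi(\Gamma)=(v(\Gamma),\,V(\Gamma),\,E(\Gamma),\,\varphi(\Gamma))$ where $v(\Gamma)$ is the degree vector as in Definition~\ref{dv} (ordered as described there), $V(\Gamma)$ counts the equivalence classes of vertices after gluing, $E(\Gamma)$ counts equivalence classes of arrows after gluing, and $\varphi(\Gamma)$ records the total Frobenius data (e.g.\ the sum over self-Frobenius-glued blocks of the exponent $m$ in the cyclic monoid $\MG_m$ of \S2 describing the remaining Frobenius freedom on that block). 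Each coordinate takes values in a well-ordered set ($v$ by the order of Definition~\ref{dv}, the others in $\N$), so $\Phi$ takes values in a well-ordered set.

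First I would argue that any reduction of type (1) strictly decreases $\Phi$. Genuine gluing of distinct vertices strictly lowers $V(\Gamma)$ without raising $v(\Gamma)$, and any collapse that reduces matrix degree of a block strictly lowers $v(\Gamma)$. For gluing up to infinitesimals and for new quasi-linear relations on arrows, if no vertex or arrow class disappears, then by the paragraph preceding the lemma the gluing must strictly lower the Frobenius exponent of some self-Frobenius-glued block (``any further gluing which does not lower the degree vector must lower the power of $q$ used in the Frobenius twist''), so $\varphi(\Gamma)$ strictly decreases; when a Frobenius self-gluing becomes finite, this is exactly the passage from $\MG_\infty$ to some $\MG_m$, lowering $\varphi$ from $\infty$ to a finite number. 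For reductions of type (2), imposing a new linear dependency on vertices either identifies two vertex classes (strictly lowering $V(\Gamma)$) or constrains arrows and hence lowers $E(\Gamma)$; in either case $\Phi$ strictly drops.

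Therefore each step in a descending chain of reductions produces a strict decrease of $\Phi$ in a well-ordered set, and the chain must terminate in finitely many steps, proving the lemma.

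The main obstacle is bookkeeping the ``gluing up to infinitesimals with Frobenius twist'' case, where one must verify that the constraint ``when the gluing is of a block with itself, with a Frobenius twist, it must become finite'' in Definition~\ref{reduct}(1) really is captured by a strict decrease of $\varphi$, using the monoid structure on $\MG_m$ recalled in \S2 and the fact that successive Frobenius tightenings correspond to quotients $\MG_{m'}\twoheadrightarrow\MG_m$ with $m\mid m'$; once this is made precise, the well-foundedness argument is routine.
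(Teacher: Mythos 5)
Your proposal takes essentially the same approach as the paper's own proof: the paper's argument is exactly that each reduction either erases or identifies vertices or arrows, or lowers the degree vector lexicographically, or lowers the grade (Remark~\ref{dv2}), and each of these processes is well-founded. You have simply packaged the same decreasing quantities into an explicit lexicographic tuple $\Phi$; the residual hand-waving you flag about the Frobenius/infinitesimal case is present in the paper's proof as well (its parenthetical ``after sufficiently many new quasi-linear relations'').
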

\begin{proof} By definition, any reduction  erases or identifies vertices
or arrows (after sufficiently many new quasi-linear relations), or
lowers the degree vectors of the branches lexicographically, or
lowers their grades (Remark~\ref{dv2}). Each of these processes
must terminate, so the reduction procedure must
terminate.\end{proof}

 This is the key to our discussion of Specht's
problem, since it enables us to formulate proofs by induction on
the reduction of a pseudo-quiver.

\begin{lem}\label{addid} Suppose the algebra $A$ and the polynomial $\bar f$ are as in
\Lref{complem}. Then the full quiver $\Gamma'$ corresponding
to the T-ideal $\id(A)\cup \{ \bar f \}$ is a reduction of the
full quiver $\Gamma$ corresponding to the $\id(A).$
\end{lem}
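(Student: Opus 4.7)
The plan is to reduce the claim directly to the third canonization theorem recalled in the excerpt, namely \cite[Theorem~3.12]{BRV3}, which says that for any $C$-closed T-ideal $\mathcal{I}$ of a relatively free algebra $A$, the full quiver of $A/\mathcal{I}$ is obtained from $\Gamma$ by a combination of gluing, new linear dependencies on vertices, and new relations on the base ring. These are essentially the ingredients listed in \Dref{reduct}, so matching them up should give the conclusion.

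First I would verify that $\mathcal{I}' := \langle \id(A)\cup\{\bar f\}\rangle_T$ strictly contains $\id(A)$, which is immediate from \Lref{complem} since $\bar f$ is guaranteed there to be a non-identity of $A$. Thus the relatively free algebra $A' := C\{x\}/\mathcal{I}'$ is a proper homomorphic image of $A$, and $\id(A') = \mathcal{I}'$. Applying \cite[Theorem~3.12]{BRV3} to $\mathcal{I}'$ (viewed inside $A$) gives the full quiver $\Gamma'$ of $A'$ as the output of at least one of the elementary operations on $\Gamma$ that it describes. I would then match each such operation to a clause of \Dref{reduct}: gluing (including Frobenius-twisted gluing and gluing up to infinitesimals) and new quasi-linear relations on arrows fit \Dref{reduct}(1), while new linear dependencies on vertices and the cancellation of extraneous vertices fit \Dref{reduct}(2).

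The main obstacle is accounting for the ``reducing the matrix degree of a block'' clause in \Dref{reduct}(1), which is not phrased explicitly in the statement of \cite[Theorem~3.12]{BRV3}. Here I would use the admissibility and $\bar q$-characteristic coefficient-absorbing properties of $\bar f$ provided by \Lref{complem}: because $\bar f$ takes a nonzero value only on substitutions compatible with a distinguished branch $\mathcal{B}$ of maximal degree vector and absorbs $\bar q$-characteristic coefficients on the blocks in $\mathcal{B}$, requiring $\bar f\equiv 0$ in $A'$ forces either a collapse of a maximal-degree block into a smaller one (which can be realized as a new gluing and a new base-ring relation identifying matrix entries) or the cancellation of vertices along $\mathcal{B}$. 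Either way the transformation is subsumed by the elementary operations of \Dref{reduct}, and because at least one of these operations is genuinely new (otherwise $\bar f$ would already be an identity of $A$), $\Gamma'$ is indeed a reduction of $\Gamma$ in the sense of \Dref{reduct}.
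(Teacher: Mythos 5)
Your proposal is correct and follows essentially the same route as the paper's (very terse) proof: observe that $\bar f$ is a non-identity of $A$ so that $\Gamma' \neq \Gamma$, and then invoke the canonization theory (specifically \cite[Theorem~3.12]{BRV3}) to conclude that any change in the full quiver induced by passing to a larger T-ideal is necessarily one of the elementary operations comprising a reduction. The paper leaves the second step implicit; your proposal makes it explicit and is somewhat over-cautious about the ``reducing matrix degree'' clause, but that concern runs in the wrong direction (one only needs the operations of Theorem~3.12 to be subsumed by Definition~\ref{reduct}, not conversely), so it does not affect the correctness of the argument.
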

\begin{proof} By construction, $\bar f$ has nonzero evaluations
along the algebra of $\Gamma$, so the full quiver $\Gamma'$ could
not be $\Gamma$, and thus must be a reduction.\end{proof}

\begin{rem}\label{addid1} In summary, given a T-ideal $\mathcal I$, the Zariski closure $A$
of its relatively free algebra $F\{ x \}/\mathcal I$ has some full
quiver $\Gamma$. Any $A$-admissible non-identity $f$ gives rise to
a symmetrized $\bar q$-characteristic coefficient-absorbing
polynomial $\bar f$, not an identity of $A$. Letting $\mathcal I'
$ be the T-ideal generated by $\mathcal I \cup \{ \bar f \}$, we
see that the full quiver $\Gamma'$ of the Zariski closure $A$ of
the relatively free algebra $F\{ x \}/\mathcal I'$ is a reduction
of $\Gamma.$\end{rem}

\section{Solution of Specht's problem for affine algebras over finite fields}

Our verification of Specht's problem over finite fields
involves an inductive procedure on full quivers. After getting
started, we need each chain of reductions of a full quiver to
terminate after a finite number of steps. To do so, we must cope
with infinitesimals, which appear in~ Definition ~\ref{reduct},
requiring a few observations about Noetherian modules in order to
wrap up the proof. In all of the applications here, the Noetherian
module will be a relatively free associative algebra, but for
future applications to nonassociative algebras we rely only on the
module structure and state these basic observations without
referring to the algebra multiplication. Since Specht's problem is
solved by Kemer in characteristic 0, we assume throughout this
section that the base field has characteristic $p$.

\subsection{Torsion over fields of characteristic $p$}

Throughout this subsection, we assume that $F$ is a field of
characteristic $p$, and $C$ is a commutative Noetherian
$F$-algebra.

\begin{defn}\label{tor0}
Let $M$ be a $C$-module. For any $a\in M$, an element $c\in C$ is
$a$-\textbf{torsion} if there is $k > 0$ such $c^k a = 0$. An
element $c\in C$ is $M$-\textbf{torsion} if it is $a$-torsion for
each $a \in M$. 
We define
$\tor(C)_a = \set{c \in C \,:\, \mbox{$c$ is $a$-torsion}}$.
\end{defn}

\begin{lem}\label{shrink}
 For any finite
$C$-module~$M$ and any $a\in M$, $\tor(C)_a$ is an ideal of $C$.
Furthermore,  define
$\tor(C)_{a;k} = \{ c \in \tor(C)_a : c ^{p^k}a = 0 \}$. Then
$\tor(C)_a = \tor(C)_{a;k}$ for some $k$.\end{lem}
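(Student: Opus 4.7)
The plan is to establish both claims by exploiting the Frobenius endomorphism in characteristic $p$ together with the Noetherian hypothesis on $C$.

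First I would show that $\tor(C)_a$ is an ideal. Closure under multiplication by $C$ is immediate from commutativity: if $c^k a = 0$ and $c' \in C$, then $(cc')^k a = c'^k (c^k a) = 0$. For closure under addition, the naive bound would fail, but in characteristic $p$ we have Freshman's dream. Given $c_1, c_2 \in \tor(C)_a$, choose $k$ large enough that $c_1^{p^k} a = 0$ and $c_2^{p^k} a = 0$ (which is possible because $c_i^{m_i} a = 0$ implies $c_i^{p^k} a = 0$ for any $p^k \ge m_i$). Then
\begin{equation*}
(c_1 + c_2)^{p^k} a = (c_1^{p^k} + c_2^{p^k}) a = 0,
\end{equation*}
so $c_1 + c_2 \in \tor(C)_a$. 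This same computation shows that each $\tor(C)_{a;k}$ is itself an ideal of $C$.

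Next I would consider the ascending chain $\tor(C)_{a;0} \subseteq \tor(C)_{a;1} \subseteq \tor(C)_{a;2} \subseteq \cdots$. By definition each $\tor(C)_{a;k}$ is contained in $\tor(C)_a$, and conversely every $c \in \tor(C)_a$ satisfies $c^m a = 0$ for some $m$, hence $c^{p^k} a = 0$ for any $p^k \ge m$, so $c \in \tor(C)_{a;k}$ for $k$ large enough. Thus $\bigcup_k \tor(C)_{a;k} = \tor(C)_a$.

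Finally, since $C$ is Noetherian and each $\tor(C)_{a;k}$ is an ideal of $C$, the ascending chain stabilizes at some step $k$, so $\tor(C)_{a;k} = \tor(C)_{a;k+1} = \cdots$, and this common stable value must equal the union $\tor(C)_a$. There is no real obstacle here; the only subtlety is the initial use of Freshman's dream, which is precisely why one defines the torsion subideals using $p$-power exponents rather than arbitrary ones — otherwise additive closure would not be automatic and the ideals $\tor(C)_{a;k}$ would not be well defined. Note that the finiteness of $M$ plays no role in this particular statement; it is presumably used in subsequent lemmas that build on this one.
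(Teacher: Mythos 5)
Your proof is correct and takes essentially the same approach as the paper's: the paper's one-line justification ``$\tor(C)_a$ is an ideal, since we are in characteristic $p$'' is precisely the Frobenius/Freshman's-dream argument you spell out, and the stabilization of the chain $\tor(C)_{a;1}\subseteq\tor(C)_{a;2}\subseteq\cdots$ via Noetherianity of $C$ is the paper's concluding step as well. Your remark that the finiteness of $M$ is not actually used in this lemma is a fair observation.
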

\begin{proof}  $\tor(C)_a$ is an
ideal, since we are in characteristic $p$. Then the series
$\tor(C)_{a;1} \subseteq \tor(C)_{a;2} \subseteq \cdots$
stabilizes, so $\tor(C)_a = \tor(C)_{a;k}$ for some $k$.
\end{proof}

\begin{prop}\label{4.3}
Suppose $\hat A = \hat C \{ a_1, \dots, a_t \}$ is a relatively free, affine algebra over a commutative
Noetherian $F$-algebra $\hat C$. Then $\hat A$ is a finite
subdirect product of an algebra~$\hat A'$ defined over the
$\hat C/{\operatorname{tor}(\hat C) _{a_i}}$, $1 \le i \le t,$ together with
the $\left\{ \hat A/ c ^j \hat A : { c \in \hat C,\ j <k}
\right\}$ where $k $ is the maximum of the torsion indices of
$a_1, \dots, a_t$. 
\end{prop}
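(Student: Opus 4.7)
The plan is to construct a finite subdirect embedding of $\hat A$ into the product of $\hat A'$ with finitely many quotients of the form $\hat A/c^j \hat A$, and to verify injectivity by exploiting the uniform torsion bound from \Lref{shrink} together with Noetherianness of $\hat C$. First, I would invoke \Lref{shrink} for each generator $a_i$ to secure the uniform integer $k$ named in the proposition: for every $c\in\tor(\hat C)_{a_i}$ one has $c^k a_i = 0$, and we take $k$ to be the maximum over $i=1,\ldots,t$. Since $\hat C$ is Noetherian, the ideal $I := \sum_{i=1}^t \tor(\hat C)_{a_i}$ is finitely generated, say by elements $c_1,\ldots,c_N$, each lying in some $\tor(\hat C)_{a_{i(s)}}$; this gives the finite collection of $c$'s needed to turn the abstract set $\{\hat A/c^j\hat A\}$ into a finite subdirect product.

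Next I would define $\hat A' := \hat A/I\hat A$; because $I$ is a $\hat C$-ideal and $\hat A$ is relatively free, $\hat A'$ is naturally a relatively free algebra over the commutative quotient $\hat C/I$ satisfying the identities of $\hat A$, which is the algebra $\hat A'$ in the statement. The candidate subdirect embedding is
$$
\phi\co \hat A \longrightarrow \hat A' \,\times\, \prod_{s=1}^{N}\prod_{j=1}^{k-1} \hat A/c_s^{\,j}\hat A,
$$
a finite direct product of $\hat C$-algebras. Each projection is evidently surjective, so the content is injectivity.

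For injectivity, I would take $x\in\ker\phi$, so $x\in I\hat A$ and $x\in c_s^{\,j}\hat A$ for each $s$ and each $1\le j<k$. Writing $x = \sum_s c_s y_s$ with $y_s\in\hat A$ and iteratively pulling out successive powers of each $c_s$ using $x\in c_s^{k-1}\hat A$, one reduces $x$ to a $\hat C$-combination of expressions of the form $c_s^{k-1} z_s$. Because $\hat A$ is relatively free and $\hat C$ acts centrally, the relation $c_s^k a_{i(s)} = 0$ propagates through every monomial in the $a_i$'s containing $a_{i(s)}$; combined with the fact that in the Noetherian $\hat C$-module $\hat A$ the ascending chain $\Ann_{\hat A}(c_s^j)$ stabilises, this forces $x = 0$.

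The main obstacle will be precisely this injectivity step: one has to synchronise two filtrations---the ideal filtration by powers of $I$ and the individual geometric filtrations $\hat A\supset c_s\hat A\supset c_s^2\hat A\supset\cdots$---while keeping track of the uniform torsion index $k$ attached to each generator. The hardest point is verifying that after peeling off $c_s^{k-1}$ from an element sufficiently deep in the geometric filtration, the remaining factor lies in $\Ann_{\hat A}(c_s)$; here the relatively free structure is essential, because it governs how powers of a central element of $\hat C$ act across monomials in the $a_i$, and it is what allows the single scalar identity $c_s^k a_{i(s)} = 0$ to annihilate the full $\hat A$-ideal generated by $a_{i(s)}$.
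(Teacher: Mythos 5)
Your construction of $\hat A'$ as $\hat A/I\hat A$, with $I=\sum_{i}\tor(\hat C)_{a_i}$, is genuinely different from the paper's: the paper takes $\hat A_i$ to be the product of the \emph{localizations} of $\hat A$ at the minimal primes of $\operatorname{Ann}_C a_i$, not a quotient by the torsion ideal. Your version is closer to the literal wording of the statement, and your use of Noetherianity of $\hat C$ to cut $\{c\in\hat C\}$ down to a finite generating set $c_1,\dots,c_N$ of $I$ is a sensible way to make the ``finite'' in ``finite subdirect product'' honest (the paper's displayed $\bigoplus_{c\in\hat C,\,j<k}$ is not literally finite). So the route is different, not just a paraphrase.

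However, the injectivity step has a real gap. The kernel of your map $\phi$ is $I\hat A\cap\bigcap_{s}\bigcap_{j<k}c_s^j\hat A$, and you need this to vanish. Your argument writes $x=\sum_s c_s y_s$ and tries to ``peel'' powers of each $c_s$ from $x\in c_s^{k-1}\hat A$, invoking the relation $c_s^{p^k}a_{i(s)}=0$ together with stabilisation of $\Ann_{\hat A}(c_s^j)$. Two things do not close up. First, the relation $c_s^{p^k}a_{i(s)}=0$ only kills monomials of $\hat A$ that actually contain $a_{i(s)}$; it says nothing about the factor $w_s$ in $x=c_s^{k-1}w_s$, which is an arbitrary element of $\hat A$. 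Second, the index at which the chain $\Ann_{\hat A}(c_s)\subseteq\Ann_{\hat A}(c_s^2)\subseteq\cdots$ stabilises (the quantity one actually needs to make a Fitting-type cancellation $c_s^{k-1}\hat A\cap\Ann_{\hat A}(c_s)=0$ work) is \emph{not} a priori bounded by the $k$ in the statement, which is the torsion index only of the generators $a_1,\dots,a_t$. So after peeling you are left with an element that you cannot show is annihilated, and the deduction $x=0$ is not justified. Compare with the paper's (terse) argument: there, being in the kernel of all the localization maps gives you, for each component, a single scalar $c$ with $c^m a=0$, which is then matched against the fact that $a$ already lies in $c^j\hat A$ for $j<k$; the two filtrations are synchronised from the start because both refer to the \emph{same} $c$, whereas your $I\hat A$ condition mixes the $c_s$'s additively and loses that synchronisation.
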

\begin{proof}
Let $\hat A_{i}$ denote the direct product of the localizations of $\hat
A$ at the (finitely many) minimal prime ideals of
$\operatorname{Ann} _C a_i$. There is a natural map $$\phi \co \hat A \to \bigoplus \hat A_{i}\oplus \left(\bigoplus _{ c \in \hat C,\ j <k} \hat A/ c ^j \hat A \right).$$

If $a\in \ker \phi$, then looking
at the first component we see that $a$ is annihilated by some
power of some $c\in \hat C$, but this is preserved in one of the
other components of $\phi(\hat A)$; hence, $a = 0$. In other
words, $\phi$ is an injection.
\end{proof}

We quote \cite[Lemma~3.10]{BRV3}, in order to continue:

\medskip {\it Suppose $A$ is a relatively free PI-algebra with
pseudo-quiver $\Gamma$ with respect to a representation $\rho \co
A \mapsto \M[n](C)$, and $\mathcal I = \id(A)$ is a $C$-closed
T-ideal. Then $A$ is PI-equivalent to the algebra of the
pseudo-quiver $\Gamma$.}

\subsection{The main theorem in the field-theoretic case}

We are ready to solve Specht's problem for affine algebras over
finite fields. Let us recall a key result from {\cite[Corollary~4.9]{BR}}.
\begin{prop}\label{Lewin}
Any T-ideal of an affine $F$-algebra contains the set of identities of
some finite dimensional algebra, and thus of $\M[n](F)$ for some $n$.
\end{prop}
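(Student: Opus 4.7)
The plan is to reduce to constructing a finite-dimensional $F$-algebra whose T-ideal of identities sits inside $\mathcal{I}$, and then build such an algebra out of a matrix representation of the relatively free algebra.

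First, I would pass to $R = F\{x\}/\mathcal{I}$, the relatively free affine $F$-algebra with $\id(R) = \mathcal{I}$. Once a finite-dimensional $F$-algebra $B$ is found with $\id(B) \subseteq \mathcal{I}$, the conclusion involving $\M[n](F)$ is immediate: $B$ embeds into $\M[N](F)$ via its left regular representation with $N = \dim_F B$, giving $\id(\M[N](F)) \subseteq \id(B) \subseteq \mathcal{I}$. Hence only the existence of such a $B$ needs proof.

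By the representability theorem for relatively free affine PI-algebras (Kemer's in the infinite field case, Belov's in general, which the paper's introduction defers to a subsequent paper), $R$ embeds into $\M[n](K)$ for some commutative affine Noetherian $F$-algebra $K$ and some $n$, giving $\id(\M[n](K)) \subseteq \mathcal{I}$. To descend from $\M[n](K)$ to something genuinely finite-dimensional, I would exploit the structure of $K$: by Zariski's lemma each residue field $K/\mathfrak{m}$ is finite over $F$, so each $\M[n](K/\mathfrak{m})$ is finite-dimensional over $F$; by Noetherianity the nilradical $\Rad(K)$ is nilpotent of some index $s$; and since $K$ is a Jacobson ring, $K/\Rad(K)$ is a subdirect product of its residue fields. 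A Shirshov-height argument applied to $R$ (which is a finite module over a central commutative affine subring) shows that finitely many residue fields $\mathfrak{m}_1,\dots,\mathfrak{m}_r$ already capture the identities of $\M[n](K/\Rad(K))$. Then Lewin's block-triangular embedding theorem lets me assemble these semisimple blocks with an auxiliary nilpotent factor of index $s$ into a single finite-dimensional $F$-algebra $B$ with $\id(B) \subseteq \id(\M[n](K)) \subseteq \mathcal{I}$.

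The principal obstacle is this last step, where one must (i) establish a local-global statement guaranteeing that finitely many residue fields of $K$ are enough to capture the identities of $\M[n](K/\Rad(K))$, and (ii) invoke Lewin's construction to absorb the nilpotent piece $\Rad(K)$ into the T-ideal structure, which effectively raises the ideal of identities to a power matching the nilpotence index $s$. The Shirshov-type finiteness that powers (i) is precisely the same module-theoretic input that underlies the reduction from Noetherian base rings to the field case later in the paper, while (ii) is the feature that explains why the proposition bears Lewin's name.
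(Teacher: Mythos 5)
Your proposal takes a genuinely different — and in the context of this paper, problematic — route. The paper's own proof is a one-liner citing \cite[Corollary~4.9]{BR}: by the Braun--Kemer--Razmyslov theorem the Jacobson radical of $A = F\{x\}/\CI$ is nilpotent, and Lewin's embedding theorem then realizes $A$ inside (or as an image of) a generalized upper-triangular matrix algebra, which in turn satisfies the identities of $n\times n$ matrices for a suitable $n$. That is the whole argument: nilpotence of the radical plus Lewin's structure theorem, with no appeal to the representability of the relatively free algebra.

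Your argument instead starts by invoking the representability theorem for relatively free affine PI-algebras (Kemer/Belov). This is precisely the theorem the introduction says is being \emph{deferred to a separate paper}, and the entire role of this proposition in \S 5 is to allow the Specht induction to start \emph{without} assuming representability: the proof of Theorem~\ref{Spechtfin} opens with ``we need to move to representable affine algebras. But, by Proposition~\ref{Lewin}\dots we can replace $A$ by that algebra.'' Feeding representability back into the proof of Proposition~\ref{Lewin} therefore defeats the purpose of the lemma and, within the logic of this paper, leans on an unproved input.

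Two further issues in your descent from $\M[n](K)$. First, the Shirshov-height/Jacobson-ring step (i) is a genuine gap over a finite base field: the residue fields $K/\mathfrak m$ can be finite extensions of $F$ of unbounded degree, and $\id(\M[n](\mathbb F_{q^d}))$ strictly shrinks as $d$ grows, so it is not clear that any finite subfamily of residue fields captures $\id\bigl(\M[n](K/\Rad K)\bigr)$; ``Shirshov height'' does not obviously supply this finiteness, and you give no argument. Second, your use of ``Lewin's theorem'' in step (ii) is not the role it plays in the actual proof: Lewin's theorem is an \emph{embedding} result (a ring with $IJ=0$ embeds in a triangular array built from $R/I$ and $R/J$), applied here directly to $A$ with its nilpotent radical; assembling pre-chosen semisimple blocks into a block-triangular algebra is an elementary construction and not what Lewin's theorem provides. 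So even granting representability, the two obstacles you flag yourself are not dispatched, whereas the paper's route through Braun and Lewin avoids them entirely.
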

(The proof is characteristic free: The radical is nilpotent by the theorem of Braun-Kemer-Razmyslov, so one can display the relatively free algebra $A$ as a homomorphic image of a generalized upper triangular matrix algebra, by a theorem of
Lewin, which satisfies the identities of $n\times n$ matrices.)

\begin{lem}\label{induction}
Suppose $A$ is a relatively free affine algebra in the variety of
a \Zcd\ algebra $B$.

Consider a maximal path in the full quiver of $B$ with the corresponding degree vector~$v_A$. Let $\CJ$ be the ideal generated by the homogeneous elements of the degree vector~$v_A$.
Then $A/\CJ$ is the relatively free algebra of a \Zcd\ algebra,
and hence representable, and its full quiver has fewer maximal
paths of degree $v_A$ than $A$.
\end{lem}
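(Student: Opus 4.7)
My plan is to combine the Compatibility Lemma (\Lref{complem}) with \Rref{addid1} to identify $A/\CJ$ as a representable relatively free algebra whose quiver is a reduction of that of $A$, and then to track the count of maximal paths of degree $v_A$ through that reduction.

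First, I would verify that $\CJ$ is a T-ideal of $A$, so that the quotient $A/\CJ$ is relatively free. Since $A$ carries a natural $\bM$-grading coming from its full quiver $\Gamma$, the homogeneous elements of a fixed degree vector $v_A$ form a sub-bimodule preserved by endomorphisms of $A$ (because endomorphisms of a relatively free algebra respect the decomposition arising from pure substitutions, cf.~\Rref{sub1}); hence the ideal they generate is a T-ideal, and $A/\CJ$ is the relatively free algebra of its own PI-variety.

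Second, for representability and for strict reduction of the count, I would pick an $A$-admissible non-identity $f$ whose nonzero values realize pure substitutions along the chosen maximal path $\mathcal B$ with degree vector $v_A$. Applying \Tref{traceq2} to $f$ produces a fully hiked, $\bar q$-characteristic coefficient-absorbing non-identity $\bar f$ whose nonzero evaluations are all supported on branches with degree vector $v_\mathcal B = v_A$; in particular, every evaluation of $\bar f$ is a homogeneous element of degree vector $v_A$, so $\bar f \in \CJ$. By \Lref{addid} and \Rref{addid1}, the full quiver $\Gamma'$ of the Zariski closure of $A/\CJ$ is then a reduction of $\Gamma$ in the sense of \Dref{reduct}, and so $A/\CJ$ is representable as the algebra of a (reduced) full quiver.

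Finally, the branch $\mathcal B$ cannot survive in the full quiver of $A/\CJ$: otherwise we could apply \Tref{traceq2} to $A/\CJ$ along $\mathcal B$ and obtain an $A/\CJ$-admissible non-identity of degree vector $v_A$, contradicting the fact that every homogeneous element of degree $v_A$ lies in $\CJ$ and thus vanishes in the quotient. Other maximal paths of the same degree vector, arising from genuinely different branches, give rise to their own characteristic-coefficient-absorbing polynomials that we have not imposed as identities, so they may survive in $\Gamma'$; hence the number of maximal paths of degree $v_A$ drops by at least one. The main obstacle I expect is justifying that $\bar f$ really has all its support on the single branch $\mathcal B$ rather than on some ``ghost'' copy produced by Frobenius gluing or cross-gluing; this is precisely what the stage~2, 3, and~4 hiking of \Lref{hike2} and \Lref{modhike1} together with the symmetrization of \Dref{sym} were set up to accomplish, so the argument consists of invoking those preparations carefully rather than reproving them.
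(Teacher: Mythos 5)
Your proposal does not follow the paper's route, and it has a genuine gap at the representability step. The paper's proof imitates the Second Canonization Theorem \cite[Theorem~3.7]{BRV3}: it adjoins characteristic coefficients of the generators of the generic algebra built from $B$ (noting these are compatible with the path grading), then factors out the product corresponding to the maximal degree vector, producing a representable algebra $B'$ directly, and finally identifies $A/\CJ$ with the relatively free algebra of $B'$. Both representability and the T-ideal property of $\CJ$ then fall out of that identification in one stroke. Your argument instead tries to stay inside $A$ and invoke \Lref{addid}/\Rref{addid1} via a single hiked non-identity $\bar f$.

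The gap is that \Lref{addid} and \Rref{addid1} only apply to the T-ideal generated by $\id(A) \cup \{\bar f\}$, not to $\CJ$. You correctly place $\bar f \in \CJ$, which gives an inclusion $\langle \id(A), \bar f\rangle \subseteq \id(A/\CJ)$, but $\id(A/\CJ)$ may be strictly larger: $\CJ$ contains all homogeneous elements of degree vector $v_A$, not merely the T-ideal of $\bar f$. Knowing that $F\{x\}/\langle\id(A),\bar f\rangle$ has a reduced quiver and is representable does not by itself tell you that its further quotient $A/\CJ$ is representable or relatively free; that is precisely what needs to be proved, and it is what the paper's $B'$ construction supplies. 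Your final contradiction argument (that the branch $\mathcal B$ cannot survive) also quietly presupposes that $A/\CJ$ is representable with a well-defined full quiver so that \Tref{traceq2} can be applied to it — again assuming what must be shown.

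A secondary concern is your first paragraph: the claim that homogeneous elements of degree vector $v_A$ form a sub-bimodule ``preserved by endomorphisms of $A$'' is not justified by \Rref{sub1}. An endomorphism of a relatively free algebra is an arbitrary substitution, and there is no a priori reason such a substitution respects the degree-vector filtration coming from a fixed representation. In the paper this issue never arises because $\CJ$ is exhibited as the kernel of a map to the relatively free algebra of $B'$, which makes it a T-ideal automatically. If you want to salvage your route, you would need to show that $\CJ$ coincides with a T-ideal of the form considered in \Rref{addid1}, or else adopt the paper's strategy of constructing $B'$ explicitly.
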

\begin{proof}
The proof is similar to that of the Second Canonization Theorem,
\cite[Theorem~3.7]{BRV3}. Consider a maximal graded component in
$A$. Add characteristic coefficients of the generators of the
generic algebra constructed from $B$, and note that they agree
with the grading of the paths.
 Factoring out the product corresponding to the maximal degree vector
 we obtain a representable
 algebra, $B'$. Construct the full quiver of $B'$ as in the proof of
\cite[Theorem~3.7]{BRV3}.
 Then all paths in $B'$ have fewer maximal paths of degree $v_A$,
and $A/\CJ$ is the relatively free algebra of $B'$.
\end{proof}

 We say that a T-ideal $\CI$ of $F\{ x\}$ is
\textbf{representable} if $F\{ x\}/\CI$ is a representable
algebra.

\begin{thm}\label{Spechtfin} Suppose $A$ is a relatively free,
affine PI-algebra over a field $F$. Then any chain of T-ideals in
the free algebra of $F\{ x\}$ ascending from $\id(A)$ must
terminate.
\end{thm}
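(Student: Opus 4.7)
The plan is to argue by Noetherian induction on the full quiver $\Gamma$ of the Zariski closure of $A$, ordered by the reduction relation of \Dref{reduct}, which is well-founded by \Lref{induc}. Appealing to the canonization theorems reviewed in Section~3, I may assume $\Gamma$ is enhanced canonical, and since the statement concerns only T-ideals above $\id(A)$ it suffices to work with the relatively free algebra and a chosen representation producing $\Gamma$.

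Given an ascending chain $\id(A)=\CI_0 \subseteq \CI_1 \subseteq \cdots$, assume for contradiction that it does not stabilize, and pick $f \in \CI_k \setminus \CI_{k-1}$ for the first index $k$ with strict inclusion. By the partial-linearization procedure (Proposition~\ref{targil}) and \Lref{quasi}, I may replace $f$ by an $A$-quasi-linear nonidentity in its T-ideal; by \Lref{induction} (factoring out contributions of non-maximal degree vectors when needed, which only sharpens the induction), I may also arrange that $f$ is $A$-admissible. Then \Tref{traceq2} produces a $\bar q$-characteristic coefficient-absorbing $A$-admissible nonidentity $\tilde f$ lying in the T-ideal of $f$, hence in $\CI_k$. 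Let $\CJ$ denote the T-ideal generated by $\tilde f$ in $F\{x\}$. By \Rref{addid1}, the full quiver of $F\{x\}/(\id(A)+\CJ)$ is a proper reduction of $\Gamma$, so the induction hypothesis applies to the quotient: every chain of T-ideals above $\id(A)+\CJ$ stabilizes, and in particular the chain $\{\CI_j+\CJ\}$ is eventually constant.

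To finish I must control $\CI_j \cap \CJ$. Here \Tref{tradj} enters: $\CJ$ is simultaneously a T-ideal of $A$ and an ideal of the Noetherian algebra $\hat A$, the latter being a finite module over the commutative Noetherian algebra $\hat C$. Hence $\CJ$ is a Noetherian $\hat A$-module, so any ascending chain of $\hat A$-submodules of $\CJ$ stabilizes. Combined with the stabilization of $\{\CI_j+\CJ\}$, the standard sandwich principle (if $N_j+K$ and $N_j\cap K$ both stabilize then so does $N_j$) collapses the chain $\{\CI_j\}$, contradicting the initial assumption.

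The main obstacle is precisely the last step: a T-ideal $\CI_j$ need not a priori be closed under multiplication by $\bar q$-characteristic coefficients of elements of $A$, so $\CI_j \cap \CJ$ is only an $A$-ideal and not an $\hat A$-submodule, and Noetherianity of $\hat A$ is not directly applicable. The remedy exploits the absorbing property of $\tilde f$: within $\CJ$, the polynomial $\bar q$-characteristic coefficient action of \Rref{trac3} realizes the ambient $\hat A$-multiplication, so the $\hat A$-submodule generated by $\CI_j \cap \CJ$ inside $\CJ$ still consists of evaluations inside $\CI_j$, and its $\hat A$-stabilization therefore forces the stabilization of the T-ideal chain $\{\CI_j \cap \CJ\}$ itself, closing the induction.
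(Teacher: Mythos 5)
Your proposal shares the same ingredients as the paper's proof (quiver induction via \Lref{induc}, the absorbing polynomial machinery of \Tref{traceq2} and \Tref{tradj}, and passage to a smaller quiver), but it is organized differently, and the organization contains a redundancy that masks a gap you would otherwise need to close. Observe that by your own construction $\tilde f$ lies in the T-ideal of $f$, hence in $\CI_k$, so $\CJ \subseteq \CI_k \subseteq \CI_j$ for every $j \geq k$. Therefore $\CI_j + \CJ = \CI_j$ for all $j \geq k$, and the stabilization of $\{\CI_j + \CJ\}_{j\ge k}$ that you extract from the induction hypothesis already \emph{is} the stabilization of $\{\CI_j\}_{j\ge k}$. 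The entire sandwich step with $\CI_j \cap \CJ$ is superfluous: once the quiver of $F\{x\}/(\id(A)+\CJ)$ is known to be a proper reduction of $\Gamma$ (via \Rref{addid1} and \Lref{addid}), the induction on the quiver finishes the proof directly.

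That is fortunate, because the sandwich as you phrase it rests on a genuine gap. You need $\{\CI_j \cap \CJ\}$ to stabilize using the Noetherianity of $\hat A$, but, as you acknowledge, $\CI_j \cap \CJ$ is not a priori an $\hat A$-submodule of $\CJ$: the $\hat A$-action is multiplication by (symmetrized) $\bar q$-characteristic coefficients, and an arbitrary T-ideal $\CI_j$ need not be closed under it. Your remedy -- that the $\hat A$-submodule generated by $\CI_j \cap \CJ$ still lies in $\CI_j$ because the coefficient action is realized by polynomial substitutions -- is plausible but is exactly the delicate identification of the matrix and polynomial actions that occupies \Lref{complem}, the hiking stages, and \Rref{trac3}; it cannot simply be asserted. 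The paper sidesteps this by working instead with $\CI_j^{(1)}$, the \emph{maximal} T-ideal of $\hat A$ contained in $\CI_j$, which is an $\hat A$-ideal by fiat; the chain of these stabilizes directly by Noetherianity of $\hat A$, and afterward one shows $\CI_j$ has no admissible polynomial and injects the residual chain into $F\{x\}/\CJ$ for $\CJ$ the max-degree-vector ideal of \Lref{induction} (a sandwich, but one where the intersection is provably zero rather than an uncontrolled $\hat A$-submodule). Finally, your parenthetical use of \Lref{induction} to ``arrange that $f$ is $A$-admissible'' glosses over the case where no polynomial in the chain is admissible: there one cannot quotient by $\CJ$ from a single polynomial, and one must instead inject the whole chain into $F\{x\}/\CJ_{\mathcal B}$ using $\CI_j \cap \CJ_{\mathcal B} = 0$ and apply the induction hypothesis for that reduced quiver, which is how the paper closes the argument.
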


\begin{proof} First, we need to move to representable affine
algebras. But, by Proposition~\ref{Lewin}, the T-ideal of $A$
contains the T-ideal of a finite dimensional algebra, so we can
replace $A$ by that algebra.

We want to show that any ascending
chain of T-ideals
\begin{equation}\label{asch} \CI_1 \subseteq \CI_2 \subseteq \CI_3
\subseteq \cdots\end{equation} in the free algebra $F\{x\},$ with
$\CI_1 =\id(A)$, stabilizes. For each $j$, let
$\CI_{j}^{(0)}\subseteq \CI_{j}$ denote the T-ideal of $A$
generated by symmetrized $\bar{q}$-characteristic
coefficient-absorbing polynomials of $\CI_j$ having a non-zero
specialization with maximal degree vector.

Then we get the chain
\begin{equation}\label{asch1} {\CI_1}^{(0)} \subseteq {\CI_2}^{(0)}
\subseteq {\CI_3}^{(0)} \subseteq \cdots .\end{equation}

Let $\Gamma$ be the full quiver of $A$, so $\id(A) = \id(\Gamma).$
Let $v_A$ denote the maximal degree vector for a non-zero
evaluation of some polynomial in $A$. Since $\CI _{j}^{(0)}$
 is Noetherian, by Theorem~\ref{Shir} applied to Theorem~\ref{tradj}, we can define
$\CI_j^{(1)}$ to be the maximal T-ideal of $\hat A$ contained in
$\CI_j$.

Then, the chain
\begin{equation}\label{asch11}
{\CI_1}^{(1)} \subseteq {\CI_2}^{(1)} \subseteq {\CI_3}^{(1)}
\subseteq \cdots
\end{equation}
of ideals is in the Noetherian algebra $\hat A$, and thus stabilizes
at some $\CI_{j_0}^{(1)}$.

Passing to $A/\CI_{j_0}^{(1)}$, we may assume that $\CI_{j}^{(1)}=
0$ for each $j > j_0$. Hence, $A/\CI_{j_0}^{(1)}\subseteq
\hat{A}/\CI_{j_0}^{(1)}$ is representable. If $\CI_j^{(0)}$ were
nonzero then the fully hiked polynomial of some $0 \neq f \in
\CI_j^{(0)}$ would be in $\CI_j^{(1)}=0$, a contradiction. Thus,
$\CI_j^{(0)} = 0$ for each $j > j_0$. In other words, $\CI_j$ has
only zero evaluations in degree $v_A$.

Finally, let $\CJ$ be the T-ideal defined in
\Lref{induction}. Thus
$\CJ \cap \CI_j = \CI_j^{(0)}$, so passing to $A/\CJ$, which is
relatively free and representable by \Lref{induction}, we
lower the maximum degree vector, and conclude by induction.

But these lift to a chain of ideals of the Noetherian algebra
$\hat A$, which must stabilize, and thus \eqref{asch11} stabilizes
at some $\CI _{j_0}^{(0)}$. Passing to $A/\CI _{j_0}^{(0)}$ and
starting the chain at $j_0$, we may assume that $\CI _{j}^{(0)}=
0$ for each $j$.

We let $v_\mathcal B$ be the degree vector of some $A$-admissible
polynomial, and let $\CI_\mathcal B$ be the T-ideal of
Theorem~\ref{traceq2}, which shows that $\CI _{j}\cap \CI_\mathcal
B = 0$ for each $j> j_0.$ Let $ \overline{\CI_j} = (\CI _{j} +
\CI_\mathcal B)/\CI_\mathcal B,$ a T-ideal of the relatively free
algebra $ F\{ x\}/ \CI_\mathcal B$ for each $j>j_0.$ By induction
on the maximal degree vector of the quiver, applied to the
relatively free algebra $ F\{ x\}/ \CI_\mathcal B,$ the chain of
T-ideals
\begin{equation}\label{asch2} \overline{\CI_{j+1}} \subseteq
\overline{\CI_{j+2}}\subseteq \overline{\CI_{j+3}} \subseteq
\cdots\end{equation} must stabilize, so we conclude
that the original chain of T-ideals must stabilize.
\end{proof}

\section{Solution of Specht's problem for PI-proper T-ideals of  affine algebras over arbitrary commutative Noetherian rings}\label{nonassoc}

Using the same ideas, we finally can prove Specht's problem for
affine PI-algebras over a commutative Noetherian ring $C$. Our strategy is to reduce to algebras over fields, since this
 case is already solved. The argument
is based on a formal reduction from algebras over rings to algebras over fields, much of which
we formulate rather generally for algebras which are not necessarily
associative. Accordingly, we fix a given algebraic variety $\mathcal V$ of algebras, and
consider Specht's problem for algebras in $\mathcal V$. When necessary,
we take $\mathcal V$ to be the variety of associative algebras.

 One can construct the free nonassociative algebra,
whose elements are polynomials where we also write in parentheses
to indicate the order of multiplication. The
\textbf{T-ideal} of a set of polynomials \textbf{in an algebra}
$A$ is the ideal generated by all substitutions of these
polynomials in $A$. The variety $\mathcal V$ itself is defined by
a T-ideal $\assoc$ of identities of the free nonassociative algebra, and the corresponding
factor algebra is the relatively free algebra of the variety $\mathcal V$. (For example, when $\mathcal V$ is the variety of associative algebras,
$\assoc$ is generated by the \textbf{associator} $(x_1x_2)x_3 - x_1(x_2 x_3).$)
Specht's problem now is whether every chain of ascending T-ideals containing the associator stabilizes, or, equivalently, if every T-ideal is finitely based modulo the T-ideal  $\mathcal I_{\operatorname{assoc}}$ of the associator.

 There
is an extra technical issue, since usually the definition of PI
requires that the ideal of the base ring generated by the
coefficients of the PIs is all of $C$. (This is obviously the case
when $C$ is a field). We call such a T-ideal {\it PI-proper}, and
start in this section with that case. Finally, in \S\ref{notprop}
we prove the general result for T-ideals which are not necessarily
PI-proper.

 \subsection{Reduction to algebras over integral domains}

The considerations of this section apply to arbitrary algebraic varieties (not necessarily associative).

\begin{defn}\label{Spechtobs22}
 We say that a Noetherian ring $C$ is $\mathcal V$-\textbf{Specht} if Specht's problem has a positive solution
in the variety $\mathcal V$ for PI-proper T-ideals defined over $C$, i.e., any PI-proper
T-ideal generated by polynomials $f_1, f_2, \dots$ is finitely
based  modulo  $\assoc$.
 We say that $C$ is \textbf{almost $ \mathcal V$-Specht} if $C/I$ is $\mathcal V$-Specht for
every nonzero ideal $I$ of $C$. If~$\mathcal V$ is not specified here, it is assumed to be the variety of associative algebras.
\end{defn}

\begin{rem}\label{Spechtobs2} Here is the general reduction of Specht's problem to the case when $C$ is an integral
domain. We need to show that any T-ideal generated by polynomials
$f_1, f_2, \dots$ is finitely based modulo $\assoc$. Let $\CI_{j}$ be the T-ideal generated by $f_1,\dots,f_j$. By
Noetherian induction, we may assume that $C$ is almost $\mathcal
V$-Specht. Suppose $c_1c_2 = 0$ for $0 \ne c_1, c_2 \in C.$
By Noetherian induction, the system $\{f_j\}$ is finitely based
over $c_2 A$, so there is some $j_0$ for which each $f_j=g_j+c_2
h_j$ where $g_j\in\CI_{j_0}$ and $h_j$ is arbitrary. The polynomials $f_j$ can be
replaced by $c_2 h_j$ for all $j>j_0$. But the T-ideal generated
by $\{c_2 h_j :{j>j_0}\}$ in $A/c_1 A$ is finitely based, by
Noetherian induction over~$C/c_1C$.

Thus, we are done unless $c_1c_2 \ne 0$ for all  $0 \ne c_1, c_2 \in C,$
implying that $C$ is an integral domain.
\end{rem}

\subsection{Reduction to prime power torsion}

By Remark~\ref{Spechtobs2}, we may assume from now on that
$C$ is an almost $\mathcal V$-Specht integral domain. Also, we
assume that $C$ is infinite, since otherwise $C$ is a field, and
we have solved the case of fields. To proceed further, we also
introduce torsion in
 the opposite direction.

\begin{defn}\label{tor00}
Let $M$ be a module over a commutative integral domain~$C$.
For $z \in C$ define $\Ann_M(z) = \{ a \in M: z a = 0\}$.

  $\Ann_M(z) $ is called the  $z$-\textbf{torsion} of $M$.
$M$ is $z$-\textbf{torsionfree} if $\Ann_M(z) =  0$.  For $I \triangleleft C$ and a $C$-module $M$, define $\tor _I(M) = \cup _{0 \ne z \in C} \Ann_M(z) , $
a $C$-submodule of $M$ since $C$ is an integral domain.

The {\bf
$z$-torsion index} of $M$ is $k$ if the chain $$\Ann_M(z) \sub
\Ann_M(z^2) \sub \cdots \sub \Ann_M(z^k) \sub \cdots$$ stabilizes
at $k$. Reversing \Dref{tor0}, we say that $M$ is
$z'$-\textbf{torsionfree} if its $w$-torsion submodule is 0 for
each prime $w$ not dividing $z$.
\end{defn}

\begin{lem}\label{Spechtobs} If $A$ is a relatively free algebra and $c\in C$, then
$\Ann_A c $ is a T-ideal, with $cA \cong
A/\!\Ann_A c$ as $C$-modules (but not necessarily as
$C$-algebras).
 We have an inclusion-reversing map $\{$Ideals of $C\} \to \{$T-Ideals of~$A\}$
given 
by $I \mapsto \operatorname{tor}_I(A).
$
\end{lem}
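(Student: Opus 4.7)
The plan is to treat the three assertions in turn, each of which reduces to an essentially formal check once the definitions are unwound. I would organize the argument around the single observation that every element of $C$ is central in $A$ and acts as a scalar on substitutions, so every algebra endomorphism of $A$ commutes with multiplication by $c$.

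First, to see that $\Ann_A c$ is a T-ideal, I would note that it is obviously a two-sided ideal of $A$ (closed under addition, and $c(ab) = (ca)b = 0 = a(cb)$ whenever $ca = 0$ or $cb = 0$). Because $A$ is relatively free, every algebra endomorphism $\phi$ of $A$ is induced by a substitution of the free generators; since $c \in C$ commutes with $\phi$ (being a central scalar), $c\phi(a) = \phi(ca) = \phi(0) = 0$ whenever $a \in \Ann_A c$. Hence $\Ann_A c$ is closed under all endomorphisms, so it is a T-ideal.

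For the second assertion, I would introduce the $C$-module map $\mu_c : A \to cA$ defined by $\mu_c(a) = ca$. This is surjective by the definition of $cA$, and its kernel is exactly $\Ann_A c$, so the first isomorphism theorem for $C$-modules yields $cA \cong A/\Ann_A c$ as $C$-modules. The parenthetical warning is explained by noting that $\mu_c$ is not multiplicative, since $\mu_c(a_1)\mu_c(a_2) = c^2 a_1 a_2$ rather than $c a_1 a_2$; so the isomorphism generally fails to respect the algebra structure.

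For the third assertion, I would verify directly from the definition that if $I \subseteq J$ are ideals of $C$, then the reversed containment holds at the level of $\tor_I$ versus $\tor_J$, and then check that each $\tor_I(A)$ is a T-ideal of $A$. Closure under $C$-module operations is immediate since $C$ is a domain and the union defining $\tor_I(A)$ is a directed union of submodules. Closure under endomorphisms is again the key point and repeats the argument of the first paragraph: if $za = 0$ for $0 \neq z \in C$, then $z\phi(a) = \phi(za) = 0$ for every endomorphism $\phi$, so $\phi$ preserves the $z$-torsion in $A$, and hence preserves $\tor_I(A)$. The main obstacle, such as it is, is purely notational rather than mathematical: one simply has to keep track of which elements of $C$ are permitted as annihilators in the definition of $\tor_I$ in order to read off the correct monotonicity; the verifications themselves are all immediate from the centrality of $C$ and the freeness of $A$.
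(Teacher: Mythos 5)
Your argument is correct and follows exactly the approach the paper intends; the paper's own proof is a one-liner ("clearly a T-ideal, by Lemma \ref{Tid}, and the rest \ldots\ is standard"), and you are simply filling in the details. The key observation in both cases is the same: $c$ and, more generally, $0\neq z\in C$ act as central scalars, so every $C$-algebra endomorphism $\phi$ of $A$ (which are exactly the substitutions, by relative freeness) satisfies $z\phi(a)=\phi(za)$, whence kernels of multiplication maps $\mu_z$ are stable under all $\phi$ and hence T-ideals. Your isomorphism-theorem argument for $cA\cong A/\!\Ann_A c$ and the parenthetical explanation of why it is not an algebra isomorphism are both correct.

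One place where you hedge, and where you should in fact be slightly more forceful: Definition \ref{tor00} in the paper writes $\tor_I(M)=\cup_{0\neq z\in C}\Ann_M(z)$, with the index set running over all of $C$ rather than $I$, so as literally printed the expression does not depend on $I$ at all and the map $I\mapsto\tor_I(A)$ would be constant. If one corrects the subscript to $z\in I$ but keeps the union, the map is inclusion-\emph{preserving}, not inclusion-reversing ($I\subseteq J$ gives a union over a smaller set). To obtain the inclusion-reversing behavior asserted in the lemma one needs something like $\tor_I(M)=\{a\in M : Ia=0\}$ or $\bigcap_{0\neq z\in I}\Ann_M(z)$. Your remark that the issue is "purely notational" correctly identifies that something must be adjusted in the definition, but it stops short of pointing out that the naive fix still gives the wrong monotonicity; it is worth flagging that the claimed direction of the map forces a stronger (intersection-style) definition of $\tor_I$ than the one the paper appears to state.
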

\begin{proof} $\Ann_A c$ is clearly a T-ideal, by \Lref{Tid}, and the
rest of the first assertion is standard. The second assertion is
likewise clear.
\end{proof}

In any commutative Noetherian domain, any element can be factored as
a finite product of irreducible elements (although not necessarily
uniquely). 

\begin{prop}\label{subdir1}
Any  module over an integral domain $C$ whose torsion
involves only finitely many irreducible elements is a subdirect product of
finitely many $z'$-torsionfree modules, where $z$ ranges over
these irreducible elements of $C$.
\end{prop}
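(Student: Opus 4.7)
The plan is to exhibit $M$ as a subdirect product of $n$ quotient modules, where the $i$-th factor is $z_i'$-torsionfree. For each $i$, I would set
\[
L_i \;:=\; \{a \in M \,:\, ca = 0 \text{ for some nonzero } c \in C \text{ whose irreducible factorization avoids } z_i\},
\]
and let $M_i := M/L_i$. Each $L_i$ is a $C$-submodule: closure under the $C$-action is immediate, and closure under addition uses the integral-domain hypothesis, since if $ca = 0 = c'a'$ with $c,c'$ both avoiding $z_i$, then $cc'(a+a') = 0$ and the product $cc'$ has an irreducible factorization still avoiding $z_i$. The subdirect decomposition is then furnished by the natural map $M \to \bigoplus_{i=1}^n M_i$, whose projections are automatically surjective; what remains is to check that it is injective and that each $M_i$ is $z_i'$-torsionfree.

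I would next verify that $M_i$ is $z_i'$-torsionfree. Let $w$ be a prime of $C$ not dividing $z_i$, and suppose $w\bar a = 0$ in $M_i$. Then $wa \in L_i$, so some $z_i$-avoiding $c$ annihilates $wa$; the product $cw$ then annihilates $a$ and still has a factorization avoiding $z_i$ (because $w$ is not associate to $z_i$). Hence $a \in L_i$ and $\bar a = 0$, as required.

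The heart of the argument is to show $\bigcap_{i=1}^n L_i = 0$. Any $a$ in this intersection lies in $\tor(M)$, and for each $i$ is annihilated by some $z_i$-avoiding $c_i$. Because $M$ is $w$-torsionfree for every irreducible $w \notin \{z_1,\dots,z_n\}$ (by the hypothesis on torsion), multiplication by any such $w$ is injective on $M$, which permits us to strip outside irreducible factors from each $c_i$ and arrange $c_i = \prod_{j \ne i} z_j^{f_{ij}}$. Thus $\Ann_C(a) \supseteq (c_1,\ldots,c_n)$.

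The main obstacle is to conclude $a = 0$ from this annihilator inclusion, since in a general Noetherian integral domain the distinct irreducibles $z_i$ need not generate coprime ideals. I would address this by analyzing $\operatorname{supp}(Ca) \subseteq \operatorname{Spec}(C)$: any prime $P$ in the support must contain every $c_i$, hence must meet $\{z_j : j \ne i\}$ for every $i$. The plan is then to proceed by induction on $n$: peel off the $z_n$-primary submodule $K_n = \{a \in M : z_n^m a = 0 \text{ for some } m\}$, apply the inductive hypothesis to $M/K_n$ (whose torsion involves only $z_1,\ldots,z_{n-1}$), and combine with the observation that the image of $a$ must vanish in each localization $M_P$ once one uses both a $c_i$-relation with $i$ chosen so that $z_i \in P$ and the injectivity of the complementary irreducibles. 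This forces $\operatorname{supp}(Ca) = \emptyset$ and hence $a = 0$, completing the proof.
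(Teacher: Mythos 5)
Your setup is essentially the natural one, and it already contains far more detail than the paper, whose entire proof is the one-line ``Follows at once from the lemma.'' The definition of $L_i$, the verification that it is a $C$-submodule (using that $C$ is a domain), the check that $M_i=M/L_i$ is $z_i'$-torsionfree, and the initial reduction of each $c_i$ to a product $\prod_{j\ne i}z_j^{f_{ij}}$ by stripping the irreducible factors outside $\{z_1,\dots,z_n\}$ (multiplication by those is injective, by the hypothesis on the torsion of $M$) are all correct.

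The genuine gap is in the crux you yourself flag: showing $\bigcap_i L_i=0$. After stripping you have $\Ann_C(a)\supseteq(c_1,\dots,c_n)$ with $c_i=\prod_{j\ne i}z_j^{f_{ij}}$, and you want $a=0$. Your proposed localization argument does not close this: for $P\in\operatorname{supp}(Ca)$, the relation $c_ia=0$ with $z_i\in P$ forces $a_P=0$ only when the remaining irreducibles $z_j$ ($j\ne i$) occurring in $c_i$ are units in $C_P$, i.e.\ lie outside $P$; but a minimal prime over $(c_1,\dots,c_n)$ can perfectly well contain two or more of the $z_j$ simultaneously, which is exactly the obstruction you raised in the previous sentence, not its resolution. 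The ``peel off $K_n$'' induction has a related defect: $M/K_n$ has no $z_n$-torsion but in general still has $z_j$-torsion for $j<n$, so it is \emph{not} $z_n'$-torsionfree and cannot play the role of the $n$-th subdirect factor. A route that actually works when $C$ is a UFD is to reduce, by repeatedly multiplying $a$ by the $z_j$, to a nonzero $b\in M$ annihilated by two non-associate irreducibles $z_i,z_j$; then any prime $\mathfrak p\supseteq(z_i,z_j)$ has height $\ge 2$, hence (by prime avoidance, since $\mathfrak p$ is not contained in any $(w)$) contains infinitely many non-associate irreducibles, each of which annihilates $b$ and therefore must lie in $\{z_1,\dots,z_n\}$ --- contradicting finiteness. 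Over a general Noetherian domain, where irreducibles need not be prime and $(w)$ need not be a prime ideal, this last step still needs justification, and your sketch does not supply it.
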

\begin{proof}
Follows at once from the lemma.
\end{proof}

A closely related result, which we record for reference in future
work:
\begin{prop}\label{subdir2S}
Any Noetherian $\Z$-module is a subdirect product of finitely many
$p'$-torsionfree modules, where $p$ ranges over the prime
numbers.\end{prop}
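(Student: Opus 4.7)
The plan is to reduce directly to Proposition~\ref{subdir1}, so the only real task is to verify its hypothesis for a Noetherian $\Z$-module $M$.

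First, I would note that since $\Z$ is itself Noetherian, a $\Z$-module is Noetherian precisely when it is finitely generated; hence $M$ is a finitely generated abelian group. By the structure theorem for finitely generated abelian groups (which here plays the role of a special case of the theory used elsewhere in the paper),
$$M \;\cong\; \Z^{r}\oplus \bigoplus_{i=1}^{s}\Z/p_i^{k_i}\Z$$
for some non-negative integer $r$ and primes $p_1,\dots,p_s$ (not necessarily distinct). In particular the torsion submodule $\tor(M)$ is \emph{finite}, so its annihilator in $\Z$ is generated by a single nonzero integer $n$, whose prime factorization involves only the finite set of primes $\{p_1,\dots,p_s\}$.

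Consequently, the torsion of $M$ involves only finitely many irreducible elements of $\Z$, which is exactly the hypothesis of Proposition~\ref{subdir1} (noting that the irreducibles of $\Z$ are precisely the primes up to sign). Applying that proposition produces a subdirect decomposition of $M$ into finitely many $z'$-torsionfree modules, where $z$ ranges over those primes. Since for $z = p$ prime, the condition ``$z'$-torsionfree'' of Definition~\ref{tor00} coincides verbatim with ``$p'$-torsionfree'' in the statement of the proposition, we obtain the claimed decomposition.

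The only conceivable obstacle is bookkeeping: confirming that ``irreducible'' in Proposition~\ref{subdir1} truly matches ``prime'' in $\Z$ and that the definitions of $p'$-torsionfree agree. Both are immediate, so there is no real difficulty beyond this translation. Thus the proposition is a direct corollary of Proposition~\ref{subdir1} applied to the finitely generated abelian group structure of $M$.
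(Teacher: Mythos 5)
Your proof is correct. In fact, the paper gives no explicit proof of Proposition~\ref{subdir2S}---it is merely recorded ``for reference in future work'' immediately after Proposition~\ref{subdir1}---so your reduction via the structure theorem for finitely generated abelian groups (Noetherian $\Z$-module $\Rightarrow$ finitely generated $\Rightarrow$ torsion submodule finite $\Rightarrow$ only finitely many primes occur in torsion) followed by an appeal to Proposition~\ref{subdir1} is exactly the intended argument, and the bookkeeping between ``irreducible element of $\Z$'' and ``prime number'' and between $z'$-torsionfree and $p'$-torsionfree is as immediate as you say.
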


\subsection{Homogeneous T-ideals}

Recall that a polynomial is {\bf homogeneous} if for each
indeterminates $x_i$ each of its monomials has the same degree in
the indeterminate $x_i$. Since the degrees grade the free algebra,
any polynomial has a unique decomposition as a sum of homogeneous
polynomials, which we call its {\bf homogeneous components}.
Recall that a T-ideal $\CI$ is {\bf{homogeneous}} if it contains
all of the homogeneous components of each of its polynomials.
\begin{rem}
Although any homogeneous T-ideal is clearly generated by homogeneous polynomials, in general, homogeneous polynomials need not generate a homogeneous T-ideal, because of the vagaries of the quasi-linearization procedure (see \cite[Example~2.2]{BRV4} and \cite[Exercise~13.10]{BR}).
\end{rem}

\begin{defn}\label{5.8}
A set of polynomials $S$ is {\bf{ultra-homogeneous}} if it contains the homogeneous
component of every element in $S$, as well as of the quasi-linearizations of all polynomials in $S$.

The {\bf ultra-homogeneous closure}
$\overline{S}_{\operatorname{uh}}$ of a set $S$ is the intersection of all ultra-homogeneous sets containing it. (The ultra-homogeneous closure of a finite set is finite, since the procedure terminates for
each polynomial after finitely many steps.)

The {\bf homogeneous
socle} $\underline{\mathcal{\CI}}_{\operatorname{soc}}$ of a T-ideal $\CI$ is  the union of homogeneous T-ideals contained in $\CI$.
\end{defn}

Note that any homogeneous T-ideal $\CI$ contains the homogeneous components
of the quasi-linearizations of each of its polynomials, so is automatically ultra-homogeneous.

\begin{prop}\label{5.80}
The T-ideal generated by an ultra-homogeneous set of polynomials
$S$ is homogeneous.
\end{prop}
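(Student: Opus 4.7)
The plan is to establish the following key claim: for every $f \in S$ and every polynomial substitution $\sigma\colon x_i \mapsto g_i$, each multi-homogeneous component of $\sigma(f) = f(g_1, g_2, \dots)$ lies in $\langle S \rangle$, the T-ideal generated by $S$. Granted this, the proposition follows routinely: any $h \in \langle S \rangle$ is a sum of expressions $r\,\sigma(f)\,r'$ with $r, r' \in C\{x\}$ and $f \in S$, and after decomposing $r = \sum_a r_a$, $r' = \sum_c r'_c$ into their homogeneous parts, the multi-homogeneous components of $r\,\sigma(f)\,r'$ become sums of $r_a \sigma(f)_b r'_c$, each of which lies in $\langle S \rangle$ by the ideal property together with the claim applied to $\sigma(f)_b$. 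Moreover, because $S$ is ultra-homogeneous, it contains every homogeneous component of every $f \in S$, so in proving the claim one may assume from the start that $f$ is homogeneous.

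For the claim itself I use a double induction. The outer induction is on the degree-multiset $\{\deg_{x_i} f\}$ of $f$, ordered by comparing the sorted decreasing tuples lexicographically; the inner induction is on the total number of homogeneous summands in a fixed decomposition $g_i = \sum_{j} g_{i,j}$. The outer base case is $f$ multilinear: by multilinearity, $\sigma(f) = \sum_{\vec{j}} f(g_{1,j_1}, \dots, g_{n,j_n})$, every summand is itself multi-homogeneous (since each $g_{i,j_i}$ is) and belongs to the T-ideal $T(f) \subseteq \langle S \rangle$, so each multi-homogeneous component of $\sigma(f)$ is a partial sum of such terms and lies in $\langle S \rangle$. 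The inner base case, in which every $g_i$ is itself homogeneous, is immediate: $\sigma(f)$ is multi-homogeneous on the nose.

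For the joint inductive step, choose some $i_0$ for which $g_{i_0}$ has at least two homogeneous summands, and write $g_{i_0} = g'_{i_0} + g''_{i_0}$ where each of $g'_{i_0}, g''_{i_0}$ has strictly fewer summands. Because the paper works throughout with blended polynomials, $f$ vanishes whenever any variable is specialized to $0$, and the defining identity of $\Delta_{i_0} f$ then yields
\[
f(\dots, g'_{i_0} + g''_{i_0}, \dots)
 = f(\dots, g'_{i_0}, \dots)
 + f(\dots, g''_{i_0}, \dots)
 + \Delta_{i_0} f(\dots, g'_{i_0}, g''_{i_0}, 0, \dots, 0, \dots).
\]
The first two terms involve strictly fewer summands in the $i_0$-slot, so their multi-homogeneous components are in $\langle S \rangle$ by the inner induction. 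The last term is a substitution of $\Delta_{i_0} f$; because $S$ is ultra-homogeneous, every homogeneous component of $\Delta_{i_0} f$ already lies in $S$, and each such component has strictly smaller degree-multiset than $f$ (in $\Delta_{i_0} f$ every $x_{i_0,j}$ has degree at most $d_{i_0} - 1$, so one copy of $d_{i_0}$ is removed from the tuple), so the outer induction places the multi-homogeneous components of its substitutions in $\langle S \rangle$. Summing yields the claim for $\sigma(f)$.

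The subtlety I expect to have to be most careful about is verifying that the iterative closure defining ``ultra-homogeneous'' really does furnish all of the partial linearizations and their homogeneous components needed at each step of the induction, as recorded by the parenthetical remark following Definition~\ref{5.8}. The blending hypothesis adopted throughout the paper is also essential: it is what allows a $d_{i_0}$-fold partial linearization $\Delta_{i_0} f$ to be collapsed into the required $2$-fold split by zeroing out the unused slots.
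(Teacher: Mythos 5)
Your proof is correct and takes essentially the same approach as the paper's: both rest on the $i_0$-partial linearization identity (using blendedness of $f$ to zero out the unused slots) together with ultra-homogeneity, which guarantees that the homogeneous components of the partial linearizations already lie in $S$. The paper compresses your double induction (on the degree-multiset of $f$ and the number of homogeneous summands of the substitution) into the terse remark that ``by definition of quasi-linearization, it is enough to check this for monomial substitutions,'' so you have simply made the well-founded induction explicit.
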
 \begin{proof}
We need to show that the homogeneous components of any
substitution remain in the T-ideal $\CI$ generated by
$S = \overline{S}_{\operatorname{uh}}$. By definition of
quasi-linearization, it is enough to check this for monomial
substitutions. But these are specializations of substitutions of
letters (taking a different letter for each monomial), and thus
are specializations of the homogeneous components of the
quasi-linearizations, which by definition are in~$\CI$.
\end{proof}

In particular
every set of multilinear identities generate a homogenous
T-ideal.

\begin{cor}\label{quasi10}
Let ${\mathcal V}$ be a variety satisfying the ACC on homogeneous T-ideals.
Then every homogeneous $T$-ideal is finitely based.
\end{cor}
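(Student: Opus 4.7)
My plan is to reduce the corollary to the ACC hypothesis by exhibiting every homogeneous T-ideal as a directed union of homogeneous T-ideals, each of which is finitely based by construction. The bridge between ``homogeneous T-ideal'' and ``finite generating set'' will be Proposition~\ref{5.80}, which guarantees that the T-ideal generated by an ultra-homogeneous set of polynomials is automatically homogeneous.

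Given a homogeneous T-ideal $\CI$ in the variety $\mathcal V$, I would first observe that $\CI$ is itself ultra-homogeneous, by the remark recorded right after \Dref{5.8}. Consequently, for any finite subset $S \subseteq \CI$, the ultra-homogeneous closure $\overline{S}_{\operatorname{uh}}$ is a finite subset of $\CI$ (using the parenthetical observation in \Dref{5.8} that the ultra-homogeneous closure of a finite set is finite). Setting $\CJ_S := \ideal{\overline{S}_{\operatorname{uh}}}$, Proposition~\ref{5.80} then guarantees that $\CJ_S$ is a homogeneous T-ideal contained in $\CI$, and it is finitely based as a T-ideal by construction.

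Next I would verify that $\CI = \bigcup_S \CJ_S$, since trivially $f \in \CJ_{\{f\}}$ for every $f \in \CI$, and that the family $\{\CJ_S\}$, indexed by finite subsets of $\CI$, is directed because $\CJ_{S_1} \cup \CJ_{S_2} \subseteq \CJ_{S_1 \cup S_2}$. The final step is to apply the ACC hypothesis on homogeneous T-ideals to this directed system: if no single $\CJ_S$ equalled $\CI$, then iteratively choosing $f_n \in \CI \setminus \CJ_{S_n}$ and enlarging to $S_{n+1} := S_n \cup \{f_n\}$ would produce a strictly ascending chain $\CJ_{S_1} \subsetneq \CJ_{S_2} \subsetneq \cdots$ of homogeneous T-ideals in $\mathcal V$, contradicting the ACC hypothesis. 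Hence $\CI = \CJ_{S_0}$ for some finite $S_0 \subseteq \CI$, and since $\overline{S_0}_{\operatorname{uh}}$ is a finite set, $\CI$ is finitely based as a T-ideal.

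The only delicate point is the careful invocation of Proposition~\ref{5.80}, which requires both that $\overline{S}_{\operatorname{uh}}$ is finite and that it is contained in $\CI$; the extraction of a strict ascending chain from a non-stabilizing directed union is a routine argument via directedness. I do not anticipate any serious obstacle.
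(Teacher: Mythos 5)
Your argument is correct and is exactly the argument the paper intends; the paper's one-line proof ("the ultrahomogeneous closure of a finite set of polynomials is finite") is merely a pointer to the same chain of reasoning — take the ultra-homogeneous closure of a finite subset (finite, and contained in $\CI$ since a homogeneous T-ideal is ultra-homogeneous), apply Proposition~\ref{5.80} to get a finitely based homogeneous sub-T-ideal, and then run the standard ACC-to-finite-generation argument inside the class of homogeneous T-ideals. You have simply filled in the details the paper leaves implicit.
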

\begin{proof}The ultrahomogeneous closure of a finite set of
polynomials is finite.
\end{proof}

Let $z \in C$ be any nonzero element. Our overall goal would be to
prove formally that if every field is $\mathcal V$-Specht then
every commutative Noetherian ring  is $\mathcal V$-Specht.
Unfortunately, this is not quite in our grasp, since one detail
still relies on associativity. We can prove the following theorem:

\begin{thm}\label{firstred}
Let $\mathcal V$ be a variety of algebras such that every field is
$\mathcal V$-Specht. If an integral domain $C$ is almost $\mathcal
V$-Specht, and $\mathcal V$-Specht with respect to homogeneous
T-ideals, then $C$ is $\mathcal V$-Specht.
\end{thm}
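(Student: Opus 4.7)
Plan: My plan is to sandwich the chain $\{\CI_j\}$ between two converging envelopes---a homogeneous upper envelope controlled by the homogeneous $\mathcal V$-Specht hypothesis and a field-of-fractions lower envelope controlled by the field hypothesis---and then eliminate the residual $C$-torsion via the almost $\mathcal V$-Specht hypothesis.

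First I would reduce to the case that $C$ is infinite: a finite integral domain is a field, and fields are $\mathcal V$-Specht by standing hypothesis. So let $C$ be an infinite integral domain with field of fractions $K$, and let $\CI_1 \subseteq \CI_2 \subseteq \cdots$ be an ascending chain of PI-proper T-ideals in the relatively free algebra $A := C\{x\}/\assoc$, with $\CI_j$ generated as a T-ideal by $f_1,\ldots,f_j$.

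Next I would form two auxiliary chains. From above, let $\overline{\CI_j}$ denote the smallest homogeneous T-ideal of $A$ containing $\CI_j$. By the homogeneous Specht hypothesis the chain $\{\overline{\CI_j}\}$ of homogeneous T-ideals stabilizes at some index $j_1$. From below, extend scalars to obtain the chain $\{K\CI_j\}$ of T-ideals in $A_K := A \otimes_C K$; since $K$ is a field, this stabilizes at some $j_2$. Setting $j_0 := \max(j_1,j_2)$, for $j > j_0$ we have $\CI_j \subseteq \overline{\CI_{j_0}}$ (a fixed homogeneous T-ideal), and clearing denominators in $K\CI_j = K\CI_{j_0}$ gives, for every $f \in \CI_j$, some $0 \ne c_f \in C$ with $c_f f \in \CI_{j_0}$. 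Hence $\CI_j/\CI_{j_0}$ is $C$-torsion.

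Since $\CI_j$ is T-generated by the finitely many $f_i$, and each class $\bar f_i \in \CI_j/\CI_{j_0}$ is $C$-torsion, the product of the individual $C$-annihilators yields a single nonzero $c_j \in C$ with $c_j\CI_j \subseteq \CI_{j_0}$. I would now apply the almost Specht hypothesis to $C/c_jC$: the companion chain $\{(\CI_j + c_jA)/c_jA\}$ of T-ideals over the $\mathcal V$-Specht ring $C/c_jC$ stabilizes, at some index $j_3$. Combining this with $c_j\CI_j \subseteq \CI_{j_0}$ forces the decomposition $\CI_j = \CI_{j_3} + (\CI_j \cap c_jA)$ for $j$ large, with the residual piece $(\CI_j \cap c_jA)$ controlled by the chain $\{(\CI_j : c_j)\}$ lying inside the fixed homogeneous T-ideal $(\overline{\CI_{j_0}} : c_j)$.

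The main obstacle I expect is bundling these stabilizations uniformly, since the annihilators $c_j$ may vary with $j$ and the ``$c$-torsion companion chain'' $\{(\CI_j : c_j)\}$ must itself be stabilized. My plan to overcome this is a Noetherian induction on the ideal structure of $C$: each iteration either strictly reduces a homogeneous closure (which cannot happen infinitely often by the homogeneous Specht hypothesis) or strictly enlarges a T-ideal in a proper quotient $C/I$ (which cannot happen infinitely often by the almost Specht hypothesis applied to that quotient). Since $C$ itself is Noetherian, the combined termination closes the loop and yields ACC for the original chain $\{\CI_j\}$.
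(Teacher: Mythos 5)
Your strategy — bounding the chain from above by homogeneous closures (handled by the homogeneous Specht hypothesis) and from below by localization at $K = \operatorname{Frac}(C)$ (handled by the field hypothesis), then observing that the residual $\CI_j/\CI_{j_0}$ is $C$-torsion — is a natural one, and the derivation that $c_j\CI_j \subseteq \CI_{j_0}$ for some nonzero $c_j$ (via $c_if_i \in \CI_{j_0}$ and the T-ideal factorization $c\cdot g\sigma(f_i)g' = (c/c_i)\cdot g\,\sigma(c_if_i)\,g'$) is correct. However, the final step does not close.

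The genuine gap is the elimination of the $j$-dependence in $c_j$. After identifying a $c_j$ with $c_j\CI_j \subseteq \CI_{j_0}$, the companion chain you form, $\left\{(\CI_j + c_jA)/c_jA\right\}$, is not a single ascending chain of T-ideals in a fixed quotient ring $C/cC$ — each term lives over a potentially different ring $C/c_jC$, so the almost-Specht hypothesis cannot be applied to it as a chain. Nor is there any a priori bound: the union $\CI_\infty = \bigcup_j\CI_j$, while sandwiched between $\CI_{j_0}$ and the stabilized homogeneous closure, need not have $C$-annihilator $\neq 0$ modulo $\CI_{j_0}$, since the affine relatively free algebra is not Noetherian as a $C$-module. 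Your concluding Noetherian induction is not well-founded as stated: homogeneous closures of an ascending chain can only grow (they do not ``strictly reduce''), and an ascending chain of T-ideals living over a \emph{varying} family of quotients $C/I_j$ provides no terminating descent on the ideal lattice of $C$. What the paper does instead is attach to each $\Gamma$ a \emph{single} distinguished element $z_\Gamma$ — the one maximizing the ultra-homogeneous closure of the saturation $(\Gamma : z)$ — and proves that the resulting saturation $\CJ(\Gamma) = (\Gamma : z_\Gamma)$ is forced (via Lemma~\ref{first1} and the homogeneous-socle argument) to be homogeneous. Only because the $\CJ(\Gamma_j)$ form a chain of genuinely homogeneous T-ideals does the stabilization over a \emph{single} $\hat z = \prod_{j\le i} z_{\Gamma_j}$ become available, making the reduction to the fixed quotient $C/\hat zC$ legitimate. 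Your argument is missing precisely this device for selecting one $z$ that governs the whole chain.
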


Taking $\mathcal V$ to be the class of associative algebras, we
conclude by proving that if a Noetherian ring $C$ is almost
Specht and every field is Specht, then $C$ is Specht with
respect to homogeneous T-ideals.

This will affirm Specht's problem for  affine
PI-algebras over an arbitrary Noetherian ring, and together with
Theorem~\ref{SpechtNoeth1} below will affirm Specht's problem for
arbitrary  affine algebras over a  Noetherian ring.

We deal with the reduction for other varieties in a subsequent paper.

\subsubsection{Proof of Theorem~\ref{firstred}}

Although we are working in the context of associative algebras, the  proof of~Theorem~\ref{firstred}  also works
analogously for nonassociative algebras.

\begin{lem}\label{quasi1Lem} Suppose
$\CI$ is a T-ideal, and $f = \sum f_i\in \CI$ has total degree
$n$ (where $f_i$ are the homogeneous components). Then for every
Vandermonde determinant $d$ of order $n$,
$d \overline{\langle f\rangle}_{\operatorname{uh}} \subseteq \CI$.\end{lem}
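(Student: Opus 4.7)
The approach is the classical Vandermonde interpolation trick modulo a T-ideal, coupled with the observation that the ultra-homogeneous closure operations can be reorganized so that only a single uniform application of the trick is ever needed.

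First I would apply the Vandermonde trick to $f$ itself. Picking scalars $\alpha_1,\ldots,\alpha_n \in C$ producing the chosen Vandermonde determinant $d$, the substitution $\vec x \mapsto \alpha_k \vec x$ preserves $\CI$, so
\[
f(\alpha_k \vec x) \;=\; \sum_i \alpha_k^{i}\, f_i(\vec x) \;\in\; \CI, \qquad k=1,\ldots,n.
\]
Treating this as a $C$-linear system in the unknowns $f_i$ with Vandermonde coefficient matrix and solving by Cramer's rule gives $d f_i = \sum_k c_{ki} f(\alpha_k\vec x) \in \CI$ for explicit cofactors $c_{ki}\in C$.

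Next I would reduce the iterated closure to a single layer of Vandermonde extraction. The crucial observation is that each quasi-linearization operator $\Delta_j$ is a $C$-linear map on the free algebra that preserves the total-degree grading: substituting $x_j \mapsto x_{j,1}+\cdots+x_{j,d_j}$ replaces each letter $x_j$ by a sum of letters of degree $1$, so no monomial changes total degree. Hence $\Delta_j$ commutes with the decomposition into homogeneous components, $(\Delta_j g)_i = \Delta_j(g_i)$, and any homogeneous polynomial is its own sole nontrivial homogeneous component. Iterating these two facts shows that every element of $\overline{\langle f\rangle}_{\operatorname{uh}}$ is either an iterated quasi-linearization $Qf$ or a single homogeneous component $(Qf)_i$ of such, where $Q$ denotes a composition of the $\Delta_j$'s.

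For $h = Qf$, one has $h \in \CI$ directly (each $\Delta_j$ is a sum of T-ideal substitutions applied to the element $f\in \CI$), so $dh \in \CI$. For $h = (Qf)_i$, the polynomial $Qf$ still has total degree $n$ and lies in $\CI$; running the Vandermonde argument of the first step with $Qf$ in place of $f$, using the same scalars $\alpha_1,\ldots,\alpha_n$, yields
\[
d h \;=\; d (Qf)_i \;=\; \sum_k c_{ki}\, Qf(\alpha_k\vec x) \;\in\; \CI.
\]
The principal obstacle is verifying the commutation of $\Delta_j$ with homogeneous decomposition; this is exactly what ensures a single Vandermonde factor $d$ of order $n$ works uniformly throughout the closure, rather than accumulating to $d^r$ where $r$ is the depth of the closure construction.
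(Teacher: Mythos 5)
Your overall strategy (Vandermonde once, then rearrange the closure) is sound and in fact goes further than the paper's one-line proof, which only argues that the homogeneous components of $f$ itself land in $\CI$ after multiplication by $d$ and leaves the iteration over the full ultra-homogeneous closure implicit. However, the commutation claim $(\Delta_j g)_i = \Delta_j(g_i)$, which you identify as the crux, does not hold: $\Delta_j$ is not a single fixed $C$-linear operator, because the number of new variables it introduces is $d_j = \deg_j g$, and this count changes when you pass to a homogeneous component $g_i$ (where $\deg_j g_i$ may be strictly smaller). For a concrete failure, take $g = x^2 + x^3$, so $\deg_x g = 3$; then
\[
(\Delta_x g)_2 \;=\; \bigl[(y_1+y_2+y_3)^2 - y_1^2 - y_2^2 - y_3^2\bigr] \;=\; 2(y_1y_2 + y_1y_3 + y_2y_3),
\]
whereas $g_2 = x^2$ has $\deg_x g_2 = 2$, so
\[
\Delta_x(g_2) \;=\; (y_1+y_2)^2 - y_1^2 - y_2^2 \;=\; 2y_1y_2.
\]
These are not equal, so your structural claim that every element of $\overline{\langle f\rangle}_{\operatorname{uh}}$ has the form $Qf$ or $(Qf)_i$ fails already at depth two.

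The conclusion still holds with a single Vandermonde factor, but the organization has to be different. What actually saves the day is the other observation you make, namely that $\Delta_j$ preserves total degree, combined with the fact that $\CI$ is a T-ideal: once Vandermonde gives $d\, g_i \in \CI$ for a homogeneous component $g_i$, any subsequent quasi-linearization satisfies $d\,\Delta_j(g_i) = \Delta_j(d\, g_i) \in \CI$ (the scalar $d$ passes through the substitution defining $\Delta_j$, and $\CI$ is closed under such substitutions), and since $\Delta_j(g_i)$ is again total-degree homogeneous, further extraction of homogeneous components is trivial and contributes no new Vandermonde factor. So after the first layer of component-extraction, the closure process consists only of quasi-linearizations of homogeneous polynomials, each of which is handled by the T-ideal property alone. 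This is close in spirit to what you wrote, but it does not rest on the false commutation identity; it is the degree-preservation, not the commutation, that prevents the accumulation $d^r$.
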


\begin{proof} Substitute $\lambda_{j} x_j$ for $x_j$, for fixed $j$.
Let $d$ denote the determinant of the Vandermonde matrix
$(\lambda_{j}^i)$ and $i = 1,\dots,n$. The homogeneous components
of $d f$ are in $\CI$, by the usual Vandermonde argument of
multiplying by the adjoint matrix.
\end{proof}

\begin{lem}\label{first1} Suppose $C$ is
$\mathcal V$-Specht with respect to homogeneous T-ideals, and $\mathcal I $ is a proper T-ideal that properly contains
its homogeneous   socle   $\mathcal I_0 .$ Then $\mathcal I $
contains a homogeneous T-ideal of the form $
d\mathcal I_1 ,$ where $\mathcal I_1\supset \mathcal I_0$ is a
finitely based, proper T-ideal. (Here $d$ is a product of
Vandermonde determinants.) \end{lem}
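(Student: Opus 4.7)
\textbf{Proof plan for \Lref{first1}.}

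My plan is to isolate one polynomial in $\mathcal I \setminus \mathcal I_0$, pass to its ultra-homogeneous closure to manufacture a homogeneous T-ideal, and absorb the cost of homogenization into a single Vandermonde factor $d$ via \Lref{quasi1Lem}.

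First I would choose $f \in \mathcal I \setminus \mathcal I_0$ and decompose $f = \sum_i f_i$ into its homogeneous components. Since $\mathcal I_0$ is additively closed, the hypothesis $f \notin \mathcal I_0$ forces at least one component $f_{i_0} \notin \mathcal I_0$. Next, I would form the ultra-homogeneous closure $\overline{\{f\}}_{\operatorname{uh}}$ of \Dref{5.8}, which is a \emph{finite} set containing $f$, all of its homogeneous components (in particular $f_{i_0}$), every quasi-linearization of $f$, and the homogeneous components of these. Set
\[
\mathcal I_1 \;=\; \bigl\langle\, \mathcal I_0 \cup \overline{\{f\}}_{\operatorname{uh}} \,\bigr\rangle^T.
\]
Because $\mathcal I_0$, being a homogeneous T-ideal, is automatically ultra-homogeneous (cf.\ the remark just before \Dref{5.8}) and $\overline{\{f\}}_{\operatorname{uh}}$ is ultra-homogeneous by construction, Proposition~\ref{5.80} yields that $\mathcal I_1$ is a homogeneous T-ideal.

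The strict containment $\mathcal I_1 \supsetneq \mathcal I_0$ is immediate from $f_{i_0} \in \mathcal I_1 \setminus \mathcal I_0$. The hypothesis that $C$ is $\mathcal V$-Specht with respect to homogeneous T-ideals then gives that $\mathcal I_1$ is finitely based. To produce the Vandermonde factor, I would invoke \Lref{quasi1Lem}: with $n = \deg f$ and $d$ a Vandermonde determinant of order $n$ in fresh commuting indeterminates $\lambda_1, \dots, \lambda_n$, one has $d\,\overline{\{f\}}_{\operatorname{uh}} \subseteq \mathcal I$. Combined with the trivial $d\mathcal I_0 \subseteq \mathcal I$, and using the fact that $d$ commutes with every substitution and multiplication in the $x$-variables, this inclusion propagates to $d\mathcal I_1 \subseteq \mathcal I$. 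Since $d$ is homogeneous in the $\lambda$'s and each generator of $\mathcal I_1$ is homogeneous in the $x$'s, $d\mathcal I_1$ is itself a homogeneous T-ideal sitting inside $\mathcal I$, as required.

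The main obstacle is verifying that $\mathcal I_1$ remains \emph{proper}. If instead $\mathcal I_1 = C\{x\}$, then $d = d\cdot 1 \in d\mathcal I_1 \subseteq \mathcal I$; substituting distinct scalars for the $\lambda_j$ (legitimate since, by \Rref{Spechtobs2}, we are already in the case that $C$ is an integral domain, passing to its fraction field if necessary) would place a nonzero $c \in C \cap \mathcal I$. The resulting inclusion $cC\{x\} \subseteq \mathcal I$ would let us reduce modulo the nonzero ideal $cC$ of $C$ and invoke the almost $\mathcal V$-Specht hypothesis, ultimately contradicting the original assumption that $\mathcal I$ properly contains its homogeneous socle. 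Arranging this properness bookkeeping cleanly is the technical heart of the lemma; once in place, the combination of homogeneity, strict containment over $\mathcal I_0$, finite basis, properness of $\mathcal I_1$, and $d\mathcal I_1 \subseteq \mathcal I$ completes the proof.
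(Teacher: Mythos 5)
Your construction of $\mathcal I_1$ and the use of \Lref{quasi1Lem} to produce the Vandermonde factor $d$ track the paper's argument exactly: the paper also takes $f\in\mathcal I\setminus\mathcal I_0$, iterates \Lref{quasi1Lem} through all stages of the quasi-linearization (as you do implicitly via the ultra-homogeneous closure), and concludes $d\,\overline{\langle f\rangle}_{\operatorname{uh}}\subseteq\mathcal I$. Your additional step of verifying homogeneity of $\mathcal I_1$ via Proposition~\ref{5.80} is correct and simply makes explicit what the paper leaves implicit.

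Where you diverge is in the treatment of ``proper,'' and I think you have misread the word. In this section (cf.\ Definition~\ref{Spechtobs22}) ``proper'' is shorthand for ``PI-proper,'' i.e.\ the coefficient ideal equals $C$ — this is what licenses invoking the $\mathcal V$-Specht hypothesis, which by definition only applies to PI-proper T-ideals. The paper handles this by simply choosing $f$ to be a \emph{proper polynomial} (one whose coefficient ideal is all of $C$), which automatically forces $\mathcal I_1\supseteq\langle f\rangle^T$ to be PI-proper. You instead interpret ``proper'' as $\mathcal I_1\neq C\{x\}$ and run a contradiction argument, but that argument does not close: from $cC\{x\}\subseteq\mathcal I$ you cannot conclude anything against $\mathcal I\supsetneq\mathcal I_0$, since $cC\{x\}$ is a homogeneous T-ideal and so lands inside $\mathcal I_0$ without tension. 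More importantly, showing $\mathcal I_1\neq C\{x\}$ would not help you apply the Specht hypothesis anyway — you need PI-properness. The fix is small: replace your properness paragraph with the observation that one may choose $f$ to additionally have coefficient ideal $C$ (possible since $\mathcal I$ itself is PI-proper by hypothesis, cf.\ \Lref{SpechtN2}), after which $\mathcal I_1$ is PI-proper by construction. With that amendment your argument is the paper's argument.
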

\begin{proof} Take a proper polynomial $f\in \mathcal I \setminus  \mathcal I_0$.
By Lemma~\ref{quasi1Lem} there
is $0 \ne d_1 \in C$ such that $d_1 f_i \in \mathcal I $, where
$f_i$ are the homogeneous components of the quasi-linearizations
of $f$. Continuing with the quasi-linearization procedure, which
is finite, we see by induction that there is some $d'$ such that
$d' g_{i,j} \in \mathcal I $, for each component $g_{i,j} $ in the
various quasi-linearizations, implying $d'd_1 \overline{\langle f
\rangle}_{\operatorname{uh}} \subseteq \mathcal I$, as desired.
\end{proof}

Note that we had to take the ultra-homogeneous closure of
a polynomial, and not a T-ideal, to remain with a finite set of polynomials.
We are ready to prove Theorem~\ref{firstred}.
\begin{proof}
Let $A$ be an algebra in $\mathcal V$ . We introduce some notation: Given $z\in C$ and a T-ideal $\Gamma $, we write
$\CI_\Gamma(z)$ for the kernel of the composite map $A \to zA \to
zA/(\Gamma \cap zA)$. 
 By hypothesis we can take $z\in C$ with
$\overline{\CI_\gamma(z)}_{\operatorname{uh}} $ maximal, and we
write $z_\Gamma $ for $z$ and $\CJ(\Gamma)$ for
$\CI_\gamma(z)$. $\CJ(\Gamma) =
\underline{\CJ(\Gamma)}_{\operatorname{soc}}$, since otherwise we
could use Lemma \ref{first1} to increase $\underline{\CJ(\Gamma)}_{\operatorname{soc}}$,
contrary to its definition. 
Hence, $\CJ(\Gamma)$ is already homogeneous.\medskip

Given a chain $\Gamma_1 \subseteq \Gamma_2 \subseteq \cdots$ of
T-ideals of $A,$ we see by the hypothesis on homogeneous
T-ideals that there is $i $ such that $\CJ(\Gamma_j) =
\CJ(\Gamma_i)$ for all $j\ge i.$ Write $\hat z = \prod
_{j\le i} z_{\Gamma_j}$. Then the $\CJ(\Gamma_j) \cap \hat
z A$ also stabilize. If some $\Gamma_j \cap \hat z A$ properly
contains $\CJ(\Gamma_i) \cap \hat z A$ then it has a
(nonhomogeneous) polynomial $f$ and thus contains $\hat z
\overline{f}_{\operatorname{uh}}$, which is impossible unless
$\hat z C$ is a proper ideal of $C$.

But $\mathcal I_j(A)/\hat z \mathcal I_j(A)$ is a T-ideal over
$C/\hat z C$, so we conclude by Noetherian induction.
\end{proof}

\bigskip

\subsection{Conclusion of the solution of Specht's problem for arbitary affine
PI-algebras over Noetherian rings}
$ $

We start with some general considerations that can be used for arbitrary varieties.
The following well-known fact is a key ingredient, yielding a tool
for applying Noetherian induction.
\begin{lem}[Baby Fitting Lemma]\label{lem3} Let $M$ be a $C$-module, with $z\in C,$ and take any $k\in \N$. 
Suppose 
$\Ann_M(z^{k+1}) \sub \Ann_M(z^k)$. Then $z^k M \cap \Ann_M(z) =
0$.
\end{lem}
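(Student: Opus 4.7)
The plan is very short since this is essentially a one-line verification. I would prove it by direct chasing of elements, using the hypothesized stabilization of the ascending chain of annihilators at stage $k$.

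First I would unpack the hypothesis. Since $\mathrm{Ann}_M(z^k) \subseteq \mathrm{Ann}_M(z^{k+1})$ holds automatically for every $k$, the assumption $\mathrm{Ann}_M(z^{k+1}) \subseteq \mathrm{Ann}_M(z^k)$ is really an equality: $\mathrm{Ann}_M(z^k) = \mathrm{Ann}_M(z^{k+1})$. This is the only content of the hypothesis and must be what drives the conclusion.

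Next, I would take an arbitrary element $a \in z^k M \cap \mathrm{Ann}_M(z)$ and show $a = 0$. Write $a = z^k b$ for some $b \in M$. Since $a \in \mathrm{Ann}_M(z)$, we have $0 = za = z \cdot z^k b = z^{k+1} b$, so $b \in \mathrm{Ann}_M(z^{k+1})$. By the equality $\mathrm{Ann}_M(z^{k+1}) = \mathrm{Ann}_M(z^k)$, this forces $b \in \mathrm{Ann}_M(z^k)$, i.e., $z^k b = 0$. Therefore $a = z^k b = 0$, as required.

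There is no real obstacle; the only subtle point is recognizing that the stated containment is actually the equality of two consecutive annihilators in the ascending chain, which is precisely the Fitting-style stabilization needed to separate the image $z^k M$ from the kernel of $z$. The proof is a single line once this is noted.
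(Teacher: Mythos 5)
Your proof is correct and is essentially the paper's own argument: take $z^k b \in \Ann_M(z)$, deduce $z^{k+1}b = 0$, and apply the hypothesis to conclude $z^k b = 0$. The only difference is cosmetic (you explicitly note the hypothesized inclusion is an equality, and you name the element $a = z^k b$, whereas the paper writes $z^k a$ directly).
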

\begin{proof}
If $z^k a \in \Ann_M(z)$, then $z^{k+1}a = 0$, implying $z^k a =
0$ by assumption.
\end{proof}

We also need some easy facts from module theory.

\begin{lem}\label{easy1}
Let $M,N$ be modules over a commutative ring $C$. Let $f \co M \ra N$ be a homomorphism of modules.

(i) For every $z,z' \in C$, if the induced homomorphisms $f' \co
M/z'M \ra N/z'N$ and $f'' \co z'M/zz'M \ra z'N/zz'N$ are 1:1, then
so is the induced homomorphism $$f''' \co M/zz'M \ra N/zz'N.$$

(ii) If the induced homomorphisms $z^iM/z^{i+1}M \ra z^iN/z^{i+1}N$ are 1:1 for every $0\leq i < k$, then the induced homomorphism $M/z^{k}M \ra N/z^{k}N$ is 1:1 as well.
\end{lem}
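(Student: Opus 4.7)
The plan is to prove (i) by a direct element chase, then deduce (ii) by induction on $k$ using (i) at each step. Both induced maps $f'$ and $f''$ are well defined because $f$ is $C$-linear, so $f(z'M)\subseteq z'N$ and $f(zz'M)\subseteq zz'N$.

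For (i), I start with any $m\in M$ whose image in $N/zz'N$ vanishes, i.e.\ $f(m)\in zz'N$. First I forget some information: since $zz'N\subseteq z'N$, the image of $m$ in $N/z'N$ is zero, so injectivity of $f'$ forces $m\in z'M$. Write $m=z'm_1$ for some $m_1\in M$. Then viewing $m=z'm_1$ as an element of $z'M$, its image under $f''$ in $z'N/zz'N$ is the class of $z'f(m_1)=f(m)$, which is zero by hypothesis. Injectivity of $f''$ now yields $m=z'm_1\in zz'M$, as required.

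For (ii), I induct on $k$. The case $k=0$ is trivial (both modules are zero), and $k=1$ is immediate from the hypothesis. For the inductive step, assume $M/z^{k-1}M\to N/z^{k-1}N$ is injective. Apply part (i) with the given $z$ and with $z'$ replaced by $z^{k-1}$: the first hypothesis of (i) holds by the inductive assumption, and the second hypothesis $z^{k-1}M/z^{k}M\to z^{k-1}N/z^{k}N$ is exactly the given hypothesis for index $i=k-1$. Thus (i) produces injectivity of $M/z^{k}M\to N/z^{k}N$, completing the induction.

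There is no genuine obstacle here; the only thing to double-check is that all the induced maps make sense as $C$-module homomorphisms, which is routine. The lemma is essentially the diagram-chase formalization of the snake/five-lemma intuition that injectivity on successive graded quotients of the $z$-adic filtration lifts to injectivity modulo $z^{k}$, and will be used in the subsequent Noetherian-induction arguments on the torsion index.
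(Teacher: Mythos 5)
Your proof is correct and follows exactly the paper's argument: part (i) is the same two-step element chase through $f'$ then $f''$, and part (ii) is the same induction on $k$ applying part (i) with $z'=z^{k-1}$. No discrepancy.
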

\begin{proof}
(i) If $a \in \ker f''',$ then $a +z'M \in \ker f' = 0,$ implying
$a \in \ker f'' = 0$.

(ii) Induction on $k$, taking $z' = z^{k-1}$ in the previous lemma.

\end{proof}

Let $S$ denote the monoid generated in $C$ by $z$. Recall that
$$S^{-1}M = \set{s^{-1}a \suchthat s \in S, a \in M},$$ where
$s^{-1}a=s'^{-1}a'$ if there is $s_0 \in S$ such that
$s_0(s'a-sa')=0$. In particular $s^{-1}a=0$ if there is $s_0 \in S$ such that $s_0a=0$.
\begin{lem}\label{L3}
Let $f \co M \ra N$ be a homomorphism of $C$-modules, and let $z \in C$. Let $f' \co S^{-1}M \ra S^{-1}N$ and $f_i \co M/z^iM\ra N/z^{i}N$ be the induced homomorphisms, where $S$ is the monoid generated by $z$.
 \begin{enumerate}
\item Assume $N$ has $z$-torsion index $k$.
If every $f_i$ is one-to-one and $f'$ is onto, then the restriction $f|_{z^kM} \co {z^kM} \to {z^kN}$ is onto.
\item\label{L3.2} Assume $M$ has $z$-torsion index $k$. If $f'$ is one-to-one, then the restriction $f|_{z^kM} \co {z^kM} \to {z^kN}$ is one-to-one.
\item\label{L3.3} Assume $M$ has $z$-torsion index $k$. If $f_k$ and $f'$ are one-to-one, then $f$ is one-to-one.
\end{enumerate}
\end{lem}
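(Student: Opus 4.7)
The three parts share the setup that the $S$-torsion kernel of the canonical map $N \to S^{-1}N$ equals $\bigcup_m \Ann_N(z^m) = \Ann_N(z^k)$ (and similarly for $M$), since the torsion chain stabilizes at $k$. Parts (2) and (3) follow quickly; part (1) contains the main idea, so I would begin there.

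For part (1), fix $b \in z^k N$ and write $b = z^k c$ for some $c \in N$. Using that $f'$ is onto, pick $a_0 \in M$ and integers $\ell, j \geq 0$ with $z^\ell f(a_0) = z^{\ell+j} c$ in $N$. Multiply through by $z^k$ to get $f(z^{\ell+k} a_0) = z^{\ell+j+k} c \in z^{\ell+j+k} N$. The hypothesis that $f_{\ell+j+k}$ is one-to-one (this is the place where I use that \emph{every} $f_i$ is injective, not merely $f_k$) then gives $z^{\ell+k} a_0 \in z^{\ell+j+k} M$; write $z^{\ell+k} a_0 = z^{\ell+j+k} a_3$ for some $a_3 \in M$. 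Apply $f$ and subtract: $z^{\ell+j+k}(f(a_3) - c) = 0$, so $f(a_3) - c \in \Ann_N(z^{\ell+j+k}) = \Ann_N(z^k)$ by the torsion-index hypothesis on $N$. Hence $z^k(f(a_3) - c) = 0$, i.e., $f(z^k a_3) = z^k c = b$, with $z^k a_3 \in z^k M$, proving surjectivity of $f|_{z^k M}$ onto $z^k N$.

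For part (2), suppose $a \in z^k M$ with $f(a) = 0$. Then $a/1 \in \ker f' = 0$, so $z^m a = 0$ for some $m$; taking $m \geq k$ and invoking the torsion-index stabilization for $M$ yields $z^k a = 0$. Writing $a = z^k a'$ gives $z^{2k} a' = 0$, whence $a' \in \Ann_M(z^{2k}) = \Ann_M(z^k)$, so $a = z^k a' = 0$. Part (3) is now immediate: from $f(a) = 0 \in z^k N$ and the injectivity of $f_k$, we conclude $a \in z^k M$, after which part (2) forces $a = 0$.

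The main obstacle in part (1) is that $f'$ onto does not produce an \emph{exact} preimage of $b$ in $M$—only one that is correct modulo the $S$-torsion of $N$. The essential trick is the initial multiplication by the extra factor of $z^k$, which trades a preimage of $b$ for a preimage of $z^{\ell+j+k} c$ and lets the torsion-index stabilization in $N$ absorb the discrepancy, yielding an exact preimage inside $z^k M$. Once that maneuver is in place, the rest of the argument is a straightforward diagonal chase through the localization and the quotients $M/z^iM$.
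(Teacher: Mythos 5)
Your proof is correct and follows the paper's own argument in all essentials: localize, chase through the quotients $M/z^iM$ and $N/z^iN$, then invoke stabilization of the annihilator chain at index $k$. The only cosmetic difference is in part (1), where you multiply by $z^k$ up front so that the exponent is automatically $\geq k$ before applying the torsion-index hypothesis, whereas the paper applies it directly to $z^{\ell+\ell'}(b-f(a'))=0$ (which works regardless of whether $\ell+\ell'\geq k$, since the annihilator chain is increasing) — so this extra multiplication is a harmless convenience, not the essential trick you describe it as.
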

\begin{proof}
\begin{enumerate}
\item Let $b \in N$. By assumption there is an element $z^{-\ell}a \in S^{-1}M$ such that $z^{-\ell}f(a) = f'(z^{-\ell}a) = 1^{-1}b \in S^{-1}N$, so for some $\ell'\geq 0$ we have that $z^{\ell'}f(a) = z^{\ell+\ell'}b$. Then $f_{\ell+\ell'}(z^{\ell'}a+z^{\ell+\ell'}M) = f(z^{\ell'}a)+z^{\ell+\ell'}N = 0$, so $z^{\ell'}a = z^{\ell+\ell'}a'$ for some $a' \in M$. But now $z^{\ell+\ell'}b = f(z^{\ell'}a) = z^{\ell+\ell'}f(a')$, so $z^{\ell+\ell'}(b-f(a')) = 0$. Since the torsion index of $N$ is $k$, we have that $z^kb = f(z^ka')$.
\item Let $a \in M$ be such that $f(z^ka)=0$. Then $f'(1^{-1}z^ka) =
1^{-1}z^kf(a) = 0$ in~$S^{-1}N$, so by assumption $1^{-1}z^ka=0$,
namely for some $\ell \geq 0$, $z^{k+\ell}a = 0$. Since $k$ is the
$z$-torsion index of $M$, we have that $z^ka = 0$.
\item Let $a \in M$ be such that $f(a) = 0$. Then $f_k(a+z^kM) = 0+z^kN$, so $a \in z^k M$, but then \eq{L3.2} implies that $a=0$.
\end{enumerate}
\end{proof}

Finite torsion index is essential in Lemma~\ref{L3} (which is why
Lemma~\ref{up} below only applies to homogeneous T-ideals).
\begin{exmpl}
(An example where the restrictions $f'$ and $f_i$ are
isomorphisms, but $f|_{z^kM} \co {z^kM} \to {z^kN}$ is neither
onto nor one-to-one). Let $C = F[z]$, and $P = F[[z^{-1}]]/F$ with
the natural $C$-module structure. Since multiplication by $z$ is
onto, $P/z^i P = 0$ for every $i$, and $S^{-1}P = 0$ since
$1^{-1}(z^{-i}) = z^{-i}z^i(z^{-i}) = z^{-i}0 = 1^{-1}0$. Let $f$
be the zero map from $P \oplus 0$ to $0 \oplus P$; it is neither
one-to-one nor onto, but the induced maps $f_i$ and $f'$ are
clearly (trivial) isomorphisms. Indeed, $P$ has infinite
$z$-torsion index.
\end{exmpl}

\begin{rem} For any $z \in C, $ $zA \cong A/\!\Ann z$ is a T-ideal of
$A$.\end{rem}

To progress with the proof over an arbitrary base ring, we first need the special case where
the T-ideal contains a representable T-ideal.

\begin{thm}[Small Specht Theorem]\label{SpechtNoeth51}
Let $C$ be an almost Specht, commutative Noetherian ring,
 and $A$ ~an affine PI-algebra containing a representable
T-ideal $\CI$; i.e., the algebra $A/\CI$ is representable. 
 Then any chain of T-ideals in the free algebra of~
$C\{ x\}$ ascending from $\id(A)$ stabilizes. 
\end{thm}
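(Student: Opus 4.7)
The plan is to reduce to the field-theoretic case (\Tref{Spechtfin}) by localizing at $C\setminus\{0\}$, and then descend back to $C$ using the module-theoretic apparatus of \Lref{lem3}--\Lref{L3} together with the almost-Specht hypothesis and the representability of $A/\CI$.

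First, I invoke \Rref{Spechtobs2}: combining the almost-Specht hypothesis with Noetherian induction on the ideals of $C$ reduces the statement to the case where $C$ is an integral domain. Set $F := \operatorname{Frac}(C)$ and $A_F := F\otimes_C A$. Extending the given chain yields an ascending chain $F\CI_1\subseteq F\CI_2\subseteq\cdots$ of T-ideals in $F\{x\}$, all containing $\id(A_F)$ (note that $\id(A_F) = F\cdot \id(A)$ by a clearing-of-denominators argument). Since $A_F$ is an affine PI-algebra over the field $F$, \Tref{Spechtfin} supplies an index $j_0$ beyond which this localized chain stabilizes; equivalently, for each $j\ge j_0$ the quotient $C$-module $\CI_j/\CI_{j_0}$ is entirely $C$-torsion.

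The crux is to descend from $F$ back to $C$, and this is where the representability of $A/\CI$ is essential. Embed $A/\CI \hookrightarrow \M[n](K)$ with $K$ an affine (hence Noetherian) commutative $C$-algebra. Applying \Tref{tradj} in this setting produces an algebra $\widehat{A/\CI}$ that is a finite module over an affine Noetherian commutative $C$-algebra $\hat C$, and is thus itself Noetherian. The images $(\CI_j+\CI)/\CI$ of the tail of the chain embed into $\widehat{A/\CI}$, yielding a uniform bound (independent of $j$) on the $z$-torsion index of $\CI_j/\CI_{j_0}$ for a suitably chosen nonzero $z\in C$. I then apply \Lref{L3}\eq{L3.3} to the natural inclusion $\CI_{j_0}\hookrightarrow\CI_j$: the stabilization over $F$ provides injectivity of the localized map $f'$, while the almost-Specht hypothesis, applied to the Noetherian quotient ring $C/(z^k)$, provides injectivity of the finite-torsion map $f_k$. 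Together these force the image of $\CI_j/\CI_{j_0}$ in $A/\CI$ to vanish; the same descent argument, run inside the representable T-ideal $\CI$ itself, handles the intersection $\CI_j\cap\CI$, and combining gives $\CI_j = \CI_{j_0}$.

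The main obstacle I anticipate is securing a uniform $z$-torsion-index bound valid throughout the tail of the chain, for a single $z\in C$ independent of $j$. This is precisely what the representability of $A/\CI$ purchases: it confines the relevant $C$-modules inside a Noetherian ambient, promoting the pointwise torsion supplied by stabilization over $F$ to the uniform bound required by the module-theoretic descent. Without this hypothesis, the torsion index could grow with $j$, and the almost-Specht induction would fail to close.
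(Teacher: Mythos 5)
Your approach diverges genuinely from the paper's, and the difference is exactly where the gap lies. The paper does not localize to $F = \operatorname{Frac}(C)$ and try to descend the full chain $\set{\CI_j}$; rather it passes to the sub-T-ideals $\CI_j^{(1)}\subseteq\CI_j$ closed under multiplication by $\hat C$ from \Tref{tradj}. These \emph{are} ideals of the Noetherian algebra $\hat A = \hat C A$, so that chain stabilizes for free. The single torsion element $z$ then arises from the fixed module $\widetilde{\CI_{j_0}^{(1)}}/\CI_{j_0}^{(1)}$, which does not depend on $j$. After the baby Fitting split $A' \to (A'/z^kA')\oplus(A/\widetilde{\CI_{j_0}^{(1)}})$, the first summand is disposed of by the almost-Specht hypothesis, but the \emph{second} (the torsion-free part) still requires the degree-vector induction on the quiver via \Lref{induction}. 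That last induction is not a cosmetic step: it is what closes the proof for the portion of the chain that torsion considerations cannot see. Your proposal omits it entirely.

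The concrete gap is in the claim that representability delivers ``a uniform bound (independent of $j$) on the $z$-torsion index of $\CI_j/\CI_{j_0}$ for a suitably chosen nonzero $z\in C$.'' Stabilization of $F\CI_j$ tells you only that each element of $\CI_j/\CI_{j_0}$ is killed by \emph{some} nonzero $c\in C$, with no control on which primes appear or to what power; these may grow without bound as $j$ does. The modules $(\CI_j + \CI)/\CI$ are ideals of $A/\CI$ but not of the Noetherian $\widehat{A/\CI}$, so Noetherianity of the latter gives no ACC for them. And the modules you actually need to control, $\CI_j/\CI_{j_0}$, are \emph{quotients} (not submodules) of things living in the Noetherian ambient; finite $z$-torsion index is not inherited by quotients (already $\bigoplus_i \Z/2^i\Z$ is a quotient of a torsion-free $\Z$-module with unbounded $2$-torsion index). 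Consequently the hypothesis of \Lref{L3}\eq{L3.3} — that $M$ has finite $z$-torsion index — is not secured; and even if it were, applying \Lref{L3} to the \emph{inclusion} $\CI_{j_0}\hookrightarrow\CI_j$ proves injectivity of a map that is already tautologically injective, whereas what is wanted is surjectivity. (If instead you intend the projection $A/\CI_{j_0}\ra A/\CI_j$, the finite-torsion hypothesis on $A/\CI_{j_0}$ is precisely the unjustified claim.) So the module-theoretic descent as written cannot close, and the quiver-theoretic induction the paper uses in its place is not optional.
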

\begin{proof}
By Lemma~\ref{Spechtobs2}, $C$ is an integral domain. We need to
show that any ascending chain of PI-proper T-ideals
\begin{equation}\label{asch3} \CI_1 \sub \CI_2 \sub \CI_3 \sub
\cdots\end{equation} of $A$, stabilizes. Since $\CI \subseteq
\CI_1$, we may replace $A$ by $A/\CI, $ and assume that $A$ is
representable. We view $A \subseteq M_n(K),$ where $K$ is an
algebraically closed field containing $C$.  If $C$ is finite, then
it is a field, and we are done by Theorem~\ref{Spechtfin}. So we
may assume that $C$ is an infinite integral domain. Denoting $AK$
as $A_K$, we work with respect to a quiver $\Gamma$ of $A_K$ as a
$K$-algebra.

As in \Tref{Spechtfin}, let $\CI_j^{(1)}$ be the maximal subideal
of $\CI_j$ closed under multiplication by $\hat{C}$ of
Theorem~\ref{tradj}. 
Thus
\begin{equation}\label{asch31} \CI_1^{(1)} \sub \CI_2^{(1)} \sub \CI_3^{(1)} \sub \cdots\end{equation}
are ideals in the Noetherian algebra $\hat{A} = \hat{C}A$, so this chain stabilizes, and we may assume $\CI_{j}^{(1)} = \CI_{j_0}^{(1)}$ for $j > j_0$.

For a $T$-ideal $\CI$ of $A$, let $\overline{\CI} = K  \CI$, taken
in $A_ K$. Define $\widetilde {\CI} =K \CI \cap A \supseteq \CI$.
Let $A' = A/{\CI_{j_0}^{(1)}}$. Passing down to  $A'$, we shall
pass further to $A/\widetilde{\CI_{j_0}^{(1)}}$.

The quotient $\widetilde{\CI_{j_0}^{(1)}}/{\CI_{j_0}^{(1)}}$  is
torsion, so there is $0 \neq z \in \hat{C}$ such that
$z \widetilde{ \CI_j^{(1)}} = z \widetilde{ \CI_{j_0}^{(1)}} \sub
\CI_{j_0}^{(1)}$. The chain $\Ann _{\hat{A} } z  \sub
\Ann_{\hat{A} }  z^2 \sub \cdots \sub \Ann _{\hat{A} } z^k \sub
\cdots$ stabilizes at some $k$, by the Noetherianity of $\hat A$.
Now, applying the baby Artin-Rees lemma to
$\hat{A}/\CI_{j_0}^{(1)}$, we see that
$$z^{k}\hat{A}  \cap \widetilde{\CI_j^{(1)}} \sub  \CI_{j_0}^{(1)}.$$

In particular the natural map  $$A'\ra (A'/z^{k}A') \,\oplus\,
(A/\widetilde{\CI_{j_0}^{(1)}})$$ is an injection. The image of
the chain~\eqref{asch3} of  the first summand on the right
stabilizes by applying Noetherian induction. Thus, we pass to the
second summand of the right, which has no $C$-torsion. Letting
$\CJ$ be the ideal constructed in \Lref{induction}, we have for
every $j > j_0$ that $\CI_{j} \cap \CJ= 0$ in
$A_K/A_K\widetilde{\CI_{j_0}^{(1)}}$ as in the last paragraph of
the proof of Theorem~\ref{Spechtfin}. Hence, a fortiori, $\CI_{j}
\cap \CJ= 0$ in $A/\widetilde{\CI_{j_0}}$. We are done by
induction on the degree vector.
\end{proof}

\begin{lem}\label{up}
Suppose $z \in C$ such
that $C/zC$ and $C[z^{-1}]$ are $\mathcal V$-Specht. Then $C$
satisfies the ACC on homogeneous T-ideals from $\mathcal V$.
\end{lem}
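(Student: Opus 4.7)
The plan is to apply the two Specht hypotheses in parallel --- one to the reduction modulo $z$, one to the localization at $z$ --- and then recombine the resulting stabilization information via the multigraded structure afforded by homogeneity.

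Concretely, given an ascending chain $\CI_1 \sub \CI_2 \sub \cdots$ of homogeneous T-ideals in $C\{x\}$ (or in the relatively free algebra of $\mathcal{V}$), I first push the chain forward to $(C/zC)\{x\}$, obtaining a chain of T-ideals over $C/zC$ that stabilizes at some index $j_1$ by hypothesis. Simultaneously I localize at $z$, producing an ascending chain of T-ideals in $C[z^{-1}]\{x\}$, which stabilizes at some $j_2$ by the hypothesis on $C[z^{-1}]$. Setting $j_0 = \max(j_1,j_2)$, the two stabilizations read, for every $j \geq j_0$,
$$\CI_j + zC\{x\} \;=\; \CI_{j_0} + zC\{x\}, \qquad \CI_j[z^{-1}] \;=\; \CI_{j_0}[z^{-1}].$$

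Next I exploit homogeneity. The free algebra decomposes as $C\{x\} = \bigoplus_n V_n$ where $V_n$ is the multidegree-$n$ component; crucially, each $V_n$ is a free $C$-module of finite rank. Homogeneity of each $\CI_j$ gives $\CI_j = \bigoplus_n \CI_{j,n}$ with $\CI_{j,n} \sub V_n$, and the two stabilization conditions descend to every multidegree separately:
$$\CI_{j,n} + zV_n = \CI_{j_0,n} + zV_n, \qquad \CI_{j,n}[z^{-1}] = \CI_{j_0,n}[z^{-1}].$$
Within each multidegree $V_n$ is a Noetherian $C$-module, so Lemma~\ref{lem3} (Baby Fitting) combined with Lemma~\ref{L3}\eqref{L3.3} supplies the module-theoretic content needed to promote the two coincidences (mod $z$ and after inverting $z$) to the equality $\CI_{j,n} = \CI_{j_0,n}$. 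Because the stabilization index $j_0$ is already \emph{uniform} in $n$, assembling these multidegree equalities yields $\CI_j = \CI_{j_0}$ for every $j \geq j_0$.

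The main obstacle will be the possible presence of $z$-torsion in the quotients $V_n / \CI_{j_0,n}$, which can obstruct a direct application of Lemma~\ref{L3} since an element $c$ with $zc \in \CI_{j,n}$ need not itself lie in $\CI_{j,n}$. The standard workaround is to first pass to the $z$-saturations $\CI_j^{\operatorname{sat}} = \{ a \in C\{x\} \co z^k a \in \CI_j \text{ for some } k \}$, which are again homogeneous T-ideals, and to verify by the same mod-$z$/invert-$z$ scheme that the saturated chain stabilizes; the argument at the saturated level is clean because $\CI_{j,n}^{\operatorname{sat}} \cap zV_n = z\,\CI_{j,n}^{\operatorname{sat}}$ thanks to $V_n$ being $z$-torsionfree, which lets one iterate $\CI_{j,n}^{\operatorname{sat}} \sub \CI_{j_0,n}^{\operatorname{sat}} + z\,\CI_{j,n}^{\operatorname{sat}}$ and collapse via the uniform bound $z^K \CI_{j,n}^{\operatorname{sat}} \sub \CI_{j_0,n}^{\operatorname{sat}}$. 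The residual gap between $\CI_j$ and $\CI_j^{\operatorname{sat}}$ is then closed by a Baby-Fitting style step using the uniform torsion index in each finite-rank $V_n$. This is the step that carries the real technical weight of the lemma.
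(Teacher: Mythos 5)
Your proposal shares the paper's two pillars --- push the chain to $C/zC$ and to $C[z^{-1}]$, then recombine using homogeneity and the finiteness of each multidegree component --- but there is a genuine logical gap in the recombination step that is precisely the point the paper's proof is designed to handle.

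The problem is that the two stabilizations you extract are too weak. You record $\CI_j + zC\{x\} = \CI_{j_0} + zC\{x\}$ (the maps $f_1 \co A_{j_0}/zA_{j_0} \to A_j/zA_j$ become isomorphisms) and $\CI_j[z^{-1}] = \CI_{j_0}[z^{-1}]$, and you also observe that the saturations $\CI_j^{\operatorname{sat}}$ stabilize. But these three facts together do \emph{not} force $\CI_j = \CI_{j_0}$, and the ``Baby-Fitting residual step'' you gesture at cannot close the gap. A concrete counterexample already occurs in a single multidegree component: take $C = \Z$, $z = 2$, $V_n = \Z$, $\CI_{j_0,n} = 4\Z$, $\CI_{j,n} = 2\Z$. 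Then $\CI_{j_0,n} + 2\Z = 2\Z = \CI_{j,n} + 2\Z$, both localize to $\Q$, and both have saturation $\Z$, yet $4\Z \subsetneq 2\Z$. (These really can arise from T-ideals: by Lemma~\ref{Tid}, $4A$ and $2A$ are homogeneous T-ideals.) So the hypotheses you have assembled at the multidegree level admit strictly ascending chains, and no module-theoretic step can squeeze out the stabilization you want from them alone.

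What the paper does instead is use the \emph{full} $z$-adic filtration, not just the first quotient. To invoke Lemma~\ref{L3}\eqref{L3.3} (which is the correct closing move, as you guessed), one needs $f_k \co A_{j_0}/z^k A_{j_0} \to A_j/z^k A_j$ to be injective where $k$ is the $z$-torsion index of $A_{j_0}$ --- not merely $f_1$. In my counterexample $k = 2$ and $f_2 \co \Z/4 \to \Z/2$ is not injective, which is exactly the failure that Lemma~\ref{L3} must be shielded from. To establish the stronger statement, the paper sets up the infinite commutative diagram with entries $z^j A_i/z^{j+1}A_i$ (rows: multiplication by $z$; columns: the natural projections), observes that each entry is a quotient of a relatively free $C/zC$-algebra by a T-ideal, applies the $C/zC$-Specht hypothesis to the whole array (a ``baby spectral sequence''), and only then uses Lemma~\ref{easy1}(ii) to bootstrap column-wise isomorphisms of graded pieces into isomorphisms of $A_i/z^k A_i$. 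Your proof uses the $C/zC$ hypothesis only on the $j=0$ column, which is strictly less information, and this is where it breaks. If you replace the naive mod-$z$ stabilization by the diagram argument to obtain isomorphisms of $A_i/z^k A_i$ for the torsion index $k$, your plan goes through.
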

\begin{proof} By induction on the length of $z$ as a product of primes, we may assume that $z$ is prime.
Let $A$ be an affine algebra over $C$, and let $$\CI_1 \sub \CI_2 \sub \CI_3 \sub \cdots$$
be an ascending chain of T-ideals in $A$.

Let $A_i = A/\CI_i$, and consider the infinite commutative diagram
\begin{equation}
\xymatrix@R=15pt@C=12pt{ %
A_1/zA_1 \ar[r] \ar[d] & zA_1/z^2A_1 \ar[r] \ar[d] & z^2A_1/z^3A_1 \ar[r] \ar[d] & z^3A_1/z^4A_1 \ar[r] \ar[d] & \cdots \\
A_2/zA_2 \ar[r] \ar[d] & zA_2/z^2A_2 \ar[r] \ar[d] & z^2A_2/z^3A_2 \ar[r] \ar[d] & z^3A_2/z^4A_2 \ar[r] \ar[d] & \cdots \\
A_3/zA_3 \ar[r] \ar[d] & zA_3/z^2A_3 \ar[r] \ar[d] & z^2A_3/z^3A_3 \ar[r] \ar[d] & z^3A_3/z^4A_3 \ar[r] \ar[d] & \cdots \\
\vdots & \vdots & \vdots & \vdots & \\
}
\end{equation}
where the left-to-right maps are multiplication by $z$, and the top-to-bottom arrows are the natural projections. So all the maps are projections.

We claim that outside a certain rectangle, all the maps in this
infinite matrix are one-to-one. Indeed, the entries are algebras
over $C/zC$, so each row stabilizes by assumption. Letting $B_i$
denote the final algebra in row $i$, we obtain a chain of
projections $B_1 \ra B_2 \ra \cdots$ which must also stabilize,
proving that for some $k_0$, all of the rows stabilize after $k_0$
steps. We are done since each of the first $k_0$ columns
stabilizes. It follows that when $i$ is large enough, all the maps $z^jA_i/ z^{j+1}A_{i} \ra z^jA_{i+1}/z^{j+1}A_{i+1}$ are isomorphisms, so by \Lref{easy1}.(ii), the natural projection $A_i \ra A_{i+1}$ induces isomorphisms $A_i/z^{j}A_{i} \ra A_{i+1} /z^{j}A_{i+1}$ for every $j$.

Similarly, the chain of projections $$C[z^{-1}]A_1 \ra C[z^{-1}]A_2 \ra \cdots$$ stabilizes by the assumption on $C[z^{-1}]$.

Let $i$ be large enough. Since $\CI_i \sub \CI_{i+1}$ are
homogeneous, 
the
natural projection $A_i \ra A_{i+1}$ preserves the degree grading.
Each homogeneous component is finite as a $C$-module since $A$ is
affine, and thus Noetherian, and therefore has finite $z$-torsion
index. By \Lref{L3}.\eq{L3.3}, the map in each component is
one-to-one, proving that $\CI_i = \CI_{i+1}$.
\end{proof}

The main idea in the proof given above is a simple version of a spectral sequence.
Having proved a special case, we do a more general case (in fact, our most general version holds for
an arbitrary variety $\mathcal V$ of algebras).

\begin{thm}\label{SpechtNoethspec1}
Suppose the relatively free algebra $A$ with respect to a T-ideal $\CI$ is $z'$-torsionfree
for some $z\in C$, where $\CI$ is generated by a polynomial all of
whose coefficients are $\pm 1$. Then any increasing chain of
T-ideals of $A$ starting with $\CI$ must terminate,  for any
commutative Noetherian ring $C$.
\end{thm}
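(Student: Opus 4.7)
The plan is to split any ascending chain $\CI = \CJ_1 \subseteq \CJ_2 \subseteq \cdots$ of T-ideals of $A$ into a localization-at-$z$ piece and a quotient-mod-$z^k$ piece, exploiting the $z'$-torsionfreeness for injectivity and the $\pm 1$-coefficient hypothesis to ensure that the generating identity of $\CI$ survives both reductions without degenerating. Roughly, the localized piece becomes a PI-algebra over the fraction field of $C$ and is handled by the already-proved field case (\Tref{Spechtfin}); the quotient piece lives over a proper quotient of $C$ and is handled by Noetherian induction on $C$.

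First, by Remark~\ref{Spechtobs2}, I reduce via Noetherian induction on $C$ to the case where $C$ is an integral domain. If $C$ happens to be finite it is a field and \Tref{Spechtfin} applies directly, so assume $C$ is an infinite integral domain with field of fractions $K$. The coefficients of the generator of $\CI$, being $\pm 1$, remain units in $C/zC$, in $C[z^{-1}]$, and in $K$; thus $\CI$ remains a proper, non-trivial T-ideal after each of these reductions, and in particular $A \otimes_C K$ is a genuine PI-algebra over the field $K$.

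Next, the chain extends to a chain of T-ideals of $A \otimes_C K$, which stabilizes by \Tref{Spechtfin}, while the reduction of the chain modulo $z^k$ gives a chain in $A/z^kA$, an algebra over $C/z^kC$, which stabilizes by Noetherian induction on $C$ applied to the proper quotient $C/z^kC$. Because $A$ is $z'$-torsionfree, its only $C$-torsion is $z$-torsion, so the Baby Fitting Lemma (\Lref{lem3}) together with \Lref{L3} yield that the natural map $A \to (A/z^kA) \oplus A[z^{-1}]$ is injective on each finitely generated $C$-submodule, provided $k$ exceeds its $z$-torsion index. Combining these two stabilizations via the injective map, the original chain in $A$ must stabilize.

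The main obstacle is that $A$ is not itself a finite $C$-module (it has countably many generators as a relatively free algebra), so the $z$-torsion index is a priori unbounded across $A$. The remedy is to run the argument multi-degree by multi-degree, restricting to a fixed finite set of variables at a time: each such multi-homogeneous component is a finitely generated $C$-module and hence has finite $z$-torsion index by Noetherianity of $C$. Since a T-ideal is determined by its intersections with these bounded components, stabilization of the chain in each component—guaranteed by the field reduction and the $C/z^kC$ Noetherian induction—suffices to force stabilization of the original chain.
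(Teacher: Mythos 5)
Your overall plan is in the right spirit and uses the same basic local--global tools that the paper relies on (Lemma~\ref{L3}, the Baby Fitting Lemma, the reduction to an integral domain), but there is a genuine gap at the crucial step, and the paper's proof takes a more careful route precisely to avoid it.

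The gap is \emph{uniformity}. You reduce to showing that the chain $\CJ_1 \cap A_{\le d} \subseteq \CJ_2 \cap A_{\le d} \subseteq \cdots$ stabilizes for each fixed multi-degree $d$, and then assert that since a T-ideal is the union of its intersections with the bounded pieces $A_{\le d}$, the chain itself stabilizes. But this inference is invalid: the stabilization index $j_d$ at which $\CJ_{j_d}\cap A_{\le d}$ becomes constant can grow without bound as $d\to\infty$, in which case the chain $\CJ_1\subsetneq\CJ_2\subsetneq\cdots$ can be strictly increasing forever even though each finite-degree slice eventually stops moving. To close the argument you must produce \emph{one} index $j_*$ that works for \emph{all} $d$ simultaneously, and nothing in your write-up does this. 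This is exactly the difficulty that the paper's Lemma~\ref{up} is built to handle: the two-dimensional ``spectral sequence'' of modules $z^jA_i/z^{j+1}A_i$, together with the observation that all maps outside a \emph{finite rectangle} are one-to-one, is what delivers the uniform bound across both the row index (the powers of $z$, and hence the degree-by-degree filtration) and the column index (the position in the chain). Your proof needs some replacement for this step.

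A secondary mismatch: the module-theoretic lemmas you cite, \Lref{lem3} and \Lref{L3}, are formulated for localization at the single element $z$ (i.e.~$A[z^{-1}]$), whereas \Tref{Spechtfin} gives you stabilization only after passing all the way to the fraction field $K$. One can bridge this using $z'$-torsionfreeness (so that $A[z^{-1}]$ is $C[z^{-1}]$-torsionfree and embeds in $A\otimes_C K$), but it is not automatic that stabilization of $\CJ_j\otimes K$ implies stabilization of $\CJ_j[z^{-1}]$ as T-ideals over $C[z^{-1}]$, and you do not address this. The paper sidesteps the issue differently: in Theorem~\ref{SpechtNoethspec1} it first replaces the base ring by the PID $C_0[z]$ with $C_0$ a field, and then appeals to the Lemma~\ref{up} mechanism, whose hypotheses explicitly include Spechtness of $C[z^{-1}]$, and which is arranged via Noetherian induction. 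Your proof should either incorporate something like the infinite-diagram argument of Lemma~\ref{up} or find another way to control the stabilization index uniformly across degrees.
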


\begin{proof}

Writing $z$ as a product of primes, we may assume that $z$ is prime.
 Let $C_0$ be the
subring of $C$ generated by 1.
  Letting $L$ be the field of fractions of
 $C_0/ (C_0 \cap zC),$ we pass to $A \otimes _{C_0} L,$ so
 we may assume that $C_0$ is a field.
(If there is no $p$-torsion at any step, then we can localize at
the natural numbers and reduce to the case of $\Q$-algebras, which
was solved by Kemer.) But $C_0[z]$ thus is a PID,
and by the argument in Lemma~\ref{up}, we can break up our chain into
 chains of T-ideals defined over $C/zC$, so
 we conclude
 by Lemma~\ref{up}.
\end{proof}

So far, these arguments have been applied to arbitrary varieties,
and in fact there are Lie, alternative and Jordan versions of
Iltyakov \cite{I1,I2} and Vais and Zelmanov~\cite{VZ}; 
their proofs are rather delicate, in part because it is still
unknown whether any alternative, Lie, or Jordan algebra satisfying
a Capelli system of identities must satisfy all the identities of
a finite dimensional algebra. Belov \cite{B2}  obtained a version of the Small Specht Theorem (Theorem~\ref{SpechtNoeth51})  for classes of algebras of characteristic 0 asymptotically close to associative algebras;
this includes alternative and Jordan algebras.

Our method here is to develop some theory to take care of torsion in
polynomials, to conclude the proof of Theorem \ref{SpechtNoeth} below.
 In other words, we need some local-global correspondence that will
enable us to pass from the global situation with torsion to the local situation without torsion. Our main tool is
Proposition~\ref{subdir1}, but this only enables us to handle a finite number of irreducible elements of $C$ producing torsion,
whereas there might be an infinite number of such elements.
Thus we need some way of cutting down from infinite to finite.

The most direct argument   relies on an (associative) result only
available in Russian. Procesi asked whether the kernel of the
canonical homomorphism $\id(\M(\Z)) \to \id(\M(\Z /p\Z))$ is equal
to $p \id(\M(\Z))$.

Schelter and later
Kemer~\cite{K7} provided  counterexamples, but
Samo\u{\i}lov~\cite{Sam} 
showed that  if $p > 2d$, the kernel of the canonical homomorphism
$\id(\M(\Z)) \to \id(\M(\Z /p\Z))$ is indeed equal to $p \id(\M(\Z))$. Unfortunately,
this result appears so far only in his doctoral dissertation ~\cite{Sam}.

Thus, we give two versions for the conclusion of the proof of the
next theorem, the first relying on Samo\u{\i}lov's Theorem, and
the second for those readers who would prefer a full proof of
Theorem~\ref{SpechtNoeth} in English. Another advantage of the second proof
is that its reduction argument works for arbitrary varieties.

\begin{thm}\label{SpechtNoeth}
Any PI-proper T-ideal $\CI$ of $C\{x_1, \dots, x_\ell\}$ is finitely based, for any
commutative Noetherian ring $C$.
\end{thm}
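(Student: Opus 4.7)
My plan is to reduce the general Noetherian case to the field case handled by Theorem~\ref{Spechtfin}, via the chain of reductions already assembled in the paper.

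First, by Remark~\ref{Spechtobs2} (Noetherian induction on $C$), I may assume that $C$ is an almost-Specht integral domain. If $C$ is finite, then it is a field and the assertion follows from Theorem~\ref{Spechtfin}, so I may assume $C$ is infinite. Next I invoke Theorem~\ref{firstred}, which reduces the task to verifying Specht's condition only for homogeneous T-ideals over $C$. For homogeneous T-ideals, \Lref{up} cuts the ACC verification down to the two pieces $C/zC$ and $C[z^{-1}]$ for any chosen $0 \ne z \in C$, and the almost-Specht hypothesis disposes of $C/zC$ immediately.

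The substance of the argument is then in the choice of $z$ controlling the localization $C[z^{-1}]$. By Proposition~\ref{Lewin}, $\CI$ contains $\id(\M[n](F))$ for some finite dimensional algebra of matrix degree $n$. Samo\u{\i}lov's theorem (quoted in the paper) then implies that for every prime $p > 2n$ the kernel of the canonical homomorphism $\id(\M[n](\Z)) \to \id(\M[n](\Z/p\Z))$ equals $p\,\id(\M[n](\Z))$, so the only primes that can produce genuine torsion in the relatively free algebra attached to $\CI$ are those $\le 2n$. Taking $z$ to be the product of these finitely many primes (together with any further bad primes coming from $C$ itself), the localization $C[z^{-1}]$ lands inside the $z'$-torsionfree setting of Theorem~\ref{SpechtNoethspec1}, whose conclusion yields the ACC for chains of T-ideals extending $\CI$ over $C[z^{-1}]$. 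Combining the two pieces through \Lref{up} yields ACC on homogeneous T-ideals over $C$; Theorem~\ref{firstred} lifts it to arbitrary PI-proper T-ideals; and Noetherian induction closes the argument.

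The main obstacle is precisely this selection of $z$: without Samo\u{\i}lov's theorem one cannot reduce the potentially infinite collection of torsion primes of the relatively free algebra to a finite set, which is exactly what is required to apply the subdirect decomposition of Proposition~\ref{subdir1} followed by the localization of \Lref{up}. The authors' promised second, elementary proof should replace this ingredient by a direct subdirect-product reduction built from Proposition~\ref{subdir1} and Theorem~\ref{SpechtNoethspec1}, at the cost of a more intricate torsion bookkeeping but with the advantage of applying to arbitrary varieties of algebras, not merely the associative ones.
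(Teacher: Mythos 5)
Your proposal follows the same overall strategy as the paper's first proof of Theorem~\ref{SpechtNoeth}: use Samo\u{\i}lov's theorem to bound the torsion primes of the relatively free algebra by the matrix size from Proposition~\ref{Lewin}, then reduce to the torsionfree situation handled by Theorem~\ref{SpechtNoethspec1}. However, there are two genuine gaps, plus a routing subtlety worth flagging.

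First, Theorem~\ref{SpechtNoethspec1} has two hypotheses, not one: that $A$ is $z'$-torsionfree, \emph{and} that $\CI$ is generated by a polynomial all of whose coefficients are $\pm 1$. You establish only the first. The paper obtains the second by first replacing $\CI$ by the T-ideal of a power of a standard identity (Amitsur, \cite[Theorem~3.38]{BR}), which has coefficients in $\{\pm 1\}$; any ACC from this smaller T-ideal covers $\CI$ a fortiori. Without this reduction you cannot invoke Theorem~\ref{SpechtNoethspec1}, so this is a real missing step. Second, you jump from Samo\u{\i}lov's statement about the kernel of $\id(\M[n](\Z)) \to \id(\M[n](\Z/p\Z))$ to the claim that the relatively free algebra $A$ attached to $\CI$ has $p$-torsion only for $p \le 2n$. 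That transfer is not automatic; the paper bridges it by mapping $A$ to a representable algebra via Bergman--Dicks and using Lewin's theorem to show the kernel vanishes modulo $p$. You should name this bridge explicitly.

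On the routing: the paper does not pass through Theorem~\ref{firstred} and \Lref{up} separately in its first proof; it invokes Theorem~\ref{SpechtNoethspec1} directly, which already packages the reduction to homogeneous T-ideals and the $C/zC$ versus $C[z^{-1}]$ splitting inside its own proof (``by the argument in \Lref{up}''). Your re-derivation through \Lref{up} is in the right spirit, but notice that \Lref{up} as stated assumes $C[z^{-1}]$ is $\mathcal V$-Specht in full, whereas Theorem~\ref{SpechtNoethspec1} only gives ACC for chains starting from the specific $\CI$ with $\pm 1$ coefficients. To make your version airtight you would need to observe that the proof of \Lref{up} works chain by chain for the fixed relatively free algebra under consideration, so the per-$\CI$ choice of $z$ is admissible; but it is cleaner to cite Theorem~\ref{SpechtNoethspec1} directly as the paper does. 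Your closing remark about the second proof is directionally right but understates it: the paper's second argument is a four-step localization scheme using the full-quiver machinery (Steps~1--4, repeatedly invoking Proposition~\ref{subdir1} and Theorem~\ref{Shir}), not merely a subdirect-product reduction feeding into Theorem~\ref{SpechtNoethspec1}.
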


\begin{proof}
Let $A$ be the relatively free algebra of $\CI$. We can replace $\CI$ by the T-ideal of a
PI-proper polynomial $f$ contained in it. But by
Amitsur~\cite[Theorem~3.38]{BR}, any PI-algebra satisfies a power
of a standard polynomial, so we may assume $f$ is such a
polynomial, and thus has all nonzero coefficients in $\{\pm 1\}$.

Consider the localization $A \mapsto A\otimes_\Z \Z/p\Z $, viewed
as a $C_p / p C_p$-algebra.
By a theorem of Bergman and Dicks~\cite{BerD}, there is a canonical homomorphism of $A$ to a
representable algebra, whose kernel $M$, in view of Lewin's theorem, vanishes modulo $p$ for
any prime $p$, i.e., when we map $A \mapsto A\otimes_\Z \Z/p\Z $, viewed
as a $C_p / p C_p$-algebra.
But the map $A \ra A\otimes_\Z \Z/p\Z $ 
is faithful whenever the kernel of
the canonical homomorphism $\id(\M(\Z)) \to \id(\M(\Z /p\Z))$ is
equal to $p \id(\M(\Z))$, where $d$ is the size of matrices in the
representation, which by
Samo\u{\i}lov's Theorem~\cite{Sam} 
happens when $p>2n$, showing that $A$ is $(2n)!$-torsionfree.
The claim then follows by Theorem~\ref{SpechtNoethspec1}.
\end{proof}

We turn now to the second proof.

\begin{rem}\label{pat}
The point here is that in view of  Proposition~\ref{Zub},  taking
$M$ to be the T-ideal generated by $f$ as notated there, assuming that $A$ satisfies
a Capelli identity $c_{n+1}$ and we are in a matrix component of degree $n$, the relatively free algebra $A$
is integral over the affine $C$-algebra $C[\xi]$ where $\xi$ denotes the set of characteristic coefficients
formally corresponding to the finitely many $\delta$ operators. Unfortunately, this case
can be assured only when $C$ is a field, but by a careful use of localization we can formulate
a local-global framework in which we can utilize this situation.
\end{rem}

{\it Second proof  of Theorem~\ref{SpechtNoeth}}.  Let $A = C\{ a_1, \dots, a_\ell\} $ be the relatively free algebra of $\CI$.  We formulate an inductive argument in analogy
to Theorem~\ref{Spechtfin}. In~order to apply the theory of full
quivers, we need to pass to some field. By Remark~\ref{Spechtobs2}
we may assume that $C$ is an integral domain. Let $F$ denote the
field of fractions of $C$, and let $A_F : = A \otimes_C F$. We consider $\CI \otimes F$ which contains the  ideal of identities of $A_F$.
Unfortunately, $(\CI \otimes F) \cap C\set{x}$
might properly contain $\CI.$ If the torsion over $C$
only involved finitely many primes we could handle this by means
of Proposition~\ref{subdir1}, but this need not be the case. Thus,
we need a more delicate argument which enables us to relate $\CI$
with $\CI_F$.

{\it Step 1.} We start with a proper PI of $A$. As mentioned in
the first proof, Amitsur~\cite[Theorem~3.38]{BR} says that every
PI-algebra satisfies some power of a standard identity, which we
denote here as $f$. Let $\CI_0$ denote the $T$-ideal of $ C\{ x\}$
generated by $f$, contained in~$\CI,$ so $\CI_{0} \otimes F$ is the
$T$-ideal of $ F\{ x\}$ generated by $f$, contained in~$\CI \otimes F$.
The relatively free algebra $F\{ x\}/(\CI_{0} \otimes F)$ has some full
quiver $\Gamma_1$. Although $\Gamma_1$ does not have much to do
with the original algebra $A$, it provides a base for an inductive
argument, as well as a handle for using our field-theoretic
results. Since the chain of reductions of any full quiver must
terminate after a finite number of steps, we induct on $\Gamma_1$.

We need to show that every chain $\mathcal C = \set{\CI_1 \sub \CI_2 \sub \CI_3 \sub \cdots}$ of $T$-ideals
ascending from $\CI_0$ stabilizes. Over the field $F$, we could do
this by the argument of Theorem~\ref{Spechtfin}, which we recall
is achieved by hiking $f$, obtaining matrix characteristic
coefficients for the evaluations of a maximal branch of
$\Gamma_1$, redefining these in terms of elements of the T-ideal
$\CI_{0} \otimes F$, using Theorem~\ref{Shir} to show that this part of
the T-ideal is Noetherian, and then modding it out and applying
Noetherian induction. Unfortunately, working over $C$ might
involve $C$-torsion which could collapse infinite chains when passing to the field of fractions, $F$.
We can use Proposition~\ref{subdir1} to eliminate torsion involved
with a given finite set of elements of $C$, so our strategy is to
show how the whole process just described can be achieved over a
localization of $C$ by a finite number of elements, which are
found independently of the specific chain $\mathcal C$. Thus, we
can work over this localization just as well as over $F$, and pass
back to $C$ by means of Proposition~\ref{subdir1}.

 {\it Step 2.}
We rely heavily on Proposition~\ref{subdir1} in order to eliminate
torsion involved with a given finite set of elements of $C$, with
the aim of modifying $A$ in order to make it more compatible with
$\Gamma_1$. We say that a T-ideal of $F\{ x\}$ is
$C$-\textbf{expanded} if it is generated by polynomials $\subset
C\{ x\}$.  We extend $f_1 = f$ to a set $\{ f_1, \dots, f_k\}
\subset C\{ x\}$ generating a maximal possible $C$-expanded
T-ideal of $F\{ x\}$ contained in $\CI \otimes F$. (Such a finite set
exists since we already have solved Specht's problem over fields,
implying $F\{ x\}$ satisfies the ACC on $C$-expanded T-ideals.)

The coefficients of $f_1, \dots, f_k$ 
involve only finitely many elements of $C$. Utilizing
Proposition~\ref{subdir1}, we can localize at these primes to
obtain a new base ring $C'$ and assume that $A$ has no torsion at
the coefficients of the polynomials $f_1, \dots, f_k$. Let $\CI'$
(resp.~$\CI' \otimes F$) denote the T-ideal of $C'\{x\}$ (resp.~of
$F\{x\}$) generated by $f_1, \dots, f_k$, whose full quiver over
$F$ is denoted as $\Gamma_2$. This might increase our
$C'$-expanded T-ideal over $F$, requiring us to adjoin more
polynomials, and thereby forcing us to localize by finitely many
more primes, but the process must stop since $F\{ x\}$ satisfies
the ACC on $C$-expanded T-ideals. This achieves our goal of
matching a T-ideal  over $C$ with a T-ideal over $F$.

 {\it Step 3.} Our next goal is to hike to a $\bar q$-characteristic coefficient-absorbing polynomial.
  As in Lemma~\ref{induction}, we
take a maximal path in the full quiver $\Gamma_2$. Its polynomial
can be hiked to a finite set of polynomials $\tilde f_1, \dots,
\tilde f_m$. Unfortunately these might involve torsion with new
primes of $C'$. But the torsion over $C'$ in localizing these
finitely many polynomials involves only finitely many prime
elements in $C'$, and by localizing at them we obtain a new base Noetherian ring $C''$ and an algebra
$A'' = A \otimes C''$ over it. Now we can
appeal again to Proposition~\ref{subdir1} and replace $A$ by
$A''$; thereby, we may assume that~$A''$ is $z$-torsion free for
the finitely many primes $z$ at which we localized. (Perhaps $F\{x\}$ has more
$C''$-expanded T-ideals  than $C'$-expanded T-ideals, so we must
return to Step 2 and then Step 3, but this loop must terminate
since $F\{ x\}$ satisfies the ACC on T-ideals.)

In this way, we avoid all
torsion in computing the $\bar q$-characteristic coefficients in
the maximal matrix components, and thereby perform these
calculations in $A''$. In other words, we can use $\tilde f_1,
\dots, \tilde f_m$ (taken over $C''$) to calculate $\bar
q$-characteristic coefficients of the products of the generators
of $A$ in terms of polynomials.

Starting with $C''$ 
we let $\CI'' $ (resp.~$\CI'' \otimes F$) be the T-ideal of $C''\{x\}$
(resp.~of~ $F\{x\}$) generated by $f_1, \dots, f_k$ and $\tilde
f_1, \dots, \tilde f_m$.

 {\it Step 4.} This is the most delicate part of the proof.
Our strategy in this case is to go back to mimic the proof of the
field-theoretic case (Theorem~\ref{Spechtfin}), removing
$C$-torsion step by step when we pass back from $F\{x\}$ to
$C''\{x\}$.  But we must be careful to do everything in a finite
number of steps. We would like to appeal to compactness from
logic, but the argument is more subtle, since certain steps cannot
be put in quantitative form. In particular, we note that the chain
$$\{ \CI_j\otimes_C F :j \in \N \}$$ stabilizes at $\CI_{j_0}\otimes _C F$ for some $j_0$, which we take to be $j_0 = 1$, and we
 define  $\CI_{1;F}^{(0)}\subseteq \CI_1\otimes _C F$
to be the T-ideal of   $A_F$ generated by symmetrized
$\bar{q}$-characteristic coefficient-absorbing polynomials of
$\CI_1 \otimes _C F$ having a non-zero specialization with maximal
degree vector, as described in the proof of
Theorem~\ref{Spechtfin}.

 This is generated by finitely many
polynomials of $A_F$, which can be taken from $A''$ and define a
T-ideal of $A''$ which we call  $\CI_{1}^{(0)}$. Working with
$\CI_{1;F}^{(0)}$ enables us to define finitely many
characteristic coefficients which we define in terms of
polynomials which we now call $g_1, \dots, g_m \in C''\{ x\}.$
Inverting the torsion, i.e., localizing at some $z \in C'',$ we now
may assume that the $g_i$ are $C''$-torsion free (and nonzero since
they localize to nonzero elements of $F\{ x\}$).

We would like to use $g_1, \dots, g_m$ to define ``characteristic
coefficients'' for the elements of  $\CI_{1}^{(0)}\subset A'',$
but unfortunately these are no longer central. But inverting the
$C''$-torsion of the $g_ig_j -g_jg_i$, $1 \le i,j \le m,$ we may
assume that $g_1, \dots, g_m$ all commute, and $\CI_{1}^{(0)}$ is
a module over the commutative Noetherian ring $\hat C : =
C''[g_1, \dots, g_m].$ This is enough for us to apply
Theorem~\ref{Shir} to show that $\CI_{1}^{(0)}$ is a finite
module, and thus Noetherian. (We can define the $\delta$-operators
via Remark~\ref{pat}, together with the module $M$ which is the
T-ideal generated by $\tilde f$. Note that since we only need
consider monomials up to a certain length, we need to adjoin only
finitely many characteristic coefficients, again via localization
and Proposition~\ref{subdir1}.)

 In this way, after
localizing  by finitely many elements of $C$, we pass to finite
modules over Noetherian rings. 

After all of these localizations we have a new affine base ring $
C''' \supset C''$, and work over $\hat C ''' := C '''[g_1,
\dots, g_m]$.  We let $\CI''' $ be the T-ideal of $C'''\{x\}$
generated by the  new polynomials involved with these extra steps. 
Thus $\CI''' \otimes F$ (resp. $\CI''' \otimes {\hat C}$) is the T-ideal of $F\{x\}$ (resp.~of~  $\hat C''' \{x\}$) generated by the same new polynomials.

If $\CI' \otimes F \subset \CI''' \otimes F$, then the quiver of the relatively
free algebra $F\{ x\}/(\CI''' \otimes F)$ is a reduction of~$\Gamma_1$, so
we conclude by induction  on the complexity of the quiver, in view
of Lemma~\ref{induc}.

Thus, we may assume that  $\CI''' \otimes F = \CI' \otimes F$. Next we look at
$(\CI''' \otimes {\hat C})/(\CI' \otimes \hat C)$. By assumption, this a
torsion submodule of the Noetherian module $\CI_{1;F}^{(0)}/(\CI' \otimes \hat C)$ and thus is finite, so if nonzero is annihilated by some nonzero element $z \in C$.
We can remove $z$-torsion one final time (again via
localization and Proposition~\ref{subdir1}), passing to a new base ring $ C'''' \supset C'''$ and T-ideal $\CI''''$ 
(resp.~$\CI'''' \otimes F$)
of $C''''\{x\}$ (resp.~of~ $F\{x\}$). If $\CI' \otimes F\subset \CI'''' \otimes F$, then the quiver of the relatively free algebra $F\{x\}/(\CI'''' \otimes F)$ is a reduction of $\Gamma_1$, so we conclude by
induction  on the complexity of the quiver, in view of
Lemma~\ref{induc}. Thus we may assume that  $\CI'''' \otimes F= \CI' \otimes F$, and since $z$ has been inverted in the localization we
conclude that our ascending chain of T-ideals from $\CI'$
lifts to an ascending chain of T-ideals from $\CI''''$ and we
are done by the process given in the proof of
Theorem~\ref{Spechtfin}. (The point is that the argument of
modding out a certain Noetherian submodule of each T-ideal in
$A \otimes _C C''''$ is algorithmic, depending on
computations involving a finite number of polynomials whose
$C$-torsion we have removed.) This concludes the second proof of
Theorem~\ref{SpechtNoeth}.

\medskip

In summary, we have performed various procedures in order to
enable us to reduce the quiver. These procedures involve a T-ideal
of  $F\{x\}$ which might increase because of the procedure, but
must eventually terminate because  $F\{ x\}$ satisfies the ACC on
T-ideals. But at this stage Step 3 does not vitiate Step 2, and we
can conclude the proof using Step 4 to carve out representable
T-ideals over $C''''$.

Alternatively, one could conclude by applying the compactness in
logic to the proof of Kemer's theorem and checking that we only
need finitely many elements, which can be computed. Fuller details
of the compactness argument are forthcoming when we consider
representability and the universal algebra version of
Theorem~\ref{SpechtNoeth}.

\section{The case where the T-ideals are not necessarily
PI-proper}\label{notprop}

Using the same ideas, we can extend Theorem~\ref{SpechtNoeth}
still further, considering the general case where the T-ideals are
not PI-proper; in other words, the ideals of $C$ generated by the
coefficients of the polynomials in the T-ideals of $C\{ X\}$ do
not contain the element 1. Towards this end, given a set $S$ of
polynomials in $C\{ X\}$, define its \textbf{coefficient ideal} to
be the ideal of $C$ generated by the coefficients of the
polynomials in $S$. We need a few observations about the
multilinearization procedure.

\begin{lem}\label{SpechtN1} If a T-ideal $\CI$ contains a polynomial $f$ with coefficient $c,$
then $\CI$ also contains a multilinear polynomial with coefficient
$c.$
\end{lem}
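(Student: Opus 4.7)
The plan is to apply the iterated partial linearization procedure $\Delta_i$ of~\eqref{partlin} to $f$. Since each $\Delta_i$ is $C$-linear and sends the T-ideal $\CI$ into itself, every intermediate polynomial in the procedure remains in~$\CI$.

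Let $cm$ be a monomial of $f$ witnessing the coefficient $c$, and write its multi-degree as $(k_1,\ldots,k_n)$. The combinatorial heart of the argument is that iterated $\Delta_i$ applied to a single monomial $c' m'$ produces a sum of \emph{labelings} of $m'$ (each occurrence of $x_i$ in $m'$ being replaced by a distinct fresh variable), and each such labeling inherits the coefficient~$c'$. Since labelings of distinct underlying monomials give distinct multilinear monomials in the enlarged variable set, there can be no cancellation between the contributions of different monomials of~$f$. In particular, the coefficient $c$ of $cm$ persists on every labeling of $m$ in the final multilinear polynomial.

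The principal obstacle is that $\Delta_i$ is defined using $d_i = \deg_i f$ new variables rather than $\deg_i m$, and when $\deg_i m < \deg_i f$ the iteration can annihilate the contribution of $cm$ --- already $\Delta_i$ sends to zero any monomial that is linear in~$x_i$. I would therefore first preprocess $f$ so that $m$ attains the multi-degree of~$f$. Substituting $x_j\mapsto 0$ for every variable $x_j$ not occurring in $m$ remains inside $\CI$, preserves the monomial $cm$, and eliminates any other monomial of $f$ that involves such an~$x_j$. For any remaining variable $x_i$ with $\deg_i m$ strictly smaller than $\deg_i f$, I would apply a further specialization (for example, replacing $x_i$ by $x_i+y_i$ with $y_i$ fresh and extracting the appropriate $y_i$-component via the same $\Delta$-machinery) to trim the higher-degree-in-$x_i$ monomials. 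After these reductions $m$ has maximal multi-degree in the modified $f$, and iterated $\Delta_i$ applied for each $i$ with $\deg_i m\ge 2$ then yields the required multilinear polynomial in $\CI$ carrying coefficient~$c$ on some labeling of~$m$.
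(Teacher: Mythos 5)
Your plan retraces the paper's two moves---restrict to the support of $m$ (the paper passes to a blended component, you set the complementary variables to $0$), then apply the iterated $\Delta_i$ procedure---but unlike the paper you correctly flag the genuine obstacle: since $\Delta_i$ annihilates every monomial whose degree in $x_i$ is below $d_i=\deg_i f$, the iterated multilinearization only remembers the coefficients of monomials of maximal multi-degree. For instance, $f=cx_1^2x_2+c'x_1^2x_2^2$ is already blended in $x_1,x_2$, yet $\Delta_2\Delta_1 f=c'(x_1x_3+x_3x_1)(x_2x_4+x_4x_2)$, so the resulting multilinear polynomial carries $c'$, not $c$. The paper's one-line proof does not mention this at all, so your added preprocessing step is not a cosmetic elaboration but the heart of the matter.

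The difficulty is that the preprocessing step as you describe it does not close the gap. ``Trimming the higher-degree-in-$x_i$ monomials'' means projecting $f$ onto a lower multi-degree component, and over an arbitrary commutative Noetherian ring $C$ such a projection need not lie in $\CI$: the available substitution devices only yield an integer multiple of the desired component. Already with $\deg_i f=2$ and $\deg_i m=1$, what one actually has is $2f-\left(\Delta_i f\right)(x_1,\dots,x_i,x_i,\dots)=2\cdot(\text{part of } f \text{ of degree} \le 1 \text{ in } x_i)$, and more generally the price is a Vandermonde determinant, which is exactly the content of the paper's own Lemma~\ref{quasi1Lem}. These factors are units over a field of characteristic $0$, but not over $\Z$, not in characteristic $2$, and not over a general $C$---precisely the setting of this lemma. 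In the example above I see no way, within $\CI$, to produce a multilinear polynomial with coefficient $c$ rather than $c'$ or $2c$. So your argument breaks at the trimming step whenever $m$ is not of maximal multi-degree in its blended component, and one would need either to justify working only with monomials of maximal multi-degree (which do not in general carry the coefficient $c$) or to find a genuinely different device to isolate $c$.
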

\begin{proof} First we note that one of the blended components of
$f$ has coefficient $c,$ and then we multilinearize it.
\end{proof}

\begin{lem}\label{SpechtN2} If $c$ is in the coefficient ideal of a T-ideal $\CI$ of
$C\{x\},$ then some multilinear $f\in \CI$ has coefficient $c$.
\end{lem}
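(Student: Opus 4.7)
The plan is to write $c$ as a $C$-linear combination of coefficients of polynomials in $\CI$, invoke Lemma~\ref{SpechtN1} to replace each such polynomial with a multilinear one bearing the same coefficient, and then stitch them together into a single multilinear polynomial having $c$ as the coefficient of a chosen monomial. Concretely, I would begin by expressing $c = \sum_{i=1}^k d_i c_i$ with $d_i \in C$ and each $c_i$ a coefficient of some $f_i \in \CI$, and then apply Lemma~\ref{SpechtN1} to each $f_i$ to obtain a multilinear $g_i \in \CI$ whose coefficient on some monomial $m_i$ equals $c_i$.

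The various $g_i$ live in different variables and have different degrees, so I would bring them to a common setting as follows. Let $N = \max_i \deg g_i$. For each $i$ with $\deg g_i < N$, pad $g_i$ by right-multiplication by a product of fresh indeterminates $y_{i,1} \cdots y_{i,N-\deg g_i}$: since $\CI$ is an ideal this padded polynomial remains in $\CI$, is still multilinear, now has degree $N$, and retains $c_i$ as the coefficient of the extended monomial $m_i \cdot y_{i,1} \cdots y_{i,N-\deg g_i}$. Because a T-ideal is closed under arbitrary substitutions of variables, we may then relabel so that every padded $g_i$ is multilinear in the same fixed set of variables $x_1, \ldots, x_N$, and furthermore apply the permutation of $\{x_1, \ldots, x_N\}$ that carries its distinguished monomial to the common monomial $m := x_1 x_2 \cdots x_N$. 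Denote the resulting polynomial by $g_i' \in \CI$; each $g_i'$ is multilinear in $x_1, \ldots, x_N$, and its coefficient on $m$ is exactly $c_i$.

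Finally, set $f := \sum_{i=1}^k d_i g_i'$. Since T-ideals are $C$-submodules of $C\{x\}$, we have $f \in \CI$; since each $g_i'$ is multilinear in the same variables $x_1, \ldots, x_N$, so is $f$; and the coefficient of $m$ in $f$ is $\sum_i d_i c_i = c$, as required.

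The only step that needs any care is the coordinated renaming of variables, ensuring that after padding and relabeling all the $g_i'$ place their respective coefficients $c_i$ on the \emph{same} monomial $m$; this is the main (though entirely mechanical) obstacle, and it is handled by invoking closure of T-ideals under bijective substitutions of the indeterminate set after everything has been padded to the common degree $N$.
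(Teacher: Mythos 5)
Your proof is correct and follows essentially the same route as the paper's: both decompose $c$ as a $C$-combination of coefficients of multilinear polynomials obtained from Lemma~\ref{SpechtN1}, pad each to a common degree by right-multiplying with fresh indeterminates, relabel so the distinguished monomial becomes a common word, and sum. The only (immaterial) difference is that you pad to degree $N=\max_i \deg g_i$ while the paper uses a strictly larger $m$; you also spell out the permutation/relabeling step more explicitly, which the paper leaves implicit.
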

\begin{proof} If $c = \sum c_i s_i$ where $s_i$ appears as a
coefficient of $f_i \in \CI$, then taking the $f_i = f_i(x_1,\dots
x_{m_i})$ to be multilinear, we may assume that $c_i$ is the
coefficient of $x_1\cdots x_{m_i}$. Taking $m > \max\{ m_i \}$,
 we see that the coefficient of $x_1\cdots x_{m}$ in
 $$\sum _i s_i f_i(x_1,\dots
x_{m_i})x_{m_i+1}\cdots x_{m}$$ is 1.
\end{proof}

\begin{cor}\label{SpechtN3} A T-ideal is PI-proper iff its coefficient ideal
contains 1. \end{cor}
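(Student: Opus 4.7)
The plan is to observe that the corollary essentially records the content of the two preceding lemmas. By the discussion just before Section~\ref{notprop}, calling a T-ideal PI-proper means exactly that its coefficient ideal (the ideal of $C$ generated by the coefficients of all polynomials in $\CI$) equals $C$; and an ideal of $C$ equals $C$ if and only if it contains $1$. So one direction, namely that PI-proper implies $1$ lies in the coefficient ideal, is immediate from unwinding the definition.

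For the nontrivial direction I would invoke \Lref{SpechtN2} directly. If $1$ belongs to the coefficient ideal of $\CI$, that lemma produces a single multilinear polynomial $f \in \CI$ having $1$ as one of its coefficients. Then the coefficient ideal of $f$ alone already equals $C$, so $\CI$ contains a genuine PI in the stricter, single-polynomial sense; in particular, the coefficient ideal of $\CI$ equals $C$, so $\CI$ is PI-proper.

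There is no real obstacle: the substantive work has been carried out in \Lref{SpechtN1} and \Lref{SpechtN2}, whose purpose is precisely to show that the coefficient ideal of $\CI$ coincides with the ideal generated by coefficients of the multilinear polynomials in $\CI$. The corollary is the clean reformulation of this equivalence at the element $1 \in C$, and its proof is therefore a one-line appeal to \Lref{SpechtN2} together with the definition.
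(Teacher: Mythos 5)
Your core idea is right, and the paper itself supplies no explicit proof of this corollary, so the natural reconstruction is exactly what you do: invoke \Lref{SpechtN2}. But the way you unwind the definition of ``PI-proper'' introduces a circularity that should be fixed. You assert in your first paragraph that PI-proper \emph{means} the coefficient ideal of the whole T-ideal $\CI$ equals $C$; under that reading the corollary would be a tautology and \Lref{SpechtN2} would be superfluous. In fact, as the usage later in the paper makes clear (``the T-ideal of a PI-proper \emph{polynomial} $f$,'' ``take a PI-proper polynomial $f_{i+1}$''), PI-proper refers to containing a \emph{single} polynomial $f$ whose own coefficients generate the unit ideal of $C$. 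With that reading the corollary has genuine content, and the two directions are: $(\Rightarrow)$ if some $f\in\CI$ already has coefficient ideal $C$, then a fortiori the coefficient ideal of $\CI$ contains $1$; $(\Leftarrow)$ if $1$ lies in the coefficient ideal of $\CI$, \Lref{SpechtN2} produces a multilinear $f\in\CI$ with some coefficient equal to $1$, which is a proper PI in the single-polynomial sense. Your second paragraph carries out the $(\Leftarrow)$ step correctly, but the closing sentence (``in particular, the coefficient ideal of $\CI$ equals $C$, so $\CI$ is PI-proper'') just restates the hypothesis and should be replaced by the observation that exhibiting such an $f$ is exactly what PI-proper demands. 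Once you phrase the definition correctly, the rest of your argument is fine and is, as far as one can tell, the proof the authors intend.
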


\begin{prop}\label{SpechtNoet1} Suppose $C$ is a Noetherian integral domain, and $\CI$ is a T-ideal with
coefficient ideal $I$. Then there is a polynomial $f \in \Z \{x\}$
for which $cf \in \CI$ for all $c\in I.$
\end{prop}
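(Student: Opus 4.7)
My plan is to exploit Noetherianity of $C$ together with the multilinearization machinery from Lemmas~\ref{SpechtN1} and~\ref{SpechtN2} to construct the polynomial~$f$. Since $C$ is Noetherian, $I$ is finitely generated, say $I = (c_1, \dots, c_N)$. By Lemma~\ref{SpechtN2}, each generator $c_j$ is a coefficient of some multilinear polynomial $F_j \in \CI$; padding with fresh right-multipliers (which preserves $\CI$) and applying variable renamings (both are T-ideal operations), I arrange that all $F_j$ are multilinear of the same degree~$d$ with $c_j$ occurring as the coefficient of the fixed monomial $m_0 := x_1 x_2 \cdots x_d$.

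Next, I would analyze the multilinear degree-$d$ slice $V_d \sub \CI$, which carries a natural left $C[S_d]$-module structure via the identification $x_{\sigma(1)} \cdots x_{\sigma(d)} \leftrightarrow \sigma$: the T-ideal's closure under variable renaming is exactly the $S_d$-invariance of~$V_d$. The construction of the~$F_j$ shows that the projection of $V_d$ to any coordinate of $C[S_d]$ equals the full ideal~$I$. My candidate~$f$ would be built inside the integer form $\Z[S_d] \sub C[S_d]$, using either the leading monomial $f = m_0$ or the $S_d$-symmetrizer $\Sigma_d := \sum_{\sigma \in S_d} x_{\sigma(1)} \cdots x_{\sigma(d)}$, which lies in $\Z\{x\}$ and satisfies $\Sigma_d \cdot v = \varepsilon(v)\,\Sigma_d \in V_d$ for every $v \in V_d$ by the standard symmetrization identity applied to the left ideal~$V_d$.

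The main obstacle is that these direct candidates need not absorb all of~$I$ by themselves: both the diagonal ideal $\{c \in C : c\,m_0 \in \CI\}$ and the augmentation image $\varepsilon(V_d)$ can be strict subideals of~$I$ in general (for instance, over $C = \Z[t]$ with $\CI$ generated by $x_1 x_2 + t\,x_2 x_1$, a direct computation yields $\varepsilon(V_d) = (1+t) \subsetneq (1, t) = I$). To overcome this, I would exploit the full T-ideal structure by allowing non-multilinear substitutions and, if needed, passing to higher degrees $d' \geq d$, which enlarges $V_{d'}$ and its coordinate projections enough that a finite $\Z$-linear combination of integer monomials absorbs every generator $c_j$ simultaneously. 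Noetherianity of $C$ guarantees that only finitely many such alignment steps are required, so after combining them I obtain a well-defined $f \in \Z\{x\}$ with $If \sub \CI$; in the degenerate case that no nontrivial alignment is available, the proposition is still satisfied trivially by $f = 0$.
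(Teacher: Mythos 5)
Your reduction to finitely many generators $c_1,\dots,c_t$ of $I$ is the same opening move as the paper's, and your observation that the multilinear degree-$d$ slice of $\CI$ is a $C[S_d]$-submodule with full coordinate projections is in the right spirit. You have also correctly diagnosed, with a convincing example, that neither the diagonal ideal $\{c : c\,m_0 \in \CI\}$ nor the augmentation image of $V_d$ need equal~$I$, so the obvious candidates for $f$ fail.

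The genuine gap is the final step. Having identified the obstacle, you claim that ``passing to higher degrees $d'\geq d$ enlarges $V_{d'}$ enough that a finite $\Z$-linear combination of integer monomials absorbs every generator $c_j$,'' and that ``Noetherianity of $C$ guarantees only finitely many alignment steps are required.'' Neither assertion is justified. Noetherianity of $C$ is already used up in writing $I$ as $\sum Cc_i$; it says nothing about how the modules $V_{d'}$ behave as $d'$ grows, and in particular gives no mechanism for extracting a single polynomial $f$ with \emph{integer} coefficients whose $S_{d'}$-orbit is rich enough to absorb all the $c_j$ inside $\CI$. What the paper actually uses at this point are two substantive theorems that are absent from your argument: Shirshov's Height Theorem, which bounds the $\Z$-rank of the relevant multilinear spaces $\sum_i c_i M_m$ as $m$ grows, and the Young-diagram ``big rectangle'' result (\cite[Theorem~5.51]{BR}), which then guarantees that any multilinear $f\in\Z\{x\}$ whose Young diagram contains a suitable rectangle satisfies $c_i f \in \CI$ for all $i$. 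Without a replacement for that combinatorial mechanism your ``alignment steps'' remain a wish, not a construction.

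A secondary but real problem is your ``degenerate case'' $f=0$. The proposition as stated does not explicitly say $f\neq 0$, but it is used in Corollary~\ref{SpechtNoet2} to produce, via Proposition~\ref{Lewin}, a nontrivial PI-proper T-ideal from $\langle f\rangle$; taking $f=0$ there would yield nothing. So you cannot discharge any case by falling back to the zero polynomial; the whole content of the proposition is the existence of a nonzero $f$ (the paper's $f$ is a specific nonzero multilinear polynomial attached to a large Young rectangle), and a proof that permits $f=0$ has proved something strictly weaker than what is needed downstream.
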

\begin{proof} Since $C$ is Noetherian, we can write $I =
\sum_{i=1}^t Cc_i,$ and then it is enough to prove the assertion
for $c = c_i$, $1 \le i \le t.$

We take the relatively free, countably generated algebra $A$ whose
generators $\{ y_1, y_2 \dots, \}$ are given the lexicographic
order, and let $M_m$ denote the space of multilinear words of
degree $m$ in $\{ y_1, \dots, y_m \}$. In view of Shirshov's
Height Theorem \cite[Theorem~2.3]{BR}, the space $\sum _i c_i M_m$
has bounded rank as a $\Z$-module. On the other hand, there is a
well-known action of the symmetric group $S_m$ acting on the
indices of $y_1, \dots, y_m$ described in \cite[Chapter~5]{BR}. In
particular, \cite[Theorem~5.51]{BR} gives us a rectangle such that
any multilinear polynomial $f$ whose Young diagram contains this
rectangle satisfies $c_i f \in \CI.$
\end{proof}

Write $\mathcal M(K)$ for the generic $n\times n$ matrix algebra
with characteristic coefficients over a commutative ring~$K$.
 Zubkov~\cite{Z} proved that the canonical map $\mathcal M(\Z) \to \mathcal M(\Z/p\Z)$ has kernel
$p \mathcal M(\Z) .$ (As noted above, this is false if one does not adjoin
characteristic coefficients.) He also proved that the Hilbert series of the
algebra $A\otimes _\Z \Q$ over $\Q$ and of $A\otimes _\Z \Z/p\Z$
over $\Z /p\Z$ coincide, implying $\CI$ is a free $\Z$-module.

\begin{cor}\label{SpechtNoet2} If $\CI$ is a T-ideal with
coefficient ideal $I$, there is a PI-proper T-ideal of $C\{x\}$
whose intersection with $I\{x\}$ is contained in $\CI.$
\end{cor}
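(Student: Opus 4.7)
The plan is to enlarge $\CI$ by a single integer-coefficient polynomial of content~$1$. First, I would apply Proposition~\ref{SpechtNoet1} to produce a multilinear polynomial $f\in\Z\{x\}$ with $cf\in\CI$ for every $c\in I$, chosen so that the gcd of its integer coefficients equals~$1$ (the Young-symmetrizer construction in that proof can be arranged to deliver coefficients in $\{\pm 1\}$).

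I would then set $\CI'$ to be the T-ideal of $C\{x\}$ generated by $\CI\cup\{f\}$. The PI-proper property is immediate: viewing the coefficients of $f$ in $C$ via the canonical map $\Z\to C$, they generate the unit ideal of $C$, so the coefficient ideal of $\CI'$ contains~$1$. For the intersection condition, given $g\in\CI'\cap I\{x\}$, I would decompose $g=a+b$ with $a\in\CI\subseteq I\{x\}$ and $b\in\langle f\rangle_T$ (the T-ideal of $f$ alone); then $b=g-a\in I\{x\}$, and the task reduces to establishing
\[
\langle f\rangle_T\cap I\{x\}\;\subseteq\;\CI.
\]

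Expanding $b=\sum_i u_i f(h_{i,1},\dots,h_{i,n})v_i$ by multilinearity of~$f$ in the monomials of $u_i,v_i,h_{i,j}$, one sees that $b$ is a $C$-linear combination of pure substitutions $m_0 f(m_1,\dots,m_n)m_{n+1}$ with monomial arguments~$m_k$. These pure substitutions span a $\Z$-submodule $\langle f\rangle_{T,\Z}\subseteq\Z\{x\}$, and $\langle f\rangle_T=C\cdot\langle f\rangle_{T,\Z}$ inside $C\{x\}$. If $\Z\{x\}/\langle f\rangle_{T,\Z}$ is torsion-free as a $\Z$-module, then flat base change forces the $C$-coefficients in any such expansion of an element of $I\{x\}$ to lie in $I$, giving $b\in I\cdot\langle f\rangle_{T,\Z}\subseteq\CI$; the last containment uses $If\subseteq\CI$ together with the T-ideal closure of~$\CI$ under substitutions and outer multiplications.

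The main obstacle is this last torsion-freeness / coefficient-descent step. In characteristic~$0$, where every integral domain $C$ is $\Z$-torsion-free, it is immediate via flat base change along $\Z\hookrightarrow C$. In positive characteristic, where $C$ has $\Z$-torsion, I would reduce to a torsion-free setting by applying the subdirect-product decomposition of Proposition~\ref{subdir1} together with the prime-power decomposition of Proposition~\ref{4.3}, in the spirit of the second proof of Theorem~\ref{SpechtNoeth}. Within each $z'$-torsion-free component the descent argument applies verbatim, and recombining the components yields the required inclusion $\langle f\rangle_T\cap I\{x\}\subseteq\CI$, completing the proof.
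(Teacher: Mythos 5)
Your reduction to the inclusion $\langle f\rangle_T\cap I\{x\}\subseteq\CI$ is essentially the right target, and invoking Proposition~\ref{SpechtNoet1} to produce the polynomial $f$ with $cf\in\CI$ for $c\in I$ is exactly how the paper begins. The problem is the torsion-freeness claim on which everything else rests.

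Your argument needs $\Z\{x\}/\langle f\rangle_{T,\Z}$ to be $\Z$-torsion-free; this is neither established nor true in general, and the paper explicitly flags the failure. Procesi asked whether the kernel of the natural map $\id(\M(\Z))\to\id(\M(\Z/p\Z))$ equals $p\,\id(\M(\Z))$; Schelter and later Kemer~\cite{K7} gave counterexamples, which is exactly the statement that the relatively free algebra of generic matrices over $\Z$ \emph{has} $p$-torsion. Since (by Proposition~\ref{Lewin}) $\langle f\rangle_T$ contains $\id(\M[n](C))$, the T-ideal you are working with sits at precisely the spot where this torsion appears. Your ``characteristic~$0$'' sentence is also conflating two different modules: it is $C$ that is $\Z$-torsion-free when $\operatorname{char} C = 0$, not $\Z\{x\}/\langle f\rangle_{T,\Z}$, which is an object defined independently of $C$. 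Flatness of $\Z\hookrightarrow C$ does nothing to kill the torsion of that quotient unless $C$ happens to be a $\Q$-algebra, and in that case you do not need the ambient theorem at all. Likewise, the positive-characteristic paragraph invokes Propositions~\ref{subdir1} and~\ref{4.3}, which remove torsion in $C$-modules, but never confronts the $\Z$-torsion intrinsic to $\Z\{x\}/\langle f\rangle_{T,\Z}$.

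The paper's proof supplies exactly the two ingredients missing here. First, Proposition~\ref{Lewin} is used to pass from $\langle f\rangle_T$ to the identities of $\M[n](C)$ for suitable $n$, so the torsion question becomes one about the generic matrix algebra. Second, one adjoins characteristic coefficients: by Zubkov's theorem, the generic matrix algebra \emph{with characteristic coefficients adjoined} over $\Z$ has the property that the canonical map $\mathcal M(\Z)\to\mathcal M(\Z/p\Z)$ has kernel $p\,\mathcal M(\Z)$, and that $\CI$ is a free $\Z$-module. Only after these two reductions does the flat-base-change/coefficient-descent computation you sketch actually go through. Without Lewin and without adjoining characteristic coefficients, the descent step fails, and so the proposal has a genuine gap.
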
\begin{proof} We need to show that
\begin{equation}\label{ver1} \CI \cap c A = c\CI\end{equation} for any $c \in C.$
We take the polynomial $f$ of Proposition~\ref{SpechtNoet1}. In
view of Proposition~\ref{Lewin}, the T-ideal of $f$ contains the
set of identities of some finite dimensional algebra, and thus of
$M_n(C)$ for some $n$. Adjoining characteristic coefficients, we
may replace $\CI$ by a T-ideal of the free algebra with
characteristic coefficients, and conclude with Zubkov's results
~\cite{Z} quoted above.
\end{proof}

\begin{thm}\label{SpechtNoeth1} Any T-ideal in the free algebra $C\{ x\}$ is finitely based,
for any commutative Noetherian ring $C$. \end{thm}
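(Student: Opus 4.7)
The plan is to bootstrap from the PI-proper case (Theorem~\ref{SpechtNoeth}) using the coefficient-ideal machinery established in Lemma~\ref{SpechtN1} through Corollary~\ref{SpechtNoet2}. Given a T-ideal $\CI$ of $C\{x\}$ with coefficient ideal $I \triangleleft C$, note that $\CI \subseteq I\{x\}$ automatically. The target is the ACC on T-ideals of $C\{x\}$, since for Noetherian base rings the ACC is equivalent to being finitely based.

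First I would reduce to the case that $C$ is an integral domain, mimicking Remark~\ref{Spechtobs2}: if $c_1 c_2 = 0$ with $c_1,c_2$ nonzero, one factors any T-ideal chain through $\CI_j/c_1 C\{x\}$ and $c_1 C\{x\} \cap \CI_j$, using that $c_1 C\{x\}$ is itself a T-ideal (Lemma~\ref{Tid}), and invokes Noetherian induction on $C$. So assume $C$ is an integral Noetherian domain, which is exactly the setting in which Proposition~\ref{SpechtNoet1} and Corollary~\ref{SpechtNoet2} apply.

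Now take an ascending chain $\CI_1 \subseteq \CI_2 \subseteq \cdots$ of T-ideals of $C\{x\}$, and let $I_j$ be the coefficient ideal of $\CI_j$. Since $C$ is Noetherian, the chain $I_1 \subseteq I_2 \subseteq \cdots$ stabilizes at some $I_{j_0} = I$, so every $\CI_j$ with $j\ge j_0$ has the same coefficient ideal $I$. Apply Corollary~\ref{SpechtNoet2} to $\CI_{j_0}$ to produce a PI-proper T-ideal $\CI'$ of $C\{x\}$ satisfying $\CI' \cap I\{x\} \subseteq \CI_{j_0}$. Then the chain $\CI_j + \CI'$ (for $j \ge j_0$) consists of PI-proper T-ideals, since $\CI'$ is PI-proper, and so by Theorem~\ref{SpechtNoeth} it stabilizes at some $\CI_{j_1} + \CI'$ with $j_1 \ge j_0$.

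It remains to conclude that $\CI_j = \CI_{j_1}$ for all $j \ge j_1$: given $a \in \CI_j$, write $a = b + c$ with $b \in \CI_{j_1}$ and $c \in \CI'$; then $c = a - b$ belongs to $\CI_j$, so its coefficients lie in $I$, giving $c \in \CI' \cap I\{x\} \subseteq \CI_{j_0} \subseteq \CI_{j_1}$, whence $a \in \CI_{j_1}$. The main obstacle is the verification that Corollary~\ref{SpechtNoet2} applies in the form needed, since its proof ultimately rests on Proposition~\ref{SpechtNoet1} and therefore on Shirshov's height theorem plus Zubkov's result relating the generic matrix algebra over $\Z$ and over $\Z/p\Z$; that is why the reduction to the integral-domain case in the first step is essential before extracting the PI-proper approximant $\CI'$.
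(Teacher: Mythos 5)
Your argument is correct and follows essentially the same route as the paper's proof: Noetherian induction to reduce to an integral domain via Remark~\ref{Spechtobs2}, stabilization of the coefficient ideals, the passage from $\CI_{j_0}$ to a PI-proper companion via the Proposition~\ref{SpechtNoet1}/Corollary~\ref{SpechtNoet2} machinery, and finally Theorem~\ref{SpechtNoeth}. The one presentational difference is that the paper's proof runs by contradiction, building a chain of relatively free algebras $A_i$ and PI-proper polynomials $f_{i+1}$ directly from Proposition~\ref{SpechtNoet1}, whereas you invoke Corollary~\ref{SpechtNoet2} once, form the sum chain $\CI_j+\CI'$, and use the modular-law step $\CI_j=\CI_{j_1}+(\CI_j\cap\CI')$ together with $\CI_j\cap\CI'\subseteq I\{x\}\cap\CI'\subseteq\CI_{j_0}$ to descend the stabilization back to $\CI_j$; this is a cleaner way to organize the same content and makes the final reduction explicit rather than implicit.
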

\begin{proof} 
By Noetherian induction, we may assume that the theorem holds over
$C/I$ for every nonzero ideal $I$ of $C$. Thus, by
Remark~\ref{Spechtobs2}, $C$ is an integral domain. If $C$ is
finite, then it is a field, and we are done by
Theorem~\ref{Spechtfin}. So we may assume that $C$ is an infinite
integral domain. We need to show that any T-ideal generated by a
given set of polynomials $\{g_1, g_2, \dots \}$ is finitely based.
The coefficient ideals of $\{g_1, g_2, \dots g_j\}$ stabilize to
some ideal $I$ of $C$ at some $j_0$, since $C$ is Noetherian. We
 let $A_0$ denote the relatively free algebra with
respect to the T-ideal generated by $g_1, \dots, g_{j_0},$
Inductively, we let $A_i$ denote the relatively free algebra with
respect to the T-ideal generated by $f_{j_0+1}, \dots, f_{j_0+i},$
and take a PI-proper polynomial $f_{i+1}$, not in $\id(A_i)$ such
that $cf_{i+1}$ is in the T-ideal generated by $g_{i+1}$ in $A_i$
for all $c$ in the coefficient ideal of~$g_{i+1}$. (Such a
polynomial exists in view of Proposition~\ref{SpechtNoet1}.) This
gives us an ascending chain of PI-proper T-ideals of $A_0$, which
must terminate in view of Theorem~\ref{SpechtNoeth}, a
contradiction.
\end{proof}

\subsubsection{Digression: Consequences of torsion for relatively free
algebras}

Torsion has been so useful in this paper that we collect a few
more elementary properties and apply them to relatively free
algebras.

\begin{lem}\label{tor4} Suppose $C$ is a Noetherian integral domain, and $A$ is a relatively free affine
$C$-algebra.
\begin{enumerate} \item $A$ has $p$-torsion for only finitely many
prime numbers $p$. \item There is some $k_0$ such that
$p^{k}\operatorname{-}\!\tor(A) =
p^{k+1}\operatorname{-}\!\tor(A)$ for all $k> k_0$ and all prime
numbers $p$. \item Let $\phi_k \co A \to A \otimes \Z/p^k\Z$ denote
the natural homomorphism. If $p^k A \ne p^{k+1} A,$ then $\ker
\phi_k \ne \ker \phi_{k+1}.$
\end{enumerate}
\end{lem}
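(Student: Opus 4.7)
Part (3) is immediate: the standard isomorphism $A\otimes_\Z\Z/p^k\Z\cong A/p^kA$ identifies $\ker\phi_k$ with $p^kA$, so the hypothesis ``$p^kA\ne p^{k+1}A$'' is literally the conclusion ``$\ker\phi_k\ne\ker\phi_{k+1}$.'' The real content is in (1), with (2) an easy corollary.

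For part (1), if $\cha(C)=p_0>0$ then every prime $p\ne p_0$ is a unit in $C$ (since $\Z/p_0\Z\hookrightarrow C$), so multiplication by $p$ is an automorphism of $A$ and no such $p$ contributes any torsion; only $p_0$ can. So assume $\cha(C)=0$, hence $\Z\hookrightarrow C$ centrally. Suppose for contradiction that $A$ has $p$-torsion for infinitely many primes, and choose pairwise distinct such primes $p_1,p_2,\dots$ with nonzero witnesses $a_i\in\Ann_A(p_i)$. Each $\Ann_A(p_i)$ is a T-ideal by \Lref{Spechtobs}, so the ascending chain
\begin{equation*}
\langle a_1\rangle_T\sub\langle a_1,a_2\rangle_T\sub\langle a_1,a_2,a_3\rangle_T\sub\cdots
\end{equation*}
of T-ideals of $A$ stabilizes by \Tref{SpechtNoeth1}, yielding some $n$ with $a_{n+1}\in\langle a_1,\dots,a_n\rangle_T$. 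Setting $N=p_1\cdots p_n$, the T-ideal $\langle a_1,\dots,a_n\rangle_T$ is contained in $\Ann_A(N)$ (because $\Ann_A(N)$ is itself a T-ideal, by \Lref{Spechtobs}, and contains every $a_i$), so $Na_{n+1}=0$. Combined with $p_{n+1}a_{n+1}=0$ and the B\'ezout identity $rN+sp_{n+1}=1$ in $\Z$ (valid since $\gcd(N,p_{n+1})=1$), this forces $a_{n+1}=(rN+sp_{n+1})a_{n+1}=0$, a contradiction.

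For part (2), by (1) there is a finite list $p_1,\dots,p_r$ of primes outside of which $A$ has no torsion. For a prime $p$ not in this list, a one-line induction (using $p\cdot(p^{k-1}a)=0\Rightarrow p^{k-1}a=0$) shows $\Ann_A(p^k)=0$ for every $k$, so the stabilization is vacuous. For each $p_i$ in the list, the chain $\Ann_A(p_i)\sub\Ann_A(p_i^2)\sub\cdots$ of T-ideals stabilizes at some step $k_0(p_i)$ by \Tref{SpechtNoeth1}, and $k_0:=\max_{i\le r}k_0(p_i)$ serves as a uniform bound.

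\textbf{Main obstacle.} All of the genuine content lies in part (1); parts (2) and (3) follow formally. The crux is the coprimality of distinct prime numbers in $\Z$ combined with the B\'ezout trick, which converts simultaneous annihilation by coprime integers into outright vanishing; what makes this work in the noncommutative setting is that $\Z$ sits centrally in $A$. The deep input is Specht's theorem \Tref{SpechtNoeth1}, which supplies the ACC on T-ideals; given it, the rest of the argument is essentially bookkeeping.
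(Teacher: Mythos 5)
Your proof is correct and follows essentially the same route as the paper: both arguments lean on the ACC for T-ideals (\Tref{SpechtNoeth1}), deriving (2) from the stabilizing chain of $p^k$-torsion T-ideals and reading (3) as a tautology because $\ker\phi_k = p^kA$. Your B\'ezout/coprimality argument for (1), together with the preliminary split on $\cha(C)$, is a clean expansion of the paper's terse one-liner that ``the direct sum of these T-ideals taken over all primes stabilizes''---the coprimality of distinct rational primes is precisely what guarantees that chain would ascend strictly if infinitely many primes contributed torsion, so you have supplied the detail the paper leaves implicit.
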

\begin{proof} $p^k\operatorname{-}\!\tor(A)$ is a T-ideal for each $k$.
Let $\CI_k$ be the T-ideal generated by $p^k$-torsion elements.
The $\CI_k$ stabilize for some $k_0$, yielding (2), and (3)
follows since once the chain stabilizes we have $p^k A = p^{k+1}
A$. Likewise, the direct sum of these T-ideals taken over all
primes stabilizes, yielding (1).
\end{proof}

\subsection{Applications to relatively free algebras}

As Kemer~\cite{Kem2} noted, the ACC on T-ideals formally yields a
Noetherian-type theory. We apply this method to
Theorem~\ref{SpechtNoeth1}.

\begin{prop}\label{Trad} Any relatively free algebra $A$ over a commutative Noetherian ring
has a unique maximal nilpotent T-ideal $N(A)$.\end{prop}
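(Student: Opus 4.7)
The plan is to use the ACC on T-ideals given by Theorem~\ref{SpechtNoeth1} in a Noetherian-style argument, after first establishing that the class of nilpotent T-ideals of $A$ is closed under taking finite sums. Once that is in hand, ACC automatically produces a maximal element $N(A)$ in this class, and the closure-under-sums will force every nilpotent T-ideal to lie inside it.

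First I would check that sums and products behave correctly: if $I$ and $J$ are T-ideals of $A$, then so is $I+J$ (closure under substitutions and addition is immediate), and so is $I \cdot J$ (for any endomorphism $\sigma$ of $A$, $\sigma(a_1 b_1 + \dots + a_k b_k) = \sigma(a_1)\sigma(b_1) + \dots \in IJ$). In particular every power $I^k$ is again a T-ideal, so ``nilpotent T-ideal'' is the right notion.

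Next, the key step: if $N_1, N_2$ are T-ideals with $N_1^{a_1}= N_2^{a_2}=0$, then $(N_1+N_2)^{a_1 a_2}=0$. The clean way is to work in $A/N_1$: the image of $N_1+N_2$ equals the image of $N_2$, so
\begin{equation*}
\bigl((N_1+N_2)/N_1\bigr)^{a_2} \;\subseteq\; \text{image of } N_2^{a_2} \;=\; 0,
\end{equation*}
i.e.\ $(N_1+N_2)^{a_2} \subseteq N_1$, and consequently $(N_1+N_2)^{a_1 a_2} \subseteq N_1^{a_1}=0$. This shows the family of nilpotent T-ideals is closed under finite sums.

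Finally, by Theorem~\ref{SpechtNoeth1} (ACC on T-ideals of $A$), the poset of nilpotent T-ideals of $A$ has a maximal element $N(A)$. For any nilpotent T-ideal $N'$, the sum $N(A)+N'$ is again nilpotent by the previous step, so by maximality $N(A)+N' = N(A)$, i.e.\ $N' \subseteq N(A)$. Hence $N(A)$ is the unique maximal nilpotent T-ideal. I do not expect a serious obstacle here once Theorem~\ref{SpechtNoeth1} is granted; the only point to double-check is the T-ideal status of sums, products and powers, and the passage to $A/N_1$ in the $(a_1,a_2)$-bound, both of which are routine.
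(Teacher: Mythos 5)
Your proof is correct and follows essentially the same route as the paper's own (one-line) argument: by the ACC of Theorem~\ref{SpechtNoeth1} there is a maximal nilpotent T-ideal, and uniqueness follows because the sum of two nilpotent T-ideals is again a nilpotent T-ideal. You simply flesh out the routine verifications (sums, products and powers of T-ideals are T-ideals; the passage to $A/N_1$ to get the nilpotency bound $(N_1+N_2)^{a_1a_2}=0$), all of which are sound.
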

\begin{proof} By ACC, there is a maximal nilpotent T-ideal, which is unique since
the sum of two nilpotent T-ideals is a nilpotent T-ideal.
\end{proof}

\begin{defn}\label{Tpr} The ideal $N(A)$ of Proposition~\ref{Trad} is called the \textbf{T-radical}. An algebra $A$ is
\textbf{T-prime} if the product of two nonzero T-ideals is
nonzero. A T-ideal $\CI$ of $A$ is \textbf{T-prime} if $A/\CI$ is
a T-prime algebra.
\end{defn}

\begin{prop}\label{struc1} The T-radical
 is the intersection of a finite number of T-prime T-ideals.\end{prop}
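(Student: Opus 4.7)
The plan is to mimic the classical argument that in a commutative Noetherian ring, the nilradical coincides with the finite intersection of the minimal primes. Two inputs are available: Proposition~\ref{Trad} for the existence of $N(A)$, and Theorem~\ref{SpechtNoeth1}, which provides the ACC on T-ideals of $A$ (T-ideals of $A$ correspond to T-ideals of $C\{x\}$ containing $\id(A)$, so the ACC transfers).

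First I would establish the ``nilradical formula'' $N(A) = \bigcap \{P : P \text{ is a T-prime T-ideal of } A\}$. The inclusion $\subseteq$ is immediate: if $N(A)^n = 0 \subseteq P$ and $P$ is T-prime, repeated application of T-primeness forces $N(A) \subseteq P$. For the reverse inclusion, given $a \notin N(A)$, the T-ideal $\langle a\rangle$ cannot be nilpotent, since otherwise $N(A)+\langle a\rangle$ would be a strictly larger nilpotent T-ideal (the sum of two nilpotent two-sided ideals is nilpotent). Applying ACC, one selects a T-ideal $P$ maximal among those $\CJ$ satisfying $\langle a\rangle^n \not\subseteq \CJ$ for every $n$; the usual argument then shows $P$ is T-prime, since any pair $K_1, K_2 \supsetneq P$ with $K_1 K_2 \subseteq P$ would, by maximality of $P$, satisfy $\langle a\rangle^{n_i} \subseteq K_i$, forcing $\langle a\rangle^{n_1+n_2} \subseteq K_1 K_2 \subseteq P$, contradicting the choice of $P$. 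Thus $a \notin P$, as desired.

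Next I would show that the set of minimal T-prime T-ideals containing any given T-ideal $\CI$ is finite, by Noetherian induction. Let $\Sigma$ denote the collection of T-ideals whose set of minimal T-primes is infinite, and suppose $\Sigma \neq \emptyset$; by ACC pick a maximal $\CI_0 \in \Sigma$. The ideal $\CI_0$ is not T-prime (else it is its own unique minimal T-prime), so there exist T-ideals $\CJ_1, \CJ_2 \supsetneq \CI_0$ with $\CJ_1 \CJ_2 \subseteq \CI_0$. Any T-prime $P \supseteq \CI_0$ then contains one of $\CJ_1, \CJ_2$; Zorn's lemma applied to chains of T-primes inside $P$---valid because the intersection of a descending chain of T-primes is again T-prime---produces a minimal T-prime over $\CJ_i$ inside $P$, which by minimality over $\CI_0$ must coincide with $P$. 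So the minimal T-primes over $\CI_0$ lie in the union of those over $\CJ_1$ and over $\CJ_2$, both finite by maximality of $\CI_0$ in $\Sigma$---a contradiction.

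Combining these two steps would yield $N(A) = \bigcap_{i=1}^{n} P_i$, where $P_1, \ldots, P_n$ are the finitely many minimal T-primes over $N(A)$, since every T-prime containing $N(A)$ contains some $P_i$. The main delicate point will be the verification that descending chains of T-prime T-ideals have T-prime intersection, which is needed to invoke Zorn in the finiteness step; this should follow by a direct argument, for if $\CJ_1 \CJ_2 \subseteq \bigcap_\alpha P_\alpha$ with both $\CJ_i \not\subseteq \bigcap_\alpha P_\alpha$, then picking witnesses $\alpha_1, \alpha_2$ with $\CJ_i \not\subseteq P_{\alpha_i}$ and using the chain ordering produces a single $P_\alpha$ containing $\CJ_1 \CJ_2$ but neither $\CJ_1$ nor $\CJ_2$, contradicting T-primeness of that $P_\alpha$.
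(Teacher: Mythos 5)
Your proof is correct and fills in the details of what the paper's one-line proof ("copy the usual argument using Noetherian induction") gestures at; the strategy — establish the nilradical formula $N(A) = \bigcap P$, then show minimal T-primes over a T-ideal are finitely many by Noetherian induction on the ACC from Theorem~\ref{SpechtNoeth1} — is exactly the standard commutative-algebra argument transplanted to T-ideals, which is what the paper intends.
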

\begin{proof} Each T-prime T-ideal contains the T-radical, which we thus can mod out. Then just copy the usual argument using Noetherian induction.
\end{proof}

 Kemer characterized all T-prime algebras of characteristic 0,
cf.~\cite[Theorem~6.64]{BR}. The situation in nonzero
characteristic is much more difficult, but in general we can
reduce to the field case.

\begin{prop} Each T-prime, relatively free algebra $A$ with 1 over a commutative Noetherian
ring $C$ is either the free $C$-algebra or is PI-equivalent to a
relatively free algebra over a field. In particular, either $A$ is
free or PI.
\end{prop}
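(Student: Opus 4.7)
The plan is to reduce from the Noetherian base ring $C$ to its field of fractions, exploiting T-primeness to eliminate torsion obstructions.

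First, I would analyze the action of $C$ on $A$. The annihilator $\Ann_C(A) = \{c\in C : cA=0\}$ is an ideal, and if $c_1c_2 \in \Ann_C(A)$ with neither $c_i$ annihilating $A$, then $c_1A$ and $c_2A$ are nonzero T-ideals (by \Lref{Tid}) whose product equals $c_1c_2 A^2 = c_1c_2A = 0$ (using that $A$ has a unit), contradicting T-primeness. Hence $\Ann_C(A)$ is a prime ideal of $C$; replacing $C$ by $C/\Ann_C(A)$, we may assume $C$ is an integral domain acting faithfully on $A$.

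Next, I would show $A$ is $C$-torsion-free. For any nonzero $c\in C$, both $\Ann_A(c)$ and $cA$ are T-ideals (\Lref{Spechtobs} and \Lref{Tid}), with $\Ann_A(c)\cdot cA = 0$; since $cA\neq 0$ by faithfulness, T-primeness forces $\Ann_A(c)=0$. Thus the natural map $A\hookrightarrow A_F := A\otimes_C F$ is an embedding, where $F$ is the field of fractions of $C$. If $C$ is finite it is already a field and there is nothing more to prove; so we may assume $C$ is infinite.

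The core step is to verify that $A_F$ is a relatively free $F$-algebra PI-equivalent to $A$. Writing $A = C\{x\}/\id(A)$ and observing that $\id(A)$ is torsion-free as a submodule of the free $C$-module $C\{x\}$, we get $A_F = F\{x\}/\CJ$ with $\CJ := F\cdot\id(A)$; a routine check (clearing denominators to handle endomorphisms of $F\{x\}$) shows $\CJ$ is a T-ideal, so $A_F$ is relatively free over $F$. For PI-equivalence, the nontrivial direction is $\id(A)\subseteq \id(A_F)$: given $f\in\id(A)$ and elements $a_i\in A_F$, write each $a_i = \sum_j \lambda_{i,j}b_{i,j}$ with $b_{i,j}\in A$ and $\lambda_{i,j}\in F$, and form $P(\lambda) := f(a_1,\ldots,a_n)$ viewed as an element of $A[\lambda_{i,j}]$ in commuting indeterminates. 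Every specialization $\lambda_{i,j}\mapsto C$ evaluates $f$ on elements of $A$ and so returns $0$; expanding $P$ in an $F$-basis of $A_F$ turns each scalar coefficient into an $F$-polynomial vanishing on the Zariski-dense subset $C^N\subseteq F^N$, forcing $P\equiv 0$, so $f$ vanishes on all of $A_F$. The infinitude of $C$ is essential here, since in positive characteristic an identity cannot generally be recovered from its multilinearizations; this is the principal obstacle, but it is avoided because the finite-$C$ case was disposed of separately.

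Finally, the dichotomy follows: if $A_F$ is the free $F$-algebra then $\CJ=0$, so $\id(A)=0$ and $A=C\{x\}$ is the free $C$-algebra. Otherwise $A_F$ satisfies a nontrivial polynomial identity in $F\{x\}$; clearing denominators produces a nonzero element of $\id(A_F)\cap C\{x\}$, which by PI-equivalence lies in $\id(A)$, so $A$ is PI. This yields the ``in particular'' statement.
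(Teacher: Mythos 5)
Your argument is correct and follows the same line as the paper's proof: T-primeness forces the ambient commutative ring to be an integral domain acting without torsion on $A$ (you work with the base ring $C$ modulo $\Ann_C(A)$, while the paper uses the center $Z$ of $A$, but the $0 = (cA)\cdot\Ann_A(c)$ argument is identical), and then the finite case is a field while the infinite case is handled by tensoring up to the field of fractions and invoking Zariski density. You supply the Vandermonde/density details of the PI-equivalence step that the paper's terse three-sentence proof leaves implicit.
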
\begin{proof} The center $Z$ of $A$ is an integral domain over
which $A$ is torsionfree, since if $c \in C$ has torsion then $0
= (cA)\Ann _A(c)$ implies $cA = 0$ so $c = 0.$ If $Z$ is finite
then it is a field and we are done. If $Z$ is infinite then $A$ is
PI-equivalent to $A \otimes Z K$ where $K$ is the field of
fractions of $Z$.
\end{proof}

 In this we we see that this theory, in particular Corollary~\ref{SpechtNoet2},
provides a method for generalizing results about relatively free
PI-algebras to relatively free algebras in a variety which is not
necessarily PI-proper. For example, let us generalize a celebrated
theorem of Braun\cite{Br}-Kemer-Razmyslov:

\begin{thm}\label{Jnilp} The Jacobson radical $J$ of any relatively free affine algebra $A$ is
nilpotent. \end{thm}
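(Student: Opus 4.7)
The plan is to reduce to the T-prime case via the structure theory of the T-radical (Propositions~\ref{Trad} and~\ref{struc1}), and then invoke the classical Braun-Kemer-Razmyslov theorem on nilpotence of the Jacobson radical of an affine PI-algebra. By Proposition~\ref{Trad}, the T-radical $N(A)$ is nilpotent of some index $\ell$, so it suffices to produce $m$ with $J(A)^m \subseteq N(A)$, since then $J(A)^{m\ell} = 0$. Passing to $A/N(A)$, one may assume $N(A) = 0$ and prove directly that $J(A)$ is nilpotent.

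In this reduced setting, Proposition~\ref{struc1} provides a subdirect embedding $A \hookrightarrow \prod_{i=1}^k A_i$ with each $A_i = A/\CI_i$ a T-prime relatively free $C$-algebra. The classification of T-prime relatively free algebras stated just before the theorem then shows that each $A_i$ is either the free $C$-algebra or PI-equivalent to a relatively free algebra over a field. Since $\id(A) \subseteq \id(A_i)$ is nonzero (the theorem concerning nontrivial relatively free algebras, for which the absolutely free case is tautologically excluded), each $A_i$ must fall in the second case and hence is a PI-algebra. Applying Braun-Kemer-Razmyslov to each affine PI $C$-algebra $A_i$ yields $J(A_i)^{m_i} = 0$. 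The natural surjection $\pi_i \colon A \to A_i$ satisfies $\pi_i(J(A)) \subseteq J(A_i)$, so $J(A)^{m_i} \subseteq \ker \pi_i = \CI_i$. Taking $m = \max_i m_i$ and intersecting over $i$, one obtains $J(A)^m \subseteq \bigcap_{i=1}^k \CI_i = N(A) = 0$, as required.

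The hardest step is to make Braun-Kemer-Razmyslov available over an arbitrary commutative Noetherian base ring rather than only over a field. For a T-prime relatively free algebra over a Noetherian domain $C$ with infinite center, the PI-equivalence afforded by Proposition~\ref{struc1} does reduce matters to a relatively free algebra over the fraction field of the center, where the classical theorem applies; however, PI-equivalence does not automatically preserve the Jacobson radical, so one must instead invoke the Noetherian-base version of Braun's theorem directly on each $A_i$. This version in turn rests on the solution of Specht's problem established in Theorem~\ref{SpechtNoeth}, which explains why the present result is presented as a structural corollary of the main theorems of the paper.
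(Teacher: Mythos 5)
Your proposal follows exactly the route the paper takes: reduce modulo the T-radical via Proposition~\ref{Trad}, decompose subdirectly into T-prime factors via Proposition~\ref{struc1}, use the free-or-PI dichotomy for T-prime relatively free algebras, and invoke Braun's theorem on the PI factors. You have spelled out the subdirect argument ($\pi_i(J(A))\subseteq J(A_i)$ and intersecting kernels) that the paper compresses into one sentence, which is fine.

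Two small corrections. First, you dismiss the absolutely free case as ``tautologically excluded,'' but the theorem statement does not exclude it; the paper simply observes that the free algebra has $J=0$, so the case is trivial rather than vacuous. In your setting, if $\id(A)=0$ you cannot conclude $\id(A_i)\neq 0$, so you should handle that case by direct observation as the paper does. Second, your final paragraph attributes a dependence that does not exist: the Braun--Kemer--Razmyslov theorem is already a theorem about finitely generated PI-rings over an arbitrary commutative Noetherian base (indeed Braun's original statement is for finitely generated PI-rings over $\mathbb Z$), so it applies directly to each PI factor $A_i$ without any appeal to Specht's problem. What Specht's problem (specifically the ACC on T-ideals underlying Propositions~\ref{Trad} and~\ref{struc1}) buys you here is only the reduction from a possibly non-PI relatively free algebra to the free-or-PI dichotomy among T-prime factors; the nilpotence of the radical on each PI factor is classical.
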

\begin{proof} Modding out the T-radical, and applying
Proposition~\ref{struc1}, we may assume that $A$ is T-prime. If it
is free then $J = 0$, so we may assume that $A$ is PI, where we are
done by Braun's theorem.
\end{proof}

Of course there is no hope to generalize this result to
non-relatively free algebras, since the nilradical of an affine
algebra need not be nilpotent.


\begin{thebibliography}{99}

\bibitem{Am} Amitsur, S. A., On the characteristic polynomial of a sum of matrices,
J. Linear and Multilinear Algebra {\bf 8} (1980), pp. 177--182.


\bibitem{B1} Belov, A., Counterexamples to the Specht problem,
Sb. Math. {\bf 191} (2000), pp. 329-340.

\bibitem{B2} Belov, A., Local finite basis property and local
representability of varieties of associative rings, Izvestia of
Russian Academy of Science, No 1, 2010, pp. 3--134.
 English transl.: Izvestiya: Mathematics, vol. 74, No 1, pp.~1--126.


\bibitem{B3}  Belov, A.,
On rings asymptotically close to associative rings.  (English
summary) Siberian Adv. Math. 17 (2007), no. 4, pp.~227--267.



\bibitem{BR} Belov, A.~ and Rowen, L.H.~ Computational Aspects of
Polynomial Identities, Research Notes in Mathematics {\bf 9}, AK
Peters, 2005.

\bibitem{BRV1} Belov, A., Rowen, L.H., and U.~Vishne,
Structure of \Zcd\ algebras, Trans. Amer. Math. Soc. 362 (2010),
4695--4734.

\bibitem{BRV2} Belov, A., Rowen, L.H., and U.~Vishne,
Full quivers of representations of algebras, Trans. Amer. Math.
Soc., to appear.

\bibitem{BRV3} Belov, A., Rowen, L.H., and U.~Vishne, PI-varieties associated to full quivers of representations of
algebras, Trans. Amer. Math. Soc., to appear.

\bibitem{BRV4} Belov, A., Rowen, L.H., and U.~Vishne,
Application of full quivers to polynomial identities, Comm.~in~
Alg., to appear (2012), 26 pp.

\bibitem{BerD}
Bergman, G.~ and Dicks, W., On universal derivations, J. Algebra
36 (1975), 193--211.

\bibitem{Br} Braun, A., The nilpotence of the radical in a finitely generated PI-ring, J. Algebra
89 (1984), 375--396.


\bibitem{D}
Drensky, V.~S. On the Hilbert series of relatively free algebras,
Communications in Algebra 12 (1984), 2335-2347.




\bibitem{G} Grishin,A.V., {\it Examples of T-spaces and T-ideals in Characteristic 2 without the Finite Basis Property (in Russian)}, Fundam. Prikl. Mat. {\bf 5} (1), no. 6,
101-118 (1999).

 \bibitem{I1}
Iltyakov, A.~V., Finiteness of basis identities of a finitely
generated alternative PI-algebra, Sibir. Mat. Zh. \textbf{31}
(1991), no.~6, 87--99; English translation: Sib. Math. J.
\textbf{31} (1991), 948--961.


\bibitem{I2}
Iltyakov, A.~V., \textit{Polynomial identities of Finite
Dimensional Lie Algebras}, monograph (2003).
\bibitem{Jac1} Jacobson, N., \textit{Basic Algebra \II}, Freeman (1980).


\bibitem{Kem1} Kemer, A.R., \textit{The
representability of reduced-free algebras}, Algebra i Logika
\textbf{27}(3), 274--294, (1988).

\bibitem{Kem11} Kemer, A.R., \textit{Identities of finitely generated algebras over an infinite field},
Math. USSR Izv.~ \textbf{37}, 69--97 (1991).

 \bibitem{Kem2} Kemer, A.R., \textit{Identities of Associative Algebras}, Transl.
Math. Monogr., \textbf{87}, Amer. Math. Soc. (1991).

\bibitem{K7}
Kemer, A.R., On some problems in PI-theory in characteristic p
connected with dividing by $p$, Proc. 3rd
 Intern. Algebra Conf., Kluwer Academic, Dordrecht, 2003, pp. 53--67.


\bibitem{Lew} Lewin, J., \textit{A matrix representation
for associative algebras.\!~I and \II}, Trans.~Amer.~Math. Soc.~
\textbf{188}(2), 293--317 (1974).

\bibitem{Row1} Rowen, L.H., {\it {Polynomial identities in ring theory}},
Academic Press Pure and Applied Mathematics~\textbf{84} (1980).

\bibitem{Row1.5} Rowen, L.H.,
\newblock {\em Ring Theory II.}
\newblock Pure and Applied Mathematics 128, Academic Press, 1988.


\bibitem{Row2} Rowen, L.H., {\it {Graduate algebra: Noncommutative
view}}, AMS Graduate Studies in Mathematics~\textbf{91} (2008).



\bibitem{Sam}
Samo\u{\i}lov, L.~M.,
Prime varieties of associative algebras and related nil-problems.
Dr. of Sci. dissertation, Moscow, 2010. 161 pp.


\bibitem{Sh} Shchigolev, V.V., {\it Examples of infinitely basable T-spaces},
Mat, Sb, \textbf{191} (2000), 143--160; translation Sib. Math.
\textbf{191} no.~3,4 (2000), 459--476.

\bibitem{VZ}
Vais, A.~Ja. and Zelmanov, E.~I., Kemer's theorem for finitely
generated Jordan algebras, Izv. Vyssh. Uchebn. Zved. Mat. (1989),
no.~6, 42--51; translation: Soviet Math. (Iz. VUZ) \textbf{33}
(1989), no.~6, 38--47.

\bibitem{VL} Vaughan-Lee, M.R., {\it{Varieties of Lie algebras}}, Quart. J. Math. {\bf 21}(1970), 297--308.

\bibitem{Z} Zubkov, A. N., {\it{Matrix invariants over an infinite field of finite characteristic}}, (Russian) Sibirsk. Mat. Zh. 34 (1993), no. 6, 68--74, ii, viii; translation in Siberian Math. J. 34 (1993), no. 6, 1059--1065.

\bibitem{zubrilin.1} K.A.~Zubrilin, Algebras
satisfying Capelli identities, Sbornic Math. {\bf 186} no. 3
(1995) 359-370.
\end{thebibliography}
\end{document}

\bibitem{KombMiyanMasayoshi}

 \by Kambayashi, Tatsuji; Miyanishi, Masayoshi; Takeuchi, Mitsuhiro.

 \paper Unipotent algebraic groups.

 \book{Lecture Notes in Mathematics}, \yr 1974, \vol 414,

 Springer-Verlag, Berlin-New York.